\newtheorem{thm}{Theorem}[section]
\newtheorem{cor}[thm]{Corollary}
\newtheorem{lem}[thm]{Lemma}
\newtheorem{assm}[thm]{Assumption}
\newtheorem{defn}[thm]{Definition}
\newtheorem{rem}[thm]{Remark}
\newtheorem{examp}[thm]{Example}
\crefname{assm}{Assumption}{Assumptions}
\crefname{lem}{Lemma}{Lemmas}
\crefname{app}{Appendix}{Appendices}
\crefname{problem}{Problem}{Problems}
\crefname{part}{part}{parts}
\crefname{examp}{Example}{Examples}
\crefname{thm}{Theorem}{Theorems}
\newcommand{\R}{{\mathbb{R}}}
\newcommand{\E}{{\mathbb{E}}}
\renewcommand{\L}{\mathcal L}
\newcommand{\abs}[1]{\left|#1\right|}
\newcommand{\Eb}[1]{\mathbb{E}\left[ #1 \right]}
\newcommand{\Pb}[1]{\mathbb{P}\left( #1 \right)}
\newcommand{\Ib}[1]{\mathbb{I}\left\{ #1 \right\}}
\newcommand{\bx}{\bm{x}}
\newcommand{\by}{\bm{y}}
\newcommand{\Y}{\mathcal X^0}
\providecommand{\be}{}
\renewcommand{\be}{\bm{e}}
\newcommand{\bA}{\bm  A}
\newcommand{\br}{\bm{r}}
\newcommand{\bz}{\bm{z}}
\newcommand{\bzbar}{\overline{\bm{z}}}
\newcommand{\ba}{\bm{a}}
\newcommand{\bZ}{\bm{Z}}
\newcommand{\bW}{\bm{W}}
\newcommand{\blambda}{\bm{\lambda}}
\newcommand{\bmu}{\boldsymbol \mu}
\newcommand{\bxi}{\boldsymbol \xi}
\newcommand{\btheta}{\boldsymbol \theta}
\newcommand{\bthetabar}{\bar{\boldsymbol \theta}}
\newcommand{\Obj}{V}
\newcommand{\vmin}{\nu_{\rm min}}
\newcommand{\vmax}{\nu_{\rm max}}
\newcommand{\blockedit}{\color{black}}
\newcommand{\citeGR}{\mbox{GR 2021}}
\newcommand{\DanskinTitle}{\mbox{Variance Gradient Correction}}
\newcommand{\Danskin}{\mbox{VGC}}
\newcommand{\polylog}{{\rm polylog}}
\newcommand{\DR}{D^{\sf R}}
\newcommand{\Var}{\mathbb V\text{ar}}
\newcommand{\blue}{\textcolor{black}}
\newcommand{\minoredit}{\textcolor{black}}
 \def\bibsep{\smallskipamount}%
\begin{document}


 \RUNAUTHOR{Gupta, Huang, and Rusmevichientong}

\RUNTITLE{Debiasing In-Sample Policy Performance}

\TITLE{Debiasing In-Sample Policy Performance for Small-Data,  Large-Scale Optimization}

\ARTICLEAUTHORS{%
\AUTHOR{Vishal Gupta, Michael Huang, and Paat Rusmevichientong}
\AFF{Data Science and Operations, USC Marshall School of Business, Los Angles, CA 90089,\\ \EMAIL{guptavis@usc.edu, huan076@usc.edu, rusmevic@marshall.usc.edu}} 
} 

\ABSTRACT{
Motivated by the poor performance of cross-validation in settings where data are scarce, 
we propose a novel estimator of the out-of-sample performance of a policy in data-driven optimization.
Our approach exploits the optimization problem's sensitivity analysis to estimate the gradient of the optimal objective value with respect to the amount of noise in the data and uses the estimated gradient to 
debias the policy's in-sample performance.  Unlike cross-validation techniques, our approach avoids sacrificing data for a test set, utilizes all data when training and, hence, is well-suited to settings where data are scarce.  
We prove bounds on the bias and variance of our estimator for optimization problems with uncertain linear objectives but known, potentially non-convex, feasible regions.  For more specialized optimization problems where the feasible region is ``weakly-coupled" in a certain sense, we prove stronger results.  Specifically, we provide explicit high-probability bounds on the error of our estimator that hold uniformly over a policy class and depends on the problem's dimension and policy class's complexity.  Our bounds show that under mild conditions, the error of our estimator vanishes as the dimension of the optimization problem grows, even if the amount of available data remains small and constant.  Said differently, we prove our estimator performs well in the small-data, large-scale regime.  Finally, we numerically compare our proposed method to state-of-the-art approaches
through a case-study on dispatching emergency medical response services using real data.  Our method provides  more accurate estimates of out-of-sample performance and learns better-performing policies.

}

\KEYWORDS{Large-scale, data-driven optimization. Small-data, large-scale regime.  Cross-validation. 
} 

\maketitle
%

\section{Introduction}

The crux of data-driven decision-making is using past data to identify decisions that will have  good out-of-sample performance on future, unseen data.  Indeed, estimating out-of-sample performance is key to both policy evaluation (assessing the quality of a given policy), and to  policy learning (identifying the best policy from a potentially large set of candidates).  Estimating out-of-sample performance, however, is non-trivial.  Naive estimates that leverage the same data to train a policy and to evaluate its performance often suffer a systematic, optimistic bias, referred to as ``in-sample bias" in machine learning and the ``optimizer's curse" in optimization \citep{smith2006optimizer}.  

Consequently, cross-validation and sample-splitting techniques have emerged as the gold-standard approach to estimating out-of-sample performance.  Despite the multitude of cross-validation methods, at a high-level, these methods all proceed by setting aside a portion of the data as ``testing" data \emph{not} to be used when training the policy, and then evaluating the policy on these testing data. The policy's performance on testing data then serves as an estimate of its performance on future, unseen data, thereby circumventing the aforementioned in-sample bias. Cross-validation is ubiquitous in machine learning and statistics 
with provably good performance in large sample settings \citep{bousquet2001algorithmic,kearns1999algorithmic}. 

%
%

Unfortunately, when data are scarce, cross-validation can perform poorly.  \cite{gupta2017small} prove that for the small-data, large-scale regime ---  when the number of uncertain parameters in an optimization problem is large but the amount of relevant data per parameter is small --- each of hold-out, $5$-fold, $10$-fold and leave-one-out cross validation can have poor performance when used for policy learning, even for very simple optimization problems.  \cite{shao1993linear} observes a similar failure for leave-one-out cross-validation in a high-dimensional linear regression setting.  The key issue in both cases is that when relevant data are {\it scarce}, estimates of uncertain parameters are necessarily imprecise, and omitting even a small amount of data when training a policy dramatically degrades its performance. Hence, the performance of a policy trained \emph{with a portion of the data} on the test set is not indicative of the performance of the policy trained \emph{with all the data} on future unseen data.  We elucidate this phenomenon with a stylized example in \cref{sec:MotivatingExample} below.

Worse, this phenomenon is not merely an intellectual curiosity.  Optimization problems plagued by numerous low-precision estimates are quite common in modern, large-scale operations.  For example, optimization models for personalized pricing necessarily include parameters for each distinct customer type, and these parameters can be estimated only imprecisely since relevant data for each type are limited.  Similar issues appear in large-scale supply-chain design, promotion optimization, and dispatching emergency response services; see \cref{sec:Formulation} for further discussion.

In this paper, we propose a new method for estimating out-of-sample performance without sacrificing data for a test set.  The key idea is to debias the in-sample performance of the policy trained on all the data.  Specifically, we focus on the optimization problem
\begin{equation} 
\label[problem]{eq:obj-problem}
    \bx^{*}\in\argmin_{\bx\in\mathcal{X} \subseteq [0,1]^n}
    \quad
\bmu^\top \bx
\end{equation}
where $\mathcal X$ is a known, potentially non-convex feasible region contained within $[0, 1]^n$, and $\bmu \in \R^n$ is an unknown vector of parameters.  We assume access to a vector $\bm Z$ of noisy, unbiased predictions of $\bmu$ (based on historical data) and are interested in constructing a policy $\bx(\bm Z)$ with good out-of-sample performance $ \bmu^\top\bx(\bm Z)$.  (For clarity, the in-sample performance of $\bx(\bZ)$ is $ \bZ^\top \bx(\bZ)$.)
Note that for many applications of interest, $\bmu^\top \bx^* = O(n)$ as $n\rightarrow\infty$; i.e., the full-information solution grows at least linearly as the dimension grows.  Hence, the unknown out-of-sample performance $\bmu^\top \bx(\bZ)$ must also be at least $O_p(n)$ as $n\rightarrow\infty$.\footnote{
    \blue{\label{note:bigO_note} Following \cite{van2000asymptotic}, we say a sequence of random variables $X_n = O_p(a_n)$ if the sequence $X_n/a_n$ is stochastically bounded, i.e., for every $\epsilon > 0$, there exists finite $M>0$ and finite $N>0$ such that $\mathbb{P} \left\{ X_n/a_n \ge M \right\} < \epsilon, \ \text{for all } n > N$.} }  See \cref{sec:Formulation} for examples.


Despite its simplicity, \cref{eq:obj-problem} subsumes a wide class of optimization problems because $\mathcal X$ can be non-convex and/or discrete. This class includes mixed-binary linear optimization problems such as facility location, network design, and promotion maximization.  By transforming decision variables, even some non-linear optimization problems such as personalized pricing can be rewritten as \cref{eq:obj-problem}; see \cref{sec:Formulation}.  In this sense, \cref{eq:obj-problem} is fairly general.

Our estimator applies to classes of affine plug-in policies which are formally defined in \cref{sec:Formulation}. 
Loosely, affine plug-in policies are those obtained by solving \cref{eq:obj-problem} after ``plugging-in" some estimator \minoredit{$r_j(Z_j)$} in place of \minoredit{$\mu_j$}, and $\minoredit{r_j(Z_j)}$ depends affinely on \minoredit{$Z_j$}.  
Many policies used in practice and previously studied in the literature can be viewed as elements of an affine plug-in policy class including Sample Average Approximation (SAA),
estimate-then-optimize policies based on regression,
 the Bayes-Inspired policies of \cite{gupta2017small}, and the SPO+ policy of \cite{AdamSPO}.  
Thus, our estimator provides a theoretically rigorous approach to assessing the quality of optimization policies based on many modern machine learning techniques.

We debias $\bm Z^\top\bx(\bZ)$ by exploiting the structure of \cref{eq:obj-problem} with the plug-in $\br(\bZ)$. Specifically, by leveraging this problem's sensitivity analysis, we approximately compute the gradient of its objective value with respect to the variance of $\bZ$, and use the estimated gradient to debias the in-sample performance.  We term this correction the \emph{Variance Gradient Correction} (VGC).  Because our method strongly exploits optimization structure, the VGC is Lipschitz continuous in the plug-in values $\br(\bZ)$.  This continuity is not enjoyed by other techniques such as those in \cite{gupta2017small}.  

Although the VGC's continuity may seem like mere a mathematical nicety, empirical evidence suggests it improves  empirical performance.   Similar empirical phenomena -- where an estimator that varies smoothly in the data often outperforms similar estimators that change discontinuously -- are rife in machine learning.  Compare $k$-nn regression with Gaussian kernel smoothing \citep{friedman2001elements}, CART trees with bagged trees \citep{breiman1996bagging}, or best subset-regression with lasso regression \citep{hastie2017extended}.  Theoretically, we exploit this smoothness heavily to establish bounds that hold uniformly over the policy class.

Specifically, we show that, when $\bZ$ is approximately Gaussian, the bias of our estimator for out-of-sample performance is $\tilde O(h)$ as $h \rightarrow 0$, where $h$ is a user-defined parameter that controls the accuracy of our gradient estimate (\cref{thm:equiv-in-sample}).  Characterizing the variance is more delicate.  We introduce the concept of \emph{Average Solution Instability}, and prove that if the instability of the
policy vanishes at rate $O(n^{-\alpha})$ for $\alpha \geq 0$, then the variance of our estimator is roughly $O(n^{3-\alpha}/h)$.  Collectively, these results suggest interpreting $h$  as a parameter controlling the bias-variance tradeoff of our estimator.  Moreover, when $\alpha > 1$, the variance of our estimator is $o(n^2)$.  Since, as mentioned, the unknown out-of-sample performance often grows at least linearly in $n$, i.e., $\bmu^\top \bx(\bZ) = O_p(n)$, our variance bound shows that when $\alpha > 1$ and $n$ is large, the stochastic fluctuations of our estimator are negligible relative to the out-of-sample performance.  In other words, our estimator is quite accurate in these settings.

Our notion of Average Solution Instability is formally defined in \cref{sec:Variance}.  Loosely, it measures 
 the expected change in the $j^{\text{th}}$ component of the policy after replacing the $k^{\text{th}}$ data point with an i.i.d. copy, where $j$ and $k$ are chosen uniformly at random from $\{ 1, \ldots, n\}$. This notion of stability is similar to hypothesis stability \citep{bousquet2001algorithmic}, but, to the best of our knowledge, is distinct.  Moreover, insofar as we expect that a small perturbation of the data is unlikely to have a large change on the solution for most real-world, large-scale optimization problems, we expect Average Solution Instability to be small and our estimator to have low variance.  

We then prove stronger high-probability tail bounds on the error of our estimator for two special classes of ``weakly-coupled" instances of \cref{eq:obj-problem}: \mbox{weakly-coupled-by-variables} and \mbox{weakly-coupled-by-constraints}.
In \cref{sec:Weakly-Coupled-Variables}, we consider problems that are weakly-coupled-by-variables, i.e., problems that decouple into many, disjoint subproblems once a small number of decision variables are fixed.  In \cref{sec:Weakly-Coupled-Constraints} we consider problems that are weakly-coupled-by-constraints, i.e., problems that decouple into many, disjoint subproblems once a small number of constraints are removed.  For each problem class, we go beyond bounding the variance to provide an explicit tail bound on the relative error of our estimator that holds uniformly over the policy class. We show that for problems weakly-coupled-by-variables the relative error scales like $\tilde{O}(C_{\sf PI} \frac{\polylog(1/\epsilon)}{\sqrt[3]{n}})$ where $C_{\sf PI}$ is a constant measuring the complexity of the policy class; see \cref{thm:unif-oos-est-wc-v}. Similarly, we show the relative error for problems weakly-coupled-by-constraints scales like $\tilde{O}(C_{\sf PI} \frac{\polylog(1/\epsilon)}{\sqrt[4]{n}})$, where $C_{\sf PI}$ measures both the complexity of the policy class and number of constraints of the problem; see \cref{thm:WC-Constraints-bound}.  Importantly, since these bounds hold uniformly, our debiased in-sample performance can be used both for policy evaluation and policy learning, even when data are scarce, so long as $n$ (the dimension of the problem) is sufficiently large.  Said differently, our estimator of out-of-sample performance is particularly well-suited to small-data, large-scale optimization. 

Admittedly, weakly-coupled problems as described above do not cover all instances of \cref{eq:obj-problem} \blue{and the appropriateness of modeling $\bZ$ as approximately Gaussian is application specific. \label{comment:Gaussian2}}
Nonetheless, our results and their proofs highly suggest our estimator will have strong performance whenever the underlying optimization problem is well-behaved enough for certain uniform laws of large numbers to pertain.  

Finally, to complement these theoretical results, we perform a numerical case study of dispatching emergency medical services with real data from cardiac arrest incidents in Ontario, Canada.  With respect to policy evaluation and learning, we show that our debiased in-sample performance outperforms both traditional cross-validation methods and the Stein correction of \cite{gupta2017small}.  
In particular, while the bias of cross-validation is non-vanishing as the problem size grows for a fixed amount of data, the bias of our VGC converges to zero. Similarly, while both the Stein correction and our VGC have similar asymptotic performance, the smoothness of VGC empirically leads to lower bias and variance for moderate and sized instances.


\subsection{Our Contributions} We summarize our contributions as follows:

\begin{enumerate}
\item We propose an estimator of out-of-sample performance for \cref{eq:obj-problem} by debiasing \mbox{in-sample} performance through a novel \emph{Variance Gradient Correction} (VGC).  Our VGC applies to a general class of affine plug-in policies that subsumes many policies used in practice.  
Most importantly, unlike cross-validation, VGC does \emph{not}  sacrifice data when training, and, hence, is particularly well-suited to settings where data are scarce. 

\item We prove that under some assumptions on the \mbox{data-generating} process, for general instances of \cref{eq:obj-problem},  the bias of our estimator is at most $\tilde{O}(h)$ as $h \rightarrow 0$, where $h$ is a user-defined parameter.  For policy classes that satisfy a certain Average Solution Instability condition, we also prove that that its variance scales like $o\left(\frac{n^2}{h}\right)$ as $n \rightarrow \infty$.

%

\item We prove stronger results for instances of \cref{eq:obj-problem} in which the feasible region is only weakly-coupled.  When the feasible region is weakly-coupled by variables, we prove that, with probability at least $1-\epsilon$, debiasing in-sample performance with our VGC recovers the true out-of-sample performance up to relative error that is at most  $\tilde{O}\left(C_{\sf PI}\frac{ \log(1/\epsilon)}{n^{1/3}}\right)$ as $n \rightarrow \infty$, uniformly over the policy class, where $C_{\sf PI}$ is a constant that measures the complexity of the plug-in policy class (\cref{thm:unif-oos-est-wc-v}). 
Similarly, for certain linear optimization problems that are weakly coupled by constraints, we prove that, with probability at least $1-\epsilon$, debiasing in-sample performance with VGC estimates the true out-of-sample performance uniformly over the policy class with relative error that is at most $\tilde{O}\left(C_{\sf PI}\frac{ \sqrt{\log(1/\epsilon)}}{n^{1/4}} \right)$ where $C_{\sf PI}$ is a constant measuring the complexity of the plug-in policy class and the number of of constraints of the problem. (\cref{thm:WC-Constraints-bound}).  \blue{We stress that since both these bounds hold uniformly, our debiased in-sample performance can not only be used for policy evaluation, but also policy learning, even when data are scarce, so long as $n$ (the size of the problem) is sufficiently large. }

\item Finally, we present a numerical case study based on real data from dispatching emergency response services to highlight the strengths and weaknesses of our approach relative to cross-validation and the Stein correction of \cite{gupta2017small}.  Overall, we find that since our VGC exploits the optimization structure of \cref{eq:obj-problem}, it outperforms the benchmarks when the number of uncertain parameters is sufficiently large. \blue{\label{note:cv-vs-vgc-adv}Additionally, in settings where the signal to noise ratio is low, VGC more effectively balances the bias-variance trade-off than cross-validation which can be quite sensitive to the number of folds used.}
\end{enumerate}




\subsection{A Motivating Example: Poor Performance of Cross-Validation with Limited Data}
\label{sec:MotivatingExample}
Before proceeding, we present an example that highlights the shortcomings of \mbox{cross-validation} and the benefits of our method when data are limited.   Consider a special case of \cref{eq:obj-problem}
\begin{equation} \label[problem]{eq:ToyProblem}
	\max_{\bx \in \left\{0,1\right\}^n} \ \sum_{j=1}^n \mu_j x_j
\end{equation}
where the true parameters $\bmu \in \{-1,1\}^n$ are unknown, but we observe $S$ samples $\bm Y_1,\dots,\bm Y_S$ where $\bm Y_i \in {\R}^{n}$ and $\bm Y_i \sim \mathcal{N}(\bmu, 2\bm{I})$ for all $i$ and $\bm{I}$ is the identity matrix.  
A standard data-driven policy in this setting is Sample Average Approximation (SAA), also called empirical risk minimization, which prescribes the policy
\begin{align*}
	\bm x^{\sf SAA}(\bm Z) \in \argmax_{\bx \in \left\{0,1\right\}^n} \ \sum_{j=1}^n Z_j x_j
\qquad 
\text{ where } 
\qquad 
Z_j = \frac{1}{S}\sum_{i=1}^S Y_{ij}.
\end{align*}
The key question, of course, is ``What is SAA's out-of-sample behavior $\bmu^\top \bx^{\sf SAA}(\bm Z)$?"

\blue{\label{note:respons-toy-example}To study this question, the left panel of \cref{tab:MotivatingExample} shows different estimators for the out-of-sample performance of SAA
$\Eb{ \bmu^\top \bx^{\sf SAA}(\bm Z)}$ when $S=3$ in \cref{eq:ToyProblem}. The right panel shows the expected relative error (with respect to the oracle) of these estimators as the number of samples $S$ grows. To account for the noise level of the samples, we plot the estimation error with respect to the signal-to-noise ratio (SNR) of $Z_j$\footnote{Following \cite{hastie2020best}, we define 
 $SNR 
 = 
 Var(\mu_{\pi})/Var(Z_{\pi}) 
 = 
  \frac{S}{2n} \sum_{j=1}^{n} \left(\mu_j - \frac{1}{n}\sum_{i=1}^{n} \mu_i \right)^2$ where 
$\pi$ is an index drawn uniformly random from 1 to $n$.
 }.  For reference, \cite{hastie2020best} argues that SNR greater than $1$ is ``rare" when working with ``noisy, observational data," and an SNR of $0.25$ is more ``typical."}


\blue{The first row of \cref{tab:MotivatingExample} presents the in-sample performance, i.e, the objective of the SAA problem. As expected, we see in-sample performance significantly over-estimates the out-of-sample performance. The right panel of \cref{tab:MotivatingExample} suggests this effect persists across SNRs and the relative error is at least 23\% for SNRs less than 2.} 

The second row of the left panel of \cref{tab:MotivatingExample} shows the leave-one-out cross-validation error, which aims to correct the over-optimistic bias and computes
\blue{\label{note:typo-3}$\frac{1}{S}\sum_{i=1}^{S} \bm Y_i^\top  \bx^{\sf SAA}(\bZ^{-i})$}, where \mbox{$\bZ^{-i}  = \frac{1}{S-1}\sum_{j \neq i} \bm Y_{j}$}. Cross-validation is also fairly inaccurate, suggesting SAA performs worse than the trivial, non-data-driven policy $\bm x  = \bm 0$, which has an out-of-sample performance of $0$.  \blue{In the right panel, this incorrect implication occurs for SNRs less than about 0.875.}

Why does cross-validation perform so poorly?  \emph{By construction} cross-validation omits some data in training, and hence, does \emph{not} estimate the out-of-sample performance of $\bx^{\sf SAA}(\bm Z)$, but rather, that of $\bx^{\sf SAA}(\bm Z^{-1})$.\footnote{For clarity, this is the same as the performance as $\bx^{\sf SAA}(\bm Z^{-2})$ because the data are i.i.d.}  From the second \blue{column} of \cref{tab:MotivatingExample}, we see the cross-validation estimate \emph{does} nearly match the true (oracle) performance of $\bx^{\sf SAA}(\bm Z^{-1})$.  When data are scarce, sacrificing even a small amount of data in training can dramatically degrade a policy. \blue{As seen in the right panel of \cref{tab:MotivatingExample}, this phenomenon is non-negligible (at least 10\% relative error) for signal-to-noise ratios less than or equal to $1.75$.  Thus, the performance of $\bx^{\sf SAA}(\bm Z^{-1})$ may not always be a good proxy of the performance of $\bx^{\sf SAA}(\bm Z)$.}  

\begin{table}
    
	\begin{minipage}{0.45\linewidth}
		\centering
		\begin{tabular}{c r r} 
        \toprule
        {} & \multicolumn{1}{c}{$\bx^{\sf SAA}(\bm Z)$} & \multicolumn{1}{c}{$\bx^{\sf SAA}(\bm Z^{-i})$}  \\
        \midrule
        In-Sample & $18.36$ & $22.33$ \\
        Cross-Val & $-1.86$ & $-9.98$ \\
        Our Method & $2.95$ & $-1.89$ \\
        Oracle & $2.97$ & $-1.87$ \\
        \bottomrule
        \end{tabular}
	\end{minipage}\hfill
	\begin{minipage}{0.5\linewidth}
		\centering
		\scalebox{.47}{\input{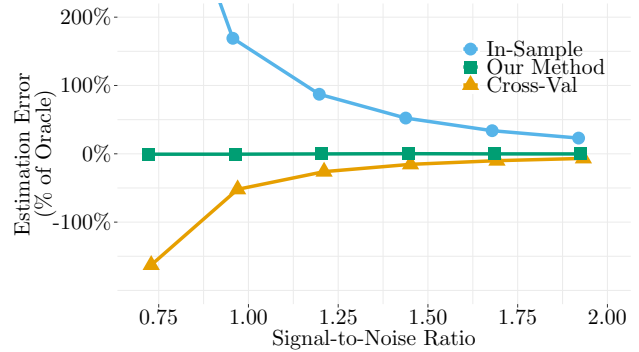}}
	\end{minipage}
    \captionof{figure}{\label{tab:MotivatingExample}\textbf{Expected Estimates of Out-of-Sample Performance by Policy for \cref{eq:ToyProblem}}.  In the left table, we take $n=100$, $S=3$ and $\mu_j = 1$ if $j \leq 14$ and  $\mu_j = -1$ otherwise. We estimate the expected out-of-sample perf. across 1,000,000 simulations.  Std. errors are less than $0.005$. In the right graph, we plot the bias of the estimates with respect to the expected out-of-sample performance as we increase the signal-to-noise ratio. The In-Sample error not shown at 0.75 SNR exceeds $500\%$.}
\end{table}

How then might we resolve the issue?  The third row of the left panel of \cref{tab:MotivatingExample} presents our estimator based on debiasing the in-sample performance of $\bx^{\sf SAA}(\bm Z)$ with our VGC. Our estimate is essentially unbiased (see also \cref{thm:equiv-in-sample} below).  \blue{The right panel of \cref{tab:MotivatingExample} confirms this excellent behavior across a range of signal-to-noise ratios.}  Finally, although this example focuses on the bias of our estimator, our results in \cref{sec:Weakly-Coupled-Problems} are stronger and bound the (random) error of our estimator directly, rather than its expectation.
%

\subsection{Relationship to Prior Work} Cross-validation is the gold-standard  for estimating out-of-sample performance in the large-sample regime with i.i.d. data; see \cite{bousquet2001algorithmic,kearns1999algorithmic} for some fundamental results.  
As discussed above, when estimating the performance of a fixed-policy, these approaches entail sacrificing some data in the training step to set aside for validation, and, hence, may be ill-suited to data-scarce settings.  Similar issues arise in a variety of other sample-splitting methods, including ``honest-trees"  \cite{wager2018estimation} and most forms of doubly-robust estimation \cite{dudik2011doubly}.  
By contrast, our VGC based approach to debiasing the in-sample performance effectively uses all the data when training, making it somewhat better suited to data-scarce settings and small-data, large-scale optimization.  


Our work also contributes to the growing literature on ``optimization-aware" estimation.  These works employ a variety of names including operational statistics \citep{liyanage2005practical}, learning-enabled optimization \citep{deng2018learning}, decision-focused learning \citep{wilder2019melding}, end-to-end learning \citep{wilder2019end} and task-based learning \citep{donti2017task}.  Fundamentally, this area of research seeks estimators that optimize the out-of-sample performance of a policy in a downstream optimization problem rather than the prediction error of the estimate.  Closest to our work is the ``Smart `Predict then Optimize'" framework studied in \cite{AdamSPO} and \cite{elmachtoub2020decision}.  These works also study \cref{eq:obj-problem}, but in a slightly different data setting, and propose policy selection methods for affine and tree-based policies, respectively. Also related is \cite{ito18a} which develops an unbiased estimate of the sample average approximation (SAA) policy for \cref{eq:obj-problem}, but does not consider higher level moments, policy evaluation for other policies, or policy learning.

\minoredit{\label{note:generalization} Recently, \cite{el2019generalization,hu2022fast} have sought to establish generalization guarantees for data-driven policies for plug-in policies for \cref{eq:obj-problem}, i.e., bounds on the difference between in-sample performance and out-of-sample performance that hold uniformly over the policy class.  Both works prove generalization guarantees that vanish in the large-sample regime (when the amount of data grows large).  We similarly bound the difference between our \emph{debiased} in-sample performance and out-of-sample performance, uniformly over a policy class. However, unlike the previous works, our bounds are specifically constructed to vanish relative to the unknown out-of-sample performance in the small-data, large-scale regime.  When applied to the types of policies studied in \cite{el2019generalization,hu2022fast}, our debiased in-sample performance equals the ordinary in-sample performance (see discussion of ``Policy Classes that Do Not Depend on $\bZ$" in \cref{sec:EstimatingInSampleBias}). Hence, our results can be reinterpreted as generalization guarantees for these classes, showing that generalization error vanishes relative to the out-of-sample performance as the problem size grows.  In this sense, our work complements the large-sample analysis of \cite{el2019generalization,hu2022fast} with new, small-data, large-scale analysis.}  



More generally, there has been somewhat less work in the small-data, large-scale regime, most notably \cite{gupta2019shrunk} and \cite{gupta2017small}.  \blue{\label{note:typo-4} Of these, \cite{gupta2017small}, henceforth \citeGR,} is most closely related to our work. 
Loosely,~\citeGR~ study a class of weakly coupled linear optimization problems and propose an estimator of the out-of-sample performance based on Stein's Lemma.  By leveraging a careful duality argument, the authors prove that the estimation error of their procedure vanishes in both the large-sample and small-data, large-scale~regime.  

Our work differs in two important respects:  First, our estimator applies to a more general class of problems and more general policy classes.  Indeed, we focus on \cref{eq:obj-problem} with specialized results for weakly-coupled instances.  Our weakly-coupled by constraints variant in \cref{sec:Weakly-Coupled-Constraints} mirrors the setting of \citeGR, and our weakly-coupled by variables variant in \cref{sec:Weakly-Coupled-Variables} is more general,  allowing us to model, for example, discrete optimization problems.  Moreover, our affine plug-in policy class significantly generalizes the ``Bayes-Inspired" policy class of \citeGR~by incorporating covariate information.  

The second important difference from \citeGR ~relates to exploiting optimization structure in \cref{eq:obj-problem}.
\citeGR ~fundamentally relies on Stein's lemma, a result which applies to general functions and does not specifically leverage optimization structure.  By contrast, our method directly leverages the structure of \cref{eq:obj-problem} through its sensitivity analysis and Danskin's theorem.  By leveraging optimization structure, our VGC is, by construction, continuous in the policy class. Evidence from \cref{sec:experiments} suggests this smoothness yields an empirical advantage of our method.

Finally, our work also contributes to a growing literature on debiasing estimates in high-dimensional statistics, most notably for LASSO regression \citep{javanmard2018debiasing,zhang2014confidence} and $M$-estimation \citep{javanmard2014confidence}.  Like these works, VGC involves estimating a gradient of the underlying system and using this gradient information to form a correction.  Unlike these works, however, our gradient estimation strongly leverages ideas from sensitivity analysis in optimization.  Moreover, the proofs of our performance guarantees involve substantively different mathematical techniques.

\section{Model}
\label{sec:Formulation}

As mentioned, our focus is on data-driven instances of \cref{eq:obj-problem} where the feasible region $\mathcal X$ is known, but the parameters $\bmu$ are unknown.  
Despite its simplicity, several applications can be modeled in this form after a suitable transformation of variables.  
\begin{examp}[Promotion Optimization]
{\rm
Promotion optimization is an increasingly well-studied application area \citep{cohen2017impact,baardman2019scheduling}.  Our formulation mirrors a formulation from the ride-sharing company Lyft around incentive allocation \citep{schmoys_wang_2019}, but also resembles the online advertising portfolio optimization problem  (\cite{rusmevichientong2006adaptive,panilarge},\citeGR).

The decision-maker (platform) seeks to allocate $J$ different types of coupons (promotions) to $K$ different customer (passenger) types.  Coupons are costly, and there is a finite budget $C$ available.  Let $\mu_{jk}$ be the reward (induced spending) and $c_{jk}$ be the cost of assigning coupon type $j$ to customer type $k$.  Using $x_{jk}$ to denote the fraction of customers of type $k$ who receive coupons of type $j$, we can formulate the following linear optimization problem of the form of \cref{eq:obj-problem}.
$$
    \max_{\bx \geq 0} \left\{ 	\sum_{k=1}^{K}\sum_{j = 1}^J \mu_{jk}x_{jk}  ~:~ \sum_{j = 1}^J x_{jk} \leq 1 \quad \text{for each } k=1,\dots,K,~~
	\sum_{k=1}^{K}\sum_{j = 1}^J c_{jk}x_{jk}\le C \right\}~.
$$
In typical instances, the cost $c_{jk}$ are likely known (a ``\$10 off" coupon costs $\$10$), whereas the reward $\mu_{jk}$ must be estimated from historical data.  In settings with many types of coupons and customers, we might further expect that the reward estimates may be imprecise.  
}
\end{examp}
Some reflection suggests many linear optimization problems including shortest-path with uncertain edge costs, or even binary linear optimization problems like multi-choice knapsack with uncertain rewards can be cast as above.  

We next observe that some two-stage linear optimization problems can also be framed as \cref{eq:obj-problem}.

\begin{examp}[Drone-Assisted Emergency Medical Response] \label[examp]{ex:drone-aed}
{\rm
In recent years, emergency response systems have begun utilizing drones as part of their operations, specifically for rapid delivery of automatic electronic defibrillators (AEDs) for out-of-hospital cardiac arrests (OHCA)  \citep{sanfridsson2019drone,cheskes2020improving}.  The intention is that a drone might reach a patient in a remote region before a dispatched ambulance, and (untrained) bystanders can use the AED to assist the patient until the ambulance arrives.  Consequently, researchers have begun studying both how to design a drone-assisted emergency response network (where to locate depots) 
\citep{boutilier2019response} and how to create optimal dispatch rules (to which locations should we allocate a drone and from which depot) \citep{chu2021machine}.  Combining these two problems yields a two-stage 
optimization problem, similar to facility location, aimed at minimizing the response time.  

Namely, let $\mu_{kl}$ be the response time of drone routed from a source $l$ for to a patient location~$k$, $l =1, \ldots, L$ and $k =1, \ldots, K$.  Let $y_l \in \{0, 1\}$ be a binary decision variable encoding if we build a drone depot at location $l$, and let $x_{kl}$ be     a binary decision variable encoding if, after building the network, we should dispatch a drone from location $l$ to patient requests at location $k$.  
We let $x_{k0}$ be the choice not to route a drone (sending only an ambulance) to location $k$ and $\mu_{k0}$ be  the corresponding ambulance travel time.
Suppose we can build at most $B$ depots. Then, we have the following optimization problem.
\begin{align*}
	\min_{\by \in \{0, 1\}^L, \ \bx \in \{0, 1\}^{K \times L} }\quad &	
	\sum_{k=1}^{K}\sum_{l=0}^{L}\mu_{kl}x_{kl},  \\
\text{s.t. }\quad &		\sum_{l=1}^{L}y_{l}\le B, \quad
x_{kl}\le y_{l}, \quad 
\sum_{l=0}^{L}x_{kl} =1, \qquad &&\forall k = 1, \ldots, K,  \ \ l = 1, \ldots, L.
\end{align*}
Insofar as some drone response times are difficult to predict (depending on the weather, local environment, ability of bystanders to locate and use the drone's payload), we expect in typical instances that estimates $\mu_{kl}$ may be imprecise.  
}
\end{examp}

Interestingly, some non-linear problems can be transformed into the form of \cref{eq:obj-problem}. 
\begin{examp}[Personalized Pricing] \label[examp]{ex:PersonalizedPricing}
{\rm Personalized pricing strategies seek to assign a tailored price to each of many customer types reflecting their heterogeneous willingness-to-pay \citep{cohen2018promotion, javanmard2020multiproduct,aouad2019market}.  One simple formulation posits distinct demand models $D_j(p) = m_j \phi_j(p) + b_j$ in each customer segment $j$, for some decreasing function~$\phi_j(p)$.  This  yields the revenue maximization problem 
\begin{align*}
	\max_{\bm{p} \geq \bm 0}\quad & \sum_{j=1}^{n} m_j p_j \phi_j(p_j) + b_j p, 
\end{align*}
where $p_j$ is the price for the $j^\text{th}$ segment.  We can cast this nonlinear objective in the form \cref{eq:obj-problem} by transforming variables,
$$
	\max_{\bm{p} \geq \bm 0, \bx} \left\{  \sum_{j=1}^{n}m_{j}x_{j}+b_{j}p_{j} ~:~
	 x_{j} = p_{j} \phi_j(p_j) \quad\text{for each }j=1,\dots,n  \right\}
$$
where the resulting feasible region is now non-convex.  In typical settings, we expect the parameters $m_j, b_j$ are unknown, and estimated via machine learning methods \citep{aouad2019market}. When there are many customer types, these estimates may be imprecise for rarely occurring types.  
}
\end{examp}

Finally, we mention as an aside that some dynamic programs like the economic lot-sizing problem in inventory management can be cast in the above form through a careful representation of the dynamic program; see \cite{AdamSPO} for details.  

\subsection{Data} \label{sec:data-setup}
Following \citeGR, we do not assume an explicit data generation procedure.  Rather, we assume that as a result of analyzing whatever raw data are available, we obtain noisy, unbiased predictions $\bm Z$ such that $\Eb{ \bm Z} = \bmu$ with known precision $\Eb{ (Z_j - \mu_j)^2} = 1/\nu_j$ for $j=1, \ldots, n$.  These predictions might arise as sample averages as in \cref{sec:MotivatingExample}, or as the outputs of some pre-processing regression procedure.  We further assume that for each $j =1, \ldots, n$, we observe a non-random covariate of feature data $\bW_j \in \R^p$, which may (or may not) be informative for the unknown $\mu_j$.  

We believe this set-up reasonably reflects many applications.  In the case of drone-assisted emergency response (\cref{ex:drone-aed}),  $\bW_j$ encodes features that are predictive of EMS response times such as physical road distance between the patient and the responding ambulance,  time of day,  day of week, and  weather conditions \citep{chu2021machine}, while $Z_{k0}$ may be an average of historical response times to location $k$.

An advantage of modeling $\bZ$ in lieu of the data generation process is that the precisions $\nu_j$ 
implicitly describe the amount of relevant data available for each $\mu_j$.  Let \mbox{$\vmin \equiv \min_j \nu_j$} and \mbox{$\vmax \equiv \max_j \nu_j.$}  Then, loosely speaking, the  large-sample 
regime describes instances where $\nu_{\min}$ is large, i.e., where data are plentiful and we can estimate $\bmu$ easily.  By contrast, the small-data, large-scale regime describes instances in which $n$ is large (large-scale), but there are limited relevant data, and, hence, $\vmax$ is small.  

To simplify our exposition, we will also assume:
\begin{assm}[Independent Gaussian Corruptions] \label[assm]{asn:Gaussian}
For each $j=1, \ldots, n$, $Z_j$ has Gaussian distribution with \mbox{$Z_j \sim \mathcal N(\mu_j, 1/\nu_j)$} where $\nu_j$ is the \emph{known} precision of $Z_j$.  Moreover, $Z_1, \dots, Z_n$ are independent.
\end{assm}
\Cref{asn:Gaussian} is common.  \citeGR~ employ a similar assumption; \cite{javanmard2018debiasing} strongly leverages a Gaussian design assumption when debiasing lasso estimates, and \cite{ito18a} also assumes Gaussian errors in their debiasing technique.  In each case, the idea is if a technique enjoys \emph{provably} good performance under Gaussian corruptions, it will likely have good \emph{practical} performance when data are approximately Gaussian.  
Indeed, if $\bZ$ is obtained by maximum likelihood estimation, ordinary linear regression, simple averaging as in \cref{sec:MotivatingExample}, or Gaussian process regression, then the resulting estimates will be approximately Gaussian.
We adopt the same perspective in our work.  

Note, the independence assumption in \cref{asn:Gaussian} is without loss of generality as illustrated in the following example.
\begin{examp}[Correlated Predictions]
{\rm 
Suppose we are given an instance of \cref{eq:obj-problem} and predictions $\bm Z \sim \mathcal N(\bmu, \bm \Sigma)$ where $\bm \Sigma$ is a \blue{\label{note:typo-7}\emph{known}}, positive semidefinite matrix.  Consider a Cholesky decomposition $\bm \Sigma = \bm L \bm L^\top$ and the transformed predictions $\overline{\bZ} \equiv \bm L^{-1} \bZ$.  
Notice $\overline{\bZ} \sim \mathcal N(\overline{\bmu}, \bm I)$ where $\overline{\bmu} \equiv \bm L^{-1} \bmu$.  We then recast \cref{eq:obj-problem} as the equivalent problem
\begin{align*}
\min_{\overline{\bx} \in \overline{\mathcal X}} \quad &\overline{\bmu}^\top \overline{\bx},
\quad \text{ where } \quad \overline{\mathcal X} \equiv \{ \bm L^\top \bx \; : \; \bx \in \mathcal X \}.
\end{align*}
Our new problem is of the required form with transformed predictions independent across $j$.
}
\end{examp}

Most importantly, \cref{asn:Gaussian} is \emph{not} crucial to many of our results. \blue{\label{note:gaussian-note} Violating the Gaussian assumption only affects the bias of our estimator (see \cref{thm:equiv-in-sample}).  \blue{Our analysis bounding the variance and tails of the stochastic errors utilize empirical process theory, and can easily be adapted for non-Gaussian corruptions.}  Moreover, although the bias of our estimator is non-negligible when $\bZ$ is non-Gaussian, we can bound this bias in terms of the Wasserstein distance between $\bZ$ and a multivariate Gaussian, suggesting our method has good performance as long as corruptions are \emph{approximately} Gaussian and this distance is small (see \cref{lem:NonGaussian} in Appendix for the bound).}  

Finally, similar results also hold when $\bm \nu$ is, itself, estimated noisily with the addition of a small bias term related to its estimate's accuracy (see \cref{lem:noisy-precision} in Appendix).

\subsection{Affine Plug-in Policy Classes}\label{sec:plug-in-policy}
A data-driven policy for \cref{eq:obj-problem} is a mapping  $\bm Z \mapsto \bx(\bZ) \in \mathcal X$ that determines a feasible decision $\bx(\bZ)$  from the observed data $\bZ$.  We focus on classes of affine plug-in policies.  Intuitively, 
a plug-in policy first proxies the unknown $\bmu$ by some estimate, $\br(\bZ)$, and then solves \cref{eq:obj-problem} after ``plugging-in" this estimate for $\bmu$.  

\begin{defn}[Affine Plug-in Policy Classes]\label{eq:plug-in-policy} \label[defn]{def:AffinePlugInPolicy}
For $j =1, \ldots, n$, let \mbox{$r_j(z, \btheta) = a_j(\btheta) z + b_j(\btheta)$}
be an affine function of $z$ where $a_j(\btheta)$ and $b_j(\btheta)$ are arbitrary functions of the parameter $\bm \theta \in \Theta$.  Let $\br(\bZ, \btheta) = \left(r_1(Z_1,\btheta), r_2(Z_2,\btheta), \dots, r_n(Z_n,\btheta) \right)^\top \in \R^n$.  The plug-in policy with respect to $\br(\cdot, \btheta)$ is given by 
\begin{equation}\label[problem]{eq:DefXPolicy}
        \bx \left(\bZ, \btheta\right)
        \ \in \ 
        \arg\min_{\bx \in \mathcal{X}}
        \  
        \br(\bZ, \btheta)^\top \bx,
\end{equation}
where ties are broken arbitrarily.  Furthermore, we let 
\(
\mathcal{X}_{\Theta}(\bZ) \equiv
 \left\{  \bx(\bZ, \btheta) \in \mathcal{X} : \btheta \in \Theta \right\} \subseteq \mathcal X
\)
denote the corresponding class of plug-in policies over $\Theta$.  
\end{defn}

When $\btheta$ is fixed and clear from context, we suppress its dependence, writing $\bx(\bZ)$ and $\br(\bZ)$. \blue{Moreover, for a fixed $\btheta$, \blue{\label{note:typo-5} $r_j(Z_j, \btheta)$} only depends on the data (linearly) through the $j^\text{th}$ component. 
} 

\blue{Plug-in policies are attractive in data-driven optimization because computing $\bx(\bZ, \btheta)$ involves solving a problem of the same form as \cref{eq:obj-problem} \label{computational}.}  Thus, if a specialized algorithm exists for solving \cref{eq:obj-problem} -- \blue{e.g., as with many network optimization problems} -- the same algorithm can be used to compute the policy.  This property does not necessarily hold for other classes of policies such as regularization based policies (\citeGR). 

\blue{Moreover, many policies used in practice are of the form $\bx(\bZ, \hat{\btheta}(\bZ))$ for some $\hat{\btheta}(\bZ)$.  (See examples below.)  
Such policies are \emph{not} affine plug-in policies; $r_j(Z_j, \hat{\btheta}(\bZ))$ may depend nonlinearly on all the data $\bZ$.  Nonetheless, our analysis will bound the error of our estimator applied to such policies.  Namely, in \cref{sec:Weakly-Coupled-Problems}, we provide error bounds on our estimator that hold uniformly over $\mathcal X_\Theta(\bZ)$.  Since these bounds hold uniformly, such bounds also hold for all policies of the form $\bx(\bZ, \hat{\btheta}(\bZ)).$} 

\blue{\label{note:opt-affine-policy} 
For clarity, we make no claim about the optimality of affine plug-in policies for \cref{eq:obj-problem}; for a particular application, there may exist non-affine policies with superior performance. Our focus on affine plug-ins is motivated by their ubiquity and computational tractability.}

\blue{We next present examples:}
\begin{itemize}[leftmargin=*]
\item \textbf{Sample Average Approximation (SAA).}
The Sample Average Approximation (SAA) is a canonical data-driven policy for Problem~\eqref{eq:obj-problem}.  It is defined by 
\begin{equation} \label{ex:SAAPolicy}
    \bx^{\sf SAA}(\bZ)  
    \ \in \  
    \arg\min_{\bx \in \mathcal{X}} 
	\ 
	\bZ^\top \bx.
\end{equation}
SAA is thus an affine plug-in policy where the function $r_j(z, \btheta) = z$.

\item \textbf{Plug-ins for Regression Models.}
Consider the linear model \blue{$r_j\left(\bZ, \btheta \right) = \btheta^\top \bW_j$}, \blue{which does not depend on $\bZ$}, and the affine plug-in~policy 
\begin{equation} \label{ex:LMPolicy}
\blue{\bx^{\sf LM}(\bZ, \btheta) \ \in \ \arg \min_{\bx \in \mathcal X} \  \sum_{j=1}^n \bW_j^\top \btheta \cdot x_j.}
\end{equation}

\blue{\label{note:SPO}
As mentioned, many policies in the literature are
of the form 
$\bx^{\sf LM}(\bZ, \hat{\btheta}(\bZ))$ for a particular $\hat{\btheta}(\bZ)$.   
For example, letting $\btheta^{\sf OLS}(\bZ) \in \argmin_{\btheta} \sum_{j=1}^n (Z_j - \btheta^\top \bW_j)^2$ be the ordinary least-squares fit yields the estimate-then-optimize policy $\bx^{\sf LM}(\bZ, \btheta^{\sf OLS}(\bZ))$.}  
Similarly, \blue{by appropriately padding the covariate with zeros}, 
we can write the ``optimization-aware" SPO and SPO+ methods of \cite{AdamSPO} over linear hypothesis classes in the form $\bx^{\sf LM}(\bZ, \btheta^{\sf SPO}(\bZ))$ and  $\btheta^{\sf SPO+}(\bZ))$ where $\btheta^{\sf SPO}(\bZ)$ and $\btheta^{\sf SPO+}(\bZ)$ are obtained by minimizing the so-called SPO and SPO+ losses, respectively.  
Other methods, e.g., \citep{wilder2019melding}, can be rewritten similarly. As mentioned, our analysis will bound the error when debiasing these polices as well.  

{\blockedit Of course, we are not limited to a linear model for $r_j(z, \btheta)$.  We could alternatively use a nonlinear specification $r_j(z, \btheta) = f(\bW_j, \btheta)$ for some, given, nonlinear regression $f$ with parameters $\btheta$.  This specification of $\br(\bZ, \btheta)$ \emph{still} gives rise to a class of affine plug-in policies.  Again, many policies in the literature, including estimate-then-optimize policies and SPO+ over non-linear hypothesis classes can be written in the form $\bx(\bZ, \btheta(\bZ))$ for some particular mapping $\btheta(\bZ) \in \Theta$.}



\item \textbf{Mixed-Effects Policies.} 
When $\bW_j$ is not informative for $\mu_j$, plug-ins for \blue{regression} models perform poorly \blue{because no choice of $\btheta$ yields a good estimate of $\bmu$}.  By contrast, if $\nu_{\min}$ is large, SAA performs quite well.  Mixed-effects policies interpolate between these choices. Define
    \begin{equation} \label{eq:DefMixedEffectPolicy}
        \bx^{\sf ME}\big (\bZ, (\tau,\bm{\beta}) \big)
        \in
        \arg\min_{\bx\in\mathcal{X}} 
        \
        \sum_{j=1}^{n}
        \left( 
            \frac{\nu_j}{\nu_j + \tau} Z_j ~+~ 
            \frac{\tau}{\nu_j + \tau}
                \bW_{j}^{\top}\bm{\beta}
        \right)
        x_j,
    \end{equation}
\blue{where we have focused on a linear model for simplicity} and made the dependence on $\btheta = (\tau, \bm \beta)$ explicit for clarity.  Mixed-effects policies are strongly motivated by Bayesian analysis \citep{gelman2014bayesian}.  These policies generalize the Bayes-Inspired policy class considered in \citeGR.  
Again, we observe that $\bx^{\sf ME}\big( \bZ, (\tau, \bm \beta) \big)$ is an affine-plug in policy.  \blue{Moreover, we can also consider shrinking towards a nonlinear regression model as in \citep{NEURIPS2019_48f7d304}.}  
\end{itemize}

\minoredit{ Note in \cref{def:AffinePlugInPolicy}, we require that $r_j(Z_j)$ depends only on $Z_j$, not on $Z_k$ for $k \neq j$. We exploit this structure in the design and analysis of our debiasing technique.  However, this requirement precludes certain types of plug-ins, e.g., those based on linear smoothers \citep{buja1989linear} including local polynomial regression and k-nearest neighbors.  Extension of our method to these settings remains an interesting open research question.}




\vspace{10pt}

\blue{\label{note:choice-affine-policy} 
The choice of which affine plug-in policy class to use is largely application dependent.  Our bounds in \cref{sec:Weakly-Coupled-Problems} provide some preliminary guidance, suggesting a tradeoff between the expressiveness of the policy class and the error of our estimator.}  
\section{\DanskinTitle} 
\label{sec:EstimatingInSampleBias}
We make the following assumption on problem parameters for the remainder of the paper: 

\begin{assm}[Assumptions on Parameters] \label[assm]{asn:Parameters}
There exists a constant $C_\mu > 1$ such that $\| \bmu\|_\infty \leq C_\mu$, and constants $0 < \vmin < 1 < \vmax < \infty$ such that $\vmin \leq \nu_j \leq \vmax$ for all $j$.  Moreover, we assume that $n \geq 3$.
\end{assm}

\blue{\label{note:assum_3_1} The assumptions for $C_\mu$ and $\vmin, \vmax$ are without loss of generality.   
These assumptions  and the assumption on $n$ 
allow us to simplify the presentation of some results by absorbing lower order terms into leading constants.}

The in-sample performance of a policy $\bx(\bZ)$ is $\bZ^\top \bx(\bZ)$.  Let $\bxi = \bZ - \bmu$.  We call the difference between in-sample and out-of-sample performance, corresponding to \mbox{$(\bZ - \bmu)^\top \bx(\bZ) \,=\, \bxi^\top\bx(\bZ)$}, the \emph{in-sample optimism}.  The expected in-sample optimism $\Eb{\bxi^\top \bx(\bZ)}$ is the in-sample bias.  

Our method  estimates the in-sample optimism of an affine, plug-in policy 
$\bx(\bZ, \btheta)$.  To this end, denote the plug-in objective value by
\begin{equation}
\Obj(\bZ, \btheta)
 \ \equiv \  
r(\bZ, \btheta)^\top \bx(\bZ, \btheta)
\ = \ 
\min_{\bx \in \mathcal X} \ r(\bZ, \btheta)^\top \bx 
.
\end{equation}\label{eq:plug-in-obj-val}
Because it is the minimum of linear functions, $\bZ \mapsto V(\bZ, \btheta)$ is always concave.   We then estimate the in-sample optimism by the \emph{Variance Gradient Correction} (VGC) defined by
\blue{
\begin{align} \label{eq:Danskin-corr}
D(\bZ, (\btheta, h)) & \ \equiv  \ \sum_{j=1}^n D_j(\bZ, (\btheta, h)), 
\end{align}
where for $j = 1, 2, \ldots, n$,
\begin{align} \label{eq:rand-finite-diff-D}
D_j(\bZ, (\btheta, h)) &\ \equiv  \ 
\begin{cases} \Eb{ \left. \frac{1}{ h \sqrt{\nu_j} a_j(\btheta) }  
\bigg( \Obj(\bZ + \delta_j \be_j) - \Obj(\bZ) \bigg) \right | \bZ }, & \text{ if } a_j \neq 0, 
\\
	0, & \text{ otherwise,}
\end{cases}
\end{align}
}
and
$\delta_1, \ldots, \delta_n$ are independent Gaussian random variables such that  
$\delta_j \sim \mathcal N \left(0,h^{2}+\frac{2h}{\sqrt{\nu_j}} \right)$ for all $j$. 
\blue{To reduce notation, we define $\bar{\Theta} \equiv \Theta \times [h_{\min}, h_{\max}]$ for $0 < h_{\min} \le h_{\max}$ and write $D(\bZ, \btheta)$ with $\btheta \in \bar{\Theta}$. We utilize the defined notation for results that require separating $h$ and $\btheta$. 
}

The VGC is defined as a (conditional) expectation over the auxiliary random variables $\delta_j$.  In practice, we can approximate this expectation to arbitrary precision by simulating $\delta_j$ and averaging; see \cref{app:ImplementationDetails} for more efficient implementations.

Given the VGC, we estimate the out-of-sample performance by 
\begin{equation}\label{eq:OOSEstimator}
\bmu^\top \bx(\bZ, \btheta) \approx \bZ^\top \bx(\btheta) - D(\bZ, \btheta).
\end{equation}

In \cref{sec:IntuitionDanskin}, we motivate the VGC. We then establish some of its key properties, namely that it is almost an unbiased estimator for the in-sample optimism, its variance is often vanishing as $n\rightarrow\infty$, and it is smooth in the policy class.

\vspace{10pt}
\minoredit{\paragraph{Policy Classes that Do Not Depend on $\bZ$: } \label{note:NonDataDrivenPolicies}
Recall our plug-ins for linear regression models example from \cref{sec:plug-in-policy}.  From \cref{ex:LMPolicy}, we can see that $D(\bZ, (\btheta, h)) = 0$ uniformly over the class.  Said differently, the in-sample performance is already an unbiased estimator of out-of-sample performance.  This happy coincidence occurs whenever the plug-in function $r_j(z, \btheta)$ does not depend on $z$ for each $j$ and $\btheta$.  That said, we stress that although in-sample performance is an unbiased estimator, it is not immediately clear what the variance of this estimator is.  We discuss this further in \cref{sec:Variance,sec:Weakly-Coupled-Problems}.  Moreover, when $r_j(z, \btheta)$ \emph{does} depend on $z$, the VGC is typically non-zero.}

\subsection{Motivating the \DanskinTitle~(VGC)}
\label{sec:IntuitionDanskin}
Throughout this section, $\btheta$ is fixed so we drop it from the notation.  Our heuristic derivation of $D(\bZ)$ proceeds in three steps.  

\vskip 5pt \noindent \textbf{Step 1:  Re-expressing the In-Sample Optimism via Danskin's Theorem.}  
Fix some $j$.  If $a_j = 0$, then from the plug-in policy problem (\cref{eq:DefXPolicy}) we see that $\bx(\bZ)$ is independent of $Z_j$ and the corresponding term in the in-sample bias is mean-zero, i.e., $\Eb{ \xi_j \bx(\bZ) } = 0 = D_j(\bZ)$.  In other words, we do not correct such terms.

When $a_j \neq 0$, consider the function 
\begin{equation}
	\lambda \mapsto \Obj(\bZ + \lambda \xi_j \be_j).
\end{equation}\label{eq:aug-obj}
This function is an example of a parametric optimization problem.  Danskin's Theorem \citep[Section B.5]{bertsekas1997nonlinear} characterizes its derivative with respect to $\lambda$.\footnote{\blue{See \cref{thm:DanskinsTheorem} in \cref{sec:DanskinStatement} for a statement of Danskin's Theorem. \label{PointerToDanskin}}} 
Specifically, for any $\lambda \in \R$ such that $\bx(\bZ + \lambda \xi_j \be_j)$ is the unique optimizer to \cref{eq:DefXPolicy}, we have
\[
\frac{\partial}{\partial \lambda} \Obj(\bZ + \lambda \xi_j \be_j) \ = \ a_j  \xi_j x_j(\bZ + \lambda \xi_j \be_j).
\]
When $\bx(\bZ + \lambda \xi_j \be_j)$ is not the unique optimizer, $a_j \xi_j x_j(\bZ + \lambda \xi_j \be_j)$ is a subgradient, see \cref{fig:DanskinIntuition} for~intuition.

Notice that $\frac{\partial}{\partial \lambda} \Obj(\bZ + \lambda \xi_j \be_j)$ is the derivative of the plug-in value when we make the $j^\text{th}$ component of $\bZ$ more variable, i.e., variance increases by a factor \blue{\label{note:typo-6}$(1 + \partial \lambda)^2$, where $\partial \lambda$ represents an infinitely small perturbation to $\lambda$}. This observation motivates our nomenclature ``Variance Gradient Correction."

\begin{figure} 
\begin{minipage}[c]{0.5\textwidth}
\begin{center}
\includegraphics[width=.7\textwidth]{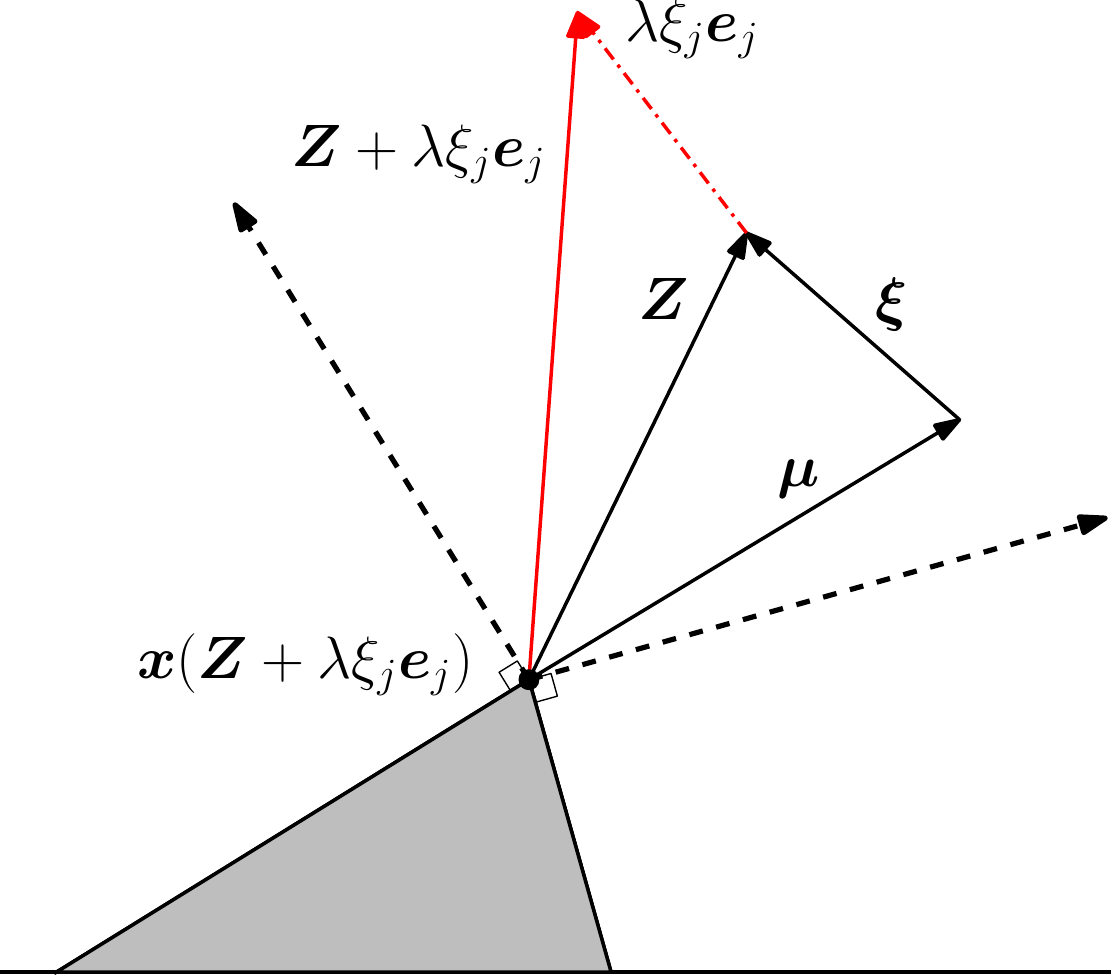}
\end{center}
\end{minipage}\hfill
\begin{minipage}[c]{0.5\textwidth}
\caption{   When $\mathcal X$ is polyhedral, $\bx_j(\bZ + \lambda \xi_j \be_j)$ must occur at a vertex if it is unique.  Hence, small perturbations to $\lambda$ do not change the solution (see figure), and the derivative of \mbox{$\Obj(\bZ  + \lambda \xi_j \be_j)$} is entirely determined by the derivative of $\br (\bZ + \lambda \xi_j \be_j)$. Similar intuition holds for non-polyhedral $\mathcal X$. 
\label{fig:DanskinIntuition} 
}
\end{minipage}        
\end{figure}

Evaluating the above derivative at $\lambda = 0$, dividing by $a_j$, and summing over $j$ such that $a_j \neq 0$,
allows us to re-express the in-sample bias whenever $\bx(\bZ)$ is unique as
\[
\sum_{j=1}^n \xi_j x_j(\bZ)  \ = \ \sum_{j : a_j \neq 0} \frac{1}{ a_j } \cdot  \left. \frac{\partial}{\partial \lambda} \Obj(\bZ + \lambda \xi_j \be_j)\right|_{\lambda = 0}.
\]

Unfortunately, it is not clear how to evaluate these derivatives from the data.  This leads to the second step in our derivation.

\vskip 5pt\noindent \textbf{Step 2: Approximating the Derivative via Randomized Finite Differencing.}
As a first attempt, we approximate the above derivatives with first-order, forward finite-differences \blue{\label{note:fin-diff-cite}\citep[Chapter 1]{leveque2007finite}}.   Intuitively, we expect that for a sufficiently small step size $ h >0$,
\begin{equation}\label{eq:FiniteDiffFirstOrderRemainder}
\left. \frac{\partial}{\partial \lambda} \Obj(\bZ + \lambda \xi_j \be_j)\right|_{\lambda = 0} 
\ \ = \ \ 
\frac{1}{h \sqrt{\nu}_j} \biggr( \Obj(\bZ + h \sqrt{\nu_j} \xi_j \be_j) - \Obj(\bZ) \biggr) \  +  \ o_p(1) \ \ \text{ as } h\rightarrow 0,
\end{equation}
which suggests that 
\begin{equation} \label{eq:StraightFiniteDiff}
\sum_{j=1}^n \xi_j x_j(\bZ) \  \ = \  \   \sum_{j : a_j \neq 0 } \left[ \frac{1}{ h \sqrt{\nu_j} a_j }  
\biggr( \Obj(\bZ + h \sqrt{\nu_j} \xi_j \be_j) - \Obj(\bZ) \biggr) \right]  \ + \ o_p(n) \ \ \text{ as } h\rightarrow 0.
\end{equation}

Unfortunately, the right side of \cref{eq:StraightFiniteDiff} is not computable from the data, because we do not observe $\mu_j$, and, hence, do not observe $\xi_j = Z_j - \mu_j$.  

To circumvent this challenge, recall that $\xi_j$ is Gaussian and independent across $j$, and let $\delta_j$ be the independent Gaussian random variables defined in the definition of the \Danskin~(\cref{eq:rand-finite-diff-D}).
A direct computation shows,  $\bZ + h \sqrt{\nu_j} \xi_j \be_j \;\sim_d\; \bZ + \delta_j \be_j$, 
because both $Z+h\sqrt{\nu_j} \xi_j \be_j$ and $Z_j +\delta_j\be_j$ are Gaussians with matching mean and covariances.\footnote{Here, $\sim_d$ denotes  equality in distribution.}
Hence, $\Obj(\bZ+ h \sqrt{\nu_j}  \xi_j \be_j) \sim_d \Obj(\bZ + \delta_j \be_j)$.


Inspired by this relation, we replace the unknown $ \Obj(\bZ + h \sqrt{\nu_j}  \xi_j \be_j)$ by $\Obj(\bZ + \delta_j \be_j)$ in our first-order, finite difference approximation, yielding a \emph{randomized} finite difference:
\begin{equation} \label{eq:RandomizedFiniteDiff}
\underbrace{\sum_{j=1}^n \xi_j x_j(\bxi + \bmu; \btheta)}_{\text{In-Sample Optimism}}  \ \  \approx  \ \ \underbrace{\sum_{j: a_j \neq 0} \left[ \frac{1}{ h \sqrt{\nu_j} a_j(\btheta) }  
\biggr( \Obj(\bmu  +  \bxi + \delta_j \be_j) - \Obj(\bmu  +  \bxi) \biggr) \right]}_{\text{Randomized Finite Difference}}. 
\end{equation}

\vskip 5pt\noindent \textbf{Step 3: De-Randomizing the Correction.}
Finally, in the spirit of Rao-Blackwellization, we then de-randomize this correction by taking conditional expectations over $\bm \delta$. This de-randomization reduces the variability of our estimator and yields the \Danskin~(\cref{eq:rand-finite-diff-D}).

\vspace{10pt}
\paragraph{Higher Order Finite Difference Approximations:} Our heuristic motivation above employs a first-order finite difference approximation, and our theoretical analysis below focuses on this setting for simplicity.  However, it is possible to use higher order approximations, which in turn reduce the bias.  Theoretical analysis of such higher order approximations is tedious, but not substantively different from the first-order case.  Hence, it is omitted.  In our experiments, we use a particular second-order approximation described in \cref{app:ImplementationDetails}.  

\subsection{Bias of \DanskinTitle}
\label{sec:prop-D-correction}
Our first main result shows that one can make the heuristic derivation of the previous section rigorous when quantifying the bias of the VGC.  

\begin{thm}[Bias of the \DanskinTitle]\label{thm:equiv-in-sample} Under \cref{asn:Gaussian,asn:Parameters},  for any $0 < h < 1/e$ and any affine, plug-in policy $\bx(\bZ, \btheta)$,  there exists a constant C (depending on $\nu_{\min}$) such that
\[
	0 \le \mathbb{E}\left[ \sum_{j=1}^n \xi_j x_j(\bmu + \bxi, \btheta) - \sum_{j=1}^n D_j(\bmu + \bxi , \btheta) \right]  \ \le \ C\cdot h n
		\log \left( \frac{1}{h} \right)
\] 
\end{thm}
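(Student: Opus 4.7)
The plan is to make the three-step heuristic derivation sketched in \cref{sec:IntuitionDanskin} quantitatively rigorous. Fix $\btheta$ throughout and suppress its dependence. Coordinates $j$ with $a_j = 0$ contribute nothing to either side and may be dismissed immediately (both $\xi_j x_j(\bmu+\bxi)$ and $D_j$ have zero expectation). For each $j$ with $a_j \neq 0$, I would combine the Gaussian distributional identity $\bxi + \delta_j \be_j \stackrel{d}{=} \bxi + h\sqrt{\nu_j}\,\xi_j \be_j$ (both are Gaussian and match in mean and covariance) with Danskin's theorem applied to the concave value function $\bZ \mapsto V(\bZ)$ to obtain the \emph{exact} integral representation
\[
\mathbb{E}\bigl[D_j(\bmu + \bxi)\bigr] \;=\; \frac{1}{h\sqrt{\nu_j}} \int_0^{h\sqrt{\nu_j}} \mathbb{E}\bigl[\xi_j\, x_j(\bmu + \bxi + t \xi_j \be_j)\bigr]\, dt,
\]
so the $j$-th contribution to the bias becomes $\frac{1}{h\sqrt{\nu_j}} \int_0^{h\sqrt{\nu_j}} \mathbb{E}[\xi_j (x_j(\bmu + \bxi) - x_j(\bmu + \bxi + t\xi_j \be_j))]\, dt$. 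Danskin's theorem yields the integrand $a_j \xi_j x_j$ only for Lebesgue-a.e.\ $t$ (where the plug-in optimizer is unique), but this suffices for both the inner integral and the outer expectation over the absolutely continuous $\bxi$. The lower bound $\mathbb{E}[\sum_j \xi_j x_j - \sum_j D_j] \geq 0$ then follows from the fact that concavity of $V$ in $Z_j$ makes $t \mapsto a_j \xi_j x_j(\bmu + \bxi + t\xi_j \be_j)$ non-increasing almost surely, which (dividing by $a_j > 0$ and summing) yields a non-negative integrand for the standard policies introduced in \cref{sec:plug-in-policy}.

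For the quantitative upper bound, I would work conditional on $\bxi_{-j}$ and apply Stein's lemma in its Stieltjes (bounded-variation) form. The section $y \mapsto x_j(\bmu + \bxi_{-j} + y \be_j)$ is monotone in $y$ (again by concavity of $V$ in $Z_j$) and bounded in $[0,1]$, so its distributional derivative $dx_j$ is a signed measure of total variation at most $1$. Applying the identity $\mathbb{E}[Y g(Y)] = \sigma^2 \int \phi_\sigma(y)\, dg(y)$ to both $\mathbb{E}[\xi_j x_j(\bZ)\mid\bxi_{-j}]$ and (after the rescaling $Y = (1+t)\xi_j$) $\mathbb{E}[\xi_j x_j(\bmu + \bxi + t\xi_j \be_j)\mid\bxi_{-j}]$ reduces the per-$j$ bias to an integral of the Gaussian-kernel difference $\phi_{\sigma_j}(y) - (1+t)\phi_{(1+t)\sigma_j}(y)$ against $dx_j$, where $\sigma_j = 1/\sqrt{\nu_j}$. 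Bounding the exponential difference by $(t/\sigma_j^2)\, y^2\, e^{-y^2/(2\sigma_j^2(1+t)^2)}$ and maximizing in $y$ shows this kernel has $L^\infty$ norm of order $t/\sigma_j$; combining with $|dx_j|(\mathbb{R}) \leq 1$ and integrating over $t \in [0, h\sqrt{\nu_j}]$ yields a per-$j$ contribution of order $h$, and summing over $j$ gives a total bound of order $hn$.

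The main obstacle is producing the extra $\log(1/h)$ factor in the stated rate. The clean Stein--BV argument above yields only $O(hn)$; the logarithmic factor likely arises from a sharper two-regime analysis of the Gaussian kernel --- handling the core region $|y| \leq C \sigma_j \sqrt{\log(1/h)}$ via the pointwise $O(t/\sigma_j)$ bound above, and the Gaussian tails via the trivial $L^\infty$ bound $O(1/\sigma_j)$ --- or equivalently from a higher-order finite-difference remainder whose subdominant $h^2$ and $h^2 \log(1/h)$ contributions get absorbed into the final constant $C = C(\nu_{\min})$ under the assumption $h \in (0, 1/e)$. A secondary technical point is justifying Gaussian integration by parts against a monotone but generally discontinuous $x_j$; this is legitimate in the Stieltjes sense used above, and is the same device underpinning Stein's lemma for indicator-type estimators in the shrinkage literature.
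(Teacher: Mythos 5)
Your proposal is correct in substance and, after the shared opening moves, takes a genuinely different route to the quantitative bound. The skeleton matches the paper: discard coordinates with $a_j=0$; use the Gaussian matching $\bZ+\delta_j\be_j \sim_d \bZ + h\sqrt{\nu_j}\,\xi_j\be_j$ to de-randomize; and use Danskin's theorem with the fundamental theorem of calculus so the $a_j$ cancels and the $j$-th bias term becomes the $t$-average over $[0,h\sqrt{\nu_j}]$ of $\mathbb{E}\bigl[\xi_j\bigl(x_j(\bZ)-x_j(\bZ+t\xi_j\be_j)\bigr)\bigr]$ (the paper reaches the comparable endpoint quantity through the two-sided subgradient sandwich of \cref{lem:prop-j-comp}). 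Where you diverge is in bounding this term. The paper (\cref{lem:expected-finite-diff-error}) conditions to form $g(t)=\mathbb{E}[x_j \mid \xi_j=t]$, exploits its monotonicity, enlarges to a symmetric window of width $2Uh\sqrt{\nu_j}$, and splits the Gaussian integral at a truncation level $U$, optimizing $U \asymp \nu_j^{-1/2}\sqrt{\log(1/(h\sqrt{\nu_j}))}$ to obtain $O\bigl(h\log(1/h)\bigr)$ per coordinate. You instead apply Gaussian integration by parts in Stieltjes form against the monotone, $[0,1]$-valued section (total variation at most one), reducing the error to the $L^\infty$ distance between the kernels $\phi_{\sigma_j}$ and $(1+t)\phi_{(1+t)\sigma_j}$, which is $O(t/\sigma_j)$; integrating over $t$ gives $O(h)$ per coordinate and $O(hn)$ in total. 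The paper's truncation argument is cruder but elementary; your Stein--BV argument exploits the exact Gaussian kernel comparison harder and is actually sharper.

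Two clarifications. First, the ``missing'' $\log(1/h)$ factor that you treat as the main obstacle is not an obstacle at all: the theorem is an upper bound, and since $\log(1/h)\ge 1$ for $h<1/e$, your $O(hn)$ bound immediately implies the stated inequality; the logarithm in the paper is an artifact of its truncation step, not a necessary feature, so no two-regime refinement is needed. Second, your lower bound $\ge 0$ is justified only when $a_j>0$; note that the paper's own proof rests on exactly the same fact --- the asserted nonnegativity of $\xi_j\bigl(x_j(\bZ)-x_j(\bZ+h\sqrt{\nu_j}\xi_j\be_j)\bigr)$ in \cref{lem:expected-finite-diff-error} is, by \cref{lem:Monotonicity}, precisely the statement that this quantity carries the sign of $a_j$ --- so your explicit caveat is, if anything, the more careful reading, and all the plug-in classes discussed in the paper have $a_j\ge 0$. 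The remaining technicalities you flag (FTC for the locally Lipschitz concave section with a.e.\ Danskin derivative, Fubini, Stieltjes integration by parts against a possibly discontinuous monotone section) are routine and handled analogously, if implicitly, in the paper.
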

Recall we expect that, in typical instances,  the full-information performance of \cref{eq:obj-problem} is $O(n)$ as $n\rightarrow \infty$.  Thus, this theorem asserts that as long as $h$ is small, say $h = h_n = o(1)$ as $n \rightarrow \infty$, the bias of VGC is negligible relative to the true out-of-sample performance.  In this sense, VGC is asymptotically unbiased for large $n$.  

The proof of the theorem is in \cref{sec:appendix_FindingBestInClassPolicy} and proceeds similarly to our heuristic derivation but uses the following monotonicity property to precisely quantify the ``little oh" terms.  
\begin{lem}[Monotonicity of Affine Plug-in Policies]\label[lem]{lem:Monotonicity}
For any $\bz$ and $j$, the function \mbox{$t \mapsto \bx(\bz + t \be_j)$} is {non-increasing} if $a_j \geq 0$, and the function is {non-decreasing if $a_j < 0$.} 
\end{lem}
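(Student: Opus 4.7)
The plan is to give the standard revealed-preference (``four-inequality'') argument from monotone comparative statics. Fix $\bz$, $\btheta$, $j$, and take two scalars $t_1 < t_2$. Set $\bx^{(i)} \equiv \bx(\bz + t_i \be_j,\btheta)$ for $i=1,2$. The essential observation is that changing $t$ affects the objective $\br(\bz + t\be_j, \btheta)^\top \bx$ only through the single coefficient of $x_j$, namely $r_j(z_j + t, \btheta) = a_j(z_j + t) + b_j$. All other terms $r_k(z_k, \btheta)$ for $k \neq j$ are unchanged; this is precisely the structural property of affine plug-in policies from \cref{def:AffinePlugInPolicy} that $r_j$ only depends on the $j^\text{th}$ coordinate of the data.

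First, I would write out the two optimality relations (valid for any tie-breaking choice of $\bx^{(i)}$):
\begin{align*}
\br(\bz + t_1\be_j,\btheta)^\top \bx^{(1)} &\ \le\ \br(\bz + t_1\be_j,\btheta)^\top \bx^{(2)},\\
\br(\bz + t_2\be_j,\btheta)^\top \bx^{(2)} &\ \le\ \br(\bz + t_2\be_j,\btheta)^\top \bx^{(1)}.
\end{align*}
Adding these two inequalities, the $k \neq j$ terms appear with identical coefficients on both sides and cancel, leaving only the $j^\text{th}$-coordinate contributions:
\[
\bigl(r_j(z_j+t_1,\btheta) - r_j(z_j+t_2,\btheta)\bigr)\bigl(x_j^{(1)} - x_j^{(2)}\bigr) \ \le\ 0.
\]
Substituting $r_j(z_j + t_i, \btheta) = a_j(\btheta)(z_j + t_i) + b_j(\btheta)$ simplifies this to
\[
a_j(\btheta)\,(t_2 - t_1)\bigl(x_j^{(1)} - x_j^{(2)}\bigr) \ \ge\ 0.
\]

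Since $t_2 - t_1 > 0$ by assumption, reading off the sign of $a_j(\btheta)$ immediately yields the claim: when $a_j(\btheta) > 0$, we get $x_j^{(1)} \ge x_j^{(2)}$ (non-increasing in $t$); when $a_j(\btheta) < 0$, we get $x_j^{(1)} \le x_j^{(2)}$ (non-decreasing in $t$); and when $a_j(\btheta) = 0$, the objective is independent of $t$ so the solution set is the same for all $t$. Because the two optimality inequalities hold for any selection from the argmin, this monotonicity is robust to how ties are broken. There is no real obstacle here --- the only subtle point is recognizing that the affine-in-$Z_j$ structure of $r_j$ and the fact that $r_k$ for $k \neq j$ is unaffected by shifts in $Z_j$ are precisely what make the cancellation in the summed inequalities work.
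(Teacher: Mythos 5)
Your proof is correct, and it takes a somewhat different (more elementary) route than the paper. You add the two revealed-preference inequalities for the minimizers at $t_1$ and $t_2$, cancel the $k\neq j$ terms (which is exactly where the affine-plug-in structure enters), and read off the sign of $a_j(\btheta)(t_2-t_1)\bigl(x_j^{(1)}-x_j^{(2)}\bigr)\ge 0$. The paper instead works through the value function: it first proves an intermediate sandwich inequality $a_j x_j(\bz+t\be_j)\,t \le V(\bz+t\be_j)-V(\bz) \le a_j x_j(\bz)\,t$ (its \cref{lem:prop-j-comp}) by combining concavity of $t\mapsto V(\bz+t\be_j)$ with Danskin's theorem, and then derives the monotonicity $a_j t\bigl(x_j(\bz)-x_j(\bz+t\be_j)\bigr)\ge 0$ from that. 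The underlying content is the same two suboptimality inequalities, but your version bypasses the Danskin/subgradient machinery entirely, so you never have to worry about points of non-differentiability or uniqueness of the minimizer, and your explicit remark that the argument is valid for arbitrary tie-breaking is a nice touch (with the same caveat as the paper for $a_j=0$, where the objective is constant in $t$ and the policy is unchanged only because tie-breaking depends on the data only through $\br$). What the paper's route buys is the intermediate finite-difference bound on $V$ itself, which it reuses directly in the bias analysis (\cref{lem:expected-finite-diff-error} and \cref{thm:equiv-in-sample}); your argument establishes the lemma as stated but would not by itself supply that quantitative sandwich.
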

Intuitively, the lemma holds because $\bz \mapsto V(\bz)$ is a concave function (it is the minimum of affine functions).  
In particular, $t \mapsto V(\bz + t)$ is also concave, and by Danskin's Theorem, \mbox{$\frac{d}{dt} V(\bz + t) = a_j x_j(\bz + t)$} whenever $\bx(\bz + t)$ is unique.  Informally, the lemma then follows since the derivative of a concave function is non-increasing.  \Cref{sec:appendix_FindingBestInClassPolicy} provides a formal proof accounting for points of non-differentiability.  

Before proceeding, we remark that \cref{thm:equiv-in-sample} holds with small modifications under mild violations of the independent Gaussian assumption (\cref{asn:Gaussian}).  Specifically, in cases where $\nu_j$ are not known but are estimated, the bias of the VGC constructed with the estimated $\nu_j$ increases by a small term depending on the accuracy of the precisions.  
See \cref{lem:noisy-precision} in the appendix for formal statements and proof.

\subsection{The Variance of the VGC} \label{sec:Variance}
As mentioned in the contributions, the parameter $h$ controls the trade offs between bias and variance in our estimator.  Unfortunately, while \cref{thm:equiv-in-sample} gives a direct analysis of the bias under mild assumptions, a precise analysis of the variance (or tail behavior) of the VGC is more delicate.  In this section we provide a loose, but intuitive bound on the variance of VGC that illustrates the types of problems for which our estimator should perform well.  
The main message of this section is that the VGC concentrates at its expectation so long as the policy $\bx(\bZ)$ is ``stable" in the sense that perturbing one element of $\bZ$ does not cause $\bx(\bZ)$ solution to change too much.  

The main challenge in showing $D(\bZ)$ concentrates at its expectation is that $D(\bZ)$ is a sum of \emph{dependent} random variables $D_j(\bZ)$.  Worse, this dependence subtly hinges on the structure of \cref{eq:obj-problem} and the plug-in policy problem (\cref{eq:DefXPolicy}) and hence is not amenable to techniques based on mixing or bounding the correlations between terms.  We require a different approach. 

As a first step towards analyzing  $D(\bZ)$, we upper bound its variance by a related, fully-randomized estimator.
\begin{lem}[Fully-Randomized VGC] \label[lem]{lem:FullyRandomizedVGC}
Suppose that the solution $\bx(\bZ)$ to \cref{eq:DefXPolicy} is almost surely unique.  
For each $j$ such that $a_j \neq 0$, let 
\begin{equation} \label{eq:DefFullyRandomized}
\DR_j(\bZ) \equiv \frac{\delta_j}{h \sqrt{\nu_j} a_j} x_j(\bZ + \delta_j \tilde U_j \be_j )
\end{equation}
where $\tilde U_j \sim \text{Uniform}[0, 1]$ and $\delta_j$ is defined in \cref{eq:rand-finite-diff-D}.  
Let $\DR(\bZ) = \sum_{j: a_j \neq 0} \DR_j(\bZ)$ denote the \emph{fully-randomized VGC}.  
Then, for any $j$ such that $a_j \neq 0$, 
\[
D_j(\bZ) = \Eb{ \DR_j(\bZ) \mid Z }
\quad \text{ and } 
\quad \Var(D(\bZ)) \leq  \Var(\DR(\bZ)).
\]
\end{lem}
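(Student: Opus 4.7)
The lemma splits into two claims: the identity $D_j(\bZ) = \Eb{\DR_j(\bZ) \mid \bZ}$ for each $j$ (whence $D(\bZ) = \Eb{\DR(\bZ) \mid \bZ}$ by summing), and the variance inequality. The first is an integral-representation argument built on Danskin's theorem; the second is then immediate from the law of total variance.

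For the first claim, fix any $j$ with $a_j \neq 0$ and condition on $\bZ$. Because $\Obj(\cdot)$ is concave (being the pointwise minimum of linear functions), the one-parameter function $\varphi(\lambda) := \Obj(\bZ + \lambda \be_j)$ is concave, hence locally Lipschitz and differentiable outside a countable (Lebesgue-null) set. At every point of differentiability, Danskin's theorem combined with the almost-sure uniqueness hypothesis on $\bx(\cdot)$ gives $\varphi'(\lambda) = a_j\, x_j(\bZ + \lambda\, \be_j)$. The fundamental theorem of calculus for absolutely continuous functions then yields
\begin{equation*}
\Obj(\bZ + \delta_j \be_j) - \Obj(\bZ) \;=\; \int_0^{\delta_j} a_j\, x_j(\bZ + \lambda\, \be_j)\, d\lambda \;=\; a_j\, \delta_j \int_0^1 x_j(\bZ + t\, \delta_j\, \be_j)\, dt,
\end{equation*}
with the second equality following from the change of variable $\lambda = t\delta_j$ (valid for either sign of $\delta_j$). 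Dividing by $h\sqrt{\nu_j}\, a_j$, the $a_j$ inside the integral cancels the normalizer; then taking conditional expectation over $\delta_j$ given $\bZ$, applying Fubini, and rewriting the $t$-integral as an expectation against an independent $\tilde U_j \sim \mathrm{Uniform}[0,1]$ yields exactly $\Eb{\DR_j(\bZ) \mid \bZ}$. For $j$ with $a_j = 0$, the identity holds trivially since both sides vanish by convention.

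Summing in $j$ gives $D(\bZ) = \Eb{\DR(\bZ) \mid \bZ}$, and the variance inequality is then a direct application of the law of total variance,
\begin{equation*}
\Var(\DR(\bZ)) \;=\; \Eb{\Var\!\left(\DR(\bZ) \mid \bZ\right)} + \Var\!\left(\Eb{\DR(\bZ) \mid \bZ}\right) \;\geq\; \Var(D(\bZ)),
\end{equation*}
since the first term on the right is nonnegative. The only delicate point in the whole argument is the invocation of Danskin in the presence of potential ties along the perturbation path; concavity of $\varphi$ forces differentiability off a countable (and thus Lebesgue-null) set, on which the integrand $x_j$ may be redefined arbitrarily without affecting the integral, so the almost-sure uniqueness hypothesis in the statement is precisely what is needed.
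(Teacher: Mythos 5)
Your proof is correct and follows essentially the same route as the paper's: Danskin's theorem plus the fundamental theorem of calculus give $V(\bZ+\delta_j\be_j)-V(\bZ)=a_j\delta_j\int_0^1 x_j(\bZ+t\delta_j\be_j)\,dt$, which after rescaling and conditioning yields $D_j(\bZ)=\Eb{\DR_j(\bZ)\mid\bZ}$, and the variance bound then follows because the variance of a conditional expectation is no larger than the variance (the paper cites Jensen's inequality; your law-of-total-variance step is the same fact). Your extra care about non-differentiability points being Lebesgue-null is a welcome refinement of the same argument, not a different approach.
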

\begin{proof}{\hspace{-12pt}Proof:}
Again, by Danskin's Theorem and the fundamental theorem of calculus, 
\[ 
V(\bZ + \delta_j \be_j) - V(\bZ) \ = \ 
\int_0^{\delta_j} a_j x_j(\bZ + t\be_j) dt
\ = \ \int_0^1 a_j \delta_j \bx_j(\bZ + t \delta_j \be_j ) dt 
\ = \ \Eb{ a_j \delta_j \bx(\bZ + \delta_j \tilde U \be_j ) \mid \bZ, \delta_j}.
\]
Scaling both sides by $\frac{1}{a_j h \sqrt{\nu_j}}$ and taking expectations over $\delta_j$ proves the first statement.  The second then follows from Jensen's inequality.  
\hfill \Halmos \end{proof}

We next propose upper bounding $\Var(\DR(\bZ))$ with the Efron-Stein Inequality.  In particular, let $\overline \bZ$, $\overline{\bm \delta}$ and $\overline { \bm U}$ be i.i.d. copies of $\bZ$, $\bm \delta$  and $\tilde{\bm U}$ respectively, and let $\bZ^k$ denote the vector $\bZ$ but with the $k^\text{th}$ component replaced by $\overline Z_k$.  Define $\bm \delta^k$ and $\tilde{\bm U}^k$ similarly.  Let $\DR(\bZ, \bm \delta, \tilde{\bm U} )$ be the fully-randomized VGC with  dependence on all constituent random variables made explicit.  Then, by the Efron-Stein Inequality 
\begin{subequations} \label{eq:EfronSteinOverall}
\begin{align} \label{eq:EfronStein}
\Var(\DR(\bZ)) &\ \leq \ 
\frac{1}{2} \sum_{k=1}^n \Eb{ (\DR(\bZ, \bm \delta, \tilde{\bm U} ) - \DR(\bZ^k,  \bm \delta, \tilde{ \bm U}) )^2 } 
\\  \label{eq:EfronStein_delta_terms}
&\qquad + 
	\frac{1}{2} \sum_{k=1}^n \Eb{ (\DR(\bZ, \bm \delta, \tilde{ \bm U} ) - \DR(\bZ, \bm \delta^k, \tilde {\bm U} ) )^2 } 
\\ \label{eq:EfronStein_U_terms}
&\qquad + 
	\frac{1}{2}\sum_{k=1}^n \Eb{ (\DR(\bZ, \bm \delta, \tilde{\bm U} ) - \DR(\bZ, \bm \delta, \tilde{\bm U^k}) )^2 }.
\end{align}
\end{subequations}

Recall that in the typical case, $\bmu^\top \bx(\bZ) = O_p(n)$.  Hence in what follows, we will focus on developing conditions for which the upper bound in \cref{eq:EfronStein,eq:EfronStein_delta_terms,eq:EfronStein_U_terms} is $o(n^2)$.  Indeed, such a bound would suggest $D(\bZ) - \Eb{D(\bZ)} = o_p(n)$, i.e., the stochastic fluctuations in the VGC are negligible relative to the magnitude of the out-of-sample error for $n$ sufficiently large.  With this perspective, it is not difficult to argue that 
both \cref{eq:EfronStein_delta_terms,eq:EfronStein_U_terms} both contribute at most $ O\left (\frac{n}{h} \right)$ (see proof of \cref{thm:EfronStein}).  Thus, we focus on \cref{eq:EfronStein}.  

Consider the $k^\text{th}$ element of the sum. Write, 
\begin{align} \notag
\abs{ \DR(\bZ) - \DR(\bZ^k) } 
&\ \leq \ 
\frac{1}{h a_{k} \vmin} \abs{ \sum_{j=1}^n \delta_j (x_j(\bZ + \delta_j \tilde U_j \be_j) - x_j(\bZ^k + \delta_j \tilde U_j \be_j ) ) }
\\ \label{eq:WeakCauchySchwarzStep}
&  \ \leq \ 
\frac{\| \bm \delta \|_2 \sqrt n }{h a_{k} \vmin}  \cdot  \left( \frac{1}{n} \sum_{j=1}^n \abs{ x_j(\bZ + \delta_j \tilde U_j \be_j) - x_j(\bZ^k + \delta_j \tilde U_j \be_j )}^2 \right)^{1/2}~,
\end{align}
where the last inequality follows from Cauchy-Schwarz's inequality. Since each $\delta_j$ is Gaussian, we expect $\| \bm \delta\|_2^2$ to concentrate sharply at its mean, i.e., $\| \bm \delta \|_2^2 = O_p( hn )$.  Thus by squaring \cref{eq:WeakCauchySchwarzStep}, taking expectations and substituting into \cref{eq:EfronStein}, we roughly~expect
\begin{align*}
\Var(\DR(\bZ)) &\ \leq \ 
\underbrace{O\left(\frac{n}{h} \right)}_{\text{\cref{eq:EfronStein_delta_terms,eq:EfronStein_U_terms}}}
 \ +  \ \ \Eb{ \frac{ \|\bm \delta \|_2^2 n^2}{h^2 \vmin^2 a_{\min}^2}  \cdot \frac{1}{n^2} \sum_{k=1}^n \sum_{j=1}^n \abs{ x_j(\bZ + \delta_j \tilde U \be_j) - x_j(\bZ^k + \delta_j \tilde U \be_j )}^2}
 \\
 & \ \approx \ 
 O\left(\frac{n}{h} \right) + O\left( \frac{ n^3}{h} \right)  \cdot 
\underbrace{\frac{1}{n^2} \sum_{k=1}^n \sum_{j=1}^n  \Eb{ \abs{ x_j(\bZ + \delta_j \tilde U_j \be_j) - x_j(\bZ^k + \delta_j \tilde U_j \be_j )}^2}}_{\text{Avg. Solution Instability}},
\end{align*}
where $a_{\min} \equiv \min_{j: a_j \neq 0} \abs{a_j}$.  

We call the indicated term the \emph{Average Solution Instability}. In the worst case, it is at most $1$ since $\mathcal X \subseteq [0, 1]^n$.  If, however, it were $O(n^{-\alpha})$ for some $\alpha > 1$, then the $\Var(\DR(\bZ)) = o(n^2)$ as~desired.  

How do we intuitively interpret Average Solution Instability?  Roughly, in the limit as $h \rightarrow 0$, we might expect that $\bx(\bZ + \delta_j \tilde U_j\be_j) \approx \bx(\bZ)$ because $\delta_j = O_p(\sqrt{h})$.  Then the Average Solution Instability is essentially the expected change in the solution in a randomly chosen component $j$ when we replace the data for a randomly chosen component $k$ with an i.i.d. copy.  This interpretation suggests Average Solution Instability should be small so long as a small perturbation to the $k^\text{th}$ component doesn't change the entire solution vector $\bx(\bZ)$ by a large amount, i.e., if small perturbations lead to small, local changes in the solutions.  Intuitively, many large-scale optimization problems exhibit such phenomenon (see, e.g., \cite{gamarnik2013correlation2}), so we broadly expect the VGC to have low variance.  

The above heuristic argument can be made formal as in the following theorem.  
\begin{thm}[Variance of the VGC] \label{thm:EfronStein} \blue{Suppose that the solution $\bx(\bZ)$ to \cref{eq:DefXPolicy} is almost surely unique,} that there exists a constant $C_1$ (not depending on $n$) such that \mbox{$\Eb{ \frac{1}{n^2} \sum_{k=1}^n \sum_{j=1}^n \left( x_j(\bZ + \delta_j \tilde U_j \be_j) - x_j(\bZ^k + \delta_j \tilde U_j \be_j) \right)^2 }  \ \leq C_1 n^{-\alpha}$} and that \cref{asn:Parameters} holds.  Then, there exists a constant $C_2$ (depending on $\vmin$ and $a_{\min} \equiv \min_j a_j$) such that for any $0 < h < 1/e$
\[
\Var(D(\bZ) ) = \frac{C_2 }{h}  \max( n^{3-\alpha}, n).
\]
In particular, in the typical case where the full-information solution to \cref{eq:obj-problem} is $O(n)$, the stochastic fluctuations in the VGC are negligible relative to the out-of-sample performance if $\alpha > 1$.  
\end{thm}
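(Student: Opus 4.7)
My approach is to make rigorous the Efron--Stein-based heuristic sketched in the paper just before the theorem. The plan has four steps.

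\textbf{Step 1 (Reduction).} By \cref{lem:FullyRandomizedVGC}, $\Var(D(\bZ))\le \Var(D^R(\bZ))$, so it suffices to control $\Var(D^R(\bZ))$. Since $\bZ$, $\bm\delta$, and $\tilde{\bm U}$ are mutually independent, the Efron--Stein inequality decomposes $\Var(D^R(\bZ))$ into exactly the three sums in \cref{eq:EfronStein,eq:EfronStein_delta_terms,eq:EfronStein_U_terms}, which I bound in turn.

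\textbf{Step 2 (The $\delta_k$ and $\tilde U_k$ sums: $O(n/h)$).} Replacing $\delta_k$ (resp.\ $\tilde U_k$) by an i.i.d.\ copy only changes the $k$th summand of $D^R$. Using $|x_k|\le 1$, $|a_k|\ge a_{\min}$, $\nu_k\ge \vmin$, and $\Eb{\delta_k^2}=h^2+2h/\sqrt{\nu_k}=O(h)$, each squared difference has expectation $O(1/h)$. Summing over $k$ yields $O(n/h)$ for each of \cref{eq:EfronStein_delta_terms,eq:EfronStein_U_terms}. These terms account for the ``$n$'' in $\max(n^{3-\alpha},n)$.

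\textbf{Step 3 (The $\bZ_k$ sum --- main obstacle).} Applying Cauchy--Schwarz as in \cref{eq:WeakCauchySchwarzStep} and squaring gives
\[
\bigl(D^R(\bZ)-D^R(\bZ^k)\bigr)^2 \;\le\; \frac{\|\bm\delta\|_2^2}{h^2 a_{\min}^2 \vmin}\sum_{j=1}^n |\Delta_{k,j}|^2,\qquad \Delta_{k,j}\equiv x_j(\bZ+\delta_j\tilde U_j\be_j)-x_j(\bZ^k+\delta_j\tilde U_j\be_j).
\]
Writing $\|\bm\delta\|_2^2=\sum_i \delta_i^2$, I take expectation and split on whether $i=j$. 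For $i\neq j$, $\delta_i$ is independent of $(\bZ,\bZ^k,\delta_j,\tilde U_j)$, so $\Eb{\delta_i^2|\Delta_{k,j}|^2}=\Eb{\delta_i^2}\Eb{|\Delta_{k,j}|^2}=O(h)\Eb{|\Delta_{k,j}|^2}$; summing over $i$ (a factor of $n$) and then over $j,k$ and invoking the Average Solution Instability hypothesis $\sum_{k,j}\Eb{|\Delta_{k,j}|^2}\le C_1 n^{2-\alpha}$ produces $O(h)\cdot n\cdot n^{2-\alpha}=O(h\,n^{3-\alpha})$. For $i=j$, independence fails, so I instead combine $|\Delta_{k,j}|^2\le |\Delta_{k,j}|$ with Cauchy--Schwarz twice: first $\Eb{\delta_j^2|\Delta_{k,j}|^2}\le \sqrt{\Eb{\delta_j^4}}\sqrt{\Eb{|\Delta_{k,j}|^2}}=O(h)\sqrt{\Eb{|\Delta_{k,j}|^2}}$, then on the double sum $\sum_{k,j}\sqrt{\Eb{|\Delta_{k,j}|^2}}\le n\sqrt{\sum_{k,j}\Eb{|\Delta_{k,j}|^2}}\le O(n^{2-\alpha/2})$. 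After dividing by the prefactor $h^2$, the $\bZ_k$ sum is therefore at most $O(n^{3-\alpha}/h + n^{2-\alpha/2}/h)$; since a direct check shows $2-\alpha/2\le \max(3-\alpha,1)$ for every $\alpha\ge 0$, both pieces are dominated by $\max(n^{3-\alpha},n)/h$.

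\textbf{Step 4 (Assembly).} Combining Steps 2 and 3 yields $\Var(D^R(\bZ))\le (C_2/h)\max(n^{3-\alpha},n)$, and \cref{lem:FullyRandomizedVGC} closes the argument. The principal technical difficulty is the diagonal $i=j$ case in Step 3, where $\delta_j$ appears both in $\|\bm\delta\|_2^2$ and inside the argument of $x_j$, so direct independence cannot be invoked; the two-stage Cauchy--Schwarz is what recovers the correct scaling in $n$ so that the $n^2/h$ bound one would naively obtain from $|\Delta_{k,j}|^2\le 1$ is replaced by an $\alpha$-dependent rate consistent with the theorem's statement.
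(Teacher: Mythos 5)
Your argument is correct and follows the paper's overall Efron--Stein strategy (reduction via \cref{lem:FullyRandomizedVGC}, the three-term decomposition, and the same $O(n/h)$ bounds on the $\bm\delta$- and $\tilde{\bm U}$-resampling terms); where you genuinely depart from the paper is in how you break the dependence between $\|\bm\delta\|_2^2$ and the instability sum in the main $\bZ^k$-term. The paper handles it by truncation: it first proves an auxiliary $\chi^2$-tail estimate (\cref{lem:ChiSqBound}, $\Eb{\|\bm\delta\|_2^2\,\Ib{\|\bm\delta\|_2^2> 18hn/\sqrt{\vmin}}}\le 36hn\,e^{-n}/\sqrt{\vmin}$), bounds $\|\bm\delta\|_2^2$ deterministically by $18hn/\sqrt{\vmin}$ on the complementary event so that the instability hypothesis can be applied directly to $\Eb{\|\bm W_k\|_2^2}$, and absorbs the large-deviation event via $\|\bm W_k\|_2^2\le n$ into an exponentially small remainder $O(n^3e^{-n}/h)$. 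You instead expand $\|\bm\delta\|_2^2=\sum_i\delta_i^2$ coordinatewise, use exact independence of $\delta_i$ from $\Delta_{k,j}$ off the diagonal, and dispose of the diagonal $i=j$ terms with the two-stage Cauchy--Schwarz, yielding a polynomial but dominated remainder $O(n^{2-\alpha/2}/h)$ (your check that $2-\alpha/2\le\max(3-\alpha,1)$ for all $\alpha\ge 0$ is right). Your route is more self-contained and elementary — no truncation constants and no separate tail lemma — while the paper's truncation isolates the dependence into an exponentially negligible term and produces a reusable tail estimate; both yield the stated $\frac{C_2}{h}\max(n^{3-\alpha},n)$ bound, so either proof is acceptable.
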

The proof of \cref{thm:EfronStein} is in \cref{sec:ProofOfEfronStein}. 

\blue{We remark that \cref{thm:EfronStein} provides a \emph{sufficient} condition for the variance of the VGC to be negligible asymptotically and to show that $h$ controls the bias-variance tradeoff, however, the bound is not tight. In \cref{sec:Weakly-Coupled-Problems} we provide a tighter analysis given more stringent assumptions on \cref{eq:obj-problem,eq:DefXPolicy}, which then also provides us guidance on how to select $h$ to approximately balance the bias-variance tradeoff. }


\subsection{Smoothness and Boundedness of the VGC}
One of the key advantages of our VGC is that it is smooth in the policy class, provided $\btheta \mapsto r(\cdot, \btheta)$ is ``well-behaved."  Other corrections, like the Stein Correction of \citeGR, do not enjoy such smoothness.  In \cref{sec:experiments}, we argue this smoothness improves the empirical performance of our method.  We formalize ``well-behaved" in the next assumption:
\begin{assm}[Plug-in Function is Smooth] \label[assm]{asn:SmoothPlugIn}
We assume the functions $a_j(\btheta)$, $b_j(\btheta)$ are each $L$-Lipschitz continuous for all $j=1, \ldots, n$.  Moreover, we assume there exists $a_{\max}, b_{\max} < \infty$ such that 
\[
\sup_{\btheta \in \Theta} \abs{ a_j(\btheta)} \ \leq \ a_{\max} \ \ \text{ and } \ \ 
\sup_{\btheta \in \Theta} \abs{ b_j(\btheta)} \ \leq \  b_{\max} \quad \forall j =1, \ldots, n.
\]
Finally, we assume there exists $a_{\min}$ such that 
\[
0 \ < \ a_{\min} \ \leq \ \inf\{ \abs{a_j(\btheta)} : a_j(\btheta) \neq 0 , \ j = 1, \ldots, n, \ \btheta \in \Theta \}
\]
\end{assm}
In words, \cref{asn:SmoothPlugIn} requires the functions $a_j(\btheta)$ and $b_j(\btheta)$ to be Lipschitz smooth, bounded, and that the non-zero components of $a_j(\btheta)$ be bounded away from 0.

\enlargethispage{5pt}
{\blockedit 
\vspace{10pt}
\paragraph{Bias and Variance of VGC for Plug-In Linear Regression Models.}
Recall our Plug-in Linear Model class from \cref{sec:plug-in-policy}.  Since $a_j(\btheta) = 0$ for all $j$,  $D(\bZ, \btheta) = 0$ for all $\bZ$ and (non data-driven) $\btheta$ for this class.  Said differently, the in-sample performance of a policy is, itself, our estimate of the out-of-sample performance, and, both \cref{thm:equiv-in-sample,thm:EfronStein} can both be strengthened; the bias of our estimator and variance of the correction are both zero.  More generally, $D(\bZ, \btheta) = 0$ whenever the plug-in functionals $r_j(z, \btheta)$ do not depend on $z$ for all $j$. 

We stress however that this analysis does not immediately guarantee that the in-sample performance of policies of the form $\bx^{\sf LM}(\bZ, \btheta(\bZ))$ is a good estimate of out-of-sample performance, because $\btheta(\bZ)$ depends on $\bZ$.  In \cref{sec:Weakly-Coupled-Problems} we provide sufficient conditions to ensure that in-sample performance is, indeed, a good estimate of out-of-sample performance.  Moreover, when $\br(\bZ, \btheta)$ does depend on $\bZ$, e.g., as with our Mixed Effects Regression class, $D(\bZ, \btheta)$ is generally non-zero.  \label{eq:NonDataDrivenPolicyclasses}
}

\begin{lem}[Smoothness of \DanskinTitle]\label[lem]{lem:D-is-Lipschitz}
Under \cref{asn:SmoothPlugIn,asn:Parameters}, the following hold: \\
i) There exists a constant $C_1$ (depending on $a_{\min}, a_{\max}, b_{\max}$, and $\vmin$)
such that for any $\bz \in \R^n$
and  any $0 < h < 1/e$, the function $\btheta \mapsto D(z, (\btheta,h))$ is Lipschitz continuous with parameter 
\blue{
\(
\frac{C_1 n^{2} L}{h \sqrt \vmin} \left( \| \bz \|_\infty + 1 \right). 
\)
}
Moreover, there exists a constant $C_2$ (depending on $C_\mu$ and $C_1$) such that for any $R > 1$, 
with probability at least $1- e^{-R}$, the (random) function $\btheta \mapsto D(\bZ, (\btheta, h))$ is Lipschitz continuous with parameter 
\blue{
\(
	\frac{C_2 L}{h} \sqrt{\frac{R}{\vmin}}\cdot n^2 \sqrt{\log n}  .
\)}\\
\blue{ ii) Consider $D(z,(\btheta,h))$ where $h \in [h_{\min},h_{\max}]$ and $0 <h_{\max} - h_{\min} < 1$. There exists an absolute constant $C_3$ such that for any $\bz \in \R^n$ and $\btheta \in \Theta$, the following holds,
\[
 	\abs{D(z,(\btheta,h)) - D(z,(\btheta,\overline{h}))} \le \frac{C_3 n}{h_{\min} \nu_{\min}^{3/4}} \sqrt{\abs{h - \overline{h}}} . 
\]
}
\end{lem}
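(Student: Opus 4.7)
For both parts, my plan is to decompose $D(\bz,(\btheta,h)) - D(\bz,(\btheta',h'))$ termwise over $j$ and rewrite each $D_j$ in a form where Danskin's theorem gives sharp control over the value-function differences. The proof then reduces to elementary perturbation/telescoping bounds.

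\emph{Part (i).} For $j$ such that $a_j(\btheta) \neq 0$, I would write $D_j(\bz,(\btheta,h)) = F_j(\btheta)/(h\sqrt{\nu_j}\,a_j(\btheta))$ where $F_j(\btheta) \equiv \Eb{V(\bz + \delta_j \be_j, \btheta) - V(\bz, \btheta) \mid \bZ = \bz}$, noting that the distribution of $\delta_j$ does not depend on $\btheta$, and apply the identity
\[
\frac{F_j(\btheta)}{a_j(\btheta)} - \frac{F_j(\btheta')}{a_j(\btheta')} \ = \ \frac{F_j(\btheta) - F_j(\btheta')}{a_j(\btheta)} \ + \ F_j(\btheta') \cdot \frac{a_j(\btheta') - a_j(\btheta)}{a_j(\btheta)\,a_j(\btheta')} .
\]
Two ingredients then suffice: (a) $|F_j(\btheta) - F_j(\btheta')| \leq 2Ln\,(\|\bz\|_\infty + |\delta_j| + 1)\,\|\btheta - \btheta'\|$, which follows by writing $V(\bz,\btheta) = \min_{\bx \in \mathcal{X}} \sum_i r_i(z_i,\btheta) x_i$ with $\mathcal{X} \subseteq [0,1]^n$ and applying the Lipschitz assumptions on $a_i,b_i$ from \cref{asn:SmoothPlugIn}; and (b) $|F_j(\btheta')| \leq a_{\max}|\delta_j|$ by Danskin's theorem applied in the $j$-th coordinate. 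Using $a_j(\btheta)\,a_j(\btheta') \geq a_{\min}^2$, taking expectations with $\Eb{|\delta_j|} = O(\sqrt{h/\sqrt{\nu_{\min}}})$, summing over $j$, and dividing by $h\sqrt{\nu_{\min}}$ yields the claimed deterministic bound; the $n^2$ scaling arises because $V(\bz,\btheta)$ couples all coordinates so its $\btheta$-Lipschitz constant scales like $nL(\|\bz\|_\infty+1)$, and we then sum $D_j$ over $n$ indices. For the high-probability statement, I would substitute the sub-Gaussian tail bound $\|\bZ - \bmu\|_\infty \leq \sqrt{2(R+\log n)/\nu_{\min}}$, which holds with probability at least $1 - 2e^{-R}$ under \cref{asn:Gaussian,asn:Parameters}, and absorb $C_\mu$ into constants to obtain the $\sqrt{\log n}\,n^2 L/h$ Lipschitz constant.

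\emph{Part (ii).} The key observation is that the $a_j(\btheta)$ factor in $D_j$ cancels cleanly when Danskin's theorem is used in integrated form: $V(\bz + s \be_j, \btheta) - V(\bz,\btheta) = a_j(\btheta)\int_0^s x_j(\bz + t \be_j, \btheta)\,dt$, which follows since $s \mapsto V(\bz + s\be_j,\btheta)$ is concave with subgradient $a_j(\btheta)\,x_j(\bz + s\be_j,\btheta)$. Parameterizing $\delta_j = \sigma_j(h)\eta_j$ with $\eta_j \sim \mathcal{N}(0,1)$ and $\sigma_j(h)^2 = h^2 + 2h/\sqrt{\nu_j}$, this yields
\[
D_j(\bz,(\btheta,h)) \ = \ \frac{1}{h\sqrt{\nu_j}}\,\Eb{I_j(\sigma_j(h)\eta_j)}, \qquad I_j(s) \ \equiv\  \int_0^s x_j(\bz + t\be_j, \btheta)\,dt ,
\]
with $|I_j(s) - I_j(s')| \leq |s-s'|$ and $|I_j(s)| \leq |s|$ (since $x_j \in [0,1]$). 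Note in particular that the awkward $a_j(\btheta)$ prefactor has disappeared. Applying the same telescoping identity to $A(h) \equiv \Eb{I_j(\sigma_j(h)\eta_j)}$, the first piece is at most $|\sigma_j(h) - \sigma_j(\overline{h})|\,\Eb{|\eta_j|}/h$, and the crucial concavity bound
\[
|\sigma_j(h) - \sigma_j(\overline{h})| \ \leq \ \sqrt{|\sigma_j(h)^2 - \sigma_j(\overline{h})^2|} \ = \ \sqrt{|h-\overline{h}|\,(h + \overline{h} + 2/\sqrt{\nu_j})} \ = \ O\!\left(\sqrt{|h-\overline{h}|}/\nu_{\min}^{1/4}\right)
\]
delivers the square-root rate. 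The second piece, $|A(\overline{h})| \cdot |1/h - 1/\overline{h}|$, is bounded using $|A(\overline{h})| \leq \sigma_j(\overline{h})\,\Eb{|\eta_j|}$. Dividing by $\sqrt{\nu_j}$ and summing over $j$ yields the claimed $O(n/(h_{\min}\,\nu_{\min}^{3/4}))\sqrt{|h-\overline{h}|}$ bound.

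\emph{Main obstacle.} The main delicate point is the square-root rate in part (ii): the second telescoping piece is naturally linear in $|h-\overline{h}|$, which I would convert to the desired square-root rate using the standing assumption $|h - \overline{h}| \le h_{\max} - h_{\min} < 1$. A careful accounting of $\nu$-powers --- $1/\sqrt{\nu_j}$ from the prefactor plus $1/\nu_{\min}^{1/4}$ from $|\sigma_j(h)-\sigma_j(\overline{h})|$ (or from $\sigma_j(\overline{h})$) --- yields the $\nu_{\min}^{3/4}$ in the denominator. A secondary subtlety in part (i) is the $n^2$ scaling, which arises from the $n$-coordinate coupling in $V$ combined with the $n$-fold sum in $D = \sum_j D_j$; one must avoid crude bounds that would spuriously introduce a further $n$ or a factor of $\|\bz\|_\infty^2$.
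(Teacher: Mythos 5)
Your proposal follows essentially the same route as the paper's proof. For part (i), your telescoping of $F_j(\btheta)/a_j(\btheta)$ is exactly the paper's decomposition of $V(\cdot,\btheta)/a_j(\btheta)$ into a value-function-difference piece and an $a_j$-difference piece; the only difference is that you bound the second piece by $|F_j(\btheta')|\le a_{\max}\,\Eb{|\delta_j|}$ (a one-coordinate perturbation bound in the spirit of \cref{lem:prop-j-comp}), which is sharper than the paper's bound $|V(\bz,\btheta)|\le n\left(a_{\max}\|\bz\|_\infty+b_{\max}\right)$, but immaterial since the dominant first piece is identical, and your high-probability step via a sub-Gaussian bound on $\|\bZ-\bmu\|_\infty$ matches the paper's. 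For part (ii), your two-piece split (change the $1/h$ prefactor, then change the noise scale) is precisely the paper's decomposition into terms $(a)$ and $(b)$, and your synchronous coupling $\delta_j=\sigma_j(h)\eta_j$ with a common standard normal realizes exactly the Gaussian $W_2$ bound the paper invokes, with the same inequality $|\sigma_j(h)-\sigma_j(\overline h)|\le\sqrt{|\sigma_j(h)^2-\sigma_j(\overline h)^2|}$ producing the square-root rate.

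One caveat on part (ii), which your ``main obstacle'' paragraph does not fully resolve (and which the paper's own ``collecting constants'' step also glosses over): converting the prefactor-change piece using only $|h-\overline h|<1$ gives the square root in $|h-\overline h|$ but leaves $h_{\min}^{3/2}$ (in your accounting, keeping the $\sqrt{\overline h}$ from $|A(\overline h)|\le\sigma_j(\overline h)\,\Eb{|\eta_j|}$; $h_{\min}^{2}$ in the paper's) in the denominator, i.e.\ a per-coordinate bound of order $\sqrt{|h-\overline h|}\,h_{\min}^{-3/2}\nu_{\min}^{-3/4}$, which misses the stated $1/h_{\min}$ by a factor $h_{\min}^{-1/2}$. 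A clean patch is a case split: if $|h-\overline h|\le h_{\min}$, then $|h-\overline h|/h_{\min}^{3/2}\le\sqrt{|h-\overline h|}/h_{\min}$ and the linear bound suffices; if $|h-\overline h|>h_{\min}$, use the uniform bound $|D_j|\le \sqrt{3}\,\nu_{\min}^{-3/4}h^{-1/2}$ from \cref{lem:VGCBounded} together with $1/\sqrt{h_{\min}}\le\sqrt{|h-\overline h|}/h_{\min}$. With that patch your argument delivers the stated constant; as written, you (like the paper's displayed intermediate bounds) obtain the lemma only with an extra factor of $h_{\min}^{-1/2}$.
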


See \cref{app:properties-VGC} for a proof.  Intuitively, the result follows because $\btheta \mapsto V(\bz, \btheta)$ is Lipschitz by Danskin's theorem and $D(\bz, \btheta)$ is a linear combination of such functions.  The second part follows from a high-probability bound on $\| \bZ\|_\infty$.

In addition to being smooth, the VGC is also bounded as a direct result of taking the conditional expectation over the perturbation parameters $\delta_j$. 
\begin{lem}[VGC is Bounded] \label[lem]{lem:VGCBounded}
Suppose that \cref{asn:SmoothPlugIn,asn:Parameters} hold. 
For any $\bz$, and any $j =1, \ldots, n$, 
\blue{
\[
\abs{ D_j(\bz)} \ \leq \ \frac{\sqrt 3}{\vmin^{3/4} \sqrt h }.
\] 
}
\end{lem}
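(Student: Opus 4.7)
The plan is to bound $|D_j(\bz)|$ directly from its defining conditional expectation \eqref{eq:rand-finite-diff-D}, using the integral form of Danskin's theorem so that the $a_j(\btheta)$ factor in the denominator cancels against an $a_j(\btheta)$ produced by differentiation. The overall strategy is essentially: (i) bound $|V(\bz+\delta_j\be_j) - V(\bz)|$ by $|a_j(\btheta)|\,|\delta_j|$ pointwise in $\delta_j$, (ii) pass to expectations, and (iii) do a short numerical check using \cref{asn:Parameters}.

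First, if $a_j(\btheta) = 0$ then $D_j(\bz) = 0$ by definition and the bound is trivial, so assume $a_j(\btheta)\neq 0$. Following the same application of Danskin's theorem and the fundamental theorem of calculus used in the proof of \cref{lem:FullyRandomizedVGC}, I would write
\[
V(\bz + \delta_j \be_j) - V(\bz) \;=\; \int_0^{\delta_j} a_j(\btheta)\, x_j(\bz + t\,\be_j)\, dt.
\]
Since $\mathcal X \subseteq [0,1]^n$ we have $|x_j(\cdot)|\le 1$, so the integrand is bounded in absolute value by $|a_j(\btheta)|$, giving $|V(\bz+\delta_j\be_j)-V(\bz)| \le |a_j(\btheta)|\,|\delta_j|$. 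Substituting into \eqref{eq:rand-finite-diff-D} and applying the conditional triangle inequality, the $|a_j(\btheta)|$ factors cancel and
\[
|D_j(\bz)| \;\le\; \frac{\Eb{\,|\delta_j|\,\big|\,\bZ=\bz\,}}{h\sqrt{\nu_j}}.
\]

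Next I would bound $\Eb{|\delta_j|}$ by its second moment via Jensen's inequality: since $\delta_j \sim \mathcal N\!\left(0,\; h^2 + 2h/\sqrt{\nu_j}\right)$,
\[
\Eb{|\delta_j|}^2 \;\le\; \Eb{\delta_j^2} \;=\; h^2 + \frac{2h}{\sqrt{\nu_j}},
\qquad\text{hence}\qquad
|D_j(\bz)|^2 \;\le\; \frac{1}{\nu_j} + \frac{2}{h\,\nu_j^{3/2}}.
\]
Multiplying through by $h\,\vmin^{3/2}$ reduces the claim to showing that $h\vmin^{3/2}/\nu_j + 2\vmin^{3/2}/\nu_j^{3/2} \le 3$. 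Using $\nu_j \ge \vmin$ together with \cref{asn:Parameters} (in particular $\vmin < 1$), the second term is bounded by $2$, and the first is bounded by $h\sqrt{\vmin}$, which is at most $1$ in the relevant range of $h$. Combining gives $|D_j(\bz)|^2 \le 3/(h\,\vmin^{3/2})$, which is the stated bound.

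The argument is conceptually short and mostly algebraic; the only real decision is to use the \emph{integral} form of Danskin's theorem rather than the fully-randomized representation $D^{\sf R}_j$ from \cref{lem:FullyRandomizedVGC}. The latter contains a factor $1/|a_j(\btheta)|$ multiplied only by the bounded quantity $x_j\in[0,1]$, which would leave a residual $1/|a_j(\btheta)|$ in the bound and prevent us from obtaining a clean, $a_j$-free estimate. Using the integral form, by contrast, exposes the $|a_j(\btheta)|$ factor coming from $\partial V/\partial \lambda$ so that it cancels exactly, which is the one step that requires care.
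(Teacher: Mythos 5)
Your proof is correct and takes essentially the same route as the paper: bound the change in the objective value by $\abs{a_j(\btheta)}\abs{\delta_j}$ so that the $a_j(\btheta)$ in the denominator cancels, then apply Jensen's inequality to $\Eb{\abs{\delta_j}}$ together with $\nu_j \geq \vmin$ and $h\le 1$. The only (cosmetic) difference is how that pointwise bound is obtained: the paper uses the optimality of $\bx(\bz,\btheta)$ to write $V(\bz+\delta_j\be_j,\btheta)-V(\bz,\btheta) \le a_j(\btheta)\,\delta_j\, x_j(\bz+\delta_j\be_j,\btheta)$ (and its mirror), which sidesteps the almost-sure-uniqueness hypothesis attached to the Danskin-integral representation you borrow from \cref{lem:FullyRandomizedVGC}; if you keep your route, invoking the subgradient bounds of \cref{lem:prop-j-comp} yields $\abs{V(\bz+\delta_j\be_j,\btheta)-V(\bz,\btheta)}\le \abs{a_j(\btheta)}\abs{\delta_j}$ without any uniqueness assumption.
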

The proof can be found in \cref{app:properties-VGC}. The result follows from observing that the $j^{\rm th}$ component of the VGC is the difference of the optimal objectives values of two optimization problems whose cost vector differs by $O(|\delta_j|)$ in one component. Thus, the two optimal objective values can only differ by $O(|\delta_j|)$ which is at most a constant once we take the conditional expectation. 
\enlargethispage{0pt}
\section{Estimating Out-of-sample Performance for Weakly-Coupled Problems}
\label{sec:Weakly-Coupled-Problems}
In this section we provide high-probability tail bounds on the error of our estimator for \mbox{out-of-sample} performance that hold uniformly over a given policy class. Such bounds justify using estimator for policy learning, i.e., identifying the best policy within the class.  They are also substantively stronger than the variance analysis of \cref{thm:EfronStein} as they provide exponential bounds on the tail behavior, rather than bounding the second moment. \blue{\label{note:uniform-bound-h} Additionally, we show the uniform results hold even when $\btheta \in \bar{\Theta}$ is chosen in a data-driven manner (which recall also includes $h$).}

From the definition of the \Danskin~out-of-sample estimator (\cref{eq:OOSEstimator}), the error of our estimator of out-of-sample performance for $\bx(\bZ, \btheta(\bZ))$ is 

\blue{
\begin{subequations} \label{eq:ErrorExpansion}
\begin{align} \notag
&\underbrace{\abs{ \bmu^\top \bx(\bZ, \btheta(\bZ)) - \left(\bZ^\top \bx(\bZ, \btheta(\bZ)) - D(\bZ, \btheta(\bZ)) \right) }}_{\text{Error Estimating Out of Sample Perf.}}
 \ \leq \ \sup_{ \btheta \in \bar{\Theta} } \underbrace{ \abs{ \bxi^\top \bx(\bZ, \btheta) - D(\bZ, \btheta)}}_{\text{Error Estimating In-Sample Optimism}}
\\ \label{eq:ErrorExpansion_Optimism}
& \qquad \ \leq \ 
\sup_{ \btheta \in \bar{\Theta} } \abs{ \bxi^\top \bx(\bZ, \btheta) - \Eb{\bxi^\top \bx(\bZ, \btheta) } }
\\ \label{eq:ErrorExpanion_VGC}
& \qquad \qquad \ + \ 
\sup_{ \btheta \in \bar{\Theta} }  \abs{ D(\bZ, \btheta)  - \Eb{D(\bZ, \btheta) }} 
\\ \label{eq:Bias}
& \qquad \qquad \ + \ 
\sup_{ \btheta \in \bar{\Theta} }  \abs{ \Eb{ \bmu^\top \bx(\bZ, \btheta) - \bZ^\top \bx(\bZ, \btheta) - D(\bZ, \btheta)  } }.
\end{align}
\end{subequations}
}

We bounded \cref{eq:Bias} in \cref{thm:equiv-in-sample}.  Our goal will be to find sufficient conditions to show the remaining terms are also $o_p(n)$ uniformly over $\btheta \in \Theta$.  Then, in the typical case where the out-of-sample performance is $O_p(n)$, the error of our estimator will be negligible relative to the true out-of-sample performance.  
Our strategy will be to leverage empirical process theory since the argument of each suprema is a sum of random variables. Importantly, this empirical process analysis does \emph{not} strictly require the independent Gaussian assumption (\cref{asn:Gaussian}). The challenge of course is that the constraints of \cref{eq:DefXPolicy} introduce a complicated dependence between the terms.  

Inspired by the average stability condition of \cref{thm:EfronStein}, we focus on classes of ``weakly-coupled" optimization problems.  
We consider two such classes of problems, those weakly-coupled by variables in \cref{sec:Weakly-Coupled-Variables} and those weakly-coupled by constraints \cref{sec:Weakly-Coupled-Constraints}.  
We provide formal definitions below.  

\subsection{Problems Weakly-Coupled by Variables} 
\label{sec:Weakly-Coupled-Variables}
We say an instance of \cref{eq:obj-problem} is weakly-coupled by variables if fixing a small number of variables causes the problem to separate into many, decoupled subproblems.  Generically,
such problems can be written as
\begin{align}\label{eq:wc-v-problem}
\min_{\bx}\quad &	\left( \bmu^0\right)^\top \bx^0 +\sum_{k=1}^{K}\left( \bm{\mu}^{k} \right) ^{\top}\bx^{k} \\ \notag
\text{s.t.}\quad & \bx^0 \in \Y, \quad 
\bx^{k}\in\mathcal{X}^{k}(\bx^0),\quad\forall k=1,\dots,K.
\end{align}
Here, $\bx^0$ represents the coupling variables and $k =1, \ldots, K$ represent distinct subproblems.  Notice that once $\bx^0$ is fixed, each subproblem can be solved separately.  Intuitively, if $\text{dim}(\bx^0)$ is small relative to $n$, the subproblems of \cref{eq:wc-v-problem} are only ``weakly" coupled.  Some reflection shows both \cref{ex:drone-aed,ex:PersonalizedPricing} from \cref{sec:Formulation} are weakly-coupled by variables.  

Let $S_k \subseteq \{ 1, \ldots, n\}$ be the indices corresponding to $\bx^k$ for $k = 0, \ldots, K$,  and $S_{\max} = \max_{k\geq 0} \abs{ S_k}$.  The sets $S_0, \ldots, S_K$ form a disjoint partition of $\{1, \ldots, n \}$.  Without loss of generality, reorder the indices so that the $S_k$ occur ``in order,"; i.e., $(j  \,:\, j \in S_0), \ldots, (j \,:\, j \in S_K)$ is a consecutive sequence.  

Given the weakly-coupled  structure of \cref{eq:wc-v-problem}, we define a generalization of $\bx(\bZ, \btheta)$:  For each $\bx^0 \in \Y$ and $\btheta \in \bar{\Theta}$, let 
\begin{equation} \label{eq:DefXPolicy_Variables}
\bx^k(\bZ, \btheta, \bx^0) \ \in \ \argmin_{\bx^k \in \mathcal X^k(\bx^0) } \br^k(\bZ, \btheta)^\top \bx^k, \quad k =1, \ldots, K,
\end{equation}
where $\br^k(\bZ, \btheta) = \left( r_j(\bZ, \btheta) \,:\, j \in S_k \right) $.  Intuitively, the vector 
\[
\bx(\bZ, \btheta, \bx^0) \equiv \left( (\bx^0)^\top, \bx^1(\bZ, \btheta, \bx^0)^\top, \ldots, \bx^K(\bZ, \btheta, \bx^0)^\top \right)^\top
\]
satisfies the Average Instability Condition of \cref{thm:EfronStein} so long as $S_{\max}$ is not too large since the $j^{\text{th}}$ component of the solution changes when perturbing the $k^{\text{th}}$ data point if and only if $j$ and~$k$ belong to the same subproblem.  This event happens with probability at most $S_{\max}/ n^{2}$.  

The key to making this intuition formal and obtaining exponential tails for the error of the out-of-sample estimator (\cref{eq:ErrorExpansion}) is that
\begin{align*}
\sup_{\btheta \in \bar{\Theta}} \abs{ \bxi^\top \bx(\bZ, \btheta) - \Eb{\bxi^\top \bx(\bZ, \btheta)}  }
&\ \leq \
\sup_{\btheta \in \bar{\Theta}, \bx^0 \in \Y} \abs{ \bxi^\top \bx(\bZ, \btheta, \bx^0) - \Eb{\bxi^\top \bx(\bZ, \btheta, \bx^0)} }
\quad \text{ and,} 
\\
\sup_{\btheta \in \bar{\Theta}} \abs{ D(\bZ, \btheta) - \Eb{ D(\bZ, \btheta)} } 
&\ \leq \
\sup_{\btheta \in \bar{\Theta}, \bx^0 \in \Y} \abs{ D(\bZ, \btheta, \bx^0) - \Eb{D(\bZ, \btheta, \bx^0)} },
\end{align*}
where both $\bxi^\top \bx(\bZ, \btheta, \bx^0)$ and $D(\bZ, \btheta, \bx^0)$ can, for a fixed $\btheta, \bx^0$, be seen as sums of $K$ \emph{independent} random variables.  To obtain uniform bounds, we then need to control only the metric entropy of the resulting (lifted) stochastic processes indexed by $(\btheta, \bx^0)$.  

\blue{\label{WhyLinearClass} \label{note:assum_4_1} We propose a simple assumption on the policy class to control this metric entropy.}  We believe this assumption is \blue{easier to verify} than other assumptions used in the literature (e.g., bounded linear subgraph dimension or bounded Natarajan dimension), \blue{but admittedly slightly more stringent}. 
\begin{assm}[Lifted Affine Plug-in Policy]\label[assm]{asn:hyperplane} 
Given an affine plug-in policy class defined by $\br(\cdot, \btheta)$ for $\btheta \in \Theta$, we say this class satisfies the lifted affine plug-in policy assumption for problems weakly-coupled by variables (\cref{eq:wc-v-problem}) if there exists mapping 
$\phi(\cdot)$ 
and mappings 
$g_{k}(\cdot)$ for $k=1, \ldots, K$ such that  
\[
	\bx^{k}(\bZ, \btheta, \bx^0) \in \argmin_{\bx^k \in \mathcal{X}^k(\bx^0)} \phi(\btheta)^{\top}
	g_k(\bZ^k, \bx^k, \bx^0)
\quad k =1, \ldots, K, \quad \forall \bx^0 \in \Y.
\]
\end{assm}
We stress that the mapping $\phi(\cdot)$ is common to all $K$ subproblems and all $\bx^0 \in \Y$, and both $\phi(\cdot)$ and $g_k(\cdot)$ can be arbitrarily nonlinear.  Moreover, $g_k(\cdot)$ may implicitly depend on the precisions $\bm{\nu}$ and covariates $\bm W$ as these are fixed constants.  
With the exception of policies from linear smoothers, each of our examples from \cref{sec:plug-in-policy}, satisfies \cref{asn:hyperplane}.   For example, for plug-ins for linear regression models, we can simply take $\phi(\btheta) = \btheta$ and $g_k(\bZ^k, \bx^k, \bx^0) = \sum_{j \in S_k} \bW_j^\top x_j$.  

When $\bx^k(\bZ, \btheta, \bx^0)$ is not the unique minimizer to the problem weakly-coupled variables defined in \cref{eq:DefXPolicy_Variables}, we require that ties are broken consistently.  Let $\text{Ext}(\text{Conv}(\mathcal{X}^{k}(\bx^0)))$ denote the set of extreme points of $\text{Conv}(\mathcal X^k(\bx^0))$ and let $\mathcal{X}_{\max} = \max_{k \ge 0} \text{Ext}(\text{Conv}(\mathcal{X}^{k}(\bx^0)))$. Note, if $\bx^k(\bZ, \btheta, \bx^0)$ is unique, it is an extreme point.
\begin{assm}[Consistent Tie-Breaking] \label[assm]{asn:TieBreaking}
We assume there exists functions $\sigma_{k\bx^0}: 2^{\mathcal X^k(\bx^0) } \mapsto \text{\rm Ext}(\text{\rm Conv}(\mathcal{X}^{k}(\bx^0)))$ such that 
\[
	\bx^{k}(\bZ, \btheta, \bx^0) =  \sigma_{k\bx^0}\left( \argmin_{\bx^k \in \mathcal{X}^k(\bx^0)} \phi(\btheta)^{\top}
	g_k(\bZ^k, \bx^k, \bx^0) \right)
\quad k =1, \ldots, K, \quad \forall \bx^0 \in \Y.
\]
\end{assm}
Consistent tie-breaking requires that if ($\btheta_1, \bx^0_1$) and $(\btheta_2, \bx^0_2)$ induce the same minimizers in \cref{eq:DefXPolicy_Variables} for some $\bZ$, then $\bx^k(\bZ, \btheta_1, \bx^0_1) = \bx^k(\bZ, \btheta_2 ,\bx^0_2)$, and this point is an extreme point of $\mathcal X^k(\bx^0)$.

\Cref{asn:hyperplane,asn:TieBreaking} allow us to bound the cardinality of the set
\(
\left\{ \left( \bx^0, \bx^1(\bZ, \btheta, \bx^0), \ldots,\bx^K(\bZ, \btheta, \bx^0) \right) \ : \ \bx^0 \in \Y, \btheta \in \Theta \right\}
\)
by adapting a geometric argument counting regions in a hyperplane arrangements from \cite{gupta2019shrunk} (Lemma C.7). The cardinality of the set characterizes the metric entropy of the policy class. 

Finally, for this section, we say a constant $C$ is \emph{dimension-independent} if $C$ does \emph{not} depend on $\{K,S_{\max},h,\mathcal{X}^0,\mathcal{X}_{\max},\text{dim}(\phi)\}$, but may depend on $\{\nu_{\min},  C_{\mu},  L\}$. We now present the main result of this section:
{ \blockedit
\begin{thm}[Policy Learning for Problems Weakly-Coupled by Variables]\label{thm:unif-oos-est-wc-v} 
Suppose \cref{asn:Gaussian,asn:hyperplane,asn:Parameters,asn:TieBreaking,asn:SmoothPlugIn} all hold. 
Let $\mathcal X_{\max} \geq \abs{ \text{\rm Ext}(\text{\rm Conv}(\mathcal{X}^{k}(\bx^0))) }$ for all $k=1, \dots, K$ and $\bx^0 \in \Y$, and assume $\mathcal X_{\max} < \infty$.  Then, for $0 < h_{\min} \le h_{\max} \le 1$, there exists a dimension-independent constant $C$ such that, for any $R > 1$, with probability at least $1-2\exp(-R)$,
\begin{align*}
	& \sup_{\theta \in \bar{\Theta}} 
	\abs{ \bxi^\top \bx(\bZ, \btheta) - D(\bZ, \btheta, h) }
 	\ \le \ 
	C K  S_{\max}  \cdot h_{\max} \log \left( \frac{1}{h_{\min}} \right)	
	\\ & \quad ~+~ 	
C S_{\max} R \sqrt{\frac{K \log(1 + \abs{\Y})}{h_{\min}}} 
\Biggr( \log(K) \sqrt{\log(S_{\max}) \log \left(h^{-1}_{\min} \cdot N\left(\sqrt{\frac{h_{\min}}{Kn^2}}, \Theta\right) \right) } 
\\ & \qquad 
	 ~+~ 
	\sqrt{ \log(K) \text{\rm dim}(\phi) \log(1 +\mathcal X_{\max}) } \Biggr).
\end{align*}
where $N(\epsilon, \Theta)$ is the $\epsilon$-covering number of the set $\Theta$.
\end{thm}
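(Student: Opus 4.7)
The plan is to attack the error decomposition \cref{eq:ErrorExpansion} term by term, using the weakly-coupled-by-variables structure to reduce the two stochastic suprema to sums of $K{+}1$ \emph{independent} random variables, one (the in-sample optimism) indexed by a combinatorial family of induced policies and the other (the VGC fluctuation) indexed by a continuous parameter with known Lipschitz/H\"older behavior.

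\emph{Step 1: Lifting and reduction to independent sums.} First, apply \cref{thm:equiv-in-sample} uniformly to control the bias term \cref{eq:Bias}: since the bound in that theorem depends only on $n, h, \nu_{\min}$ and $n \le (K{+}1)S_{\max}$, this immediately yields the leading $CKS_{\max}\, h_{\max} \log(1/h_{\min})$ contribution. To handle the remaining suprema \cref{eq:ErrorExpansion_Optimism} and \cref{eq:ErrorExpanion_VGC}, lift the supremum from $\btheta \in \bar{\Theta}$ to $(\btheta,\bx^0) \in \bar{\Theta}\times \Y$, as is valid since $\bx^0(\bZ,\btheta)$ is a particular point of $\Y$. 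For any fixed $(\btheta,\bx^0)$, both $\bxi^\top \bx(\bZ,\btheta,\bx^0)$ and $D(\bZ,\btheta,\bx^0)$ split as a sum over $k=0,\ldots,K$ of terms depending only on $\bZ^k$, hence are sums of $K{+}1$ independent summands.

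\emph{Step 2: Uniform bound for the in-sample optimism.} For each fixed $\bx^0 \in \Y$, \cref{asn:hyperplane} writes $\bx^k(\bZ,\btheta,\bx^0)$ as the argmin of $\phi(\btheta)^\top g_k(\bZ^k,\cdot,\bx^0)$, so the map $\btheta \mapsto \bx(\bZ,\btheta,\bx^0)$ is determined by which cell of a hyperplane arrangement in $\mathbb{R}^{\mathrm{dim}(\phi)}$ (with at most $K\mathcal{X}_{\max}^2$ hyperplanes, one per ordered pair of extreme points of each $\text{Conv}(\mathcal{X}^k(\bx^0))$) contains $\phi(\btheta)$; \cref{asn:TieBreaking} guarantees this map is well-defined. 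Adapting Lemma~C.7 of \cite{gupta2019shrunk}, the number of distinct induced policies is at most $(K\mathcal{X}_{\max})^{O(\mathrm{dim}(\phi))}$. Combining a sub-Gaussian/Hoeffding tail bound at each fixed policy (exploiting that $\bxi^k$ is sub-Gaussian and $\bx^k\in[0,1]^{|S_k|}$) with a union bound over the finitely many induced policies and over $\bx^0 \in \Y$ yields the term proportional to $S_{\max}R\sqrt{K\log(1+|\Y|)/h_{\min}}\cdot\sqrt{\log(K)\,\mathrm{dim}(\phi)\,\log(1+\mathcal{X}_{\max})}$.

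\emph{Step 3: Uniform bound for the VGC, and the main obstacle.} The map $\btheta \mapsto D(\bZ,\btheta,h)$ is not an argmin, so the hyperplane count does not apply. Instead, invoke \cref{lem:D-is-Lipschitz}: $D$ is Lipschitz in $\btheta$ with high-probability constant $\tilde O(n^2/h_{\min})$ and H\"older-$\tfrac12$ in $h$ with constant $\tilde O(n/(h_{\min}\nu_{\min}^{3/4}))$. Take an $\epsilon$-net of $\Theta$ of size $N(\epsilon,\Theta)$ and a companion net of $[h_{\min},h_{\max}]$; at each net point apply Hoeffding-type concentration across the $K{+}1$ independent summands $D^k$ using the boundedness $|D^k|\le S_{\max}\sqrt{3}/(\nu_{\min}^{3/4}\sqrt{h_{\min}})$ from \cref{lem:VGCBounded}; and control interpolation with the Lipschitz/H\"older bounds. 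Choosing $\epsilon \asymp \sqrt{h_{\min}/(Kn^2)}$ equates interpolation error with fluctuation error and produces the factor $\sqrt{\log(h_{\min}^{-1}N(\sqrt{h_{\min}/(Kn^2)},\Theta))}$. The main obstacle is exactly this balancing: the Lipschitz constant of $D$ grows polynomially in $n$, so a crude $\epsilon$-net is vacuous, and one must use the \emph{high-probability} Lipschitz bound of \cref{lem:D-is-Lipschitz} (which absorbs the $\|\bZ\|_\infty$ factor into a $\sqrt{\log n}$) rather than the deterministic one to keep the rate $o(n)$. A final union bound over the $\bx^0$-sum and the two supremum contributions, together with the bias bound from Step~1, gives the stated high-probability inequality.
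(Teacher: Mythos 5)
Your overall architecture is the paper's: bound the bias term via \cref{thm:equiv-in-sample} with $n\le (K{+}1)S_{\max}$, lift the two stochastic suprema from $\btheta$ to $(\btheta,\bx^0)\in\bar\Theta\times\Y$ so that each becomes a sum of independent per-subproblem terms, count the induced policies via the hyperplane-arrangement argument (the paper's \cref{thm:cardinality-fully-decoupled}), and handle the VGC supremum by a covering of $\Theta$ at scale $\asymp\sqrt{h_{\min}/(Kn^2)}$ together with a net in $h$, Hoeffding at net points using the per-subproblem boundedness, and interpolation via the high-probability Lipschitz/H\"older bounds of \cref{lem:D-is-Lipschitz} (this is exactly \cref{lem:p-wise-decoupled-Danskin,lem:unif-danskin-decoupled}).

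The one step that does not go through as written is in your Step 2: ``Hoeffding at each fixed policy, then a union bound over the finitely many induced policies.'' The induced policies in $\mathcal{X}^{\Theta,\Y}(\bZ)$ — and the hyperplane arrangement that defines them, since the hyperplanes are built from $g_k(\bZ^k,\cdot,\cdot)$ — are functions of the data, so you cannot treat them as fixed and union bound over the realized set; and a union bound over \emph{all} deterministic candidate policies (roughly $\abs{\Y}\,\mathcal X_{\max}^K$ of them) gives a deviation of order $K\sqrt{S_{\max}\log\mathcal X_{\max}}$, which is comparable to $n$ and hence vacuous. What makes the $\sqrt{\log M}$ scaling with the \emph{cardinality bound} $M=(1+2K\abs{\Y}\mathcal X_{\max}^2)^{\dim(\phi)}$ legitimate is the maximal inequality for suprema of independent-coordinate processes over a \emph{random} finite class (the paper's \cref{thm:Pollard}, from Pollard's chaining/symmetrization argument), applied with the envelope $F_k=\sum_{j\in S_k}\abs{\xi_j}$ whose $\Psi$-norm is $O(S_{\max}\sqrt{K/\vmin})$; this is precisely how \cref{thm:unif-in-samp-opt-fd} is proved. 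So replace your naive union bound by an invocation of \cref{thm:Pollard} (the union bound over $\bx^0\in\Y$ for the coupling block $\sup_{\bx^0}\abs{(\bxi^0)^\top\bx^0-\Eb{(\bxi^0)^\top\bx^0}}$ is fine, since those candidates are deterministic). With that substitution your argument matches the paper's proof; the extra $1/\sqrt{h_{\min}}$ you attach to the in-sample-optimism term is not needed but is harmless since $h_{\min}\le 1$.
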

}
{\blockedit
\label{note:selecting-h-WCV} In the typical case 
where $\Theta$ does not depend on $n$ or $K$ and 
\begin{align} \label{eq:wc-var-intuition-eq-1}
	\max(S_{\max},\abs{\mathcal{X}_0}, \text{dim}(\phi)) = \tilde{O}(1) \text{ as } K\rightarrow \infty, 
\end{align}
we can approximately minimize the above bound
by selecting $h \equiv h_k = O(K^{-1/3})$ and noting the relevant covering number grows at most logarithmically in $K$. This choice of $h$ approximately balances the deterministic and stochastic contributions, and the bound reduces to $\tilde{O}(C_{\sf PI}K^{2/3})$ for some $C_{\sf PI}$ (depending on $\abs{\mathcal{X}_0}$, $\text{dim}(\phi)$, $\text{dim}(\btheta)$) that measures the complexity of the policy class. Many applications satisfy the conditions in \cref{eq:wc-var-intuition-eq-1}, including the drone-assisted emergency medical response application (\cref{ex:drone-aed}). }

To illustrate, recall \cref{ex:drone-aed}.  Here, $\by$ represents the binding variables ``$\bx^0$", and we see $\abs{\mathcal X^0} = \binom{L}{B}$.  Moreover, $S_{\max}= L$, since $\bx$ decouples across $k$.  Inspecting the constraints, $\mathcal X_{\max} \leq B$, since for each $k$, we choose exactly one  depot from which to serve location $k$, and there are most $B$ available depots. Finally, for a fixed policy class, $\text{dim}(\phi)$ is constant and the log covering number above grows at most logarithmically in $K$. 
 Most importantly, we expect $L$ (the number of possible depots) and $B$ (the budget) to be fairly small relative to $K$ since regulations and infrastructure limit placement of depots, but there are many possible locations for cardiac events. Here typical instances of \cref{ex:drone-aed} satisfy \cref{eq:wc-var-intuition-eq-1}. We return to \cref{ex:drone-aed} in \cref{sec:experiments} where we study the performance of our method numerically.

\subsection{Problems with Weakly-Coupled by Constraints}
\label{sec:Weakly-Coupled-Constraints}
An instance of \cref{eq:obj-problem} is weakly-coupled by constraints if, after removing a small number of binding constraints, the problem decouples into many separate subproblems.  Data-driven linear optimization problems of this form have been well studied by \cite{li2019online} and \citeGR.  In order to facilitate comparisons to the existing literature, we study the specific problem
\begin{align}\label[problem]{eq:wc-c-problem}
\max_{\bx \in \mathcal X} \ \ \sum_{j=1}^{n} \mu_j x_j, 
\qquad \mathcal{X} = \left\{ \bx \in [0,1]^n : \frac{1}{n}\sum_{j=1}^n \bA_j x_j \le \bm{b} \right\},
\end{align}
and corresponding plug-in policies
\begin{align}\label[problem]{eq:wc-c-plug-in-problem}
\bx(\bZ, \btheta) \in \argmax_{\bx \in \mathcal X}\quad & \sum_{j=1}^{n} r_j(Z_j, \btheta) x_j.
\end{align}
Here, $\bA_j \in \mathbb{R}^m$ with $m \geq 1$.  In particular, we consider a maximization instead of a minimization.

We next introduce a dual representation of \cref{eq:wc-c-plug-in-problem}.
Specifically, scaling the objective of \cref{eq:wc-c-plug-in-problem} by $\frac{1}{n}$ and dualizing the binding constraints yields 
\begin{align}\label{eq:wc-c-problem-dualize-i}
 \blambda(\bZ, \btheta) \in \argmin_{\blambda \in \R^m_+} \ \left\{ \bm{b}^\top \blambda  + \max_{\bx \in [0, 1]^n} \ \ \frac{1}{n} \sum_{j=1}^{n} (r_j(Z_j, \blue{\btheta}) - \bA_j^{\top}\blambda)  x_j \right\}
\end{align}
For a fixed $\blambda$, the inner maximization of \cref{eq:wc-c-problem-dualize-i} can be solved explicitly, yielding
\begin{align*}
	\blambda(\bZ, \btheta) \in \argmin_{\blambda \in \mathbb{R}^m_{+}} \mathcal{L}(\blambda, \bZ, \btheta), \quad \text{where} \quad 
	\mathcal{L}(\blambda, \bZ, \btheta) \equiv
	\bm{b}^\top \blambda + \frac{1}{n} \sum_{j=1}^n \left( r_j(Z_j,\btheta) - \bA_j ^\top \blambda \right)^+.
\end{align*}
By strong duality, $V(\bZ, \btheta) = \br(\bZ, \btheta)^\top \bx(\bZ, \btheta) = n \mathcal L(\blambda(\bZ, \btheta), \bZ, \btheta)$. 

This dual representation highlights the weakly-coupled structure.  Indeed, the dependence across terms in the sum in $\L(\blambda(\bZ, \btheta), \bZ, \btheta)$ arises because $\blambda(\bZ, \btheta)$ depends on the entire vector $\bZ$.  However, this dependence has to be ``channeled" through the $m$ dimensional vector $\blambda(\bZ, \btheta)$, and, hence, when $m$ is small relative to $n$, cannot create too much dependence between the summands.  Indeed, we will show that if $m$ is small relative to $n$, then $\blambda(\bZ,\btheta)$ concentrates at its expectation, i.e., a constant, as $n \rightarrow \infty$, and, hence, the summands become independent asymptotically.  This insight is key to the analysis.




To formalize these ideas, we make assumptions similar to those in \citeGR~and \cite{li2019online}: 
\begin{assm}[$s_0$-Strict Feasibility] \label[assm]{asn:strict-feasibility}
There exists an $s_0 > 0$ and $\bx_0 \in \mathcal{X}$ such that $\frac{1}{n}\sum_{j=1}^n \bA_j x^0_j + s_0 \be \le \bm{b}$.
\end{assm}
\begin{assm}[Regularity of Matrix $\bA$] \label[assm]{asn:MatrixAConditions}
There exists a constant $C_A$ such that $\| \bA_j \|_{\infty} \leq C_A$ for all $1 \leq j \leq n$.  Moreover, there exists a constant $\beta > 0$ such that the minimal eigenvalue of $\frac{1}{n} \sum_{j=1}^n \bA_j \bA_j^\top$ is at least $\beta$.
\end{assm}
The strict feasibility assumption can often be satisfied by perturbing $\bA$~or~$\bm{b}$ and ensures the dual optimal values $\blambda(\bZ, \btheta)$ are bounded with high probability. The regularity assumptions on $\bm A$ ensure the function $\blambda \mapsto \Eb{\L(\blambda, \bZ, \btheta)}$ is strongly convex, a key property in our proof (see below). \blue{Such a property holds, e.g., if the columns $\bA_j$ are drawn randomly from some distribution. \label{explainingStrictFeasibility}} 

Like \cref{sec:Weakly-Coupled-Variables}, in order to obtain uniform bounds we must also control the metric entropy of the different stochastic error terms in out-of-sample error \cref{eq:ErrorExpansion}. Generalizing \citeGR, we make the following assumption:
\begin{assm}[VC Policy Class]
\label[assm]{asn:LiftedAffinePlugInII}
There exists a function $\rho(\cdot)$ such that 
\[
	r_j(z_j, \btheta) = \rho((z_j, \nu_j, \bW_j, \bA_j), \btheta), \quad j=1, \ldots, n
\]
and a constant $V$ such that the class of functions 
\[
	\left\{(z, \nu, \bW, \bA) \mapsto
			\rho((z, \nu, \bW, \bA), \btheta) - \bA^\top \blambda \ : \ \btheta \in \Theta, \; \blambda \in \R^m_+
			\right\}
\]
has a pseudo-dimension at most $V$. Without loss of generality, we further assume $V \geq \max(m, 2)$.
\end{assm}

\blue{The size of the constant $V$ captures the complexity of the policy class which typically depends upon the dimension of $\btheta$ as well as the number of binding constraints $m$. \label{explainingVCAssumption}} As an illustration, recall the plug-in for linear regression models policy class from \cref{sec:Formulation}.  By \cite[Lemma 4.4]{pollard1990empirical}, this policy class satisfies \cref{asn:LiftedAffinePlugInII} with $V = \text{dim}(\btheta) + m$.

Finally, we say a constant $C$ is \emph{dimension-independent} if $C$ does \emph{not} depend on $\{n,h,m,V,\text{dim}(\btheta)\}$, but may depend on $\{\nu_{\min}, C_A,  C_{\mu},  \beta,s_0, a_{\min}, a_{\max}, b_{\max}, L\}$.

The main result of this section is then:
\blue{
\begin{thm}[Estimation Error for Problems Weakly Coupled Constraints]\label{thm:WC-Constraints-bound}
Under \cref{asn:Gaussian,asn:Parameters,asn:SmoothPlugIn,asn:strict-feasibility,asn:MatrixAConditions,asn:LiftedAffinePlugInII}, for $0 < h_{\min} \le h_{\max} \le 1$
there exists dimension-independent constants $C$ and $n_0$, such that for any $ R > 1$ and all $n\ge n_0 e^R$, we have that with probability $1-C\exp(-R)$,
\begin{align*}
	\sup_{\btheta \in \bar{\Theta}} &
 	\left| \sum_{j=1}^n \xi_j x_j (\bZ, \btheta) - D_j(\bZ, \btheta) \right| 
 	\ \le \ 
	C n \cdot h_{\max} \log \left( \frac{1}{h_{\min}} \right) \\
	& \qquad \qquad \qquad + C \cdot V^3 \log^3 V  \cdot R \cdot \sqrt{n \log \left(n \cdot N(n^{-3/2}, \Theta) \right)} \cdot \frac{\log^5 n }{h_{\min}} 
\end{align*}
\end{thm}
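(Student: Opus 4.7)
The plan is to use the error decomposition in \cref{eq:ErrorExpansion}. The bias term \cref{eq:Bias} is already bounded by \cref{thm:equiv-in-sample}, applied uniformly over $\bar{\Theta}$, contributing the $C n \cdot h_{\max} \log(1/h_{\min})$ piece. Thus the core work is to show that the two stochastic terms, \cref{eq:ErrorExpansion_Optimism} and \cref{eq:ErrorExpanion_VGC}, are uniformly of order $\tilde{O}(V^3 R \sqrt{n \log(n N(n^{-3/2},\Theta))}/h_{\min})$. For this the dual representation $V(\bZ,\btheta) = n\mathcal{L}(\blambda(\bZ,\btheta),\bZ,\btheta)$ is essential: all coupling between the primal summands is channeled through the $m$-dimensional dual $\blambda(\bZ,\btheta)$, so if I can pin $\blambda(\bZ,\btheta)$ down to a deterministic approximation, the primal decisions decouple across $j$.

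The central technical step is uniform concentration of the dual. Let $\blambda^*(\btheta) \in \arg\min_{\blambda \geq 0}\Eb{\mathcal{L}(\blambda,\bZ,\btheta)}$. Under \cref{asn:strict-feasibility} and \cref{asn:MatrixAConditions}, $\blambda^*(\btheta)$ is bounded (by $C_\mu/s_0$) and the population Lagrangian is strongly convex on a neighborhood of $\blambda^*(\btheta)$. A VC-style empirical process bound based on the pseudo-dimension control in \cref{asn:LiftedAffinePlugInII} then yields $\sup_{\btheta \in \bar\Theta} \|\blambda(\bZ,\btheta) - \blambda^*(\btheta)\|_2 = \tilde{O}(V\sqrt{R/n})$ with probability $1 - C e^{-R}$. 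Having pinned the dual, I introduce the \emph{oracle primal} $\hat{x}_j(\btheta) \equiv \Ib{r_j(Z_j,\btheta) > \bA_j^\top \blambda^*(\btheta)}$, which, crucially, depends on $\bZ$ only through $Z_j$. Therefore $\sum_{j=1}^n \xi_j \hat{x}_j(\btheta)$ is a sum of independent mean-zero, bounded random variables for each fixed $\btheta$, and a uniform Bernstein inequality combined with the pseudo-dimension bound of \cref{asn:LiftedAffinePlugInII} produces the $V^3 \log^3 V$ factor and an $\tilde{O}(\sqrt{n})$ uniform-in-$\btheta$ fluctuation. The replacement error $\sum_j \xi_j(x_j(\bZ,\btheta) - \hat{x}_j(\btheta))$ is controlled by counting ``boundary'' indices with $|r_j(Z_j,\btheta) - \bA_j^\top \blambda^*(\btheta)|$ small; Gaussian anti-concentration of $Z_j$ shows that only $\tilde{O}(\sqrt{n})$ such indices exist with high probability.

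For the VGC fluctuation $\sup_\btheta |D(\bZ,\btheta)-\Eb{D(\bZ,\btheta)}|$ in \cref{eq:ErrorExpanion_VGC}, I would use \cref{lem:FullyRandomizedVGC} to replace $D$ by $\DR$, and then apply the same oracle substitution: after replacing $\blambda(\bZ+\delta_j \be_j,\btheta)$ by $\blambda^*(\btheta)$ (via a perturbation of the dual concentration result, which only costs $\tilde{O}(\sqrt{h/n})$ because $\delta_j = O_p(\sqrt{h})$), the $j$-th term of $\DR$ depends on $\bZ$ only through $Z_j$, so $\DR(\bZ,\btheta)$ is again a sum of independent terms. \cref{lem:VGCBounded} shows each term is $O(1/\sqrt{h})$, so Bernstein yields an $\tilde{O}(\sqrt{n/h})$ pointwise bound. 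Uniformity over $\Theta$ follows from the Lipschitz bound in \cref{lem:D-is-Lipschitz}(i), by covering $\Theta$ at scale $n^{-3/2}$; uniformity over $h \in [h_{\min},h_{\max}]$ follows from \cref{lem:D-is-Lipschitz}(ii) by discretizing $h$ on a polynomial-in-$n$ grid. The $\sqrt{|h-\bar h|}$ form of (ii) is what ultimately produces the $1/h_{\min}$ (rather than $1/\sqrt{h_{\min}}$) dependence in the final bound. Combining the three pieces by a union bound and absorbing lower-order terms gives the claimed estimate.

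The main obstacle is the discontinuous dependence of the primal indicator $\hat{x}_j(\btheta)$ on the dual $\blambda$: a small error in approximating $\blambda(\bZ,\btheta)$ by $\blambda^*(\btheta)$ can in principle flip many indicators and generate a linear-in-$n$ replacement error. Overcoming this requires establishing, \emph{uniformly} in $\btheta$, that only $\tilde{O}(\sqrt{n})$ indices lie within $\tilde{O}(1/\sqrt{n})$ of the dual threshold, using Gaussian anti-concentration of $Z_j$ together with \cref{asn:SmoothPlugIn} to ensure the threshold varies smoothly in $\btheta$, and then applying a chaining argument over $\bar\Theta$. The compounding of covering-number estimates across the dual-concentration, primal-approximation, VGC-concentration, and $h$-discretization steps is precisely what generates the $\log^5 n$ factor appearing in the theorem.
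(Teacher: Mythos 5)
Your decomposition, bias bound, and treatment of the in-sample optimism term essentially coincide with the paper's argument (cf.\ \cref{lem:unif-conv-in-samp-wide-lp}: complementary-slackness rounding, substitution of a deterministic dual, a VC/pseudo-dimension ULLN via \cref{asn:LiftedAffinePlugInII}, and an anti-concentration count of boundary indices). Where you diverge is the VGC fluctuation term and the dual concentration behind it: you use population strong convexity of $\blambda\mapsto\Eb{\L(\blambda,\bZ,\btheta)}$ to pin $\blambda(\bZ,\btheta)$ to a deterministic $\blambda^*(\btheta)$ at rate $\tilde O(1/\sqrt n)$ and then decouple across $j$ by an oracle substitution — precisely the GR/Li-style route the paper contrasts with its own analysis, which instead establishes approximate strong convexity of the \emph{random} function $\blambda\mapsto\L(\blambda,\bZ)$ on the good set $\mathcal E_n$, deduces the Hamming-distance dual stability of \cref{lem:DualStability}, a bounded-differences property of the VGC (\cref{lem:VGC-Dual-Solution-Bnd}), and concludes via the McDiarmid variant \cref{thm:Combes-McDiarmid}. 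Your route is not precluded by the paper's remarks (you never need the Hamming stability), but as written it has two genuine gaps.

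First, \cref{lem:FullyRandomizedVGC} only gives $D_j(\bZ)=\Eb{\DR_j(\bZ)\mid\bZ}$ and a \emph{variance} comparison; a high-probability bound on $\DR-\Eb{\DR}$ over the joint randomness does not transfer to the conditional expectation $D-\Eb{D}$, especially since $\DR$ is unbounded in $\delta_j$. You would need to work directly with an oracle conditional VGC $\widehat D_j(Z_j,\btheta)\equiv\frac{1}{h\sqrt{\nu_j}a_j}\Eb{\delta_j\,\hat x_j(Z_j+\delta_j\tilde U_j,\btheta)\mid Z_j}$, which is bounded by \cref{lem:VGCBounded} and independent across $j$, and then bound $|D-\widehat D|$. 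Second — and this is the crux you leave unargued — the substitution error for the VGC is \emph{not} the same estimate as for the optimism term: each flipped index now costs $O\big(|\delta_j|/(h\sqrt{\nu_j})\big)$, and the flip event involves the dual at the perturbed point $\bZ+\delta_j\tilde U_j\be_j$ (which requires uniform control for all $j$ simultaneously, including large $|\delta_j|$, hence a truncation as in the paper's proof of \cref{lem:VGC-Dual-Solution-Bnd}). A naive implementation — count indices whose reduced cost lies within the perturbation scale $O(\sqrt h)$ of the threshold and multiply by the $O(1/\sqrt h)$ per-term bound — yields an error of order $n$, which is useless. To recover $\tilde O(\sqrt{n/h})$ you must exploit that a flip forces the \emph{perturbed} reduced cost to land in an interval whose width is the dual error $\tilde O(1/\sqrt n)$ (not $\sqrt h$), uniformly over $\btheta$ and over $\blambda$ in a ball; this is exactly the degeneracy/anti-concentration control the paper builds into the good set (the $T_n$ condition of \cref{lem:BoundAwayDegeneracy}) and it requires a careful conditional computation that your sketch does not supply. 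Relatedly, your claim that replacing $\blambda(\bZ+\delta_j\be_j,\btheta)$ by $\blambda^*(\btheta)$ "costs $\tilde O(\sqrt{h/n})$" is a misstatement: the cost is the full dual error $\tilde O(1/\sqrt n)$; the $\sqrt h$ enters only through $\Eb{|\delta_j|}$. With these pieces repaired (and your covering steps over $\Theta$ and $h$, which match the paper's), the route plausibly delivers the stated rate, but as proposed the key VGC estimate is missing.
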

}
\blue{ \label{note:select-h-WCC} To build intuition, consider instances where $\Theta$ does not depend on $n$. Then, $V= O(1)$ and the covering number grows at most logarithmically as $n \rightarrow \infty$. We can then minimize the above bound (up to logarithmic terms) by taking 
taking $h\equiv h_n = O(n^{-1/4})$, yielding a bound of order $\tilde{O}(n^{3/4})$. In particular, in the typical instance where the full-information optimum (c.f. \cref{eq:wc-c-problem}) is $O(n)$, the relative error of our estimate is $\tilde O(n^{-1/4})$ which is vanishing as $n \rightarrow \infty$. }

\begin{rem} \rm
The rate above ($\tilde{O}(n^{-1/4})$) is slightly slower than the rate of convergence of the Stein correction in \citeGR~($\tilde{O}(n^{-1/3})$). We attribute this difference to our choice of a first order finite difference when constructing the \Danskin. A heuristic argument strongly suggests that had we instead used a second order forward finite difference scheme as in \cref{app:ImplementationDetails}, we would recover the rate $\tilde{O}(n^{-1/3})$. Moreover, our numerical experiments (with the second order scheme) in \cref{sec:experiments} shows our second-order \Danskin~to be very competitive.
\end{rem}




\subsection*{Proof Intuition:  Approximate Strong-Convexity and Dual Stability}
To build intuition, recall that to show the convergence of VGC, it suffices to bound the Average Solution Instability defined in \cref{thm:EfronStein}.  By complementary slackness, $x_j(\bZ) = \Ib{r_j(Z_j) > \bA_j^\top \blambda(\bZ)}$ except possibly for $m$ fractional components.  Hence, by rounding the fractional components, we have, for $j \neq k$,
\begin{align*}
&x_j(\bZ + \delta_j \tilde U_j \be_j) - x_j(\bZ^k + \delta_j \tilde U_j \be_j)
\\
& \qquad \leq \ 
\Ib{ r_j(Z_j + \delta_j \tilde U_j \be_j) \geq \bA_j^\top \blambda(\bZ + \delta_j \tilde U_j \be_j) } - 
\Ib{ r_j(Z_j + \delta_j \tilde U_j \be_j) > \bA_j^\top \blambda(\bZ^k + \delta_j \tilde U_j \be_j) } 
\\ 
& \qquad \leq \ 
\Ib{ r_j(Z_j + \delta_j \tilde U_j \be_j) \in 
	\left\langle \bA_j^\top \blambda(\bZ + \delta_j \tilde U_j \be_j), \ 
	 	 	\bA_j^\top \blambda(\bZ^k + \delta_j \tilde U_j \be_j)\right\rangle 
},
\end{align*}
where we use $\langle l, u\rangle$ to denote the interval $[ \min(l, u),  \ \max(l, u)]$.  By symmetry, the same bound holds for $x_j(\bZ^k + \delta_j \tilde U_j \be_j) - x_j(\bZ + \delta_j \tilde U_j \be_j)$.  Since summands where $j = k$ each contribute at most $1$ to the Average Solution Instability, we thus have that 
\begin{align*}
& \frac{1}{n^2} \sum_{k=1}^n \sum_{j=1}^n \left( x_j(\bZ + \delta_j \tilde U_j \be_j) - \bx_j(\bZ^k + \delta_j \tilde U_j \be_j) \right)^2
\\	& \quad \leq \ 
	\frac{1}{n^2} \sum_{k=1}^n \sum_{j=1}^n \Ib{ r_j(Z_j + \delta_j \tilde U_j \be_j) \in 
	\left\langle \bA_j^\top \blambda(\bZ + \delta_j \tilde U_j \be_j), \ 
	 	 	\bA_j^\top \blambda(\bZ^k + \delta_j \tilde U_j \be_j)\right\rangle }
	 	 	+ O_p\left(\frac{1}{n}\right).
\end{align*}

The principal driver of the Solution Instability is the first double sum; in a worst-case, it might be $O_p(1)$.  It will be small if $\| \blambda(\bZ + \delta_j \tilde U_j \be_j) - \blambda(\bZ^k + \delta_j \tilde U_j \be_j)\|$ is small for most $j$ and $k$.  
Said differently, problems like \cref{eq:wc-c-plug-in-problem} that are weakly-coupled by constraints will have small Solution Instability if the dual solutions $\blambda(\cdot)$ are, themselves, stable, i.e., if the dual solution does not change very much when we perturb one data point.  Our analysis thus focuses on establishing this dual solution stability.  

Unfortunately, solutions of linear optimization problems need not be stable -- a small perturbation to the cost vector might cause the optimal solution to switch between extreme points, inducing a large change.   By contrast, solutions of convex optimization problems with strongly-convex objectives \emph{are} stable (see, e.g., \cite{shalev2010learnability}). 
The next key step of our proof is to show that although $\blambda \mapsto \L(\blambda, \bZ)$ is not strongly-convex, it is still ``approximately" strongly-convex with high probability in a certain sense.

\begin{figure} 
\begin{center}
\includegraphics[width=.8\textwidth]{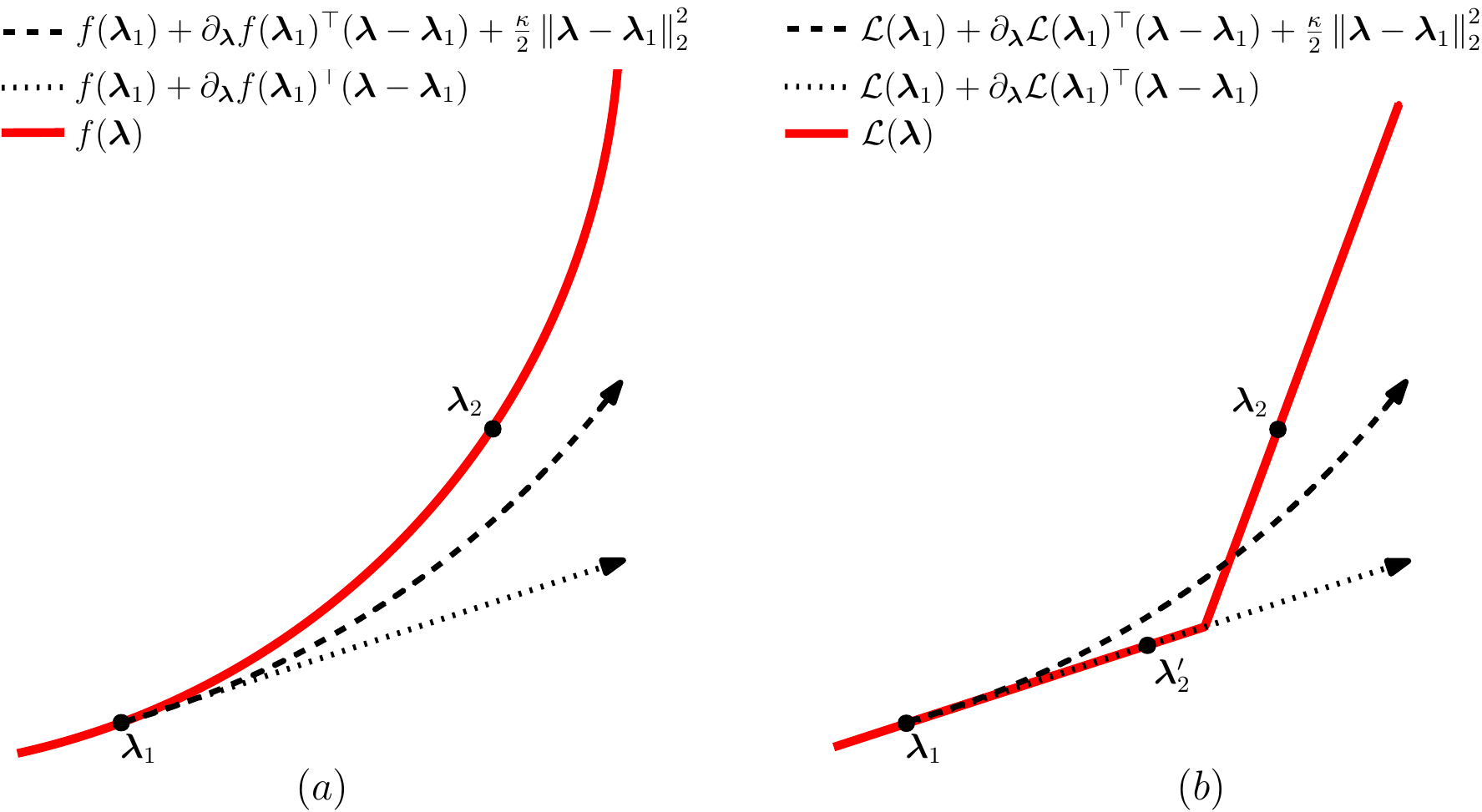}
\end{center}
\caption{\label{fig:ApxStrongCvxIntuition}  \textbf{Approximate Strong Convexity of $\L(\blambda)$.} Figure (a) shows a strongly convex function $f(\blambda)$ and visualizes the strong convexity condition \cref{eq:dual-strongly-convex-i}. Figure (b) shows that because $\L(\blambda)$ is piecewise linear, it does not satisfy \cref{eq:dual-strongly-convex-i} for points on same line segment ($\blambda_1$ and $\blambda_2^\prime$).  However, when $\blambda_1,\blambda_2$ are sufficiently far apart, they are on different line segments and \cref{eq:dual-strongly-convex-i} is satisfied. 
}
\end{figure}

To be more precise, recall that a function $f(\blambda)$ is $\kappa$-strongly-convex if 
\begin{equation}\label{eq:dual-strongly-convex-i}
	f(\blambda_2) - f(\blambda_1)  \ge 
	\nabla_{\blambda} f(\blambda_1)^{\top} (\blambda_2 - \blambda_1) 
	+ \frac{\kappa}{2}\|\blambda_2 -\blambda_1\|_2^2,\qquad \forall \blambda_1,\blambda_2 \in \text{Dom}(f).
\end{equation}
where $\kappa > 0$ and $\nabla_{\blambda}$ denotes the subgradient.  The left panel of \cref{fig:ApxStrongCvxIntuition} depicts this condition graphically.  For any two points $\blambda_2$ and $\blambda_1$, the first-order Taylor series underestimates the function value, and one can ``squeeze in" a quadratic correction $\frac{\kappa}{2}\|\blambda_2 -\blambda_1\|_2^2$ in the gap.

The function $\blambda \mapsto \L(\blambda, \bZ)$ does not satisfy this condition, as seen in the right panel for points $\blambda_1$ and $\blambda_2^\prime$.  This function is piecewise-linear, and, for two points on the same line-segment, the first-order Taylor series is exact.  However, for points on different line segments, such as $\blambda_1$ and $\blambda_2$, the first-order Taylor series is a strict underestimation, and we can squeeze in a quadratic correction.  Said differently, \cref{eq:dual-strongly-convex-i} does not hold for all $\blambda_1$, $\blambda_2$, but holds for most $\blambda_1$, $\blambda_2$.  
In this sense, $\blambda \mapsto \L(\blambda, \bZ)$ is ``approximately" strongly-convex.  

To make a formal statement, it is more convenient to use a different, equivalent definition of strong-convexity.  \Cref{eq:dual-strongly-convex-i} is equivalent to the condition 
\begin{equation} \label{eq:StrongConvexityDefinition}
\big( \nabla_{\blambda} f(\blambda_1) - \nabla_{\blambda} f(\blambda_2) \big)^\top (\blambda_1 - \blambda_2) \geq \kappa \| \blambda_1 - \blambda_2\|_2^2   \quad \forall \blambda_1, \blambda_2 \in \text{Dom}(f).
\end{equation} 
\Cref{lem:ApproxStrongConvexCondition} then shows that  $\blambda \mapsto \L(\blambda, \bZ)$ is approximately strongly convex in the sense that, with high probability, there exists a $C >0$ such that 
\begin{align}\label{eq:strong-cvx-main-body}
	\big( \nabla_{\blambda} \L(\blambda_1, \bZ) - \nabla_{\blambda} \L(\blambda_2, \bZ) \big)^\top (\blambda_1 - \blambda_2) \ \geq \ &
C \| \blambda_1 - \blambda_2\|_2^2 
-  \frac{\| \blambda_1 - \blambda_2\|_2^{3/2}}{C\sqrt{n}}, \\ \notag
	\text{ for all } \| \blambda_1 \|_1 \le \lambda_{\max}, \ & \| \blambda_2 \|_1 \le \lambda_{\max}, \ \text{and} \
	\| \blambda_1 - \blambda_2 \|_2 \geq \frac{4}{n}
\end{align}
where $\lambda_{\max}$ is a dimension independent constant satisfying $\mathbb{E} \| \blambda(\bZ,\btheta) \|_1 \le \lambda_{\max}$.

\Cref{eq:strong-cvx-main-body} mirrors \cref{eq:StrongConvexityDefinition}.  As $n\rightarrow \infty$, \cref{eq:strong-cvx-main-body} reduces to the analogue of \cref{eq:StrongConvexityDefinition}  for $\| \blambda_1 \|_1, \| \blambda_2\|_1 \le \lambda_{\max}$.  Moreover, for $\| \blambda_1 - \blambda_2 \|_2  \gg \frac{1}{n}$, the first term on the left of \cref{eq:strong-cvx-main-body} above dominates the second, so that the right hand side is essentially a quadratic in $\| \blambda_1 - \blambda_2\|_2$.  These relations motivate our terminology ``approximately strongly-convex."

Using this notion of approximate strong-convexity, we show in \cref{lem:DualStability} that there exists a set $\mathcal E_n \subseteq \R^n$ such that $\bZ \in \mathcal E_n$ with high probability, and, more importantly, for any $\bz \in \mathcal E_n$, the dual solutions are stable, i.e., 
\begin{equation} \label{eq:HammingDual_mainbody}
	\| \blambda(\bz, \btheta) - \blambda(\overline \bz, \btheta)\|_2 \ \leq \frac{C}{n} \sum_{j=1}^n \Ib{ z_j \neq \overline z_j} \quad \forall \overline{\bz} \text{ s.t. } \| \blambda(\overline{\bz}) \|_1 \leq \lambda_{\max}.
\end{equation}
Equipped with this dual-stability condition, we can bound the average solution instability and the variance of $D(\bZ)$  as in \cref{thm:EfronStein}.  

However, since the above stability condition holds with high probability instead of in expectation, we can actually use a modification of McDiarmid's inequality (see \cref{thm:Combes-McDiarmid}) to prove the following, stronger tail bound: 
 \begin{lem}[Pointwise Convergence of VGC]\label[lem]{lem:VGC-pw-conv} Fix some $\btheta \in \Theta$.
 Under the assumptions of \cref{thm:WC-Constraints-bound}, there exists a dimension independent constants $C,n_0$
such that, for any $R> 1$ and $n\ge n_0 e^R$, we have with probability at least $1-4\exp(-R)$,
\[
\left|D(\bZ,\btheta)-\mathbb{E}\left[D(\bZ,\btheta)\right]\right|
\ \le \ 
	C V^3 \log^2(V) \frac{\log^4(n)\sqrt n}{h} \sqrt{R}. 
\]
\end{lem}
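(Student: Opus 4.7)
The plan is to apply the modified bounded-differences inequality (\cref{thm:Combes-McDiarmid}) to $D(\bZ,\btheta)$ viewed as a function of the independent coordinates $Z_1,\ldots,Z_n$. Ordinary McDiarmid fails because, in principle, swapping a single coordinate can move the LP solution $\bx(\cdot)$ drastically and hence change $D$ by much more than any single summand. However, on the dual-stability event $\mathcal{E}_n$ of \cref{lem:DualStability} (which has high probability and on which $\|\blambda(\bz)-\blambda(\overline\bz)\|_2\le C/n$ whenever $\bz,\overline\bz$ differ in a single coordinate), the coordinate-wise increments of $D$ become tightly controlled; \cref{thm:Combes-McDiarmid} is designed precisely to handle bounded-difference constants that hold only on a high-probability event.

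First I would use \cref{lem:FullyRandomizedVGC} to write
\[
 D(\bZ,\btheta)-D(\bZ^k,\btheta) \;=\; \E\!\left[\sum_{j:\,a_j\neq 0}\frac{\delta_j}{h\sqrt{\nu_j}\,a_j(\btheta)}\bigl(x_j(\bZ+\delta_j\tilde U_j\be_j)-x_j(\bZ^k+\delta_j\tilde U_j\be_j)\bigr)\,\Big|\,\bZ,\bZ^k\right].
\]
Complementary slackness in \cref{eq:wc-c-plug-in-problem} gives $x_j(\bz)=\Ib{r_j(z_j)>\bA_j^\top\blambda(\bz)}$ up to at most $m$ fractional components, so, absorbing the $O(m/n)$ rounding into lower-order terms, each summand is dominated by the indicator that $r_j(\cdot)$ (at the relevant perturbed argument) lies in an interval $I_{j,k}(\btheta)$ whose length is bounded by $C_A\|\blambda(\bZ+\delta_j\tilde U_j\be_j)-\blambda(\bZ^k+\delta_j\tilde U_j\be_j)\|_2\le C/n$ on $\mathcal{E}_n$. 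Counting the $j$'s for which $r_j$ falls in these shrinking intervals is an empirical-process problem over the family of intervals indexed by $(\btheta,\blambda)$; combining the pseudo-dimension bound from \cref{asn:LiftedAffinePlugInII} with a Gaussian anti-concentration bound from \cref{asn:Gaussian}, a peeling argument over interval widths yields
\[
 \sup_k \sum_{j=1}^n \Ib{r_j(\cdot)\in I_{j,k}(\btheta)} \;\le\; C\,V^3\log^2(V)\,\log^3(n)+R
\]
with probability at least $1-O(e^{-R})$. Combined with a Gaussian tail bound $|\delta_j|\le O(\sqrt{h\log n})$ and the scaling $1/(h\sqrt{\nu_j}a_j(\btheta))$ in each summand, this produces a uniform bounded-difference constant $c_n \le C\,V^3\log^2(V)\log^4(n)/h$ valid on the good event $\mathcal{E}$ (the intersection of the dual-stability and counting events).

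Finally I would apply \cref{thm:Combes-McDiarmid} with the differences $c_n$ on $\mathcal{E}$ and the crude pointwise bound $|D_j|\le\sqrt{3}/(\vmin^{3/4}\sqrt h)$ from \cref{lem:VGCBounded} off $\mathcal{E}$. A union bound over the dual-stability event applied to $\bZ$ and each of its $n$ coordinate-replacements, together with the counting event, contributes $O(n\,e^{-R})$ to the failure probability, which is absorbed by the assumption $n\ge n_0 e^R$ after adjusting constants. Since $\sum_k c_n^2 = O(n\,V^6\log^4(V)\log^8(n)/h^2)$, the resulting sub-Gaussian tail bound is of order $V^3\log^2(V)\log^4(n)\sqrt n \sqrt R/h$, with probability at least $1-4e^{-R}$. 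The main obstacle is the uniform counting step: because $I_{j,k}(\btheta)$ depends on the data through $\blambda(\bZ^k)$ and has width shrinking like $1/n$, a naive VC bound on the indicator class gives a suboptimal rate, and the polylogarithmic factors $V^3\log^2 V$ and $\log^4 n$ in the conclusion are precisely the cost of handling this via careful peeling over interval widths.
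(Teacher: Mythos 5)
Your high-level architecture is the same as the paper's: restrict to the good set $\mathcal E_n$, apply the extension of McDiarmid's inequality (\cref{thm:Combes-McDiarmid}), and handle the off-event and the re-centering at $\Eb{D(\bZ)\mid \bZ\in\mathcal E_n}$ via the boundedness of the VGC (\cref{lem:VGCBounded}) and $\Pb{\bZ\notin\mathcal E_n}\le C/n$. Where you diverge is in how the bounded-differences constant for $D$ is obtained, and this is where the gap lies. The paper proves a \emph{deterministic, pairwise} inequality $\abs{D(\bz)-D(\bzbar)}\le \tilde O(V^3\log^2 V/h)\sum_j\Ib{z_j\ne\bar z_j}$ valid for \emph{all} $\bz,\bzbar\in\mathcal E_n$ (\cref{lem:VGC-Dual-Solution-Bnd}), by writing $V(\cdot)=n\L(\blambda(\cdot),\cdot)$, sandwiching $V(\bz+\delta_j\be_j)-V(\bz)$ between the two suboptimality bounds in the dual, using the $1$-Lipschitzness of $t\mapsto t^+$, and invoking dual stability (\cref{lem:DualStability}) — with a separate treatment of the single coordinate where $z_j\ne\bar z_j$ (contributing $O(\sqrt h)$) and of large $\abs{\delta_j}$ via a Gaussian-tail argument. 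No complementary-slackness rounding or interval counting is needed for this lemma; that counting picture is the paper's heuristic for the variance bound (\cref{thm:EfronStein}), not its proof here.

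Your route through the fully-randomized representation does not, as stated, deliver an input that \cref{thm:Combes-McDiarmid} can use. The theorem requires the differences bound to hold for every pair of points in a fixed measurable set $\mathcal Y\subseteq\R^n$ differing in one coordinate; your ``counting event'' is indexed by the replaced coordinate $\bar Z_k$ and by the auxiliary randomness $(\delta_j,\tilde U_j)$, so it is not a subset of $\R^n$ and cannot play the role of $\mathcal Y$, and a bound holding merely with high probability over the replacement is not enough (this would have to be folded into a data-only condition, uniformly over dual centers, which is exactly the delicate step you defer). Your probability bookkeeping is also off: under $n\ge n_0e^R$ one has $ne^{-R}\ge n_0$, so an ``$O(ne^{-R})$'' failure term is \emph{not} absorbed; in the paper the sole role of $n\ge n_0e^R$ is to make $\Pb{\bZ\notin\mathcal E_n}\le C/n$ smaller than a fraction of $e^{-R}$ so that the requirement $\epsilon>2p$ in \cref{thm:Combes-McDiarmid} is satisfied. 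Finally, two technical points you gloss over: dual stability can only be applied when at least one argument lies in $\mathcal E_n$, so comparing $\blambda(\bZ+\delta_j\tilde U_j\be_j)$ with $\blambda(\bZ^k+\delta_j\tilde U_j\be_j)$ requires the paper's detour through the unperturbed points together with \cref{lem:DualsBoundedOnE} and a tail argument for large $\abs{\delta_j}$; and since the expectation over $(\delta_j,\tilde U_j)$ sits inside the definition of $D$, you cannot simply condition on $\abs{\delta_j}\lesssim\sqrt{h\log n}$ when forming the differences constant.
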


We then complete the proof of \cref{thm:WC-Constraints-bound} by i) invoking a covering argument over $\Theta$ to extend this tail bound to a uniform concentration for the VGC and ii) again leveraging dual stability to show the in-sample optimism (\cref{eq:ErrorExpansion_Optimism}) concentrates similarly.  See \cref{sec:appendix_pointwise_results} for the details. 

\subsection*{Comparison to Proof Technique to \citeGR~and \cite{li2019online}}
\citeGR~and \cite{li2019online} also analyze the behavior of $\blambda(\bZ)$ in the limit as $n\rightarrow \infty$.  The key to their analysis is observing that the function $\blambda \mapsto \Eb{\L(\blambda, \bZ)}$ is strongly-convex.    
Using this property, they prove that 
\begin{equation}\label{eq:exp-dual-sol-bnd-i}
	\| \blambda(\bZ) - \blambda^{*} \|_2 = \tilde{O}_p\left(\frac{1}{\sqrt{n}} \right)
\end{equation}
for some constant $\blambda^*$ that does not depend on the realization of $\bZ$. 

Our analysis via approximate strong-convexity takes a different perspective.  Specifically, instead of studying the function $\blambda \mapsto \Eb{\L(\blambda, \bZ)}$, we study the (random) function $\blambda \mapsto \L(\blambda, \bZ)$.  While more complex, this analysis permits a tighter characterization of the behavior of the dual variables. In particular, leveraging \cref{eq:HammingDual_mainbody}, one can prove a statement similar to \cref{eq:exp-dual-sol-bnd-i} (see \cref{lem:Uniform-Dual-sol-conv}), however, to the best of our knowledge, one cannot easily prove \cref{eq:HammingDual_mainbody} given the strong-convexity of $\blambda \mapsto \Eb{\L(\blambda, \bZ)}$ or  \cref{eq:exp-dual-sol-bnd-i}.  A simple triangle inequality from \cref{eq:exp-dual-sol-bnd-i} would suggest the much slower rate $ \|\blambda(\bZ) - \blambda(\bar{\bZ}) \| = O_p\left(\frac{1}{\sqrt n}\right)$.

It is an open question whether this tighter analysis might yield improved results for the online linear programming setting studied in \cite{li2019online}.

\section{Numerical Case Study: Drone-Assisted Emergency Medical Response}
\label{sec:experiments}
We reconsider \cref{ex:drone-aed} using real data from Ontario, Canada.  
Our data analysis and set-up largely mirror \cite{boutilier2019response}, however, unlike that work, our optimization formulation explicitly models response time uncertainty.  

\vspace{5pt}
\noindent \textbf{Data and Setup.}
Recall, our formulation decides the location of drone depots $(y_l)$ and dispatch rules ($x_{kl}$) where a dispatch rule  determines whether to send a drone from depot $l$ to location $k$ when requested.  Our objective is to minimize the expected response time
to out-of-hospital cardiac arrest (OHCA) events.  We consider $L = 31$ potential drone depot locations at existing paramedic, fire, and police stations in the Ontario area.  

To study the effect of problem dimension on our estimator, we vary the number of OHCA events via simulation similarly to \cite{boutilier2019response}.  
Specifically, we estimate a (spatial) probability density over Ontario for the occurrence of OHCA events using a kernel density estimator trained on $8$ years of historical OHCA events.  We then simulate $K$ (ranging from $50$ to $3{,}200$) events according to this density giving the locations $k$ used in our formulation.   

In our case-study, $\mu_{kl}$ represents the excess time a drone-assisted response takes over an ambulance-only response.  (This objective is typically negative).  We learn these constants by first training a $k$-nearest neighbors regression (kNN) for the historical ambulance response times to nearby OHCAs.  (For a sense of scale, the maximum ambulance response time is less than $1500s  = 25$ min.)
We estimate a drone response time based on the (straight-line) distance between $k$ and $l$ assuming an average speed of 27.8 m/s and 20s for take-off and landing (assumptions used in \cite{boutilier2019response}).  We then set $\mu_{kl}$ to the difference of the drone time minus the ambulance time.  These values are fixed constants throughout our simulations and range from $-3100$ seconds to $1200$ seconds.

We take $Z_{kl}$ be normally corrupted predictions of $\mu_{kl}$ where the precisions $\nu_{kl}$ are determined by bootstrapping.  Specifically, we take many bootstrap samples of our original historical dataset and refit the $k$-nearest neighbor regression and recompute an estimate of ambulance and drone response times.  The precision $\nu_{kl}$ is taken to be the reciprocal of the variance of these bootstrap replicates. Precisions range from $\nu_{\min} = 4 \times 10^{-7}$ to $\nu_{\max} = 2 \times 10^{-4}$. 



\vspace{5pt} \noindent \textbf{Policy Class}.
To determine dispatch rules for our case study, we consider the following policies:
\begin{align*}
	\bx(\bZ,\bm{W},(\tau,\mu_{0}))\in\arg \min_{\bx\in\mathcal{X}}\sum_{k=1}^{K}\sum_{j=1}^{L} \left(\frac{\nu_{jk}}{\nu_{jk}+\tau}Z_{jk}+\frac{\tau}{\nu_{jk}+\tau}\left(W_{jk} - \mu_{0}\right)\right)x_{jk},
\end{align*}
where $W_{jk}$ is the computed drone travel time between facility $j$ and OHCA $k$.  Our policy class consists of policies where $\tau \in [0,100\nu_{\min}]$ and $\mu_0 \in [0,1500]$.  Similar to the Mixed-Effects Policies from \cref{sec:plug-in-policy}, each policy is a weighted average between the SAA policy and a deterministic policy that dispatches to any location whose drone travel-time is at most $\mu_0$. 

For the first three experiments, we generate out-of-sample estimates using our VGC, the Stein-correction of \citeGR, and cross-validation using hold-out ($2$-fold) cross-validation. 
We assume that we are given two samples $\bZ^1, \bZ^2$ so that $Z_{jk} = \frac{1}{2} (Z_{jk}^1 + Z_{jk}^2)$.  
We set $h = n^{-1/6}$ for both the VGC and the Stein-Correction based on the recommendation of \citeGR.  
\blue{ In the last experiment, we are given one hundred samples $\bZ^i$ for $i=1,\dots,100$ where $Z_{jk} = \frac{1}{100} \sum_{i=1}^{100} Z_{jk}^i$ and generate out-of-sample estimates for $2$, $5$, $10$, $20$, $50$, and $100$ fold cross-validation. }
For ease of comparison, we present all results as a percentage relative to full-information optimal performance. 

\subsection{Results}
\begin{figure} 
\begin{center}
\scalebox{.45}{
\begin{tikzpicture}[x=1pt,y=1pt]
\definecolor{fillColor}{RGB}{255,255,255}
\path[use as bounding box,fill=fillColor,fill opacity=0.00] (0,0) rectangle (505.89,361.35);
\begin{scope}
\path[clip] (  0.00,  0.00) rectangle (505.89,361.35);
\definecolor{drawColor}{RGB}{255,255,255}
\definecolor{fillColor}{RGB}{255,255,255}

\path[draw=drawColor,line width= 0.6pt,line join=round,line cap=round,fill=fillColor] (  0.00, -0.00) rectangle (505.89,361.35);
\end{scope}
\begin{scope}
\path[clip] ( 47.09, 44.99) rectangle (500.39,355.85);
\definecolor{drawColor}{gray}{0.92}

\path[draw=drawColor,line width= 0.3pt,line join=round] ( 47.09, 94.45) --
	(500.39, 94.45);

\path[draw=drawColor,line width= 0.3pt,line join=round] ( 47.09,165.10) --
	(500.39,165.10);

\path[draw=drawColor,line width= 0.3pt,line join=round] ( 47.09,235.75) --
	(500.39,235.75);

\path[draw=drawColor,line width= 0.3pt,line join=round] ( 47.09,306.40) --
	(500.39,306.40);

\path[draw=drawColor,line width= 0.3pt,line join=round] ( 91.61, 44.99) --
	( 91.61,355.85);

\path[draw=drawColor,line width= 0.3pt,line join=round] (197.01, 44.99) --
	(197.01,355.85);

\path[draw=drawColor,line width= 0.3pt,line join=round] (307.48, 44.99) --
	(307.48,355.85);

\path[draw=drawColor,line width= 0.3pt,line join=round] (417.94, 44.99) --
	(417.94,355.85);

\path[draw=drawColor,line width= 0.6pt,line join=round] ( 47.09, 59.12) --
	(500.39, 59.12);

\path[draw=drawColor,line width= 0.6pt,line join=round] ( 47.09,129.77) --
	(500.39,129.77);

\path[draw=drawColor,line width= 0.6pt,line join=round] ( 47.09,200.42) --
	(500.39,200.42);

\path[draw=drawColor,line width= 0.6pt,line join=round] ( 47.09,271.07) --
	(500.39,271.07);

\path[draw=drawColor,line width= 0.6pt,line join=round] ( 47.09,341.72) --
	(500.39,341.72);

\path[draw=drawColor,line width= 0.6pt,line join=round] (144.31, 44.99) --
	(144.31,355.85);

\path[draw=drawColor,line width= 0.6pt,line join=round] (249.72, 44.99) --
	(249.72,355.85);

\path[draw=drawColor,line width= 0.6pt,line join=round] (365.23, 44.99) --
	(365.23,355.85);

\path[draw=drawColor,line width= 0.6pt,line join=round] (470.64, 44.99) --
	(470.64,355.85);
\definecolor{drawColor}{RGB}{230,159,0}

\path[draw=drawColor,line width= 1.7pt,dash pattern=on 1pt off 3pt on 4pt off 3pt ,line join=round] ( 77.07,250.73) --
	(143.57,151.63) --
	(210.08,143.42) --
	(276.58,125.03) --
	(343.09,117.00) --
	(409.59,110.62) --
	(476.10,112.74);
\definecolor{drawColor}{RGB}{86,180,233}

\path[draw=drawColor,line width= 1.7pt,dash pattern=on 4pt off 4pt ,line join=round] ( 77.81,192.89) --
	(144.31,124.05) --
	(210.81,113.84) --
	(277.32,100.25) --
	(343.82, 93.63) --
	(410.33, 83.56) --
	(476.83, 78.08);
\definecolor{drawColor}{RGB}{0,158,115}

\path[draw=drawColor,line width= 1.7pt,line join=round] ( 78.54,188.47) --
	(145.05,119.47) --
	(211.55,106.29) --
	(278.06, 94.30) --
	(344.56, 87.41) --
	(411.06, 80.13) --
	(477.57, 74.87);
\definecolor{fillColor}{RGB}{0,158,115}

\path[fill=fillColor] ( 73.90,183.83) --
	( 83.18,183.83) --
	( 83.18,193.11) --
	( 73.90,193.11) --
	cycle;
\definecolor{fillColor}{RGB}{86,180,233}

\path[fill=fillColor] ( 77.81,200.11) --
	( 84.05,189.28) --
	( 71.56,189.28) --
	cycle;
\definecolor{fillColor}{RGB}{230,159,0}

\path[fill=fillColor] ( 77.07,250.73) circle (  4.64);
\definecolor{fillColor}{RGB}{0,158,115}

\path[fill=fillColor] (140.41,114.83) --
	(149.69,114.83) --
	(149.69,124.11) --
	(140.41,124.11) --
	cycle;
\definecolor{fillColor}{RGB}{86,180,233}

\path[fill=fillColor] (144.31,131.26) --
	(150.56,120.44) --
	(138.06,120.44) --
	cycle;
\definecolor{fillColor}{RGB}{230,159,0}

\path[fill=fillColor] (143.57,151.63) circle (  4.64);
\definecolor{fillColor}{RGB}{0,158,115}

\path[fill=fillColor] (206.91,101.65) --
	(216.19,101.65) --
	(216.19,110.93) --
	(206.91,110.93) --
	cycle;
\definecolor{fillColor}{RGB}{86,180,233}

\path[fill=fillColor] (210.81,121.05) --
	(217.06,110.23) --
	(204.57,110.23) --
	cycle;
\definecolor{fillColor}{RGB}{230,159,0}

\path[fill=fillColor] (210.08,143.42) circle (  4.64);
\definecolor{fillColor}{RGB}{0,158,115}

\path[fill=fillColor] (273.42, 89.66) --
	(282.70, 89.66) --
	(282.70, 98.94) --
	(273.42, 98.94) --
	cycle;
\definecolor{fillColor}{RGB}{86,180,233}

\path[fill=fillColor] (277.32,107.46) --
	(283.57, 96.64) --
	(271.07, 96.64) --
	cycle;
\definecolor{fillColor}{RGB}{230,159,0}

\path[fill=fillColor] (276.58,125.03) circle (  4.64);
\definecolor{fillColor}{RGB}{0,158,115}

\path[fill=fillColor] (339.92, 82.77) --
	(349.20, 82.77) --
	(349.20, 92.05) --
	(339.92, 92.05) --
	cycle;
\definecolor{fillColor}{RGB}{86,180,233}

\path[fill=fillColor] (343.82,100.85) --
	(350.07, 90.03) --
	(337.58, 90.03) --
	cycle;
\definecolor{fillColor}{RGB}{230,159,0}

\path[fill=fillColor] (343.09,117.00) circle (  4.64);
\definecolor{fillColor}{RGB}{0,158,115}

\path[fill=fillColor] (406.43, 75.49) --
	(415.70, 75.49) --
	(415.70, 84.77) --
	(406.43, 84.77) --
	cycle;
\definecolor{fillColor}{RGB}{86,180,233}

\path[fill=fillColor] (410.33, 90.78) --
	(416.58, 79.95) --
	(404.08, 79.95) --
	cycle;
\definecolor{fillColor}{RGB}{230,159,0}

\path[fill=fillColor] (409.59,110.62) circle (  4.64);
\definecolor{fillColor}{RGB}{0,158,115}

\path[fill=fillColor] (472.93, 70.23) --
	(482.21, 70.23) --
	(482.21, 79.51) --
	(472.93, 79.51) --
	cycle;
\definecolor{fillColor}{RGB}{86,180,233}

\path[fill=fillColor] (476.83, 85.29) --
	(483.08, 74.47) --
	(470.58, 74.47) --
	cycle;
\definecolor{fillColor}{RGB}{230,159,0}

\path[fill=fillColor] (476.10,112.74) circle (  4.64);
\end{scope}
\begin{scope}
\path[clip] (  0.00,  0.00) rectangle (505.89,361.35);
\definecolor{drawColor}{RGB}{0,0,0}

\node[text=drawColor,anchor=base east,inner sep=0pt, outer sep=0pt, scale=  1.80] at ( 42.14, 52.92) {0};

\node[text=drawColor,anchor=base east,inner sep=0pt, outer sep=0pt, scale=  1.80] at ( 42.14,123.57) {20};

\node[text=drawColor,anchor=base east,inner sep=0pt, outer sep=0pt, scale=  1.80] at ( 42.14,194.22) {40};

\node[text=drawColor,anchor=base east,inner sep=0pt, outer sep=0pt, scale=  1.80] at ( 42.14,264.87) {60};

\node[text=drawColor,anchor=base east,inner sep=0pt, outer sep=0pt, scale=  1.80] at ( 42.14,335.52) {80};
\end{scope}
\begin{scope}
\path[clip] (  0.00,  0.00) rectangle (505.89,361.35);
\definecolor{drawColor}{gray}{0.20}

\path[draw=drawColor,line width= 0.6pt,line join=round] ( 44.34, 59.12) --
	( 47.09, 59.12);

\path[draw=drawColor,line width= 0.6pt,line join=round] ( 44.34,129.77) --
	( 47.09,129.77);

\path[draw=drawColor,line width= 0.6pt,line join=round] ( 44.34,200.42) --
	( 47.09,200.42);

\path[draw=drawColor,line width= 0.6pt,line join=round] ( 44.34,271.07) --
	( 47.09,271.07);

\path[draw=drawColor,line width= 0.6pt,line join=round] ( 44.34,341.72) --
	( 47.09,341.72);
\end{scope}
\begin{scope}
\path[clip] (  0.00,  0.00) rectangle (505.89,361.35);
\definecolor{drawColor}{gray}{0.20}

\path[draw=drawColor,line width= 0.6pt,line join=round] (144.31, 42.24) --
	(144.31, 44.99);

\path[draw=drawColor,line width= 0.6pt,line join=round] (249.72, 42.24) --
	(249.72, 44.99);

\path[draw=drawColor,line width= 0.6pt,line join=round] (365.23, 42.24) --
	(365.23, 44.99);

\path[draw=drawColor,line width= 0.6pt,line join=round] (470.64, 42.24) --
	(470.64, 44.99);
\end{scope}
\begin{scope}
\path[clip] (  0.00,  0.00) rectangle (505.89,361.35);
\definecolor{drawColor}{RGB}{0,0,0}

\node[text=drawColor,anchor=base,inner sep=0pt, outer sep=0pt, scale=  1.80] at (144.31, 27.65) {100};

\node[text=drawColor,anchor=base,inner sep=0pt, outer sep=0pt, scale=  1.80] at (249.72, 27.65) {300};

\node[text=drawColor,anchor=base,inner sep=0pt, outer sep=0pt, scale=  1.80] at (365.23, 27.65) {1,000};

\node[text=drawColor,anchor=base,inner sep=0pt, outer sep=0pt, scale=  1.80] at (470.64, 27.65) {3,000};
\end{scope}
\begin{scope}
\path[clip] (  0.00,  0.00) rectangle (505.89,361.35);
\definecolor{drawColor}{RGB}{0,0,0}

\node[text=drawColor,anchor=base,inner sep=0pt, outer sep=0pt, scale=  1.80] at (273.74,  9.00) {OHCA Events ($K$)};
\end{scope}
\begin{scope}
\path[clip] (  0.00,  0.00) rectangle (505.89,361.35);
\definecolor{drawColor}{RGB}{0,0,0}

\node[text=drawColor,rotate= 90.00,anchor=base,inner sep=0pt, outer sep=0pt, scale=  1.80] at ( 17.90,200.42) {Abs. Bias (\% Full-Info)};
\end{scope}
\end{tikzpicture}}
\scalebox{.45}{\input{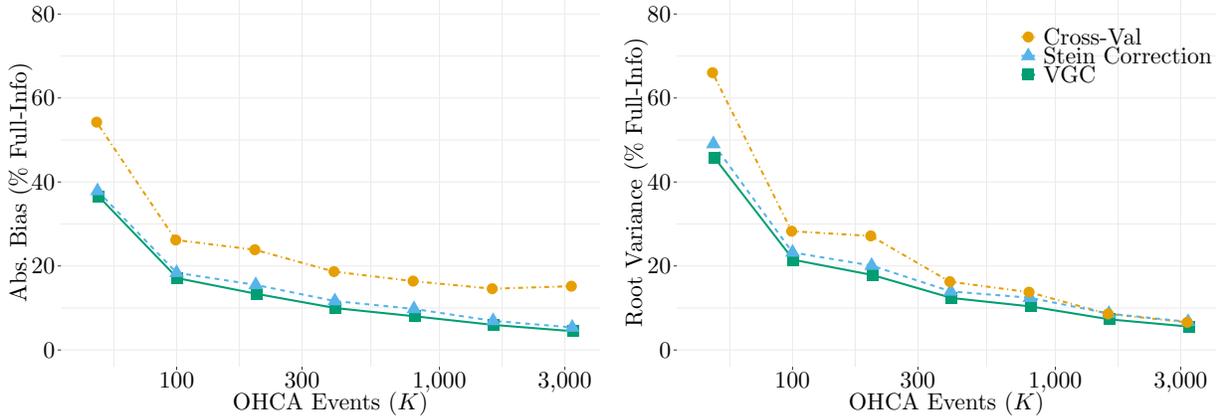}}
\end{center}
\caption{\label{fig:estimator-performance}  \textbf{Bias and variance as $K \rightarrow \infty$.} The two graphs plot the bias and variance of the different out-of-sample performance estimators for the Sample Average Approximation (SAA) policy.  The bias and variance were estimated across 100 simulations for each $K$. Although variance vanishes for all methods as $K$ increases, cross-validation exhibits a non-vanishing bias and is uniformly worse for all $K$.  
}
\end{figure}

\begin{figure}
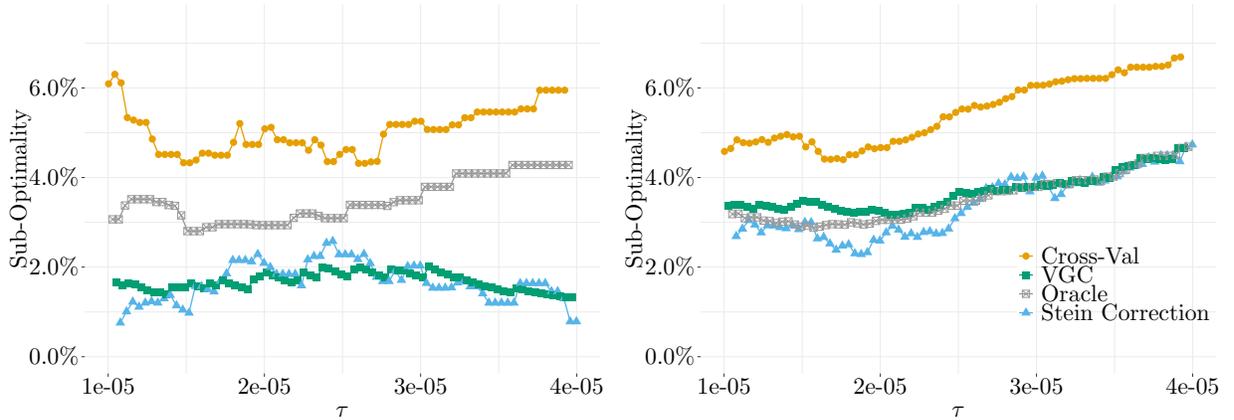
 
\begin{center}
\scalebox{.45}{\input{Figures/Size_25_estimates.tex}}
\scalebox{.45}{\input{Figures/Size_100_estimates.tex}}
\end{center}
\caption{\label{fig:policy-size}  \textbf{Estimating Performance across Policy Class.}
The first graph shows the estimates of out-of-sample performance across the policy class for the parameter $\tau \in [25\nu_{\min},100\nu_{\min}]$ and $\mu_0 = 1000$ for one sample path when $K = 400$.  The second graph is similar, but for $K = 3200$.  Both plots  highlight the smoothness of VGC relative to the Stein-Correction.
}
\end{figure}

In our first experiment, we evaluate the bias and square root variance of each method for the out-of-sample performance of the SAA policy ($\tau = 0$)  as $K$, the number of OHCA events grows (see \cref{fig:estimator-performance}).  
As predicted by our theoretical analysis, the quality of the out-of-sample estimates improve as we increase the problem size for both the VGC and the Stein Correction. However, cross-validation incurs a non-vanishing bias because it only leverages half the data in training.  


As a second experiment, in \cref{fig:policy-size}, we can observe the quality of the estimators over multiple policies in the policy class. We highlight the smoothness of the VGC as  $\tau$ varies.  Since, for large $K$, the true performance is quite smooth, the worst-case error of VGC is generally smaller than that of the Stein Correction.  
We also note that while it appears both Stein and VGC systematically over-estimate performance, this is an artifact of the particular sample path chosen.  By contrast, cross-validation does systematically underestimate performance, because it estimates the performance of a policy trained with half the data, which is necessarily worse.  

\begin{figure} 
\begin{center}
\scalebox{.48}{\includegraphics[width=1\textwidth]{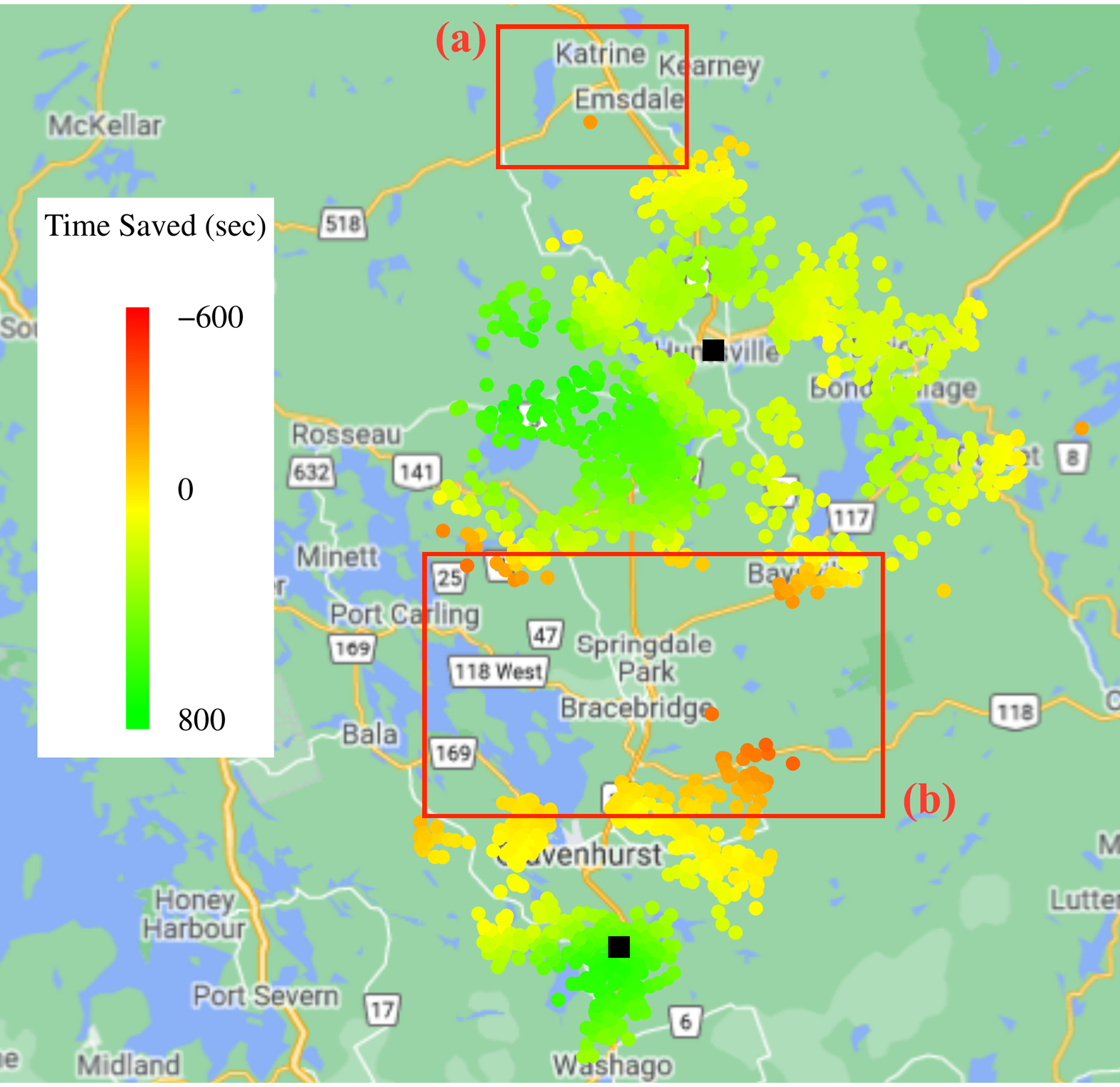}}
\scalebox{.48}{\includegraphics[width=1\textwidth]{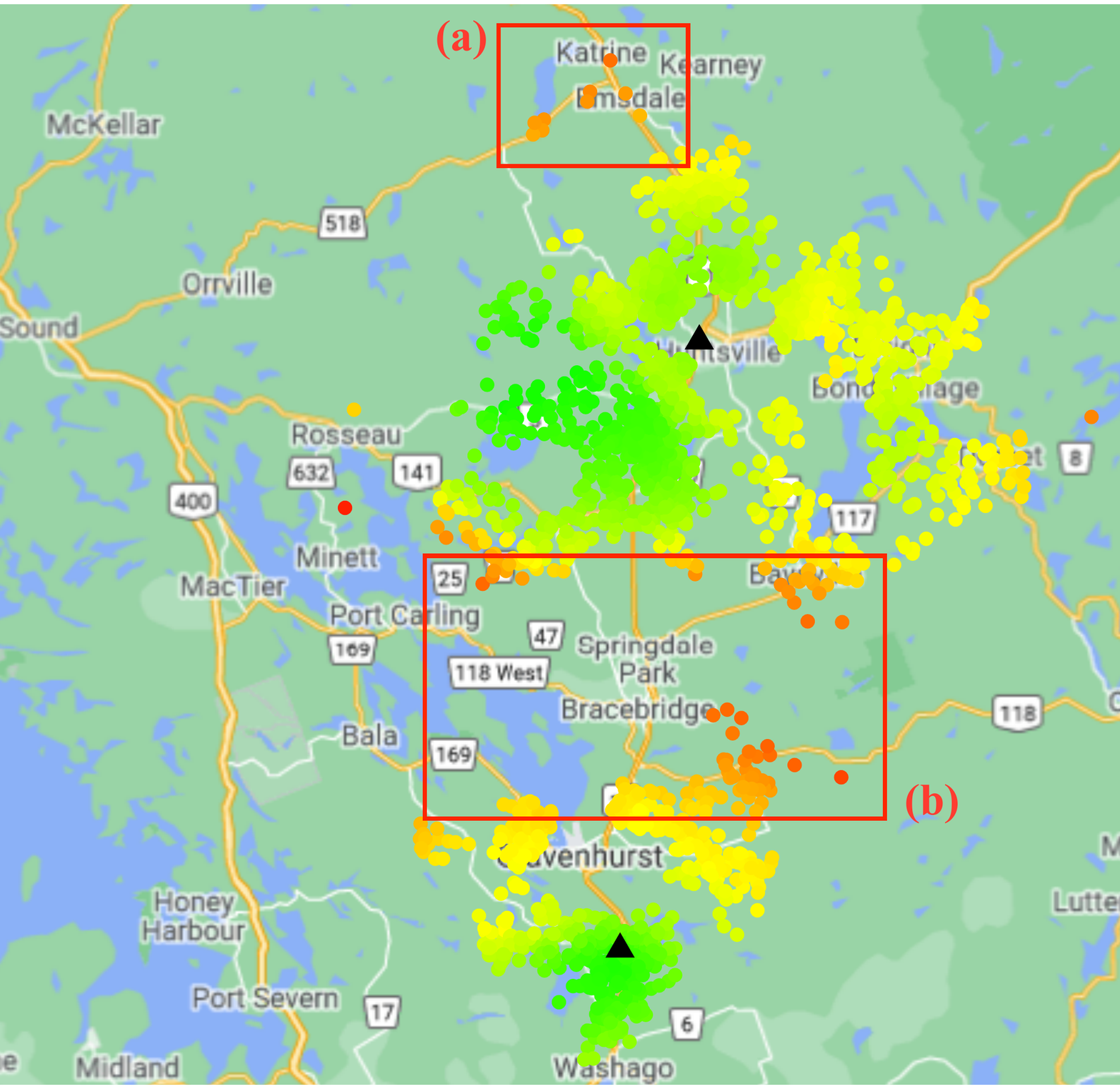}}
\end{center}
\caption{\label{fig:aed-policy-comp}  \textbf{Comparing Policy Decisions.}  Left (resp. right) panel shows routing decisions for the policy selected by VGC (resp. cross-validation).  Color indicates time-saved relative to an ambulance-only policy (green is good, red is bad) computed relative to the ground truth.  
Although routing is largely similar, Regions $(a)$ and $(b)$ highlight some differences where the cross-validation policy makes poorer routing decisions (more orange dots). The larger black points are drone depots.
}
\end{figure}

\begin{figure}
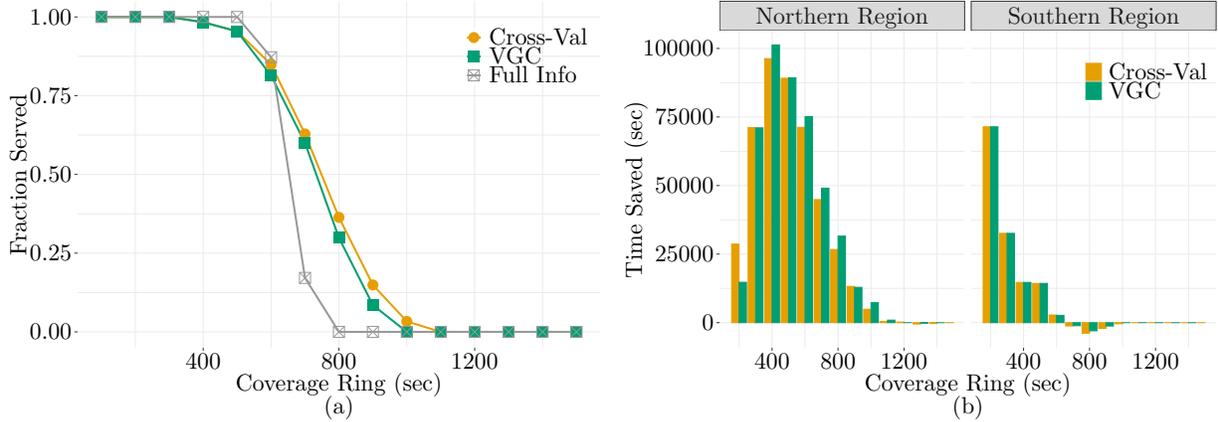
 
\begin{center}
\scalebox{.45}{\input{Figures/plot_frac_served.tex}}
\scalebox{.45}{\input{Figures/plot_time_saved.tex}}
\end{center}
\caption{\label{fig:aed-policy-comp-ii}  \textbf{Estimating Performance across Policy Class.}
Each data point in the graph represents the performance metric of each selected policy for a ring shaped region corresponding to distance in time from a drone depot. 
Graph $(a)$ shows the fraction of patients served in each region for the patients serviced by the Southern depot in \cref{fig:aed-policy-comp}. Graph $(b)$ plots the time saved by each policy. The plots highlight the performance difference in routing decisions between the two policies.
}
\end{figure}

In our third experiment, we highlight the differences in the policy selected by the VGC estimator and the policy selected by cross-validation. In \cref{fig:aed-policy-comp}, we plot the routing decisions of each policy and color code them by the true (oracle) amount of time saved. We highlight two regions (labeled $(a)$ and $(b)$) on the map where drones arrive after the ambulance. We see that in those regions, the cross-validation policy routes to more patients/regions where the drone arrives after the ambulance, thus potentially wasting drone resources and availability. We see in the regions outside of $(a)$ and $(b)$ that routing decisions of the two polices are similar and result in the drone arriving before the the ambulance. Additionally, we note that the drone depots in the southern region of the map are the same for both policies while the drone depots in the northern region are different.

 In \cref{fig:aed-policy-comp-ii}, we plot key performance metrics for regions organized by their distance from a drone depot for these two policies. Specifically, we group OHCA events into ``coverage rings" based on the travel time from the depot to the event.  Each ring is of ``width" 100s. For example, the 800 seconds coverage ring corresponds to all OHCA events that are between 701 and 800 seconds away from the drone depot. In Panel $(a)$ of \cref{fig:aed-policy-comp-ii}, we 
 restrict attention to the the southern region  \cref{fig:aed-policy-comp} where both policies have selected the same drone depot so that we can focus on routing decisions.  We  plot the fraction of patients served for each coverage ring.  We see that the policy chosen by VGC is more conservative with routing in comparison to the policy chosen by cross-validation and  more closely aligns the full information benchmark. 
 
 In panel $(b)$ of \cref{fig:aed-policy-comp-ii}, we compare the time saved between the two policies. We organize the regions into the North and South corresponding to the servicing depots. In the northern region, we see that the VGC policy saves more time in coverage rings further away from the drone depot by sacrificing time saved in closer coverage rings. This difference partially corresponds with the more conservative routing decisions of the VGC, but also can be attributed to the choice of drone depot. We see the VGC policy chooses a depot in less densely populated region
that is more centralized overall, while the cross-validation chooses a depot closer to more densely populated regions in terms of OHCA occurrences. In total, we see that the VGC policy saves $1.43\%$ more time in comparison to the cross-validation policy. However, if we breakdown the time saved with respect to minimum distance from a depot, we see that for patients within 600 seconds of an existing drone depot, the VGC performs less than $1\%$ worse in comparison to the cross-validation policy. However, if we consider patients more than 600 seconds away from existing drone depot, the VGC policy saves $13.8\%$ more time in comparison to cross-validation. We interpret this to mean that both the VGC and cross-validation policies make similar performing depot decisions, but VGC makes significantly better routing decisions, particularly at long distances.  Since these long distances are precisely where the imprecision is most crucial, we argue this is a relevant advantage. 

\blue{\label{note:cross-val-exp-discuss}Finally, we also compare how higher fold cross-validation performs with respect to VGC. In (a) of \cref{fig:cross-val-perf}, we first show how the cross-validation estimators for three different policies performs in estimation error relative to VGC as we vary the number of folds. For all the policies (where increasing $\tau$ corresponds to lower variance policies), the plot shows the root mean-squared error (MSE) of cross-validation is uniformly larger than VGC over the folds considered. Furthermore, the plot highlights a drawback of cross-validation, which is that it is not even clear how to select the optimal number of folds in order to minimize the bias variance trade-off of the cross-validation estimator. In comparison, the VGC with minimal guidance on the choice of the $h$ parameter out-performs common choices of folds for cross-validation such as 5-fold cross-validation and leave-one-out cross-validation. In (b) of \cref{fig:cross-val-perf}, we show how the performance of policy evaluation translates to policy learning in regions further away from the depot, where drone decisions are most crucial.  As expected, VGC out-performs cross-validation across all choices of folds and the fold that performs most similarly to VGC, $20$-fold, also corresponds to the cross-validation fold with the lowest MSE. This observation suggests with an ``ideal" number of folds, cross-validation can perform well in this example, but identifying the right number of folds is non-trivial.}

\begin{figure}
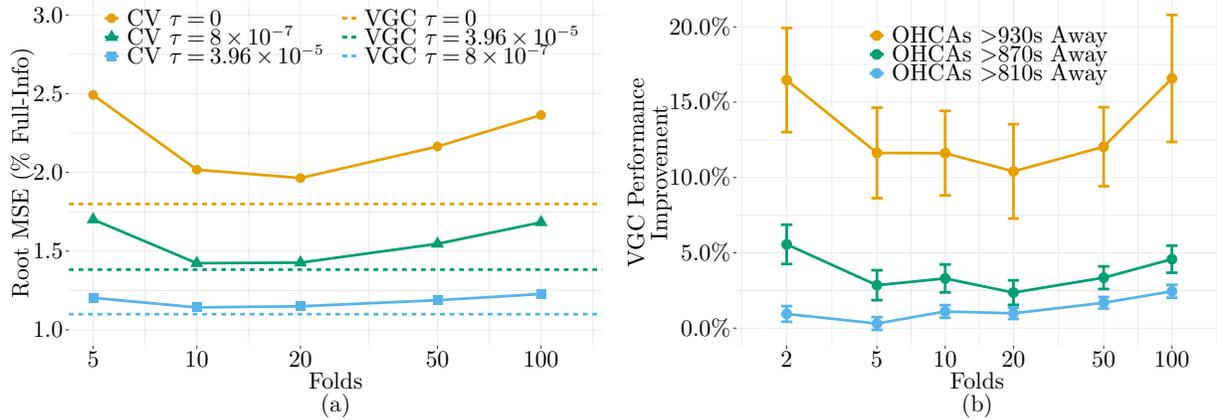
 
\begin{center}
\scalebox{.45}{\input{Figures/cv_folds_mse.tex}}
\scalebox{.45}{\input{Figures/vgc_perf_improve.tex}}
\end{center}
\caption{\label{fig:cross-val-perf}  \textbf{Varying Cross-Validation Folds.} We plot the policy evaluation and learning performance of cross-validation with different folds across 500 simulations. In each simulation there are 100 samples of $\bZ^i$. In (a), we plot the mean squared error of cross-validation for three different policies and compare them with the respective VGC estimates represented by the dotted lines. In (b), we plot the percent improvement VGC has over cross-validation, so larger bars indicate lower cross-validation performance.}
\end{figure}

\section{Conclusion}
Motivated by the poor performance of cross-validation in data-driven optimization problems where data are scarce, we propose a new estimator of the out-of-sample performance
\blue{of an affine plug-in policy}.  Unlike \mbox{cross-validation}, our estimator avoids sacrificing data and uses all the data when training, making it well-suited to settings with scarce data. We prove that our estimator is nearly unbiased, and for ``stable'' optimization problems -- problems whose optimal solutions do not change too much when the coefficient of a single random component changes -- the estimator's variance vanishes. Our notion of stability leads us to consider two special classes of weakly-coupled optimization problems: weakly-coupled-by-variables and weakly-coupled-by-constraints. For each class of problems, we prove an even stronger result and provide high-probability bounds on the error of our estimator that holds uniformly over a policy class. Additionally, in our analysis of optimization problems weakly-coupled-by-constraints, we provide new insight on the stability of the dual solutions. This new insight may provide further insight in problems that leverage the dual solution such as online linear programming.

Our work offers many exciting directions for future research.  Our solution approach to the weakly-coupled problems exploits the decomposability of the underlying optimization problems. We believe that such an approach can be generalized to other settings. \blue{Finally, 
our analysis strongly leverages the linearity of the affine plug-in policy class; it is an open question if similar debiasing techniques might be developed to handle nonlinear objective functions as well. \label{nonlinear}}

\ACKNOWLEDGMENT{
The first two author were partially supported by the National Science Foundation under Grant No. CMMI-1661732.
All three authors thank Prof. Tim Chan and Prof. Justin Boutilier for sharing simulation results and details pertaining to their paper \citep{boutilier2019response}, and the two anonymous reviewers who provided feedback on an earlier revision.
}

{
\bibliographystyle{ormsv080} 
\setlength{\bibsep}{-3pt plus .3ex}
{	\footnotesize
\bibliography{references.bib} 

\begin{thebibliography}{49}
\expandafter\ifx\csname natexlab\endcsname\relax\def\natexlab#1{#1}\fi
\expandafter\ifx\csname url\endcsname\relax
  \def\url#1{{\tt #1}}\fi
\expandafter\ifx\csname urlprefix\endcsname\relax\def\urlprefix{URL }\fi
\expandafter\ifx\csname urlstyle\endcsname\relax
  \expandafter\ifx\csname doi\endcsname\relax
  \def\doi#1{doi:\discretionary{}{}{}#1}\fi \else
  \expandafter\ifx\csname doi\endcsname\relax
  \def\doi{doi:\discretionary{}{}{}\begingroup \urlstyle{rm}\Url}\fi \fi

\bibitem[{Aouad et~al.(2019)Aouad, Elmachtoub, Ferreira, and
  McNellis}]{aouad2019market}
Aouad, Ali, Adam~N Elmachtoub, Kris~J Ferreira, Ryan McNellis. 2019.
\newblock Market segmentation trees.
\newblock {\it arXiv preprint arXiv:1906.01174\/} .

\bibitem[{Baardman et~al.(2019)Baardman, Cohen, Panchamgam, Perakis, and
  Segev}]{baardman2019scheduling}
Baardman, Lennart, Maxime~C. Cohen, Kiran Panchamgam, Georgia Perakis, Danny
  Segev. 2019.
\newblock Scheduling promotion vehicles to boost profits.
\newblock {\it Management Science\/} {\bf 65}(1) 50--70.

\bibitem[{Bertsekas(1997)}]{bertsekas1997nonlinear}
Bertsekas, Dimitri~P. 1997.
\newblock Nonlinear programming.
\newblock {\it Journal of the Operational Research Society\/} {\bf 48}(3)
  334--334.

\bibitem[{Bousquet and Elisseeff(2001)}]{bousquet2001algorithmic}
Bousquet, Olivier, Andr{\'e} Elisseeff. 2001.
\newblock Algorithmic stability and generalization performance.
\newblock {\it Advances in Neural Information Processing Systems\/}. 196--202.

\bibitem[{Boutilier and Chan(2019)}]{boutilier2019response}
Boutilier, Justin~J, Timothy~CY Chan. 2019.
\newblock Response time optimization for drone-delivered automated external
  defibrillators.
\newblock {\it arXiv preprint arXiv:1908.00149\/} .

\bibitem[{Breiman(1996)}]{breiman1996bagging}
Breiman, Leo. 1996.
\newblock Bagging predictors.
\newblock {\it Machine Learning\/} {\bf 24}(2) 123--140.

\bibitem[{Buja et~al.(1989)Buja, Hastie, and Tibshirani}]{buja1989linear}
Buja, Andreas, Trevor Hastie, Robert Tibshirani. 1989.
\newblock Linear smoothers and additive models.
\newblock {\it The Annals of Statistics\/}  453--510.

\bibitem[{Cheskes et~al.(2020)Cheskes, McLeod, Nolan, Snobelen, Vaillancourt,
  Brooks, Dainty, Chan, and Drennan}]{cheskes2020improving}
Cheskes, Sheldon, Shelley~L McLeod, Michael Nolan, Paul Snobelen, Christian
  Vaillancourt, Steven~C Brooks, Katie~N Dainty, Timothy~CY Chan, Ian~R
  Drennan. 2020.
\newblock Improving access to automated external defibrillators in rural and
  remote settings: a drone delivery feasibility study.
\newblock {\it Journal of the American Heart Association\/} {\bf 9}(14)
  e016687.

\bibitem[{Chu et~al.(2021)Chu, Leung, Snobelen, Nevils, Drennan, Cheskes, and
  Chan}]{chu2021machine}
Chu, Jamal, KH~Benjamin Leung, Paul Snobelen, Gordon Nevils, Ian~R Drennan,
  Sheldon Cheskes, Timothy~CY Chan. 2021.
\newblock Machine learning-based dispatch of drone-delivered defibrillators for
  out-of-hospital cardiac arrest.
\newblock {\it Resuscitation\/} {\bf 162} 120--127.

\bibitem[{Cohen et~al.(2021)Cohen, Kalas, and Perakis}]{cohen2018promotion}
Cohen, Maxime~C, Jeremy~J Kalas, Georgia Perakis. 2021.
\newblock Promotion optimization for multiple items in supermarkets.
\newblock {\it Management Science\/} {\bf 67}(4) 2340--2364.

\bibitem[{Cohen et~al.(2017)Cohen, Leung, Panchamgam, Perakis, and
  Smith}]{cohen2017impact}
Cohen, Maxime~C., Ngai-Hang~Zachary Leung, Kiran Panchamgam, Georgia Perakis,
  Anthony Smith. 2017.
\newblock The impact of linear optimization on promotion planning.
\newblock {\it Operations Research\/} {\bf 65}(2) 446--468.

\bibitem[{Combes(2015)}]{combes2015extension}
Combes, Richard. 2015.
\newblock An extension of mcdiarmid's inequality.
\newblock {\it arXiv preprint arXiv:1511.05240\/} .

\bibitem[{Deng and Sen(2018)}]{deng2018learning}
Deng, Yunxiao, Suvrajeet Sen. 2018.
\newblock Learning enabled optimization: Towards a fusion of statistical
  learning and stochastic programming.
\newblock {\it INFORMS Journal on Optimization (submitted)\/} .

\bibitem[{Donti et~al.(2017)Donti, Amos, and Kolter}]{donti2017task}
Donti, Priya, Brandon Amos, J~Zico Kolter. 2017.
\newblock Task-based end-to-end model learning in stochastic optimization.
\newblock {\it Advances in Neural Information Processing Systems\/}.
  5484--5494.

\bibitem[{Dud{\'\i}k et~al.(2011)Dud{\'\i}k, Langford, and
  Li}]{dudik2011doubly}
Dud{\'\i}k, Miroslav, John Langford, Lihong Li. 2011.
\newblock Doubly robust policy evaluation and learning.
\newblock {\it Proceedings of the 28th International Conference on
  International Conference on Machine Learning\/}. 1097--1104.

\bibitem[{El~Balghiti et~al.(2019)El~Balghiti, Elmachtoub, Grigas, and
  Tewari}]{el2019generalization}
El~Balghiti, Othman, Adam~N Elmachtoub, Paul Grigas, Ambuj Tewari. 2019.
\newblock Generalization bounds in the predict-then-optimize framework.
\newblock {\it Advances in Neural Information Processing Systems\/}.
  14412--14421.

\bibitem[{Elmachtoub et~al.(2020)Elmachtoub, Liang, and
  McNellis}]{elmachtoub2020decision}
Elmachtoub, Adam, Jason Cheuk~Nam Liang, Ryan McNellis. 2020.
\newblock Decision trees for decision-making under the predict-then-optimize
  framework.
\newblock {\it International Conference on Machine Learning\/}. PMLR,
  2858--2867.

\bibitem[{Elmachtoub and Grigas(2021)}]{AdamSPO}
Elmachtoub, Adam~N, Paul Grigas. 2021.
\newblock Smart “predict, then optimize”.
\newblock {\it Management Science\/} .

\bibitem[{Friedman et~al.(2001)Friedman, Hastie, and
  Tibshirani}]{friedman2001elements}
Friedman, J., T.~Hastie, R.~Tibshirani. 2001.
\newblock {\it The Elements of Statistical Learning\/}.
\newblock Springer, Berlin.

\bibitem[{Gamarnik(2013)}]{gamarnik2013correlation2}
Gamarnik, David. 2013.
\newblock Correlation decay method for decision, optimization, and inference in
  large-scale networks.
\newblock {\it Theory Driven by Influential Applications\/}. INFORMS, 108--121.

\bibitem[{Gelman et~al.(2014)Gelman, Carlin, Stern, and
  Rubin}]{gelman2014bayesian}
Gelman, A., J.B. Carlin, H.S. Stern, D.B. Rubin. 2014.
\newblock {\it {Bayesian} Data Analysis\/}, vol.~2.
\newblock Chapman \& Hall/CRC Boca Raton, FL, USA.

\bibitem[{Gupta and Kallus(2021)}]{gupta2019shrunk}
Gupta, Vishal, Nathan Kallus. 2021.
\newblock Data pooling in stochastic optimization.
\newblock {\it Management Science\/}
  \urlprefix\url{https://doi.org/10.1287/mnsc.2020.3933}.

\bibitem[{Gupta and Rusmevichientong(2021)}]{gupta2017small}
Gupta, Vishal, Paat Rusmevichientong. 2021.
\newblock Small-data, large-scale linear optimization with uncertain
  objectives.
\newblock {\it Management Science\/} {\bf 67}(1) 220--241.

\bibitem[{Hastie et~al.(2020{\natexlab{a}})Hastie, Tibshirani, and
  Tibshirani}]{hastie2017extended}
Hastie, Trevor, Robert Tibshirani, Ryan Tibshirani. 2020{\natexlab{a}}.
\newblock {Best Subset, Forward Stepwise or Lasso? Analysis and Recommendations
  Based on Extensive Comparisons}.
\newblock {\it Statistical Science\/} {\bf 35}(4) 579 -- 592.
\newblock \doi{10.1214/19-STS733}.
\newblock \urlprefix\url{https://doi.org/10.1214/19-STS733}.

\bibitem[{Hastie et~al.(2020{\natexlab{b}})Hastie, Tibshirani, and
  Tibshirani}]{hastie2020best}
Hastie, Trevor, Robert Tibshirani, Ryan Tibshirani. 2020{\natexlab{b}}.
\newblock Best subset, forward stepwise or lasso? analysis and recommendations
  based on extensive comparisons.
\newblock {\it Statistical Science\/} {\bf 35}(4) 579--592.

\bibitem[{Hu et~al.(2022)Hu, Kallus, and Mao}]{hu2022fast}
Hu, Yichun, Nathan Kallus, Xiaojie Mao. 2022.
\newblock Fast rates for contextual linear optimization.
\newblock {\it Management Science\/} .

\bibitem[{Ignatiadis and Wager(2019)}]{NEURIPS2019_48f7d304}
Ignatiadis, Nikolaos, Stefan Wager. 2019.
\newblock Covariate-powered empirical bayes estimation.
\newblock H.~Wallach, H.~Larochelle, A.~Beygelzimer, F.~d\textquotesingle
  Alch\'{e}-Buc, E.~Fox, R.~Garnett, eds., {\it Advances in Neural Information
  Processing Systems\/}, vol.~32. Curran Associates, Inc.

\bibitem[{Ito et~al.(2018)Ito, Yabe, and Fujimaki}]{ito18a}
Ito, Shinji, Akihiro Yabe, Ryohei Fujimaki. 2018.
\newblock Unbiased objective estimation in predictive optimization.
\newblock Jennifer Dy, Andreas Krause, eds., {\it Proceedings of the 35th
  International Conference on Machine Learning\/}, vol.~80. Stockholm Sweden,
  2176--2185.

\bibitem[{Javanmard and Montanari(2014)}]{javanmard2014confidence}
Javanmard, Adel, Andrea Montanari. 2014.
\newblock Confidence intervals and hypothesis testing for high-dimensional
  regression.
\newblock {\it The Journal of Machine Learning Research\/} {\bf 15}(1)
  2869--2909.

\bibitem[{Javanmard and Montanari(2018)}]{javanmard2018debiasing}
Javanmard, Adel, Andrea Montanari. 2018.
\newblock Debiasing the lasso: Optimal sample size for gaussian designs.
\newblock {\it Annals of Statistics\/} {\bf 46}(6A) 2593--2622.

\bibitem[{Javanmard et~al.(2020)Javanmard, Nazerzadeh, and
  Shao}]{javanmard2020multiproduct}
Javanmard, Adel, Hamid Nazerzadeh, Simeng Shao. 2020.
\newblock Multi-product dynamic pricing in high-dimensions with heterogeneous
  price sensitivity.
\newblock {\it 2020 IEEE International Symposium on Information Theory
  (ISIT)\/}. 2652--2657.
\newblock \doi{10.1109/ISIT44484.2020.9174296}.

\bibitem[{Kearns and Ron(1999)}]{kearns1999algorithmic}
Kearns, Michael, Dana Ron. 1999.
\newblock Algorithmic stability and sanity-check bounds for leave-one-out
  cross-validation.
\newblock {\it Neural Computation\/} {\bf 11}(6) 1427--1453.

\bibitem[{LeVeque(2007)}]{leveque2007finite}
LeVeque, Randall~J. 2007.
\newblock {\it Finite difference methods for ordinary and partial differential
  equations: steady-state and time-dependent problems\/}.
\newblock SIAM.

\bibitem[{Li and Ye(2019)}]{li2019online}
Li, Xiaocheng, Yinyu Ye. 2019.
\newblock Online linear programming: Dual convergence, new algorithms, and
  regret bounds.
\newblock {\it arXiv preprint arXiv:1909.05499\/} .

\bibitem[{Liyanage and Shanthikumar(2005)}]{liyanage2005practical}
Liyanage, Liwan~H, J~George Shanthikumar. 2005.
\newblock A practical inventory control policy using operational statistics.
\newblock {\it Operations Research Letters\/} {\bf 33}(4) 341--348.

\bibitem[{Pani and Sahin(2017)}]{panilarge}
Pani, S.~Raghavan, Abhishek, Mustafa Sahin. 2017.
\newblock Large-scale advertising portfolio optimization in online marketing.
\newblock {\it Working Paper\/}
  \urlprefix\url{http://terpconnect.umd.edu/~raghavan/preprints/lsoapop.pdf}.

\bibitem[{Pollard(1990)}]{pollard1990empirical}
Pollard, David. 1990.
\newblock Empirical processes: Theory and applications.
\newblock {\it NSF-CBMS Regional Conference Series in Probability and
  Statistics\/}, vol.~2. Institute of Mathematical Statistics, i--86.

\bibitem[{Rusmevichientong(2006)}]{rusmevichientong2006adaptive}
Rusmevichientong, David P.~Williamson, Paat. 2006.
\newblock An adaptive algorithm for selecting profitable keywords for
  search-based advertising services.
\newblock {\it Proceedings of the 7th ACM Conference on Electronic Commerce\/}.
  ACM, 260--269.

\bibitem[{Sanfridsson et~al.(2019)Sanfridsson, Sparrevik, Hollenberg, Nordberg,
  Dj{\"a}rv, Ringh, Svensson, Forsberg, Nord, Andersson-Hagiwara
  et~al.}]{sanfridsson2019drone}
Sanfridsson, J, J~Sparrevik, J~Hollenberg, P~Nordberg, T~Dj{\"a}rv, M~Ringh,
  L~Svensson, S~Forsberg, A~Nord, Magnus Andersson-Hagiwara, et~al. 2019.
\newblock Drone delivery of an automated external defibrillator--a mixed method
  simulation study of bystander experience.
\newblock {\it Scandinavian journal of trauma, resuscitation and emergency
  medicine\/} {\bf 27}(1) 1--9.

\bibitem[{Schmoys and Wang(2019)}]{schmoys_wang_2019}
Schmoys, David, Shujing Wang. 2019.
\newblock How to solve a linear optimization problem on incentive allocation?
\newblock
  \urlprefix\url{https://eng.lyft.com/how-to-solve-a-linear-optimization-problem-on-incentive-allocation-5a8fb5d04db1}.

\bibitem[{Shalev-Shwartz et~al.(2010)Shalev-Shwartz, Shamir, Srebro, and
  Sridharan}]{shalev2010learnability}
Shalev-Shwartz, Shai, Ohad Shamir, Nathan Srebro, Karthik Sridharan. 2010.
\newblock Learnability, stability and uniform convergence.
\newblock {\it The Journal of Machine Learning Research\/} {\bf 11} 2635--2670.

\bibitem[{Shao(1993)}]{shao1993linear}
Shao, Jun. 1993.
\newblock Linear model selection by cross-validation.
\newblock {\it Journal of the American statistical Association\/} {\bf 88}(422)
  486--494.

\bibitem[{Smith and Winkler(2006)}]{smith2006optimizer}
Smith, James~E, Robert~L Winkler. 2006.
\newblock The optimizer's curse: Skepticism and postdecision surprise in
  decision analysis.
\newblock {\it Management Science\/} {\bf 52}(3) 311--322.

\bibitem[{Van~der Vaart(2000)}]{van2000asymptotic}
Van~der Vaart, A.~W. 2000.
\newblock {\it Asymptotic Statistics\/}.
\newblock No.~3 in Cambridge Series in Statistical and Probabilistic
  Mathematics, Cambridge University Press, Cambridge, UK.

\bibitem[{Wager and Athey(2018)}]{wager2018estimation}
Wager, Stefan, Susan Athey. 2018.
\newblock Estimation and inference of heterogeneous treatment effects using
  random forests.
\newblock {\it Journal of the American Statistical Association\/} {\bf
  113}(523) 1228--1242.

\bibitem[{Wainwright(2019)}]{wainwright2019high}
Wainwright, Martin~J. 2019.
\newblock {\it High-dimensional statistics: A non-asymptotic viewpoint\/},
  vol.~48.
\newblock Cambridge University Press.

\bibitem[{Wilder et~al.(2019{\natexlab{a}})Wilder, Dilkina, and
  Tambe}]{wilder2019melding}
Wilder, Bryan, Bistra Dilkina, Milind Tambe. 2019{\natexlab{a}}.
\newblock Melding the data-decisions pipeline: Decision-focused learning for
  combinatorial optimization.
\newblock {\it Proceedings of the AAAI Conference on Artificial
  Intelligence\/}, vol.~33. 1658--1665.

\bibitem[{Wilder et~al.(2019{\natexlab{b}})Wilder, Ewing, Dilkina, and
  Tambe}]{wilder2019end}
Wilder, Bryan, Eric Ewing, Bistra Dilkina, Milind Tambe. 2019{\natexlab{b}}.
\newblock End to end learning and optimization on graphs.
\newblock {\it Advances in Neural Information Processing Systems\/}.
  4672--4683.

\bibitem[{Zhang and Zhang(2014)}]{zhang2014confidence}
Zhang, Cun-Hui, Stephanie~S. Zhang. 2014.
\newblock Confidence intervals for low dimensional parameters in high
  dimensional linear models.
\newblock {\it Journal of the Royal Statistical Society: Series B: Statistical
  Methodology\/}  217--242.

\end{thebibliography}
}
}

%

\newpage

\newpage

\ECSwitch
\ECDisclaimer
\ECHead{
\begin{center}
Online Appendix:
\vspace{8pt} Debiasing In-Sample Policy Performance for Small-Data,  Large-Scale Optimization
\end{center}
}

\begin{APPENDICES}
\section{Background Results on Empirical Processes} 
\label{sec:background_results_on_empirical_processes}

In this appendix we collect some results on the suprema of empirical processes that we will require in our proofs.  All results are either known or easily derived from known results.  Our summary is necessarily brief and we refer the reader to \cite{pollard1990empirical} for a self-contained exposition.  

Let $\Psi (t) = \frac{1}{5} \exp ( t^2 )$. For any real-valued random variable $Z$, we define the $\Psi$-norm $\left\Vert Z \right\Vert_{\Psi}$ to be $\left\Vert Z \right\Vert_{\Psi} \equiv \inf \left\{C > 0 : \mathbb{E}[ \Psi ( \abs{Z} / C )] \le 1 \right\}$. 
Random variables with finite $\Psi$-norm are sub-Gaussian random variables.
We first recall a classical result on the suprema of sub-Gaussian processes over finite sets.

\begin{thm}[Suprema of Stochastic Processes over Finite Sets] \label{thm:Pollard} 
Let
 \[ \bm{f}(\btheta)=(f_{1}(\btheta),\dots,f_{K}(\btheta))\in\mathbb{R}^{K} \]
be a vector of $K$ independent stochastic processes indexed by $\btheta \in \Theta$.
Let 
$\bm{F} \in \mathbb{R}^K_{+}$ be a random vector such that $\abs{f_{k}(\btheta)} \le F_{k}$ for all $\btheta \in \Theta$, $k=1,\dots ,K$, and suppose there exists a constant $M < \infty$ such that  $\abs{\left\{ \bm{f}(\btheta) : \btheta \in \Theta \right\}} \le M$ almost surely. Then, for any $ R > 1$, there exists an absolute constant $C$ such that with probability $1-e^{-R}$
\[
	\sup_{\btheta \in \Theta} \abs{\sum_{k=1}^K f_{k}(\btheta) - \Eb{ \sum_{k=1}^K f_{k}(\btheta) } }
	\le 
	C \cdot  R \cdot 
	\left\Vert \left\Vert \bm{F} \right\Vert_{2} \right\Vert_{\Psi}
	\sqrt{ \log M}. 
\]
\end{thm}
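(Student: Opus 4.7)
The plan is to reduce the supremum to a maximum over at most $M$ vectors (exploiting the finite-cardinality hypothesis), control that maximum by a sub-Gaussian maximal inequality conditional on $\bm F$, and then integrate the tail with respect to $\bm F$ using its $\Psi$-norm.

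First, I would apply standard symmetrization.  Let $\bm f'$ be an independent copy of $\bm f$, and let $\epsilon_1,\dots,\epsilon_K$ be i.i.d.\ Rademacher variables independent of $(\bm f,\bm f')$.  By Jensen's inequality and independence of the coordinates,
\[
\sup_{\btheta \in \Theta}\Bigl|\sum_{k=1}^K f_k(\btheta)-\mathbb E\sum_{k=1}^K f_k(\btheta)\Bigr|
\;\le\;
\sup_{\btheta \in \Theta}\Bigl|\sum_{k=1}^K \bigl(f_k(\btheta)-f'_k(\btheta)\bigr)\Bigr|
\;\stackrel{d}{=}\;
\sup_{\btheta \in \Theta}\Bigl|\sum_{k=1}^K \epsilon_k\bigl(f_k(\btheta)-f'_k(\btheta)\bigr)\Bigr|,
\]
so up to a factor $2$ it suffices to bound $S\equiv\sup_{\btheta}\bigl|\sum_{k=1}^K \epsilon_k f_k(\btheta)\bigr|$.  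By the envelope bound, the coupled envelope is still $\bm F+\bm F'$, whose $\Psi$-norm is comparable to that of $\bm F$, so I will treat the single-sample version for clarity.

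Next, I would condition on $\bm f$ (and hence on $\bm F$).  For each fixed $\btheta$, the Rademacher sum $\sum_k \epsilon_k f_k(\btheta)$ is, by Hoeffding's lemma, sub-Gaussian in $\epsilon$ with parameter $\|\bm f(\btheta)\|_2\le\|\bm F\|_2$.  The finite-cardinality hypothesis guarantees that, conditional on $\bm f$, the set $\{\bm f(\btheta):\btheta\in\Theta\}$ has at most $M$ distinct elements, so $S$ is a maximum of at most $M$ sub-Gaussian random variables, each with parameter $\|\bm F\|_2$.  A standard sub-Gaussian maximal inequality (union bound on the exponential tail) then yields, for every $t>0$,
\[
\mathbb P\bigl(S \ge t \bigm| \bm F\bigr)
\;\le\;
2M\exp\!\Bigl(-\tfrac{t^2}{2\|\bm F\|_2^{\,2}}\Bigr),
\]
which is equivalent to the $\Psi$-norm bound $\bigl\|\,S\,\bigl|\,\bm F\,\bigr\|_{\Psi}\le C_1\|\bm F\|_2\sqrt{\log M}$ for an absolute constant $C_1$.

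Finally, I would de-condition.  Because $\Psi$ is convex and the conditional $\Psi$-norm is dominated by $C_1\|\bm F\|_2\sqrt{\log M}$, an application of the Orlicz tower property (or equivalently, the elementary fact that if $\|X\mid Y\|_\Psi\le A(Y)$ then $\|X\|_\Psi\le C\|A(Y)\|_\Psi$) gives
\[
\|S\|_{\Psi}\;\le\;C_2\,\bigl\|\|\bm F\|_2\bigr\|_{\Psi}\sqrt{\log M}.
\]
Translating the $\Psi$-norm into a tail: the definition of $\Psi$ implies $\mathbb P(|X|\ge t\|X\|_\Psi)\le5\exp(-t^2)$, so choosing $t=\sqrt{R}$ with $R>1$ yields $\mathbb P(S\ge C_3\bigl\|\|\bm F\|_2\bigr\|_\Psi\sqrt{R\log M})\le e^{-R}$.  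Since $R\ge1$, $\sqrt{R\log M}\le R\sqrt{\log M}$, which gives the stated bound.

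The main obstacle is not any single step but the bookkeeping needed to pass cleanly from a conditional concentration bound (where $\bm F$ is treated as deterministic) to a joint bound, while preserving the correct dependence on $R$ and on $\bigl\|\|\bm F\|_2\bigr\|_\Psi$ rather than weaker moment quantities.  The convexity of $\Psi$ and the tower property for Orlicz norms are the essential ingredients that make this combination work; replacing the Orlicz tower step with a naive union bound would produce an extra $\sqrt{\log}$ factor.
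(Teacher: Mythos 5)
There is a genuine gap, and it sits exactly in your de-conditioning step. The ``Orlicz tower property'' you invoke --- if $\|X\mid Y\|_{\Psi}\le A(Y)$ then $\|X\|_{\Psi}\le C\,\|A(Y)\|_{\Psi}$ --- is false for the sub-Gaussian Orlicz function $\Psi(t)=\tfrac15 e^{t^2}$. A counterexample: let $Y$ and $g$ be independent standard normals and $X=Yg$. Conditionally on $Y$, $X$ is Gaussian with $\|X\mid Y\|_{\Psi}\asymp |Y|$ and $\||Y|\|_{\Psi}<\infty$, yet $X$ has only sub-exponential tails, so $\|X\|_{\Psi}=\infty$. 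This is not a technicality you can absorb into constants: in your setting the conditional scale $\|\bm F\|_2$ is random, and passing from the conditional sub-Gaussian bound to an unconditional one genuinely degrades the tail from $e^{-t^2}$ to $e^{-t}$. That is precisely why the theorem's bound is \emph{linear} in $R$ rather than of order $\sqrt{R}$; your intermediate claim $\|S\|_{\Psi}\le C_2\,\bigl\|\|\bm F\|_2\bigr\|_{\Psi}\sqrt{\log M}$ asserts a strictly stronger (and generally false) statement, and your final inequality only matches the theorem because you then weaken $\sqrt{R\log M}$ to $R\sqrt{\log M}$.

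The fix is standard and recovers exactly the stated bound: keep your conditional step (union bound over the at most $M$ realized vectors gives $\mathbb{E}\bigl[\exp\bigl(S^2/(C_1^2\|\bm F\|_2^2\log M)\bigr)\mid \bm f\bigr]\le 5$), and then de-condition via $2ab\le a^2+b^2$ and Cauchy--Schwarz: with $a=S/(C_1\|\bm F\|_2\sqrt{\log M})$ and $b=\|\bm F\|_2/\bigl\|\|\bm F\|_2\bigr\|_{\Psi}$ one gets $\mathbb{E}\exp\bigl(S/(C_1\bigl\|\|\bm F\|_2\bigr\|_{\Psi}\sqrt{\log M})\bigr)\le 25$, whence $\mathbb{P}\bigl(S\ge C R\,\bigl\|\|\bm F\|_2\bigr\|_{\Psi}\sqrt{\log M}\bigr)\le e^{-R}$ for $R>1$. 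This is exactly the de-conditioning used in the discussion preceding Eq.~(7.4) of Pollard (1990), which is what the paper's one-line proof cites after bounding the entropy integral of a finite class by $9\|\bm F\|_2\sqrt{\log M}$; your replacement of chaining by a union bound over $M$ points is a fine simplification for this finite-cardinality setting. Two smaller points: your symmetrization inequality is not valid pathwise --- it holds only after applying a convex increasing function and taking expectations (equivalently, at the level of $\Psi$-norms), which is what you actually need; and you should condition on $\bm f$ (not merely $\bm F$) when applying the Rademacher maximal inequality, and treat the trivial case $M=1$ (where $\log M=0$) separately or assume $M\ge 2$.
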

\begin{proof}{\hspace{-12pt}Proof:}
The result follows from the discussion leading up to Eq. (7.4) of \cite{pollard1990empirical} after noting that the entropy integral ($J_n(\omega)$ in the notation of \cite{pollard1990empirical}) is at most $ 9 \| \bm F \|_2 \sqrt{ \log M }$ given the conditions of the theorem.  
\hfill \Halmos \end{proof}

When considering the suprema over potentially infinite sets, we must characterize the ``size" of $\{ \bm f(\btheta) : \btheta \in \Theta \}$ more carefully.  
Recall for any set $\mathcal F$, the $\epsilon$-packing number of $\mathcal F$ is
the largest number of points we can select in $\mathcal F$ such that no two points are within $\ell_2$ distance $\epsilon$.  We denote this packing number by $D(\epsilon, \mathcal F)$.  We restrict attention to sets whose packing numbers do not grow too fast.  
\begin{defn}[Euclidean Sets]
We say a set $\mathcal F$ is \emph{Euclidean} if 
there exists constants $A$ and $W$ such that 
\[
	D(\epsilon \delta, \mathcal F) \ \leq A \epsilon^{-W} \quad \forall 0 < \epsilon < 1,
\] 
where $\delta \equiv \sup_{\bm f \in \mathcal F} \| \bm f\|$.
\end{defn}

Furthermore, note that in the special case that $F_k \leq U$, \cref{thm:Pollard} bounds the suprema by a term that scales like $U \sqrt{K}$.  This bound can be quite loose since $f_k(\btheta)$ typically takes values much smaller than $U$ and is only occasionally large.  Our next result provides a more refined bound on the suprema when the pointwise variance of the process is relatively small and the relevant (random) set is Euclidean. We stress the parameters $A$ and $W$ below must be deterministic.

\begin{thm}[Suprema of Stochastic Processes with Small Variance]\label{thm:SmallVariancePollard}
Suppose that the set $\{ \bm f(\btheta) : \btheta \in \Theta \} \subseteq \R^K$ is Euclidean with parameters $A$ and $W$ almost surely.
Suppose also that 
\begin{enumerate}[label=\roman*)]
	\item There exists a constant $U$ such that $\sup_{\btheta \in \Theta} \| \bm f(\btheta)\|_\infty \ \leq U$, almost surely, and
	\item There exists a constant $\sigma^2$ such that $\sup_{\btheta \in \Theta} \Eb{ \| \bm f(\btheta)\|_2^2} \ \leq \ K \sigma^2$.
\end{enumerate}
Then, there exists an absolute constant $C$ such that for any $R > 1$, with probability at least $1- e^{-R}$, 
\[
	\sup_{\btheta \in \Theta} \abs{ 
	\sum_{k=1}^K f_k(\btheta) - \Eb{\sum_{k=1}^K f_k(\btheta)}
	} \ \leq  \ C R \cdot V(A,W) \sqrt{K}
	\left(\sigma +  \frac{U V(A,W)}{ \sqrt{K}} \right),
\]
where $V(A,W) \equiv \frac{ \log A + W}{\sqrt{\log A}}$.
\end{thm}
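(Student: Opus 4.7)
The plan is to prove this by a chaining argument combined with symmetrization and a Bernstein-type tail at each scale, a standard pipeline for obtaining ``variance + bounded'' maximal inequalities over Euclidean classes of independent summands. The proof has three conceptual steps.

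First, by symmetrization (e.g., Section 4 of \cite{pollard1990empirical}), it suffices, up to absolute constants, to bound $\sup_\btheta \bigl|\sum_k \varepsilon_k f_k(\btheta)\bigr|$, where $\varepsilon_k$ are i.i.d.\ Rademacher variables independent of $\bm f$. Conditional on $\bm f$, this Rademacher process has sub-Gaussian increments with variance proxy $d(\btheta,\btheta')^2 \equiv \|\bm f(\btheta)-\bm f(\btheta')\|_2^2$, which sets up a natural pseudometric for chaining.

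Second, I will chain against $d$. Because $\{\bm f(\btheta):\btheta\in\Theta\}$ is Euclidean, the packing numbers satisfy $D(\epsilon\delta,\mathcal{F}) \le A\epsilon^{-W}$ for $0<\epsilon<1$, where $\delta \equiv \sup_\btheta \|\bm f(\btheta)\|_2$. Applying a Bernstein--Dudley chaining bound at each scale $2^{-k}\delta$---using the pointwise variance bound $K\sigma^2$ at small scales (sub-Gaussian regime) and the $L^\infty$ bound $U$ at large scales (bounded regime)---and summing the resulting tails in $k$ yields, conditional on $\bm f$ and with probability at least $1-e^{-R}$,
\[
\sup_\btheta \Bigl|\sum_k \varepsilon_k f_k(\btheta)\Bigr| \ \le \ C \cdot R \cdot \Bigl(\sigma\sqrt{K} \cdot V(A,W) + U\cdot V(A,W)^2\Bigr).
\]
Here, the factor $V(A,W)$ arises from evaluating the entropy integral $\int_0^\delta \sqrt{\log A + W\log(\delta/\epsilon)}\,d\epsilon$, while the extra $V(A,W)$ in the second term is produced by the bounded-regime contributions summed across scales.

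Third, I will control $\delta$ itself. Pointwise one has $\mathbb{E}\|\bm f(\btheta)\|_2^2 \le K\sigma^2$ and $\|\bm f(\btheta)\|_\infty \le U$. Applying the same chaining machinery to the squared process $\|\bm f(\btheta)\|_2^2 - \mathbb{E}\|\bm f(\btheta)\|_2^2 = \sum_k(f_k(\btheta)^2 - \mathbb{E} f_k(\btheta)^2)$---whose class is also Euclidean with the same parameters up to constants, and whose increments are dominated by $2U\cdot d(\btheta,\btheta')$---gives $\delta \le C\bigl(\sigma\sqrt{K} + U\cdot V(A,W)\bigr)$ with high probability. Substituting this back into the bound from Step 2 and relabeling the absolute constant completes the proof.

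The main obstacle will be isolating the $\sigma\sqrt{K}\cdot V(A,W)$ leading term rather than the cruder $U\sqrt{K}\cdot V(A,W)$ produced by pure sub-Gaussian chaining: this requires a genuine Bernstein-type tail at each scale so that the pointwise variance $\sigma^2$ replaces the worst-case $U^2$ bound in the small-deviation regime, and careful bookkeeping of which scales fall in the ``sub-Gaussian'' versus the ``bounded'' regime. A clean shortcut is to invoke Theorem 7.5 of \cite{pollard1990empirical} (or an analogous Talagrand-type maximal inequality for bounded empirical processes), which packages precisely this Bernstein--chaining inequality for Euclidean classes; verifying that its constants can be rewritten in terms of $V(A,W)$ as defined in the statement completes the proof with minimal additional effort.
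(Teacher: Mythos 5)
Your overall architecture matches the paper's proof in its decisive step: the paper also reduces everything to controlling the random diameter $\delta \equiv \sup_{\btheta}\|\bm f(\btheta)\|_2$, and it controls it exactly as you propose in Step 3 --- by viewing $\|\bm f(\btheta)\|_2^2 - \Eb{\|\bm f(\btheta)\|_2^2}$ as a second empirical process whose increments are $2U$-Lipschitz in the original metric (so the squared class is Euclidean with essentially the same parameters), which yields $\delta \le C(\sigma\sqrt K + U\,V(A,W))$ in the appropriate sense. Where you differ is the maximal inequality itself: instead of symmetrization plus scale-by-scale Bernstein chaining, the paper invokes a packaged sub-Gaussian chaining bound (Theorem A.2 of \citeGR, built on the discussion around Eq.~(7.4) of \cite{pollard1990empirical}) of the form $C\,R\,V(A,W)\,\|\delta\|_\Psi$, and does all tail bookkeeping through the Orlicz norm $\|\delta\|_\Psi$, so the diameter bound and the maximal inequality combine multiplicatively without conditioning or union bounds; the variance $\sigma$ enters only through $\|\delta\|_\Psi$, never through a Bernstein tail.

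Two points in your write-up need repair before the argument closes. First, your Step 2 claim is not valid as stated: conditional on $\bm f$, the Rademacher process at $\btheta$ has variance $\|\bm f(\btheta)\|_2^2$, a random quantity that is not bounded by $K\sigma^2$ (that constant bounds only its expectation), so no conditional Bernstein chaining can produce the $\sigma\sqrt K$ term directly; what conditional chaining actually gives is a bound of order $R\,\delta\,V(A,W)$, and $\sigma\sqrt K$ appears only after importing Step 3 --- which is precisely the paper's logic. Second, in Step 3 the chaining bound for the squared process is itself of order $U\,\delta\,V(A,W)$, since its entropy integral again runs up to the random diameter $\delta$; the resulting inequality $\delta^2 \le K\sigma^2 + C\,U\,\delta\,V(A,W)\cdot(\mathrm{tail\ factor})$ is self-referential, and you must solve this quadratic in $\delta$ (the paper does this explicitly for $\|\delta\|_\Psi$, bounding the larger root by $b+\sqrt c$). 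Neither issue is fatal, but both must be addressed; once they are, your proof is essentially the paper's, with the Orlicz-norm bookkeeping replaced by conditioning plus a union bound over the two high-probability events.
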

\begin{rem} \rm
Notice that when $K$ is sufficiently large, the term in the parenthesis is dominated by $2 \sigma$, and hence the bound does not depend on $U$.
\Cref{thm:SmallVariancePollard} is not tight in its dependence on $R$.  See, for example, Talagrand's Inequality for the suprema of the empirical process (\cite{wainwright2019high}).  We prefer \cref{thm:SmallVariancePollard} to Talagrand's inequality in what follows because it is somewhat easier to apply and is sufficient for our purposes.  
\end{rem}

\proof{Proof of \cref{thm:SmallVariancePollard}.}
For convenience in what follows, let $\mathcal F$ be the (random) set $ \left\{\bm f(\btheta) : \btheta \in \Theta \right\}$.  Let $\delta \equiv \sup_{\bm f \in \mathcal F} \| \bm f\|_2$.  

Our goal will be to apply Theorem A.2 of \citeGR.  That theorem shows that there exists an absolute constant $C_1$ such that 
\begin{equation}\label{eq:SupremaBoundPsi2}
	\sup_{\bm f \in \mathcal F} \abs{ 
	\sum_{k=1}^K f_k - \Eb{\sum_{k=1}^K f_k}
	} \ \leq C_1 R V(A, W) \| \delta \|_\Psi.	
\end{equation}
Thus, the remainder of the proof focuses on bounding $\| \delta \|_\Psi$. As an aside, a naive bound $ \| \delta \|_\Psi \ \leq U \sqrt K$, so we know that this value is finite.  In what follows, we seek a stronger bound.  

Write
\begin{align*}
\delta^2 
&\ = \ 
	\sup_{\bm f \in \mathcal F} \| \bm f \|_2^2
\\ 
& \ \leq \ 
	\sup_{\bm f \in \mathcal F} \| \bm f\|_2^2 - \Eb{ \| \bm f \|_2^2} + \sup_{\bm f \in \mathcal F} \Eb{ \| \bm f\|_2^2} 
\\
& \ \leq \ 
	\sup_{\bm f \in \mathcal F} \| \bm f\|_2^2 - \Eb{ \| \bm f \|_2^2} + K \sigma^2 
\end{align*}

Let $C_2 > 0$ be a constant to be determined later.  Dividing by $C_2$ and taking expectations above shows
\begin{equation} \label{eq:BoundingPsi2Norm_delta}
	\Eb{ e^{\frac{\delta^2}{ C_2 } } } 
	\ \leq \ 
	e^{\frac{K \sigma^2}{C_2}} \cdot \Eb{ e^\frac{\bar Z}{C_2}},
\end{equation}
where
\[
	\bar Z \ \equiv \ \sup_{\bm f \in \mathcal F} \left\{ \| \bm f \|^2_2 - \Eb{ \| \bm f \|_2^2} \right\}
\ = \ 
	\sup_{\bm f \in \mathcal F} \left\{ \sum_{k=1}^K f_j^2 - \Eb{f_j^2} \right\}
\]
Importantly, $\bar Z$ is again the suprema of an empirical process, namely for the ``squared'' elements.  \cite{pollard1990empirical} provides bounds on $\bar Z$ in terms of the entropy integral of the process.  

Specifically, let
$\bm f^2$ denote the vector whose $j^\text{th}$ element is $f_j^2$.  Let $\mathcal F^2 = \{ \bm f^2 : \bm f \in \mathcal F\}$.  Then the entropy integral of the squared process is defined to be
\[
	\overline J 
	\ \equiv \ 9 \int_0^{\overline{\delta}} \sqrt{ \log D(x, \mathcal F^2) } dx,
\]
where $\overline \delta \equiv \sup_{\bm f \in \mathcal F} \| \bm f^2 \|_2$.

Then, in the discussion just prior to Eq. (7.4) of \cite{pollard1990empirical}, it is proven that 
\begin{equation} \label{eq:Psi1Norm_Zbar}
	\Eb{ e^{ \bar Z / \| \bar J\|_\Psi}} \ \leq 25.
\end{equation}
Hence, to bound the right side of \cref{eq:BoundingPsi2Norm_delta}, we will next bound $\| \bar J\|_\Psi$.  This in turn will allow us to bound $\| \delta \|_\Psi$ and invoke Theorem A.2 of \citeGR.

To this end, observe that for any $\bm f, \bm g \in \mathcal F$, we have 
\[
\| \bm f^2 - \bm g^2 \|^2 \ = \ \sum_{k=1}^K (f_j^2 - g_j^2)^2 \ = \sum_{k=1}^K (f_j + g_j)^2 (f_j - g_j)^2 
\ \leq \ 
4 U^2 \| \bm f - \bm g \|^2.
\]
Hence, 
\(
D(\epsilon, \mathcal F^2) \ \leq \ D\left( \frac{\epsilon}{2 U}, \mathcal F \right).
\)
Write
\begin{align*}
\overline J 
& 
	\ \equiv \ 9 \int_0^{\overline{\delta}} \sqrt{ \log D(x, \mathcal F^2) } dx
\\
& 
	\ \leq \  9 \int_0^{\overline{\delta}} \sqrt{ \log D\left( \frac{x}{2U}, \mathcal F \right) } dx.
\\
& 
	\ = \  2 U \cdot 9 \int_0^{\frac{\overline \delta}{2U}} \sqrt{ \log D\left( x , \mathcal F \right) } dx.
\end{align*}
where the last equality is a change of variables.  We now claim we can upper bound this last expression by replacing the upper limit of integration with $\delta$.   Indeed, if $\frac{\overline \delta}{2U} \leq \delta$, then because the integrand is nonnegative, we only increase the integral.  If, $\frac{\overline \delta}{2U} > \delta$, then note that 
\[
\int_\delta^{\frac{\overline \delta}{2U}} \sqrt{ \log D(x, \mathcal F)}dx = 0, 
\]
since $D(x, \mathcal F) = 1$ for all $x \geq \delta$.  Thus, in either case we can replace the upper limit of integration, yielding
\[
	\bar J \ \leq \ 18 U \int_0^\delta \sqrt{ \log D(x, \mathcal F)} dx.
\]

Recall the entropy integral of the original process is given by 
\[
	J \equiv 9 \int_0^\delta \sqrt{ \log D(x, \mathcal F)} dx.
\]
Hence, 
\[
\overline J \leq 2 U J.
\]
Moreover, Theorem A.2 of \citeGR~ shows that $\| J \|_\Psi \leq C_3 \| \delta \|_\Psi  V(A,W)$ for some absolute constant $C_3$. Thus we have successfully bounded \[
	\| \bar J \|_\Psi \ \leq \ 2 U C_3 \| \delta \|_\Psi V(A, W).
\]

Substituting back into \cref{eq:Psi1Norm_Zbar} shows that 
\[
	\Eb{ e^{ \frac{\bar Z}{ 2 U C_3 \| \delta \|_\Psi V(A, W)}} } \ \leq \ 25.
\]

Now choose $C_2$ in \cref{eq:BoundingPsi2Norm_delta} to be $C_2 = \alpha  2 U C_3 \| \delta \|_\Psi V(A, W)$ for some $\alpha > 0$ to be determined later.  Substituting our bound on $\bar Z$ into \cref{eq:BoundingPsi2Norm_delta} shows 

\begin{align} \notag
	\Eb{ \exp\left(\frac{\delta^2}{ \alpha 2 U C_3 \| \delta \|_\Psi V(A, W)} \right)} 
&\ \leq \ 
	e^{\frac{K \sigma^2}{\alpha  2U C_3 \| \delta \|_\Psi V(A, W)}} \cdot \Eb{ e^\frac{\bar Z}{\alpha  2 U C_3 \| \delta \|_\Psi V(A, W)}}
\\ \label{eq:UpperBoundDelta_PsiNorm2}
&	\ \leq \ 
	\exp\left(\frac{K \sigma^2}{ \alpha  2 U C_3 \| \delta \|_\Psi V(A, W) }\right) \cdot 	25^{1/\alpha} ,
\end{align}
where we have used $\alpha > 0$ and Jensen's Inequality to simplify.

We now to choose $\alpha$ large enough that the right side is at most $5$.  Taking logs, it suffices to choose $\alpha$ such that 
\begin{align*}
\log(5) \ \geq \ \frac{1}{\alpha}  \left(\log(25) +  \frac{K \sigma^2}{  2U C_3 \| \delta \|_\Psi V(A, W) }  \right)
\\
\iff
\alpha \ \geq \ 2 + \frac{K \sigma^2}{  2U C_3 \log(5) \cdot \| \delta \|_\Psi V(A, W) }.
\end{align*}
Substituting into \cref{eq:UpperBoundDelta_PsiNorm2} shows 
\[
	\|\delta\|^2_\Psi \ \leq \ \left(2 + \frac{K \sigma^2}{  2U C_3 \log(5) \cdot \| \delta \|_\Psi V(A, W) } \right) 2 U C_3 \| \delta \|_\Psi V(A, W)
	\ = \ 
	4 C_3 U V(A,W) \| \delta\|_\Psi + 
	\frac{K \sigma^2}{ \log(5) }
\]

In summary, $\| \delta \|_\Psi$ is at most the largest solution to the quadratic inequality
\[
	y^2 - b y - c \leq 0,
\]
where
\begin{align*}
	b = 4 C_3 U V(A,W) \qquad \text{ and } \qquad 
	c = \frac{K \sigma^2}{ \log(5)}.
\end{align*}
Bounding the largest root shows
\begin{align*}
y &\ \leq \ \frac{b}{2} + \frac{ \sqrt{b^2 + 4 c}}{2}
\\
& \ \leq \ 
\frac{b}{2} + \frac{ b + 2\sqrt{c}}{2} && (\text{Triangle-Inequality})
\\
& \ = \ 
b + \sqrt{c}.
\end{align*}
Or in other words, 
\[
	\| \delta\|_\Psi \ \leq \ 4 C_3 U V(A,W) + \sigma \sqrt K,
\]
where we upper bounded $(\sqrt{\log(5)})^{-1} \leq 1$.

Now simply substitute into \cref{eq:SupremaBoundPsi2} and collect constants to complete the proof. 
\hfill \Halmos \endproof
\subsection{Method of Bounded Differences Excluding an Exceptional ``Bad" Set}  

In our analysis, we utilize an extension of McDiarmid's inequality
due to \cite{combes2015extension}. Recall, McDiarmid's
inequality shows that for a random vector $\bZ\in\mathcal{X}$ with independent components and
function $f:\mathcal{X}\rightarrow\mathbb{R}$ such that 
\begin{equation} \label{eq:McDiarmidBoundedDiffs}
\abs{f(\bx)-f(\by)} \le \sum_{i=1}^{n}c_{i}\mathbb{I}\left\{ x_{i}\ne y_{i}\right\} \quad 
\forall (\bx,\by) \in\mathcal{X}^{2},
\end{equation}
for some $\bm c \in \R^n$, we have that 
\[
\mathbb{P}\left\{ \abs{ f(\bZ) - \Eb{ f(\bZ) } } \ge t \right\} \ \le \ 2\exp\left(-\frac{2t^{2}}{\sum_{i=1}^{n}c_{i}^{2}}\right).
\]

The next result extends McDiarmid's inequality to a setting where \cref{eq:McDiarmidBoundedDiffs} only holds for all $(\bx, \by) \in \mathcal Y^2$ where $\mathcal Y \subseteq \mathcal X$ is a certain ``good" set:
\begin{thm}
[\cite{combes2015extension}] \label{thm:Combes-McDiarmid}
Let $\bZ \in \mathcal X$ be a random vector with independent components and $f: \mathcal X \mapsto \R$ be a function such that 
\[
	\abs{f(\bx) - f(\by)} \ \leq \ \sum_{j=1}^n c_j \Ib{ x_j \neq y_j} \quad \forall (\bx, \by) \in \mathcal Y,
\]
for some vector $\bm c \in R^n$,  
where $\mathcal Y \subseteq \mathcal X$.  
Let $\overline{c}=\sum_{i=1}^{n}c_{i}$,
and $p = \mathbb{P}\left\{ X\not\in\mathcal{Y}\right\} $.  Then, for any $t > 0$
\[
\Pb{ \abs{f(\bZ)- \Eb{f(\bZ) \mid \bZ \in \mathcal Y}}\ge t + p \bar c} 
\ \le \ 
2\left( p + \exp\left\{ -\frac{2t^2}{\sum_{i=1}^{n}c_{i}^{2}}\right\} \right)
\]
In particular, this implies that for any $\epsilon > 2p$, with probability at least $1-\epsilon$, 
\[
\abs{f(\bZ) - \Eb{f(\bZ) \mid \bZ \in \mathcal Y}} \ \leq p \bar c + 
\| \bm c \|_2 \sqrt{\log\left(\frac{2}{\epsilon - 2p}\right)}.
\]

\end{thm}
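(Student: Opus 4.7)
The plan is to reduce this to the classical McDiarmid inequality by constructing an auxiliary function that coincides with $f$ on the ``good'' set $\mathcal{Y}$ but satisfies the bounded differences condition \emph{globally} on $\mathcal{X}$. The natural candidate is the infimal convolution
\[
\tilde f(\bx) \ \equiv \ \inf_{\by \in \mathcal Y}\Bigl[\, f(\by) \,+\, \sum_{j=1}^n c_j \Ib{x_j \neq y_j} \,\Bigr],
\]
which agrees with $f$ on $\mathcal Y$ (achieved by $\by = \bx$) by the hypothesized bounded-differences condition on $\mathcal Y$. For the global bounded-differences property, I would use the triangle-like inequality $\Ib{x_j \neq y_j} \le \Ib{x'_j \neq y_j} + \Ib{x_j \neq x'_j}$ to obtain, for any $\by \in \mathcal Y$,
\[
\tilde f(\bx) \ \le \ f(\by) + \sum_j c_j \Ib{x'_j \neq y_j} + \sum_j c_j \Ib{x_j \neq x'_j},
\]
and then take the infimum over $\by$ to conclude $\tilde f(\bx) - \tilde f(\bx') \le \sum_j c_j \Ib{x_j \neq x'_j}$; the symmetric inequality follows by swapping $\bx, \bx'$.

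Applying the standard McDiarmid inequality to $\tilde f(\bZ)$ then yields $\Pb{|\tilde f(\bZ) - \Eb{\tilde f(\bZ)}| \ge t} \le 2 \exp(-2t^2/\sum_i c_i^2)$. Next I would bridge the centerings: since $\tilde f = f$ on $\mathcal Y$,
\[
\Eb{\tilde f(\bZ)} \ = \ (1-p)\,\Eb{f(\bZ) \mid \bZ \in \mathcal Y} \,+\, p\,\Eb{\tilde f(\bZ) \mid \bZ \notin \mathcal Y},
\]
and because the global bounded-differences property forces $\tilde f$ to have oscillation at most $\bar c$ on $\mathcal X$, the gap $\bigl|\Eb{\tilde f(\bZ) \mid \bZ \notin \mathcal Y} - \Eb{f(\bZ) \mid \bZ \in \mathcal Y}\bigr|$ is at most $\bar c$, giving $\bigl|\Eb{\tilde f(\bZ)} - \Eb{f(\bZ) \mid \bZ \in \mathcal Y}\bigr| \le p\bar c$.

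Finally, I would decompose the target event: on $\{\bZ \in \mathcal Y\}$, $f(\bZ) = \tilde f(\bZ)$, so by the triangle inequality the event $|f(\bZ) - \Eb{f(\bZ) \mid \bZ \in \mathcal Y}| \ge t + p\bar c$ implies $|\tilde f(\bZ) - \Eb{\tilde f(\bZ)}| \ge t$; the residual event $\{\bZ \notin \mathcal Y\}$ contributes at most $p$. Combining gives the stated tail bound (with some slack in the prefactor). The ``in particular'' statement follows by setting the right-hand side equal to $\epsilon$ and solving for $t$.

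The main obstacle is verifying that the extension $\tilde f$ preserves the bounded-differences constants $c_i$ globally despite $f$ only being controlled on $\mathcal Y$; the infimal convolution handles this cleanly, but the two-step indicator inequality above is the crux. The remaining calculations, relating $\Eb{\tilde f(\bZ)}$ to the conditional expectation $\Eb{f(\bZ) \mid \bZ \in \mathcal Y}$ and assembling the union bound, are routine.
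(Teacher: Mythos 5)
Your proposal is correct: the paper does not prove this theorem itself (it is imported from \cite{combes2015extension}), and your route---the weighted-Hamming infimal-convolution extension $\tilde f$ that agrees with $f$ on $\mathcal Y$ and inherits the bounded-differences constants globally, McDiarmid applied to $\tilde f$, and the $p\bar c$ comparison of $\Eb{\tilde f(\bZ)}$ with $\Eb{f(\bZ)\mid \bZ\in\mathcal Y}$---is exactly the standard argument behind that reference, and in fact yields the slightly sharper bound $p+2e^{-2t^2/\sum_i c_i^2}\le 2\bigl(p+e^{-2t^2/\sum_i c_i^2}\bigr)$. The only implicit requirements ($c_j\ge 0$ so the indicator triangle inequality can be scaled, and $p<1$ so the conditional expectation is well defined) are harmless in this setting.
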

\begin{rem}
In the special case that $\mathcal Y = \mathcal X$, then $p = 0$, the theorem recovers McDiarmid's inequality.  
\end{rem}

\section{Properties of the Variance Gradient Correction (VGC)} \label[app]{sec:DanskinStatement}

\blue{\label{note:Danskin-Theorem} First, we state the relevant portion of Danskin's Theorem for reference. See \cite[Section B.5]{bertsekas1997nonlinear} for a proof of a more general result:  
\begin{theorem}[Derivative Result of Danskin's Theorem] \label{thm:DanskinsTheorem}
Let $Z \subseteq \mathbb{R}^m$ be a compact set, and let $\phi : \mathbb{R}^n \times Z \mapsto \mathbb{R}$ be continuous and such that $\phi(\cdot,z) : \mathbb{R}^n \mapsto \mathbb{R}$ is convex for each $z \in Z$. Additionally, define 
\[
    Z(x)=\left\{ \bar{z} \ : \ \phi(x,\bar{z}) = \max_{z \in Z} \phi(x,z) \right\}.
\]
Consider the function $f : \mathbb{R}^n \mapsto \mathbb{R}$ given by 
\[
    f(x) = \max_{z\in Z} \phi(x,z).
\]
If Z(x) consists of a unique point $\bar{z}$ and $\phi(\cdot,\bar{z})$ is differentiable at $x$, then $f$ is differentiable at $x$, and $\nabla f(x) = \nabla_{x} \phi(x, \bar{z})$, where $\nabla_{x} \phi(x, \bar{z})$ is the vector with coordinates 
\[
    \frac{\partial \phi(x, \bar{z})}{\partial x_i},\qquad i=1,\dots,n.
\]
\end{theorem}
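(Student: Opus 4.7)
The plan is to prove differentiability of $f$ at $x$ by computing the directional derivative $f'(x;d)$ for an arbitrary direction $d \in \mathbb{R}^n$ and showing it equals $\nabla_x \phi(x,\bar{z})^\top d$. Since $f$ is a pointwise supremum of the convex functions $\phi(\cdot,z)$, $f$ is itself convex, so its one-sided directional derivatives exist. Establishing $f'(x;d) = \nabla_x \phi(x,\bar{z})^\top d$ for every $d$ then forces $f$ to be (Gateaux, hence Fr\'echet for finite-dimensional convex functions) differentiable with gradient $\nabla_x \phi(x,\bar{z})$.

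For the lower bound $f'(x;d) \geq \nabla_x \phi(x,\bar{z})^\top d$, I would use $f(y) \geq \phi(y,\bar{z})$ for all $y$ together with $f(x) = \phi(x,\bar{z})$. This yields
\[
    \frac{f(x+td) - f(x)}{t} \;\geq\; \frac{\phi(x+td,\bar{z}) - \phi(x,\bar{z})}{t},
\]
and sending $t \downarrow 0$ gives the claim by the assumed differentiability of $\phi(\cdot,\bar{z})$ at $x$.

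For the upper bound $f'(x;d) \leq \nabla_x \phi(x,\bar{z})^\top d$, I would pick a sequence $t_k \downarrow 0$ and select $z_k \in Z(x+t_k d)$; compactness of $Z$ and continuity of $\phi$ force any limit point of $(z_k)$ to lie in $Z(x)$, and uniqueness implies $z_k \to \bar{z}$. Then, writing $f(x+t_k d) = \phi(x+t_k d, z_k)$ and using $\phi(x,z_k) \leq \phi(x,\bar{z}) = f(x)$,
\[
    \frac{f(x+t_k d) - f(x)}{t_k} \;\leq\; \frac{\phi(x+t_k d, z_k) - \phi(x, z_k)}{t_k}.
\]
The key technical step is that convexity of $\phi(\cdot,z_k)$ makes the slope $t \mapsto [\phi(x+td,z_k)-\phi(x,z_k)]/t$ monotone nondecreasing in $t$; hence for any fixed $s>0$ and $k$ large so that $t_k < s$, the right side above is bounded by $[\phi(x+sd,z_k) - \phi(x,z_k)]/s$. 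Taking $k\to\infty$ (continuity of $\phi$, $z_k\to\bar{z}$) and then $s \downarrow 0$ (differentiability of $\phi(\cdot,\bar{z})$ at $x$) yields $\limsup_k [f(x+t_k d) - f(x)]/t_k \leq \nabla_x \phi(x,\bar{z})^\top d$.

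The main obstacle is the upper-bound step, specifically the need to push the difference quotient evaluated at the moving point $z_k$ back to a limit involving only $\bar{z}$, for which the differentiability of $\phi(\cdot,z_k)$ is not directly assumed. The monotone-slope trick from convexity, combined with the compactness/uniqueness argument that pins down $z_k \to \bar{z}$, is what circumvents this and ties everything together.
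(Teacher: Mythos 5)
Your proposal is correct. Note, however, that the paper does not prove this statement at all: it is quoted as background (Theorem \ref{thm:DanskinsTheorem}) with a pointer to Bertsekas, Section B.5, for a proof of a more general result, so there is no in-paper argument to compare against. Your self-contained proof is the classical Danskin argument for exactly this special case: the lower bound from $f \geq \phi(\cdot,\bar z)$ with equality at $x$; the upper bound by picking maximizers $z_k \in Z(x+t_k d)$, using compactness plus uniqueness of $Z(x)$ to get $z_k \to \bar z$, and the monotone-slope property of the convex functions $\phi(\cdot,z_k)$ to replace the shrinking step $t_k$ by a fixed $s>0$ before passing to the limit in $z_k$; and finally the standard fact that a finite convex function on $\mathbb{R}^n$ whose directional derivative is linear in the direction is differentiable. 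One step worth making explicit when you write it up: to show a limit point $z^*$ of $(z_k)$ lies in $Z(x)$, you can pass to the limit directly in the optimality inequality $\phi(x+t_k d, z_k) \geq \phi(x+t_k d, z)$ for all $z \in Z$, which avoids any separate appeal to continuity of $f$; alternatively, invoke continuity of $f$ (which holds since $f$ is finite and convex, or by the maximum theorem). With that detail spelled out, the argument is complete.
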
}

The remainder of the section contains proofs of the results in  \cref{sec:EstimatingInSampleBias}.

\subsection{Proof of \cref{thm:equiv-in-sample}}
\label[app]{sec:appendix_FindingBestInClassPolicy}
This section contains the omitted proofs leading to the proof of \cref{thm:equiv-in-sample}.  We first relate finite difference approximations of the subgradients of $V(\bz + t \be_j)$ to their true values.

\begin{lem}[Subgradients Bound Finite Difference Approximation] \label[lem]{lem:prop-j-comp}
For any $\bz \in \R^n$ and $t \in \R$, we have 
\[
a_j x_j( \bz + t\be_j) t \ \leq V(\bz + t \be_j ) - V(\bz) \ \leq \ a_j x_j(\bz) t.
\]

\end{lem}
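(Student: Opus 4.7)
The plan is to exploit the fact that $\bz \mapsto V(\bz)$ is concave (as the pointwise minimum of affine functions) and use the supergradient characterization from Danskin's theorem to sandwich the finite difference $V(\bz+t\be_j) - V(\bz)$ between two linear terms.

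First, I would observe that for each fixed $\bx \in \mathcal{X}$, the function $\bz \mapsto r(\bz,\btheta)^\top \bx = \sum_k (a_k z_k + b_k) x_k$ is affine in $\bz$, with gradient $(a_1 x_1, \ldots, a_n x_n)$. Hence $V(\bz) = \min_{\bx \in \mathcal X} r(\bz,\btheta)^\top \bx$ is concave. Moreover, by Danskin's theorem (\cref{thm:DanskinsTheorem}, applied to $-V$), for any minimizer $\bx(\bz)$ of the plug-in problem at $\bz$, the vector $(a_1 x_1(\bz), \ldots, a_n x_n(\bz))$ belongs to the superdifferential of $V$ at $\bz$. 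In particular, its $j$-th component $a_j x_j(\bz)$ is a valid one-sided directional supergradient in the direction $\be_j$.

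Next, I would apply the supergradient inequality for concave functions at two different base points. Applied at $\bz$, concavity gives
\[
V(\bz + t\be_j) \;\le\; V(\bz) + a_j x_j(\bz)\cdot t,
\]
which rearranges to the upper bound $V(\bz + t\be_j) - V(\bz) \le a_j x_j(\bz) t$. Applied instead at $\bz + t\be_j$ in the direction $-t\be_j$, the same concavity gives
\[
V(\bz) \;\le\; V(\bz + t\be_j) + a_j x_j(\bz + t\be_j)\cdot(-t),
\]
which rearranges to the lower bound $V(\bz + t\be_j) - V(\bz) \ge a_j x_j(\bz + t\be_j) t$. Note that the argument works uniformly for $t > 0$ and $t < 0$, since the supergradient inequality does not depend on the sign of the displacement.

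The only subtlety is handling the case where $\bx(\bz)$ or $\bx(\bz+t\be_j)$ is not the unique optimizer. This is automatically resolved: Danskin's theorem identifies every element of $(a_1 x_k, \ldots, a_n x_n)$ with $\bx \in \arg\min r(\bz,\btheta)^\top\bx$ as a supergradient, so under our convention that $\bx(\bz)$ denotes any such minimizer (broken arbitrarily), the bounds still hold. Thus no obstacle remains and the lemma follows from two applications of the supergradient inequality.
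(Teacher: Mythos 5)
Your proposal is correct and follows essentially the same route as the paper: concavity of $V$ plus the supergradient identified by Danskin's theorem, applied once at $\bz$ and once at $\bz+t\be_j$ (the paper simply restricts to the one-dimensional function $f(t)=V(\bz+t\be_j)$ before applying the same two inequalities). Your explicit remark that any minimizer yields a valid supergradient, handling non-uniqueness, is a point the paper's proof leaves implicit but does not change the argument.
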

\begin{proof}{\hspace{-12pt}Proof:}
Let $f(t) = V(\bz + t\be_j)$.  Recall that $f(t)$ is concave and $f'(t) = a_j x_j (\bz + t \be_j)$ by Danskin's Theorem. Hence, by the subgradient inequality for concave functions, $f(t) \leq f(0) + f'(0) t$ and $f(0) \leq f(t) - t f'(t)$, and thus, $t f'(t) \leq f(t) - f(0) \leq t f'(0)$. This is equivalent to \mbox{$a_j x_j( \bz + t\be_j) t \ \leq V(\bz + t \be_j ) - V(\bz) \ \leq \ a_j x_j(\bz) t$}, which is the desired result.
\hfill \Halmos \end{proof}
\smallskip
Equipped with \Cref{lem:prop-j-comp}, the proof of \Cref{lem:Monotonicity} is nearly immediate.
\begin{proof}{Proof of \Cref{lem:Monotonicity}:}
The bounds in \Cref{lem:prop-j-comp} show that $a_j t ( x_j(\bz) - x_j(\bz + t \be_j) ) \geq 0$.  If $a_j \geq 0$, it follows that $t ( x_j(\bz) - x_j(\bz + t \be_j) ) \geq 0$ for all $t$, which shows that $t \mapsto x_j(\bz + t \be_j)$ is \mbox{non-increasing}.   Similarly, if $a_j < 0$, it follows that $t ( x_j(\bz) - x_j(\bz + t \be_j) ) \leq 0$ for all $t$, which shows that $t \mapsto x_j(\bz + t \be_j)$ is non-decreasing.
\hfill \Halmos
\end{proof}
\smallskip
Before proving \cref{thm:equiv-in-sample}, we establish the following intermediary result on the error of a non-randomized forward step, finite difference.  
\begin{lem}[Forward Step Finite Difference Error] \label[lem]{lem:expected-finite-diff-error} 
Fix some $j$ such that $a_j \neq 0$ and \mbox{$0 < h < 1/e$}.  Then, 
\[
	\abs{
		~\Eb{\xi_{j}x_j(\bZ)
				- \frac{
					V(\bZ+h\sqrt{\nu_{j}}\xi_{j}\be_{j} )
					- V(\bZ )
					}
				{h\sqrt{\nu_{j}}\bm{a}_{j}(\btheta)}
			}
~}
	\ \le \ 
	4  h
		\log \left( \frac{1}{h\sqrt{\nu_{\min}}} \right)
\]
In other words, the forward finite step difference introduces a bias of order $\tilde O(h)$.  
\end{lem}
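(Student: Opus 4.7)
The plan is to combine Danskin's theorem with the monotonicity result of \cref{lem:Monotonicity} to reduce the quantity of interest to a one-dimensional Gaussian integral that admits an elementary closed-form bound. Setting $A_j = h\sqrt{\nu_j}$ and writing $\Phi_j(\bZ) \equiv \xi_j x_j(\bZ) - (V(\bZ + A_j\xi_j\be_j) - V(\bZ))/(A_j a_j(\btheta))$, Danskin's theorem (paired with the fundamental theorem of calculus applied to the absolutely continuous function $s \mapsto V(\bZ + s\be_j)$) gives $V(\bZ + s\be_j) - V(\bZ) = \int_0^s a_j x_j(\bZ + t\be_j)\,dt$. Substituting $u = t/A_j$ yields the clean representation
\[
    \Phi_j \;=\; \int_0^{\xi_j}\bigl[\,x_j(\bZ) - x_j(\bZ + uA_j\be_j)\bigr]\,du.
\]
By \cref{lem:Monotonicity}, the integrand has sign equal to that of $a_j \cdot u$, so $\Phi_j$ itself carries the sign of $a_j$ (in particular $|\mathbb{E}\Phi_j| = \mathbb{E}|\Phi_j|$), and monotonicity of the integrand in $|u|$ further yields $|\Phi_j| \le |\xi_j|\cdot|x_j(\bZ) - x_j(\bZ + A_j\xi_j\be_j)|$.

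Next, I would condition on $\bZ^{-j}$ (independent of $\xi_j$ by \cref{asn:Gaussian}) and define $W(z) = x_j(\bZ^{-j}, z)$, which is monotone and $[0,1]$-valued because $\mathcal{X} \subseteq [0,1]^n$. The total-variation measure $|dW|$ is then a positive Borel measure of mass at most one. Using the standard identity $|W(a) - W(b)| = \int \mathbf{1}\{s \in [a\wedge b, a\vee b]\}\,|dW|(s)$ and Fubini,
\[
    \mathbb{E}\bigl[|\xi_j|\cdot|W(Z_j) - W(Z_j + A_j\xi_j)| \bigm|\bZ^{-j}\bigr] \;=\; \int_{\mathbb{R}} \mathbb{E}\bigl[|\xi_j|\,\mathbf{1}\{s \in I(\xi_j)\}\bigr]\,|dW|(s),
\]
where $I(\xi_j)$ is the interval with endpoints $Z_j$ and $Z_j + A_j\xi_j$. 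Since $\{s \in I(\xi_j)\}$ reduces to $|\xi_j| \in [|s-\mu_j|/(1+A_j),\,|s-\mu_j|]$, a direct Gaussian integration produces
\[
    \mathbb{E}\bigl[|\xi_j|\,\mathbf{1}\{s \in I(\xi_j)\}\bigr] \;=\; \frac{1}{\sqrt{2\pi\nu_j}}\Bigl[\exp\bigl(-\tfrac{\nu_j(s-\mu_j)^2}{2(1+A_j)^2}\bigr) - \exp\bigl(-\tfrac{\nu_j(s-\mu_j)^2}{2}\bigr)\Bigr].
\]

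Finally, I would bound this kernel pointwise by elementary calculus. Writing $q = \nu_j(s-\mu_j)^2/2$, the supremum of $e^{-q/(1+A_j)^2} - e^{-q}$ over $q \ge 0$ is attained at an explicit $q^*$ and equals $(1-(1+A_j)^{-2})(1+A_j)^{-2/((1+A_j)^2-1)}$, which I would bound crudely by $1-(1+A_j)^{-2} \le 2A_j$. Multiplying by $(2\pi\nu_j)^{-1/2}$ and using $A_j/\sqrt{\nu_j} = h$, the kernel is bounded uniformly by $2h/\sqrt{2\pi}$. Combining with $|dW|(\mathbb{R}) \le 1$ and iterating expectations gives $\mathbb{E}|\Phi_j| \le 2h/\sqrt{2\pi} < h$, which is strictly tighter than the stated bound since $\log(1/(h\sqrt{\nu_{\min}})) > 1$ under the standing hypotheses $h < 1/e$ and $\nu_{\min} < 1$. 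The main technical subtlety is that $W$ may be piecewise constant (as in discrete or mixed-integer $\mathcal{X}$), so $dW$ must be interpreted measure-theoretically; this is standard for bounded-variation functions and does not affect the bound.
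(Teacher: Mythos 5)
Your proof is correct, but it takes a genuinely different route from the paper's and in fact gives a strictly sharper bound. Writing $\Phi_j$ for the quantity inside the expectation, the paper also begins from the monotonicity/subgradient reduction $0\le \Phi_j \le \xi_j\bigl(x_j(\bZ)-x_j(\bZ+h\sqrt{\nu_j}\xi_j\be_j)\bigr)$ (its \cref{lem:prop-j-comp}), but then passes to $g(t)=\Eb{x_j(\cdot)\mid \xi_j=t}$, truncates the resulting one-dimensional integral at a level $U$, bounds the middle region by $O(U^2 h\nu_j)$ via the Gaussian density-derivative identity, bounds the tails by the Gaussian tail, and optimizes $U$ --- that balancing step is precisely where the $\log\bigl(1/(h\sqrt{\nu_{\min}})\bigr)$ factor enters. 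You instead condition on $\bZ^{-j}$, represent the increment of the monotone, $[0,1]$-valued path function $W$ through its Lebesgue--Stieltjes measure, and integrate out $\xi_j$ exactly for each fixed threshold $s$; the kernel is bounded by $2h/\sqrt{2\pi}$ uniformly in $s$, so total variation at most one gives $\Eb{\abs{\Phi_j}}\le \sqrt{2/\pi}\,h$, which implies the lemma since $\log\bigl(1/(h\sqrt{\nu_{\min}})\bigr)\ge 1$ when $h<1/e$ and $\nu_{\min}<1$. Two minor repairs: the event $\{s\in I(\xi_j)\}$ also forces $\mathrm{sign}(\xi_j)=\mathrm{sign}(s-\mu_j)$, not only the stated constraint on $\abs{\xi_j}$ --- your displayed one-sided Gaussian integral is the correct evaluation, so only the prose needs the sign condition added; and at atoms of $dW$ the exact increment identity should be replaced by an inequality over the closed interval, which suffices because you only need an upper bound. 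What each approach buys: yours removes the logarithm (so the bias bound in \cref{thm:equiv-in-sample} could be stated as $O(hn)$ rather than $O(hn\log(1/h))$), at the price of an exact Gaussian computation; the paper's truncation is looser but its reliance on Gaussianity (through $\phi_j'(z)=-\nu_j z\phi_j(z)$) is of essentially the same nature, so neither argument is meaningfully more general than the other.
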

\begin{proof}{\hspace{-12pt}Proof:}
From \Cref{lem:prop-j-comp}, we see that the term inside the expectation 
can be upper-bounded by the non-negative term
$\xi_{j}\left[x_{j}(\bZ)-x_{j}(\bZ+h\sqrt{\nu_{j}}\xi_{j}\be_{j})\right]$.
Hence, 
\[
\Eb{\xi_{j}x_{j}(\bZ)-\frac{V(\bZ+h\sqrt{\nu_{j}}\xi_{j}\be_{j} )-V(\bZ )}{h\sqrt{\nu_{j}}\bm{a}_{j}(\btheta)}}
\ \leq \ 
\Eb{\xi_{j}\left(x_{j}(\bZ)-x_{j}(\bZ+h\sqrt{\nu_{j}}\xi_{j}\be_{j})\right)} .
\]
To simplify notation, 
let $g(t) = \Eb{ x_{j}\left(\bZ_{-j}+\xi_{j}\be_{j}\right)|\xi_{j}=t }$
where $\bZ_{-j}$ is identical to $\bZ$ but has a $0$ at the $j^{\text{th}}$ component.  Then, 
\[
\abs{ \Eb{\xi_{j}\left(x_{j}(\bZ)-x_{j}(\bZ+h\sqrt{\nu_{j}}\xi_{j}\be_{j})\right) } }
 = \abs{ \int_{-\infty}^{\infty}t\left[g(t)-g(t+h\sqrt{\nu_{j}}t)\right]\phi_{j}(t)\,dt}
\]
where $\phi_{j}(t)$ is the density for $\mathcal{N}(0,1/\nu_{j})$.

To bound the integral, choose a constant $U > 0$ (which we optimize later) and break the integral into
three regions, $(-\infty,-U),(-U,U),(U,\infty)$.  This yields the upper bound
\begin{align*}
\Biggr| \int_{-\infty}^{\infty}t&\left(g(t) - g(t+th\sqrt{\nu_{j}})\right)\phi_{j}(t)\,dt \Biggr|
\\
\ \le\  & \underbrace{\int_{-U}^{U}U\left|g(t)-g(t+th\sqrt{\nu_{j}})\right|\phi_{j}(t)\,dt}_{(a)}+\underbrace{\int_{-\infty}^{-U}\left|t\right|\phi_{j}(t)\,dt+\int_{U}^{\infty}\left|t\right|\phi_{j}(t)\,dt}_{(b)}.
\end{align*}

We first bound $(a)$. As the fist step, we attempt to remove the absolute value. From \Cref{lem:prop-j-comp}, $g(\cdot)$ is a monotone function.
We claim that for $\abs{t} < U$, 
\begin{equation} \label{eq:symmetric-upperbound}
\abs{ g(t) - g(t + h \sqrt{ \nu_{j} } t)} 
\ \leq \ 
\abs{g\left(t-Uh\sqrt{\nu_{j}}\right)-g\left(t + Uh\sqrt{\nu_{j}}\right)},
\end{equation}
since $(t-Uh\sqrt{\nu_{j}},t+Uh\sqrt{\nu_{j}})$ always contains the interval $(t, t + h\sqrt{ \nu_j } t )$.
Let 
\[
b = \begin{cases} U h\sqrt{\nu_j} & \text{ if } a_j > 0,
	\\ 		   - Uh \sqrt{\nu_j} & \text{ otherwise,} 
	\end{cases}
\]
so that 
\(
\abs{g(t - Uh\sqrt{\nu_j}) - g(t + Uh\sqrt{\nu_j}) } 
\ = \ 
g(t-b) - g(t+b).
\)
Then, 
\begin{align*}
\int_{-\infty}^{\infty} \big(g(t-b) - g(t+b)\big) \phi_j(t)\,dt 
& =\int_{-\infty}^{\infty} g\big(t + b\big) \left(\phi_j (t + 2b) - \phi_j(t)\right)\,dt, 
	\quad \text{(Change of variables)}
\\
 & =\int_{-\infty}^{\infty}
 g\left(t+b\right)\int_{t}^{t + 2b}-\nu_j z \phi_{j}(z)\,dz\,dt, \quad (\text{since } \phi_{j}'(z) = -\nu_j z \phi_{j}(z))
 \\
 & \le \nu_j \int_{-\infty}^{\infty} \abs{z} \phi_{j}(z)\int_{z-2b}^{z}g\left(t+b\right)\,dt\,dz, 
	 \quad 	\text{(Fubini's Theorem)}
 \\
 & \leq 2 \abs{b} \nu_{j}\int_{-\infty}^{\infty} \abs{z} \phi_{j}(z)dz, 
 	~~~~~~~~~~~~~~~~\quad \text{(Since $\abs{g(t + b)} \leq 1$ for all $t$)}
 \\
 & = 4b\sqrt{\frac{\nu_{j}}{2\pi}} 
 \\
 & \leq 
  2\sqrt{\frac{2}{\pi}}Uh\nu_{j}.
\end{align*}
To summarize, we have shown that the term $(a)$ satisfies
\[
\int_{-U}^{U}U\left|g(t)-g(t+th\sqrt{\nu_{j}})\right|\phi_{j}(t)\,dt
\ \le \ 2 \sqrt{\frac{2}{\pi}} U^{2}h\nu_{j}
\]

To bound $(b)$, we see that first see that
\[
    \int_{-\infty}^{-U}\left|t\right|\phi_{j}(t)\,dt
    +
    \int_{U}^{\infty}\left|t\right|\phi_{j}(t)\,dt
    =
    2\int_{U}^{\infty}t\phi_{j}(t)\,dt
    =
    2\sqrt{\frac{1}{2\pi\nu_{j}}}\exp\left\{ \frac{-U^{2}\nu_{j}}{2}\right\} 
\]
where the first equality holds by symmetry.

Putting the bounds of $(a)$ and $(b)$ together, we have
\[
    \abs{ \int_{-\infty}^{\infty}t\left[g(t)-g(t+th\sqrt{\nu_{j}})\right]\phi_{j}(t)\,dt }
    \le
    2 \sqrt{\frac{2}{\pi}}U^{2}h\nu_{j} +2\sqrt{\frac{1}{2\pi\nu_{j}}}\exp\left\{ \frac{-U^{2}\nu_{j}}{2}\right\} 
\]
We approximately balance the two terms by letting $U^{2}=\frac{2}{\nu_{j}}\log\left( \frac{1}{h\sqrt{\nu_{j}}}\right)$.  Substituting and simplifying yields
\[
4 \sqrt{\frac{2}{\pi}} h \log\left( \frac{1}{h \sqrt{\nu_j}} \right) + h \sqrt{\frac{2}{\pi}}
\ \leq \ 
4 \sqrt{\frac{2}{\pi}} h \log\left( \frac{1}{h \sqrt{\vmin}} \right) + h \sqrt{\frac{2}{\pi}}.
\]
To simplify, note that $h < 1/e$ and $\vmin \leq 1$ implies that $\log\left( \frac{1}{h \sqrt{\nu_j}} \right) \geq 1$.  Hence, combining the two terms and simplifying provides a bound of 
\(
10\sqrt{\frac{1}{2\pi}}h \log\left( \frac{1}{h\sqrt{\nu_{\min}}} \right).
\)
Note that $10/\sqrt{2\pi} \leq 4$ to complete the proof. 
\hfill \Halmos \end{proof}

\smallskip
We can now prove \cref{thm:equiv-in-sample}.
\begin{proof}{Proof of \cref{thm:equiv-in-sample}:}
Notice that if $a_j(\btheta) = 0$, then the $j^{\rm th}$ term contribute nothing to the bias because $\bx(\bZ)$ is independent of $Z_j$, so  $\Eb{ \xi_j x_j(\bZ) } = 0 = \Eb{D_j(\bZ)}$.
Hence, we focus on terms $j$ where $a_j(\btheta) \neq 0$.  

Decompose the $j^\text{th}$ term as 
\begin{align*} 
 \mathbb{E}\left[
 	\xi_{j}x_{j}(\bZ) 
 	- D_{j}\left(\bZ \right)
 \right] 
 ~=~ & 
 \underbrace{ \mathbb{E}\left[
 	\xi_{j} x_j (\bZ)
 	- \frac{V(\bZ+h\sqrt{\nu_{j}}\xi_{j}\be_{j} )
 		- V(\bZ )}
 		{h\sqrt{\nu_{j}} a_{j}(\btheta)}
 	\right] }_{(a)}\\
 & \qquad + \underbrace{
 \mathbb{E} \left[ \mathbb{E}\left[ \left.
 	\frac{
 		V(\bZ+h\sqrt{\nu_{j}}\xi_{j}\be_{j} )
 		- V(\bZ )
 	}{h\sqrt{\nu_{j}} a_{j}(\btheta)}
 - \frac{
 	V(\bZ+\delta_{j}\be_{j} )
 	- V(\bZ )}
 	{h\sqrt{\nu_{j}} a_{j}(\btheta)}
 \right| \bZ \right] \right]}_{(b)}
\end{align*}
We first bound $(b)$.  Canceling out the $V(\bZ )$ yields
\[
    \frac{1}{h\sqrt{\nu_{j}}\bm{a}_{j}(\btheta)}
    \mathbb{E}\left[
        V(\bZ + h\sqrt{\nu_{j}}\xi_{j}\be_{j} ) 
 	    - V(\bZ+\delta_{j}\be_{j} )
    \right].
\]
From our previous discussion,
\(
    V(\bZ + h\sqrt{\nu_{j}}\xi_{j}\be_{j} ) 
 	    \sim_{d} V(\bZ+\delta_{j}\be_{j} ),
\)
whereby
\mbox{$\mathbb{E}\left[V(\bZ+h\sqrt{\nu_{j}}\xi_{j}\be_{j} ) \,-\, V(\bZ+\delta_{j}\be_{j} )\right]=0$}. 

\Cref{lem:expected-finite-diff-error} bounds $(a)$ by
%
$
    4 h  \log\left(\frac{1}{h\sqrt{\nu_{\min}}}\right)
$. 
Summing over $j$ gives us our intended bound.
\hfill \Halmos \end{proof}

\smallskip
\subsection{Properties of VGC}\label[app]{app:properties-VGC}
We next establish smoothness properties of the VGC.
\proof{Proof of \Cref{lem:D-is-Lipschitz}.}
We begin with i). We first claim that $\btheta \mapsto V(\bz, \btheta)$ is Lipschitz continuous with parameter $Ln (1 + \| \bz \|_\infty)$.  To this end, write
\begin{align*}
V(\bz, \bthetabar) - V(\bz, \btheta)
& = \left(\br\left(\bz, \bthetabar\right)-\br\left(\bz, \btheta\right)\right)^{\top}\bx(\bz, \bthetabar)-\underbrace{\br\left(\bz, \btheta\right)^{\top}\left(\bx(\bz, \btheta)-\bx(\bz, \bthetabar)\right)}_{\leq 0 \text{ by optimality of $\bx(\bz, \btheta)$}}
\\
&\le  \abs{ \left(\br\left(\bz, \bthetabar\right)-\br\left(\bz, \btheta\right)\right)^{\top}\bx(\bz, \bthetabar)}
\\
& \leq 
\| \br(\bz, \bthetabar) - \br(\bz, \btheta) \|_1 \| \bx(\bz, \bthetabar) \|_\infty
\\
& \leq 
\| \br(\bz, \bthetabar) - \br(\bz, \btheta) \|_1
\qquad \qquad \qquad \qquad   (\text{since } \bx(\bz, \bthetabar) \in \mathcal X \subseteq [0, 1]^n)
\\
& \leq
\sum_{j=1}^n \abs{ a_j(\bthetabar) - a_j(\btheta)}\abs{z_j} + \abs{b_j(\bthetabar) - b_j(\btheta)}
\\
& \leq 
\sum_{j=1}^n \big( L \| \bz \|_\infty  \| \bthetabar - \btheta \| + L \| \bthetabar - \btheta \| \big)
\\  
& = 
L n \cdot (1 + \| \bz \|_\infty) \| \bthetabar - \btheta \|.
\end{align*}
Reversing the roles of $\btheta$ and $\bthetabar$ yields an analogous bound, and, hence, 
\[
\left| \br \left(\bz, \bthetabar\right)^{\top}\bx(\bz, \bthetabar) - \br\left(\bz, \btheta\right)^{\top}\bx(\bz, \btheta)\right| \leq
L n \left(1+\left\Vert \bz\right\Vert_{\infty}\right) \left\Vert \bthetabar-\btheta\right\Vert 
\]
This proves the first statement.  

Next, we claim for any $\bz$, 
\begin{equation} \label{eq:ScaledPlugInLipschitz}
\abs{ \frac{1}{a_j(\bthetabar)} V(\bz, \bthetabar) - \frac{1}{a_j(\btheta)} V(\bz, \btheta) } 
\ \leq \ 
\frac{2nL}{a_{\min}} \left( \frac{a_{\max}}{a_{\min}}  \| \bz \|_\infty + \frac{a_{\max}+b_{\max}} {a_{\min}} \right)  \| \bthetabar - \btheta \|. 
\end{equation}
Write
\begin{align*}
\abs{ \frac{1}{a_j(\bthetabar)} V(\bz, \bthetabar) - \frac{1}{a_j(\btheta)} V(\bz, \btheta) } 
& \ = \ 
\abs{ 
\frac{ a_j(\btheta) V(\bz, \bthetabar) - a_j(\bthetabar) V(\bz, \btheta) }{a_j(\btheta) a_j(\bthetabar)}
}
\\
& \ \leq \ 
\abs{ \frac{ V(\bz, \bthetabar) - V(z,\btheta)}{a_j(\bthetabar)}} + 
	\abs{ \frac{ V(z,\btheta) ( a_j(\btheta) - a_j(\bthetabar))}{a_j(\btheta) a_j(\bthetabar)}},
\\
& \ \leq \
\frac{ Ln(1 + \| \bz \|_\infty) \| \bthetabar - \btheta \|}{a_{\min}} + 
\frac{ \abs{V(z,\btheta)} L \| \bthetabar - \btheta \|}{a_{\min}^2},	
\end{align*}
where the first inequality follows
by adding and subtracting $a_j(\btheta)V(\btheta)$ in the numerator, and the second inequality follows from the Lipschitz continuity of $a_j(\btheta)$ and $V(\bz, \btheta)$ (\cref{asn:SmoothPlugIn}).   Next note that 
\[
\abs{V(\bz, \btheta)} 
\ \leq \ 
\| \br(\bz, \btheta) \|_1 \| \bx(\bz, \btheta ) \|_\infty 
\ \leq \ 
\| \ba(\btheta) \circ \bz \|_1 + \| \bm b(\btheta) \|_1
\ \leq \ 
n \| \bz \|_\infty a_{\max} + n b_{\max}.
\]
Substituting above and simplifying proves \cref{eq:ScaledPlugInLipschitz}

We can now prove the lemma.  Fix a component $j$.  Then, 
\begin{align*}
D_j(\bz, \bthetabar) - D_j(\bz, \btheta)
&\ = \ 
\Eb{  \frac{1}{h\sqrt{\nu_j} a_j(\bthetabar)} \left( V(\bZ + \delta_j \be_j, \bthetabar) - V(\bZ, \bthetabar)\right) \ \mid \bZ = \bz} 
\\ & \qquad  - \Eb{ \frac{1}{h\sqrt{\nu_j} a_j(\btheta)} \left( V(\bZ + \delta_j \be_j, \btheta) - V(\bZ, \btheta)\right) \ \mid   \bZ = \bz }
\\ & \ = \ 
\frac{1}{h \sqrt{\nu_j}} \Eb{ \frac{1}{a_j(\bthetabar)} V(\bZ + \delta_j \be_j, \bthetabar) 
					- \frac{1}{a_j(\btheta)} V(\bZ + \delta_j \be_j, \btheta) \mid \bZ = \bz}
\\ & \qquad 
+ 
\frac{1}{h \sqrt{\nu_j}} \Eb{ \frac{1}{a_j(\btheta)} V(\bZ, \btheta) 
					- \frac{1}{a_j(\bthetabar)} V(\bZ, \bthetabar) \mid \bZ = \bz}.
\end{align*}			
Hence, by taking absolute values and applying \cref{eq:ScaledPlugInLipschitz} twice we obtain
\begin{align*} 
\abs{ D_j(\bz, \bthetabar) - D_j(\bz, \btheta) }
& \ \leq \ 
\frac{2nL}{h a_{\min}} \left( \frac{a_{\max}}{a_{\min}}  \Eb{ \| \bZ + \delta_j \be_j \|_\infty \mid \bZ = \bz} + \frac{a_{\max}+b_{\max}} {a_{\min}} \right)  \| \bthetabar - \btheta \| 
\\ & \qquad + 
\frac{2nL}{h a_{\min}} \left( \frac{a_{\max}}{a_{\min}}  \| \bz  \|_\infty + \frac{a_{\max}+b_{\max}} {a_{\min}} \right)  \| \bthetabar - \btheta \|,
\end{align*}
where we have passed through the conditional expectation.  Finally, note that $\Eb{ \| \bZ + \delta_j \be_j \|_\infty \mid \bZ= \bz } \leq \| \bz \|_\infty + \Eb{ \abs{\delta_j}} \ \leq \| \bz \|_\infty + \sqrt{ h^2 + 2h /\nu_j}$ by Jensen's inequality.  We simplify this last expression by noting for $h  < 1/e$, $h^2 < h$,  so that 
\[
\sqrt{ h^2 + 2h /\nu_j} \  \leq  \ \sqrt{ h} \sqrt{ 1 + 2/\vmin} \  \leq  \ 2 \sqrt{\frac{h}{\vmin}}, 
\]
using $\vmin \leq 1$.  Thus, 
$\Eb{ \| \bZ + \delta_j \be_j \|_\infty \mid \bZ = \bz} \leq \| \bz \|_\infty +2 \sqrt{\frac{h}{\vmin}}$.  Substituting above and collecting terms yields
\begin{equation}\label{eq:UpperBoundLip}
\frac{4nL}{h a_{\min}} \left( \frac{a_{\max}}{a_{\min}}  \| \bz  \|_\infty + \frac{a_{\max}+b_{\max}} {a_{\min}}  + \frac{a_{\max}}{a_{\min}} \sqrt{\frac{h}{\vmin}} \right)  \| \bthetabar - \btheta \|.
\end{equation}
We can simplify this expression by letting 
\[
C_3 \geq  \frac{4}{a_{\min}} \cdot \max\left(\frac{a_{\max}}{a_{\min}}, \ 
			\frac{a_{\max} + b_{\max}}{a_{min}}, \ 
			\frac{a_{\max}}{a_{\min}}
			\right) .
\]
Then \cref{eq:UpperBoundLip} is at most 
\[
\frac{C_3 n L}{h}\left( \| \bz \|_\infty + 1 + \sqrt{\frac{h}{\vmin}} \right) \| \bthetabar - \btheta \|_2 
\ \leq \ 
\frac{C_3 n L}{h}\left( \| \bz \|_\infty + \frac{2}{\sqrt \vmin} \right) \| \bthetabar - \btheta \|_2 
\ \leq \ 
\frac{2 C_3 n L}{h \sqrt {\vmin}} \left( \| \bz \|_\infty + 1 \right) \| \bthetabar - \btheta \|_2, 
\]
where we have used the bounds on the precisions (\cref{asn:Parameters}) and $h < 1/e$ to simplify.  
Letting $C_1 = 2 C_3$ proves the first part of the theorem.  

To complete the proof, we require a high-probability bound on $\| \bZ \|_\infty$.  Since $\bZ- \bmu$ is sub-Gaussian, such bounds are well-known \citep{wainwright2019high}, and we have with probability $1-e^{-R}$, 
\[
\| \bZ \|_\infty \ \leq \ C_\mu + \| \bZ - \bmu \|_\infty \leq C_\mu + \frac{C_4}{\sqrt \vmin} \sqrt{ \log n } \sqrt{ R}, 
\]
for some universal constant $C_4$.  Substitute this bound into our earlier Lipschitz bound for an arbitrary $\bz$, and use the \cref{asn:Parameters}, $h < 1/e$, and $R>1$  to collect terms and simplify. \blue{We then sum over the $n$ terms of $D(\bZ,\btheta)$ to complete the proof for i).} \\
\blue{
We now bound ii). Focusing on the $j^{th}$ component of $D(\bZ,(\btheta,h))$
and writing 
\[
D_{j}\left(\bZ,(\btheta,h)\right)\equiv D_{j}\left(\bZ,h,\delta_{j}^{h},\btheta \right)=\mathbb{E}\left[\left.\frac{V(\bZ+\delta_{j}^{h}\be_{j},\btheta)-V(\bZ,\btheta)}{a_{j}(\btheta)h\sqrt{\nu_{j}}}\right|\bZ\right],
\]
we see that 
\begin{align*}
D_{j}\left(\bZ,h,\delta_{j}^{h},\btheta\right)-D_{j}\left(\bZ,\overline{h},\delta_{j}^{\overline{h}},\btheta\right) & =\underbrace{D_{j}\left(\bZ,h,\delta_{j}^{h},\btheta\right)-D_{j}\left(\bZ,\overline{h},\delta_{j}^{h},\btheta\right)}_{(a)}\\
 & \qquad+\underbrace{D_{j}\left(\bZ,\overline{h},\delta_{j}^{h},\btheta\right)-D_{j}\left(\bZ,\overline{h},\delta_{j}^{\overline{h}},\btheta\right)}_{(b)}.
\end{align*}
To bound $(a)$ and $(b)$, we see from the proof of \cref{lem:VGCBounded} that,
\begin{equation}
\left|\frac{V(\bZ,\btheta)-V(\bZ+Y\be_{j},\btheta)}{a_{j}(\btheta)}\right|\le\left|Y\right|.\label{eq:change-in-obj-j-pert}
\end{equation}
We first bound $(a)$. We see 
\begin{align*}
\left|D_{j}\left(\bZ,h,\delta_{j}^{h},\btheta\right)-D_{j}\left(\bZ,\overline{h},\delta_{j}^{h},\btheta\right)\right| & =\left|\frac{\overline{h}-h}{h\overline{h}}\right|\left|\mathbb{E}\left[\left.\frac{V(\bZ+\delta_{j}^{h}\be_{j},\btheta)-V(\bZ,\btheta)}{\sqrt{\nu_{j}}}\right|\bZ\right]\right|\\
 & \le\frac{\left|\overline{h}-h\right|}{h_{\min}^{2}}\left|\mathbb{E}\left[\left.\frac{|\delta_{j}^{h}|}{\sqrt{\nu_{j}}}\right|\bZ\right]\right|,\text{ by Eq. (\ref{eq:change-in-obj-j-pert})}\\
 & \le\frac{\left|\overline{h}-h\right|}{h_{\min}^{2}}\frac{1}{\sqrt{\nu_{\min}}}\sqrt{\frac{3h}{\sqrt{\nu_{\min}}}}\le\frac{\sqrt{3}\left|\overline{h}-h\right|}{h_{\min}^{2}\nu_{\min}^{3/4}},
\end{align*}
where the second to last inequality applies the inequality $\mathbb{E}\left[|\delta_{j}^{h}|\right]=\mathbb{E}\left[\sqrt{|\delta_{j}^{h}|^{2}}\right]\le\sqrt{\mathbb{E}\left[|\delta_{j}^{h}|^{2}\right]}\le\sqrt{\frac{3h}{\sqrt{\nu_{\min}}}}$.
We then bound $(b)$. We see 
\begin{align*}
\left|D_{j}\left(\bZ,\overline{h},\delta_{j}^{h},\btheta\right)-D_{j}\left(\bZ,\overline{h},\delta_{j}^{\overline{h}},\btheta\right)\right| & =\left|\mathbb{E}\left[\left.\frac{V(\bZ+\delta_{j}^{h}\be_{j},\btheta)-V(\bZ+\delta_{j}^{\overline{h}}\be_{j},\btheta)}{a_{j}(\btheta)\overline{h}\sqrt{\nu_{j}}}\right|\bZ\right]\right|\\
 & =\frac{1}{\overline{h}\sqrt{\nu_{j}}}\left|\mathbb{E}\left[\left.f\left(\delta_{j}^{h}\right)-f\left(\delta_{j}^{\overline{h}}\right)\right|\bZ\right]\right|\\
 & \le\frac{1}{\overline{h}\sqrt{\nu_{j}}}W_{2}\left(\delta_{j}^{h},\delta_{j}^{\overline{h}}\right),\text{ by Wainwright (2019, pg. 76)}
\end{align*}
The Wasserstein distance between two mean-zero Gaussians is known
in closed form:
\[
W_{2}\left(\delta_{j}^{h},\delta_{j}^{\overline{h}}\right)=\left|\sqrt{h^{2}+\frac{2h}{\sqrt{\nu_{j}}}}-\sqrt{\overline{h}^{2}+\frac{2\overline{h}}{\sqrt{\nu_{j}}}}\right|\le\sqrt{\left|h^{2}-\overline{h}^{2}+\frac{2\left(h-\overline{h}\right)}{\sqrt{\nu_{j}}}\right|}\le\sqrt{\left(2+\frac{2}{\sqrt{\nu_{j}}}\right)\left|h-\overline{h}\right|},
\]
where the first inequality comes from the common inequality $\left| \sqrt{a} - \sqrt{b} \right| \le \sqrt{\left| a - b \right|} $.
Thus,
\[
\left|D_{j}\left(\bZ,\overline{h},\delta_{j}^{h},\btheta\right)-D_{j}\left(\bZ,\overline{h},\delta_{j}^{\overline{h}},\btheta\right)\right|\le\frac{1}{h_{\min}\sqrt{\nu_{\min}}}\sqrt{\left(2+\frac{2}{\sqrt{\nu_{\min}}}\right)\left|h-\overline{h}\right|}.
\]
Collecting constants of the bounds of $(a)$ and $(b)$, we obtain
our result.
}
\hfill \Halmos \endproof

We now show the that the components of VGC is bounded. 

\blue{
\proof{Proof of \ref{lem:VGCBounded}:}
We see
\begin{align*}
&\frac{V(\bZ+\delta_{j}\be_{j}, \btheta)-V(\bZ, \btheta)}{a_{j}(\btheta)h\sqrt{\nu_{j}}}
 \\
& \qquad =
	\frac{1}{a_j(\btheta) h \sqrt{\nu_j}} 
	\Big( \underbrace{
		\br(\bZ, \btheta)^\top \left( \bx(\bZ + \delta_j \be_j, \btheta) - \bx(\bZ, \btheta) \right)
	}_{ \leq 0 \text{ by optimality of } \bx(\bZ, \btheta) } \; 
	+ \; a_j(\btheta) \delta_j x_j(\bZ + \delta_j\be_j, \btheta)  
	\Big)
\\
&\qquad \leq \  \frac{\delta_{j}x_{j}(\bZ+\delta_{j}\be_{j}, \btheta )}{h\sqrt{\nu_{j}}}
 \leq \  \frac{ \abs{ \delta_{j}} }{h\sqrt{\nu_{\min}}}.
\end{align*}
By an analogous argument,
\[
\frac{V(\bZ, \btheta)-V(\bZ+\delta_{j}\be_{j}, \btheta)}{a_{j}(\btheta)h\sqrt{\nu_{j}}}\le\frac{\abs{ \delta_{j}} }{h\sqrt{\nu_{\min}}}.
\]
Taking the conditional expectation, we see
\[
	\abs{D_j(\bz)} \le 
	\mathbb{E} \left[ \left| \frac{V(\bZ+\delta_{j}\be_{j}, \btheta)-V(\bZ, \btheta)}{a_{j}(\btheta)h\sqrt{\nu_{j}}} \right| \Big|  \bZ \right] 
	\leq \mathbb{E}\left[ \frac{\abs{ \delta_{j}} }{h\sqrt{\nu_{\min}}} \right] 
	\leq \frac{\sqrt 3}{\vmin^{3/4} \sqrt h }.
\]
where the first inequality holds by Jensen's inequality and the last inequality holds by Jensen's inequality as
$\mathbb{E}\left[ \abs{ \delta_{j}} \right] 
\le \mathbb{E}\left[ \sqrt{\delta_{j}^2} \right] 
\le \sqrt{\mathbb{E}\left[ \delta_{j}^2 \right] } 
= \sqrt{h^2 + 2h/\sqrt{\nu_j}} 
\le \sqrt{ \frac{3h}{\sqrt{\vmin}}}$.
\hfill \Halmos \endproof
}
\subsection{Bias Under Violations of \cref{asn:Gaussian}} \label[app]{sec:ViolatingGaussian}

In cases where the precisions $\nu_j$ are not known but estimated by a quantity $\tilde{\nu}_j$, we can construct the VGC in the same fashion, but replacing instances of $\nu_j$ with $\tilde{\nu}_j$, giving us,
\[
\sum_{j: a_j \neq 0}
\frac{1}{ h \sqrt{\tilde{\nu}_j} a_j(\btheta) }  
\Eb{ \biggr( \Obj(\bmu  +  \bxi + \tilde{\delta}_j \be_j) - \Obj(\bmu  +  \bxi) \biggr) \Biggr | \bZ }
\]
where $\tilde{\delta}_j\sim \mathcal{N}(0,h^2 + 2h/\sqrt{\tilde{\nu}_j} )$.  The bias of this VGC is similar to \cref{thm:equiv-in-sample}, except that it picks up an additional bias term due to the approximation error incurred from $\tilde{\nu}_j$, which we quantify in the following lemma.

\begin{lem}[Bias of VGC with Estimated Precisions]\label[lem]{lem:noisy-precision} 
Suppose \cref{asn:Parameters} holds.
Let $\tilde{\nu}_{j}$ be an estimate of $\nu_{j}$
and let $\tilde{\delta}_{j}\sim\mathcal{N}(0,h^{2}+2h/\sqrt{\tilde{\nu}_{j}})$ and assume $\vmin \leq \min_j \tilde \nu_j$.
For any $0<h<1/e$, there exists a constant $C$ dependent on $\nu_{\min}$
such that
\begin{align*}
&\left|\mathbb{E}\left[\sum_{j=1}^{n}\xi_{j}x_{j}(\bZ) - \sum_{j: a_j \neq 0}^{n}
\Eb{ \frac{V(\bZ+\tilde{\delta}_{j}\be_{j})-V(\bZ)}{a_{j}(\btheta)h\sqrt{\tilde{\nu}_{j}}} \Biggr| \bZ }  \right]\right|
\\
& \qquad \leq C\cdot nh\log\left(\frac{1}{h}\right)+\frac{C}{\sqrt{h}} \sum_{j:a_j\ne 0}\left(\left|\nu_{j}^{1/2}-\tilde{\nu}_{j}^{1/2}\right|+\sqrt{\left|\nu_{j}^{1/2}-\tilde{\nu}_{j}^{1/2}\right|}\right)
\end{align*}
\end{lem}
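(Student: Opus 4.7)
\proof{Proof Proposal for \cref{lem:noisy-precision}:}
The plan is to decompose the total bias into two pieces by passing through the ``oracle'' VGC constructed with the true precisions $\nu_j$. Specifically, letting $D_j(\bZ)$ denote the VGC with true precisions (as defined in \cref{eq:rand-finite-diff-D}) and $\tilde D_j(\bZ)$ the analogous object built from $\tilde \nu_j$ and $\tilde \delta_j$, write
\[
\Eb{\sum_j \xi_j x_j(\bZ) - \sum_{j:a_j\neq 0} \tilde D_j(\bZ)}
\; = \;
\underbrace{\Eb{\sum_j \xi_j x_j(\bZ) - \sum_{j:a_j\neq 0} D_j(\bZ)}}_{(\mathrm I)}
\; + \;
\underbrace{\sum_{j:a_j\neq 0}\Eb{D_j(\bZ) - \tilde D_j(\bZ)}}_{(\mathrm{II})}.
\]
\cref{thm:equiv-in-sample} immediately bounds $(\mathrm I)$ by $C n h \log(1/h)$, recovering the first term on the right-hand side of the lemma. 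The hard part will be $(\mathrm{II})$, which measures the sensitivity of the VGC to misspecification of the precision.

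To bound $(\mathrm{II})$, I will further split each summand by adding and subtracting the ``hybrid'' term $\frac{1}{a_j h \sqrt{\nu_j}}\Eb{V(\bZ+\tilde\delta_j\be_j)-V(\bZ)\mid \bZ}$, giving
\[
D_j(\bZ) - \tilde D_j(\bZ)
\; = \;
\underbrace{\frac{1}{a_j h \sqrt{\nu_j}}\Eb{V(\bZ+\delta_j\be_j) - V(\bZ+\tilde\delta_j\be_j)\mid \bZ}}_{(\mathrm{II}_a)}
\; + \;
\underbrace{\left(\tfrac{1}{\sqrt{\nu_j}}-\tfrac{1}{\sqrt{\tilde\nu_j}}\right)\frac{1}{a_j h}\Eb{V(\bZ+\tilde\delta_j\be_j)-V(\bZ)\mid \bZ}}_{(\mathrm{II}_b)}.
\]
For $(\mathrm{II}_a)$, the key observation is that $y\mapsto V(\bz + y\be_j)$ is $|a_j|$-Lipschitz (as seen in the proof of \cref{lem:VGCBounded}), so by the Kantorovich--Rubinstein duality the conditional expectation is bounded by $|a_j|\,W_1(\delta_j,\tilde\delta_j)\leq |a_j|\,W_2(\delta_j,\tilde\delta_j)$. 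Since both perturbations are mean-zero Gaussians, the same closed-form computation used in the proof of \cref{lem:D-is-Lipschitz}(ii) yields
\[
W_2(\delta_j,\tilde\delta_j) \; \leq \; \sqrt{\Bigl|\tfrac{2h}{\sqrt{\nu_j}} - \tfrac{2h}{\sqrt{\tilde\nu_j}}\Bigr|} \; \leq \; C_1 \sqrt{h} \sqrt{\bigl|\nu_j^{1/2}-\tilde\nu_j^{1/2}\bigr|},
\]
where the last step factors the difference of reciprocals and absorbs $\vmin$-dependence into $C_1$. Dividing by $h\sqrt{\nu_j}$ produces the $\frac{1}{\sqrt h}\sqrt{|\nu_j^{1/2}-\tilde\nu_j^{1/2}|}$ contribution.

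For $(\mathrm{II}_b)$, I use the same Lipschitz bound $|V(\bz+y\be_j)-V(\bz)|\leq |a_j||y|$ to get $\frac{1}{a_j h}\Eb{V(\bZ+\tilde\delta_j\be_j)-V(\bZ)\mid \bZ} \leq \frac{1}{h}\Eb{|\tilde\delta_j|} \leq \frac{C_2}{\sqrt h}$ after using $\tilde\nu_j\geq \vmin$ and $h<1/e$. The prefactor $|\nu_j^{-1/2}-\tilde\nu_j^{-1/2}|$ factors as $|\nu_j^{1/2}-\tilde\nu_j^{1/2}|/\sqrt{\nu_j\tilde\nu_j}$, which is $O(|\nu_j^{1/2}-\tilde\nu_j^{1/2}|)$ under \cref{asn:Parameters}. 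This produces the $\frac{1}{\sqrt h}|\nu_j^{1/2}-\tilde\nu_j^{1/2}|$ contribution. Summing over $j:a_j\neq 0$ and combining with the bound on $(\mathrm I)$ yields the claim. The main obstacle is the careful two-step decomposition of $(\mathrm{II})$ together with matching the Gaussian Wasserstein computation to the exponents on the right-hand side; once that is set up, the remaining estimates are direct applications of already-established Lipschitz properties of $V$.
\hfill\Halmos
\endproof
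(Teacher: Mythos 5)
Your proposal is correct and follows essentially the same route as the paper: after peeling off the oracle-precision bias via \cref{thm:equiv-in-sample}, your hybrid-term split of $(\mathrm{II})$ into $(\mathrm{II}_a)$ and $(\mathrm{II}_b)$ is algebraically identical (up to sign and rearrangement of the $\sqrt{\tilde{\nu}_j/\nu_j}$ factor) to the paper's terms $(a)$ and $(b)$, which are likewise bounded by a Lipschitz--Wasserstein comparison of the two Gaussian perturbations and a direct bound on $\mathbb{E}|\tilde{\delta}_j|$.
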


\proof{Proof of \cref{lem:noisy-precision}}
Move the inner conditional expectation outwards and consider a sample path with a fixed $\bZ$.  
Let $D_{j}(t)\equiv\frac{V(\bZ+t\be_{j})-V(\bZ)}{a_{j}h\sqrt{\tilde{\nu}_{j}}}$ if $a_j \ne 0$ and 0 otherwise,
so that $\Eb{ D_{j}(\tilde{\delta}_{j}) \mid \bZ} $ is the $j^{\text{th}}$ component
of the VGC with the estimated precisions and $\sqrt{\frac{\tilde{\nu}_{j}}{\nu_{j}}}\Eb{ D_{j}(\delta_{j}) \mid \bZ } $
is the $j^{\rm th}$ component of the VGC with the correct $\nu_{j}$. Fix some
 $j^{\rm th}$ where $a_j \ne 0$.  Note, 
\[
	\xi_{j}x_{j}(\bZ)-D_{j}(\tilde{\delta}_{j})
	=
	\left(\xi_{j}x_{j}(\bZ)-\frac{V(\bZ+\delta_{j}\be_{j})-V(\bZ)}{a_j h\sqrt{\nu_{j}}}\right)
	+
	\left(\sqrt{\frac{\tilde{\nu}_{j}}{\nu_{j}}}D_{j}(\delta_{j})-D_{j}(\tilde{\delta}_{j})\right)
\]
The expectation of the first term was bounded in \cref{thm:equiv-in-sample}, so we focus
on the expectation of the second. We see
\[
    \sqrt{\frac{\tilde{\nu}_{j}}{\nu_{j}}}D_{j}(\delta_{j})-D_{j}(\tilde{\delta}_{j})
    =
    \underbrace{
        \left(\sqrt{\frac{\tilde{\nu}_{j}}{\nu_{j}}}\,\right)\left(D_{j}(\delta_{j})-D_{j}(\tilde{\delta}_{j})\right)
    }_{(a)}
    + \underbrace{
        \left(1-\sqrt{\frac{\tilde{\nu}_{j}}{\nu_{j}}}\,\right)D_{j}(\tilde{\delta}_{j})
    }_{(b)}.
\]
To bound the expectation of $(a)$, we see first see $t \mapsto  h\sqrt{\tilde{\nu}_{j}}D_{j}(t)$
is $1-$Lipschitz because
\[
\frac{\partial}{\partial t}h\sqrt{\tilde{\nu}_{j}}D_{j}(t)=\frac{\partial}{\partial t}\frac{V(\bZ+t\be_{j})-V(\bZ)}{a_j}= \frac{1}{a_j} a_j x_{j}(\bZ) = x_{j}(\bZ)
\]
by Danskin's theorem and because $x_{j}(\bZ)$ is between 0 and 1. Thus,
by \cite[pg. 76]{wainwright2019high}
\[
\abs{ \mathbb{E}\left[\left.  h\sqrt{\tilde{\nu}_{j}}\left(D_{j}(\delta_{j})-D_{j}(\tilde{\delta}_{j})\right)\right|\bZ\right] }
\le W_{2}\left(\delta_{j},\tilde{\delta}_{j}\right),
\]
where $W_{2}\left(\delta_{j},\tilde{\delta}_{j}\right)$ is the Wasserstein distance between two mean-zero Gaussians $\delta_j$ and $\tilde{\delta}_j$, which is known in closed form:
\begin{align*}
W_{2}\left(\delta_{j},\tilde{\delta}_{j}\right) 
 & = \abs{ \sqrt{h^{2}+\frac{2h}{\tilde{\nu}_{j}^{1/2}}}-\sqrt{h^{2}+\frac{2h}{\nu_{j}^{1/2}} }}
 \le\sqrt{\left|\frac{2h}{\tilde{\nu}_{j}^{1/2}}-\frac{2h}{\nu_{j}^{1/2}}\right|}\le\sqrt{2h\left|\frac{\nu_{j}^{1/2}-\tilde{\nu}_{j}^{1/2}}{\nu_{\min}}\right|}
\end{align*}
where $\nu_{\min} \ \leq \min_{j}\left\{ \min\left\{ \nu_{j},\tilde{\nu}_{j}\right\} \right\} $.

To bound the expectation of $(b)$, we see
\[
	\mathbb{E}\left[\left.\left|D_{j}(\tilde{\delta}_{j})\right|\right|\bZ\right]
	\le
	\mathbb{E}\left[\left.\frac{ \left|a_j \tilde{\delta}_{j}\right|}{|a_j| \tilde{\nu}_j^{1/2} h}\,\right|\bZ\right]
	=
	\frac{1}{\tilde{\nu}_j^{1/2} h}\sqrt{\frac{2}{\pi}
	\left(
		h^{2}+\frac{2h}{\tilde{\nu}_{j}^{1/2}}
	\right)}
	\le
	\sqrt{\frac{2}{\pi}\left(\frac{3}{\nu_{\min}h}\right)}
\]
 where the first equality holds by directly evaluating the expectation
and the last inequality holds because $h < 1/e$ and $\vmin \leq 1$.

Putting it all together, we see
\begin{align*}
 & 
 \left|
 	\mathbb{E}\left[
 		\sum_{j=1}^{n}
 			\xi_{j}x_{j}(\bZ)
 		-
 		\sum_{j = 1}^n
 			\frac{V(\bZ+\tilde{\delta}_{j}\be_{j})
 			-
 			V(\bZ)}{a_{j} h\sqrt{\tilde{\nu}_{j}}}\right]\right|
 \\
 \le 
 & 
 \left|
 	\mathbb{E} \left[ 
 		\sum_{j=1}^{n} 
 		\xi_{j}x_{j}(\bZ)  
 		-\sqrt{\frac{\tilde{\nu}_{j}}{\nu_{j}}}D_{j}(\delta_{j}) 
 	\right] 
 \right|
 +
 \left|
 	\sum_{j=1}^n
 	\mathbb{E}\left[
 		\mathbb{E}\left[\left.\sqrt{\frac{\tilde{\nu}_{j}}{\nu_{j}}} D_{j}(\delta_{j}) 
 			- D_{j}(\tilde{\delta}_{j})\right|\bZ
 		\right]
 	\right]
 \right|
 \\
 \le & 
 \sum_{j=1}^{n}C\cdot h\log\left(\frac{1}{h}\right) 
 + 
 \sum_{j:a_j \ne 0}\frac{1}{h\sqrt{\nu_{\min}}}\sqrt{2h\left|\frac{\nu_{j}^{1/2}-\tilde{\nu}_{j}^{1/2}}{\nu_{\min}}\right|}
 +
 \left|
 	\frac{\nu_{j}^{1/2}-\tilde{\nu}_{j}^{1/2}}{\nu_{\min}}
 \right|
 \sqrt{\frac{2}{\pi}
 \left(\frac{3}{h\nu_{\min}}\right)}
 \\
 \le & 
 \sum_{j=1}^{n} C\cdot h\log\left(\frac{1}{h}\right)
 +
 \sum_{j: a_j \ne 0} \frac{\sqrt{2\left|\nu_{j}^{1/2}-\tilde{\nu}_{j}^{1/2}\right|}}{\sqrt{h}\nu_{\min}^{3/2}}
 +
 \frac{\left|\nu_{j}^{1/2}-\tilde{\nu}_{j}^{1/2}\right|\sqrt{\frac{6}{\pi}}}{\sqrt{h}\nu_{\min}^{3/2}}
\end{align*}
where the first inequality follows from triangle inequality and the second from applying \cref{thm:equiv-in-sample} and our bounds on $(a)$ and $(b)$.
Collecting constants we obtain our intended result.
\hfill \Halmos \endproof

We now highlight when $\xi_j$ are not Gaussian but only sub-Gaussian. Let
\begin{lem}[Bias VGC with Gaussian assumption violated]\label[lem]{lem:NonGaussian} Suppose \cref{asn:Parameters} holds. Let $\xi_j$ be a mean-zero, sub-Gaussian random variable with variance proxy at most $\sigma^2$ and admits a density density $\phi(\cdot)$. Additionally, let $\bar{\xi}_j \sim \mathcal{N}(0,1/\sqrt{\nu_j}$ with density $\bar{\phi}(\cdot)$.
Then, there exists a dimension independent constant $C$, such that
\begin{align*}
&\left|\mathbb{E}\left[\sum_{j=1}^{n}\xi_{j}x_{j}(\bZ) - \sum_{j: a_j \neq 0}^{n}
\Eb{ \frac{V(\bZ+\delta_{j}\be_{j})-V(\bZ)}{a_{j}(\btheta)h\sqrt{\nu_{j}}} \Biggr| \bZ }  \right]\right|
\\
& \qquad \leq 
n\left(\sigma\sqrt{2\pi}-\log\left(\frac{\left\Vert \phi-\bar{\phi}\right\Vert _{1}}{4}\right)\right)\left\Vert \phi-\bar{\phi}\right\Vert _{1}
    + Cnh\log\left(\frac{1}{h}\right)
    + \sum_{j: a_j \neq 0}^{n} W_{2}(\xi_{j},\bar{\xi}_{j})
\end{align*}
\end{lem}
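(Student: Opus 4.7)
}

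The plan is to reduce the non-Gaussian bias to the Gaussian bias already handled by \cref{thm:equiv-in-sample}, plus two perturbation terms that measure how far $\bxi$ is from its Gaussian analog $\bar\bxi$. To this end, I would introduce a Gaussian reference vector $\bar\bxi$ with independent components $\bar\xi_j \sim \mathcal{N}(0,1/\nu_j)$ (independent of $\bxi$) and set $\bar\bZ = \bmu + \bar\bxi$. Adding and subtracting $\mathbb{E}\bigl[\sum_j \bar\xi_j x_j(\bar\bZ) - D_j(\bar\bZ)\bigr]$ inside the outer absolute value and applying the triangle inequality splits the quantity of interest into three pieces:
\[
\text{(A)}\ \Bigl|\mathbb{E}\Bigl[\sum_j \bar\xi_j x_j(\bar\bZ) - D_j(\bar\bZ)\Bigr]\Bigr|, \quad
\text{(B)}\ \Bigl|\mathbb{E}\Bigl[\sum_j \xi_j x_j(\bZ) - \bar\xi_j x_j(\bar\bZ)\Bigr]\Bigr|,\quad
\text{(C)}\ \Bigl|\mathbb{E}\Bigl[\sum_j D_j(\bZ) - D_j(\bar\bZ)\Bigr]\Bigr|.
\]
Part (A) is exactly the Gaussian bias, so \cref{thm:equiv-in-sample} immediately yields the $Cnh\log(1/h)$ contribution.

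For part (B), I would combine a truncation argument with the total-variation bound between product measures. The integrand $(z_j-\mu_j)x_j(\bz)$ is bounded in magnitude by $|\xi_j|$, which is unbounded but sub-Gaussian. Fix a truncation level $U$. On the event $\{|\xi_j|\leq U\}$ (and similarly for $\bar\xi_j$), the integrand is bounded by $U$, so the contribution of this event to (B) is at most $U\cdot d_{TV}(\mathcal L(\bxi), \mathcal L(\bar\bxi))$. Exploiting independence and the standard subadditivity of TV for product measures, this is in turn at most $\tfrac{U}{2}\sum_j\|\phi_j-\bar\phi_j\|_1$. For the tail, the contribution is controlled by $\mathbb{E}[|\xi_j|\mathbb{1}\{|\xi_j|>U\}]+\mathbb{E}[|\bar\xi_j|\mathbb{1}\{|\bar\xi_j|>U\}]$, each of which is at most $O(\sigma U e^{-U^2/(2\sigma^2)})$ by the sub-Gaussian tail bound on $\xi_j$ and the Gaussian tail bound on $\bar\xi_j$. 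Choosing $U^2 = 2\sigma^2\log(4/\|\phi-\bar\phi\|_1)$ balances the two contributions and produces the leading term $n\bigl(\sigma\sqrt{2\pi}-\log(\|\phi-\bar\phi\|_1/4)\bigr)\|\phi-\bar\phi\|_1$ appearing in the stated bound, where the $\sigma\sqrt{2\pi}$ absorbs the additive $\mathbb{E}[|\xi_j|]$-type constants.

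For part (C), I would use the Lipschitz continuity of $\bz\mapsto D_j(\bz)$, which follows from the Lipschitz continuity of $V$ (inherited from Danskin's theorem and the boundedness of $\mathcal X\subseteq [0,1]^n$): a straightforward adaptation of \cref{lem:D-is-Lipschitz} shows that $|D_j(\bz)-D_j(\bz')|\leq (\text{const})\cdot \|\bz-\bz'\|_1$ with the constant depending on $h,\nu_{\min},a_{\min},a_{\max}$. Taking a componentwise coupling of $\bxi$ and $\bar\bxi$ that achieves the Wasserstein-2 optimum in each coordinate (and using $W_1\leq W_2$), the expectation $\mathbb{E}[\|\bxi-\bar\bxi\|_1]$ is bounded by $\sum_j W_2(\xi_j,\bar\xi_j)$, producing the third term with all problem-level constants absorbed into $C$.

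The main obstacle, as in \cref{thm:equiv-in-sample}, is getting the truncation in (B) to yield the \emph{specific} form $(\sigma\sqrt{2\pi}-\log(\|\phi-\bar\phi\|_1/4))\|\phi-\bar\phi\|_1$: one must carefully track the interplay between the chosen $U$ and the sub-Gaussian tail bounds, and verify that the additive Gaussian tail for $\bar\xi_j$ contributes comparably. A secondary subtlety is ensuring that the TV-distance bound in (B) and the Wasserstein bound in (C) properly handle the independence structure across $j$, so that the perturbation terms sum linearly in $n$ and $\sum_j\|\phi_j-\bar\phi_j\|_1$ (respectively, $\sum_j W_2$) rather than multiplicatively.
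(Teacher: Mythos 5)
There is a genuine gap in how you handle terms (B) and (C): by Gaussianizing \emph{all} coordinates at once ($\bar{\bZ} = \bmu + \bar{\bxi}$), every one of the $n$ summands $\E\bigl[\xi_j x_j(\bZ) - \bar\xi_j x_j(\bar{\bZ})\bigr]$ requires comparing the full $n$-dimensional laws of $\bZ$ and $\bar{\bZ}$, whose total-variation distance is (by the very subadditivity you invoke) of order $\tfrac{n}{2}\Vert \phi - \bar\phi \Vert_1$, not $\tfrac{1}{2}\Vert \phi - \bar\phi\Vert_1$. Summing over $j$, your truncation argument therefore yields a bound of order $n^2 U \Vert \phi - \bar\phi\Vert_1$ for (B), an extra factor of $n$ beyond the claimed $n\bigl(\sigma\sqrt{2\pi} - \log(\Vert\phi-\bar\phi\Vert_1/4)\bigr)\Vert\phi-\bar\phi\Vert_1$. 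The same loss occurs in (C): treating $D_j$ as a function of the whole vector and coupling all coordinates gives, for \emph{each} $j$, a bound of order $\tfrac{1}{h\sqrt{\vmin}}\sum_k W_2(\xi_k,\bar\xi_k)$, so summing over $j$ produces $\tfrac{n}{h}\sum_k W_2(\xi_k,\bar\xi_k)$ rather than the stated $\sum_j W_2(\xi_j,\bar\xi_j)$ — and the $1/h$ factor cannot be "absorbed into $C$," since $C$ multiplies only the middle term in the statement.

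The paper avoids both losses by Gaussianizing one coordinate at a time: for each $j$ it sets $\bar{\bZ}$ equal to $\bZ$ with only the $j^{\text{th}}$ component replaced by $\mu_j + \bar\xi_j$, conditions on $\bZ^{-j}$, and then (i) compares $\E[\xi_j x_j(\bZ)\mid \bZ^{-j}]$ with $\E[\bar\xi_j x_j(\bar{\bZ})\mid\bZ^{-j}]$ via a one-dimensional density-comparison lemma (Lemma C.2 of \citeGR), which produces $T\Vert\phi-\bar\phi\Vert_1 + 4e^{-T^2/2\sigma^2}(T+\sigma\sqrt{2\pi})$ and, after optimizing $T$, exactly the stated per-coordinate term; (ii) applies the Gaussian bias bound of \cref{thm:equiv-in-sample} conditionally, which is legitimate because that argument only uses Gaussianity of the single perturbed coordinate; and (iii) exploits that $t \mapsto h\sqrt{\nu_j}\,D_j(t)$, viewed as a function of the \emph{scalar} $\xi_j$ with $\bZ^{-j}$ fixed, is $1$-Lipschitz, so the Kantorovich duality gives a per-coordinate $W_2(\xi_j,\bar\xi_j)$ bound. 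Your overall decomposition (A)+(B)+(C) is sound, and your treatment of (A) and the spirit of your truncation argument are fine; what is missing is the coordinate-at-a-time (conditional) swap, without which the stated linear-in-$n$ dependence cannot be recovered.
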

\proof{Proof of \cref{lem:NonGaussian}}
Let $\bar{\bZ}$ be $\bZ$, but with the $j^{{\rm th}}$ component
replaced by $\bar{Z}_{j}=\mu_{j}+\bar{\xi}_{j}$ and let 
\[
D_{j}(t)=\frac{1}{a_{j}h\sqrt{\nu_{j}}}\mathbb{E}\left[\left.V\left(\bZ+\left(\delta_{j}+t-\xi_{j}\right)\be_{j}\right)-V\left(\bZ+\left(t-\xi_{j}\right)\be_{j}\right)\right|\bZ\right].
\]
We see
\begin{align*}
\left|\mathbb{E}\left[\xi_{j}x_{j}(\bZ)-D_{j}(\xi_{j})\right]\right| & \le\left|\mathbb{E}\left[\mathbb{E}\left[\left.\xi_{j}x_{j}(\bZ)-\bar{\xi}_{j}x_{j}(\bar{\bZ})\right|\bZ^{-j}\right]\right]\right|+\left|\mathbb{E}\left[\bar{\xi}_{j}x_{j}(\bar{\bZ})-D_{j}(\bar{\xi}_{j})\right]\right|\\
 & \qquad+\left|\mathbb{E}\left[\mathbb{E}\left[\left.D_{j}(\bar{\xi}_{j})-D_{j}(\xi_{j})\right|\bZ^{-j}\right]\right]\right|
\end{align*}
By Lemma C.2 \citeGR, we see
\[
\left|\mathbb{E}\left[\left.\xi_{j}x_{j}(\bZ)-\bar{\xi}_{j}x_{j}(\bar{\bZ})\right|\bZ^{-j}\right]\right|\le T\left\Vert \phi-\bar{\phi}\right\Vert _{1}+4\exp\left(-\frac{T^{2}}{2\sigma^{2}}\right)\left(T+\sigma\sqrt{2\pi}\right).
\]
To optimize $T$, we first upperbound the latter term as follows
\begin{align*}
4\exp\left(-\frac{T^{2}}{2\sigma^{2}}\right)\left(T+\sigma\sqrt{2\pi}\right) & =4\exp\left(-\frac{T^{2}}{2\sigma^{2}}+\log\left(T+\sigma\sqrt{2\pi}\right)\right)\\
 & \le4\exp\left(-\frac{T^{2}}{2\sigma^{2}}+\left(T+\sigma\sqrt{2\pi}-1\right)\right),\qquad\text{since }\log t<t-1\\
 & =4\exp\left(-\frac{T^{2}}{2\sigma^{2}}+2T+\sigma\sqrt{2\pi}-1-T\right)\\
 & \le4\exp\left(\sigma\sqrt{2\pi}-1-T\right)
\end{align*}
where the last inequality used the fact that the quadratic $-\frac{T^{2}}{2\sigma^{2}}+2T$
is maximized at $T^{*}=4\sigma^{2}$. Substituting the upperbound,
we see
\[
T\left\Vert \phi-\bar{\phi}\right\Vert _{1}+4\exp\left(-\frac{T^{2}}{2\sigma^{2}}\right)\left(T+\sigma\sqrt{2\pi}\right)\le T\left\Vert \phi-\bar{\phi}\right\Vert _{1}+4\exp\left(\sigma\sqrt{2\pi}-1-T\right)
\]
We see the right hand side is minimized at $T^{*}=\sigma\sqrt{2\pi}-1-\log\left(\frac{\left\Vert \phi-\bar{\phi}\right\Vert _{1}}{4}\right)$.
Thus, we see
\[
    \left|\mathbb{E}\left[\left.\xi_{j}x_{j}(\bZ)-\bar{\xi}_{j}x_{j}(\bar{\bZ})\right|\bZ^{-j}\right]\right|
    \le
    \left(\sigma\sqrt{2\pi}-\log\left(\frac{\left\Vert \phi-\bar{\phi}\right\Vert _{1}}{4}\right)\right)\left\Vert \phi-\bar{\phi}\right\Vert _{1}
\]

By \cref{thm:equiv-in-sample}, we see
\[
\left|\mathbb{E}\left[\left.\bar{\xi}_{j}x_{j}(\bar{\bZ})-D_{j}(\bar{\bZ})\right|\bZ^{-j}\right]\right|\le Ch\log\left(\frac{1}{h}\right).
\]
Finally, since $t\mapsto h\sqrt{\nu_{j}}D_{j}(t)$ is $1-$Lipschitz from \cref{lem:noisy-precision},
we see that
\[
\left|\mathbb{E}\left[\left.D_{j}(\bar{\bZ})-D_{j}(\bZ)\right|\bZ^{-j}\right]\right|\le W_{2}(\xi_{j},\bar{\xi}_{j}).
\]
Putting it all together, we see 
\[
    \left|\mathbb{E}\left[\xi_{j}x_{j}(\bZ)-D_{j}(\xi_{j})\right]\right|
    \le 
    \left(\sigma\sqrt{2\pi}-\log\left(\frac{\left\Vert \phi-\bar{\phi}\right\Vert _{1}}{4}\right)\right)\left\Vert \phi-\bar{\phi}\right\Vert _{1}
    + Ch\log\left(\frac{1}{h}\right)
    + W_{2}(\xi_{j},\bar{\xi}_{j}).
\]
Summing over the $j$ terms, we obtain our result
 \hfill \Halmos \endproof

\subsection{Proof of \cref{thm:EfronStein}.} \label[app]{sec:ProofOfEfronStein}
Before proving the theorem, we require the following lemma.  
\begin{lem}[A $\chi^2$-Tail Bound]\label[lem]{lem:ChiSqBound}
	Consider $\bm \delta = \left( \delta_1, \ldots, \delta_n\right)^\top$ where $\delta_j$ is defined as in the definition of the \Danskin~(\cref{eq:rand-finite-diff-D})
	and $0 < h < 1/e$.  Suppose \cref{asn:Parameters} holds.  Then, 
\[
\Eb{ \| \bm \delta \|^2_2 \Ib{ \| \bm \delta \|_2^2 > \frac{18h n}{\sqrt{\vmin}} } } \ \leq \ \frac{36 h n}{\sqrt{\vmin}}e^{-n}.
\]
\end{lem}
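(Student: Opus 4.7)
}

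The plan is to reduce to a standard $\chi^2$ tail bound by uniformly bounding the individual variances $\sigma_j^2 \equiv h^2 + 2h/\sqrt{\nu_j}$. Under \cref{asn:Parameters} we have $\vmin \le 1$ and by hypothesis $h < 1/e < 1$, so $h^2 \le h$ and
\[
\sigma_j^2 \ \le \ h + \tfrac{2h}{\sqrt{\vmin}} \ \le \ \tfrac{3h}{\sqrt{\vmin}} \ \equiv \ \sigma_{\max}^2,
\]
using $1/\sqrt{\vmin} \ge 1$. Writing $\delta_j = \sigma_j Z_j$ with $Z_j \sim \mathcal N(0,1)$ i.i.d., we have the \emph{pointwise} domination
\[
\|\bm{\delta}\|_2^2 \ = \ \sum_{j=1}^n \sigma_j^2 Z_j^2 \ \le \ \sigma_{\max}^2 \sum_{j=1}^n Z_j^2 \ = \ \sigma_{\max}^2 W,
\qquad W \sim \chi^2_n,
\]
and consequently $\Ib{\|\bm{\delta}\|_2^2 > a} \le \Ib{\sigma_{\max}^2 W > a}$ for any $a$. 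Taking $a = 18 h n/\sqrt{\vmin}$, this gives
\[
\Eb{ \|\bm{\delta}\|_2^2 \, \Ib{\|\bm{\delta}\|_2^2 > a} } \ \le \ \sigma_{\max}^2 \, \Eb{ W \, \Ib{W > a/\sigma_{\max}^2}}\, ,
\qquad a/\sigma_{\max}^2 \ \ge \ 6n.
\]

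The next step uses the standard identity (obtained by recognizing the resulting integrand as a chi-squared density)
\[
\Eb{ W \, \Ib{W > u} } \ = \ n \, \Pb{ \chi^2_{n+2} > u}
\qquad \text{for } W \sim \chi^2_n,
\]
which reduces the problem to a tail bound on $\chi^2_{n+2}$ at the level $6n$. A Chernoff bound with parameter $t = 1/3$ yields
\[
\Pb{\chi^2_{n+2} > 6n} \ \le \ (1-2t)^{-(n+2)/2} e^{-6nt} \ = \ 3 \cdot 3^{n/2} e^{-2n} \ = \ 3 (\sqrt{3}/e)^n \cdot e^{-n} \ \le \ 12 e^{-n},
\]
since $\sqrt{3}/e < 1$ makes the geometric factor bounded by $1$.

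Assembling the three bounds gives
\[
\Eb{ \|\bm{\delta}\|_2^2 \, \Ib{\|\bm{\delta}\|_2^2 > 18hn/\sqrt{\vmin}}} \ \le \ \tfrac{3h}{\sqrt{\vmin}} \cdot n \cdot 12 e^{-n} \ = \ \tfrac{36 h n}{\sqrt{\vmin}} e^{-n},
\]
as claimed. There is no real obstacle here; the only subtlety is remembering that $\vmin \le 1$ (from \cref{asn:Parameters}) is needed to absorb $\sqrt{\vmin}$ into the constant when bounding $\sigma_j^2$, and that the identity $\Eb{W \Ib{W>u}} = n\, \Pb{\chi^2_{n+2} > u}$ avoids having to integrate the tail of the $\chi^2_n$ density by hand.
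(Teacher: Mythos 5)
Your proof is correct and lands exactly on the stated constant. It follows the same overall reduction as the paper's proof—uniformly bound each variance $\sigma_j^2 = h^2 + 2h/\sqrt{\nu_j}$ by a constant multiple of $h/\sqrt{\vmin}$ (using $h<1/e$ and $\vmin\le 1$) and compare $\| \bm \delta\|_2^2$ pointwise with a scaled $\chi^2_n$ variable—but the treatment of the truncated second moment is genuinely different. The paper uses the layer-cake formula $\Eb{X \Ib{X>a}} = a\Pb{X>a} + \int_a^\infty \Pb{X>t}\,dt$ together with the sub-exponential tail bound $\Pb{\tfrac{1}{n}\sum_j Y_j^2 > 1+t} \le e^{-nt/8}$ and then integrates the tail explicitly; you instead invoke the exact density identity $\Eb{W\Ib{W>u}} = n\,\Pb{\chi^2_{n+2}>u}$ and finish with a single Chernoff bound at $t=1/3$, which avoids the integration altogether and is arguably cleaner. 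A side benefit of your route: your uniform bound $\sigma_j^2 \le 3h/\sqrt{\vmin}$ is the correct one, whereas the paper's in-text claim of $2h/\sqrt{\vmin}$ rests on writing the variance as $h^2 + h/\sqrt{\nu_j}$ (dropping a factor of $2$); this only perturbs intermediate constants and not the lemma itself, but your version does not need that slack. The two small steps you use implicitly are sound: the pointwise domination $\|\bm\delta\|_2^2\,\Ib{\|\bm\delta\|_2^2>a} \le \sigma_{\max}^2 W\,\Ib{\sigma_{\max}^2 W>a}$ holds because the event on the left implies the event on the right, and $3\cdot 3^{n/2}e^{-2n} \le 12\,e^{-n}$ follows from $\sqrt{3}/e<1$, so the final assembly $\tfrac{3h}{\sqrt{\vmin}}\cdot 12\,n\,e^{-n} = \tfrac{36hn}{\sqrt{\vmin}}e^{-n}$ is exactly as claimed.
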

\proof{Proof of \cref{lem:ChiSqBound}.}
Let $Y_1, \ldots, Y_n$ be independent standard normals.  

Observe that since $h < 1/e$, the variance of $\delta_j$ is at most $h^2  + \frac{h}{\sqrt{\nu_j}} \ \leq 2h/\sqrt{\nu_{\min}}$.
Then, for $t > 1$
\begin{align} \notag
\Pb{\| \bm \delta \|_2^2 > \frac{2hn}{\sqrt{\vmin}}( 1 + t) }
&\ \leq \ 
\Pb{ \frac{2h}{\sqrt{\vmin}} \sum_{j=1}^n Y_j^2 > \frac{2hn}{\sqrt{\vmin}} (1+t) }
\\ \notag
& \ = \ 
\Pb{ \frac{1}{n} \sum_{j=1}^n Y_j^2 > 1 + t }
\\ \label{eq:ChiSqTail}
& \ \leq 
e^{-nt/8} ,
\end{align}
where the last inequality follows from \cite[pg. 29]{wainwright2019high}.

Next, by the tail formula for expectation, 
\begin{align*}
\Eb{ \| \bm \delta \|_2^2 \Ib{ \| \bm \delta \|_2^2 > \frac{18hn}{\sqrt{\vmin}}} } 
& \ = \ \int_0^\infty \Pb{ \| \bm \delta \|_2^2 \Ib{ \| \bm \delta \|_2^2 > \frac{18hn}{\sqrt{\vmin}} } > t } dt
\\
& = 
\int_0^{ \frac{18hn}{\sqrt{\vmin}}} \Pb{ \| \bm \delta \|_2^2 >  \frac{18hn}{\sqrt{\vmin}}} dt 
	+ \int_{ \frac{18hn}{\sqrt{\vmin}}}^\infty \Pb{ \| \bm \delta \|_2^2 >  t} dt
\\
&\  \leq \
 \frac{18hn}{\sqrt{\vmin}} e^{-n} + \int_{ \frac{18hn}{\sqrt{\vmin}}}^\infty \Pb{ \| \bm \delta \|_2^2 >  t} dt
 &&(\text{Applying \cref{eq:ChiSqTail}})
\\
&\  \leq \
 \frac{18hn}{\sqrt{\vmin}} e^{-n} + 
 \frac{2hn}{\sqrt{\vmin}}  \int_{8}^\infty \Pb{ \| \bm \delta \|_2^2 >  \frac{2hn}{\sqrt{\vmin}} (1 + s) } ds
\\
&\  \leq \
 \frac{9hn}{\sqrt{\vmin}} e^{-n} + 
 \frac{2hn}{\sqrt{\vmin}}  \int_{8}^\infty e^{-ns/8} ds
&&(\text{Applying \cref{eq:ChiSqTail}})
 \\
 & \ = \ 
 \frac{18hn}{\sqrt{\vmin}} e^{-n} + 
 \frac{16h}{\sqrt{\vmin}} e^{-n}
\end{align*} 
Rounding up and combining proves the theorem.  
\hfill \Halmos \endproof

\smallskip
We can now prove the theorem. 
\proof{Proof of \cref{thm:EfronStein}.} 
Proceeding as in the main body, we bound each of the three terms of the out-of-sample estimator error (\cref{eq:EfronSteinOverall}).  
Before beginning, note that under \cref{asn:Parameters}, $\Var(\delta_j) \leq \frac{3h}{\sqrt{\vmin}}$.  We use this upper bound frequently. 

We start with \cref{eq:EfronStein_delta_terms}.  Consider the $k^\text{th}$ non-zero element of the sum.  By definition of $\DR$, 
\begin{align*}
	\abs{ \DR(\bZ, \bm \delta, \tilde{\bm U}) - \DR(\bZ, \bm \delta^k, \tilde{\bm U}) }
&\ = \ 
\abs{\frac{\delta_k}{h \sqrt{\nu_k} a_k} x_k(\bZ + \delta_k \tilde U_k \be_k)
- 
\frac{\overline \delta_k}{h \sqrt{\nu_k} a_k} x_k(\bZ + \overline \delta_k \tilde U_k \be_k)	}
\\ &  \ \leq \ 
\frac{1}{h\sqrt{\nu_k} a_k} ( \abs{\delta_k} + \abs{\overline \delta_k} )
\end{align*}
Hence, squaring and taking expectations, 
\begin{align*}
	\Eb{ \left( \DR(\bZ, \bm \delta, \tilde{ \bm U} ) - \DR(\bZ, \bm \delta^k, \tilde{\bm U} ) \right)^2}
&\  \leq  \ 
\frac{2}{h^2 \nu_k a^2_k} \left( \Eb{ \delta_k^2} + \Eb{\overline \delta_k^2} \right)
\\
&\  \leq  \ 
\frac{12}{h \vmin^{3/2} a_{\min}}.
\end{align*}
Summing over $k$ shows
\[
\cref{eq:EfronStein_delta_terms} \ \leq \ \frac{6n}{h \vmin^{3/2} a_{\min}}.
\]

We now bound \cref{eq:EfronStein_U_terms}.  Again, consider the $k^\text{th}$ non-zero element. By definition, 
\begin{align*}
	\abs{ \DR(\bZ, \bm \delta, \tilde{\bm U}) - \DR(\bZ, \bm \delta, \tilde{\bm U}^k) }
&\ = \ 
\frac{\abs{\delta_k}}{h \sqrt{\nu_k} a_k} \abs{ x_k(\bZ + \delta_k U_k \be_k) - x_k(\bZ + \delta_k \overline U_k \be_k) }
\\ &\ \leq \ 
\frac{2\abs{\delta_k}}{h \sqrt{\vmin} a_{\min}} 
\end{align*}
Hence, 
\begin{align*}
\Eb{ \left( \DR(\bZ, \bm \delta, \tilde{\bm U}) - \DR(\bZ, \bm \delta, \tilde{\bm U}^k) \right)^2 }
& \ \leq \
\frac{4 }{h^2 \vmin a^2_{\min}}  \Eb{\delta_k^2 }
\ \leq \
\frac{12}{h \vmin^{3/2} a^2_{\min}} 
\end{align*}
Summing over $k$ shows 
\[
\cref{eq:EfronStein_U_terms}  \ \leq \ \frac{6n }{h \vmin^{3/2} a^2_{\min}}.
\]

Finally, we bound \cref{eq:EfronStein}.
For convenience, let $\bm W_k \in \R^n$ be the random vector with components $W_{kj} = x_j(\bZ + \delta_j \tilde U \be_j) - x_j(\bZ^k + \delta_j \tilde U \be_j )$.  Then, 
proceeding as in the main text, we have
\begin{align*}
\left( \DR(\bZ) - \DR(\bZ^k) \right)^2  
& \ \leq \ 
\frac{\| \bm \delta \|^2_2 }{h^2 a^2_{\min} \vmin}  \cdot   \sum_{j=1}^n \left( x_j(\bZ + \delta_j \tilde U \be_j) - x_j(\bZ^k + \delta_j \tilde U \be_j )\right)^2 
\\
& \ \leq \ 
\frac{ \| \bm \delta \|^2_2 }{h^2 a^2_{\min} \vmin}  \cdot   \| \bm W_k \|^2_2
\end{align*}

Notice that $\Eb{ \| \bm \delta \|_2^2} ~=~ O \left( n h / \vmin \right)$.  We upper bound this expression by splitting on cases where $\| \bm \delta \|_2^2 > \frac{18hn}{\sqrt{\vmin}}$ or not.  Note this quantity is much larger than the mean, so we expect contributions when $\| \bm \delta \|_2^2$ is large to be small.  

Splitting the expression yields
\begin{align*}
\left( \DR(\bZ) - \DR(\bZ^k) \right)^2  
& \ \leq \ 
\frac{ \| \bm \delta \|^2_2 }{h^2 a^2_{\min} \vmin}  \cdot   \| \bm W_k \|^2_2  \Ib{ \| \bm \delta \|_2^2 > \frac{18hn}{\sqrt{\vmin}} }
+ 
\frac{ \| \bm \delta \|^2_2 }{h^2 a^2_{\min} \vmin}  \cdot   \| \bm W_k \|^2_2 \Ib{ \| \bm \delta \|_2^2 \leq \frac{18hn}{\sqrt{\vmin}} }
\\
& \ \leq \ 
\frac{n}{h^2 a^2_{\min} \vmin}  \| \bm \delta \|^2_2    \Ib{ \| \bm \delta \|_2^2 > \frac{18hn}{\sqrt{\vmin}} }
\ + \
\frac{ 18n }{h a^2_{\min} \vmin^{3/2}}  \cdot   \| \bm W_k \|^2_2
\end{align*}

Next take an expectation and apply \cref{lem:ChiSqBound} to obtain 
\begin{align*}
\Eb{\left( \DR(\bZ) - \DR(\bZ^k) \right)^2}  
& \ \leq \ 
\frac{36 n^2}{h a^2_{\min} \vmin^{3/2}}  e^{-n}
\ + \
\frac{ 18n }{h a^2_{\min} \vmin^{3/2}}  \cdot   \Eb{\| \bm W_k \|^2_2}
\end{align*}
Finally summing over $k$ shows 
\[
\cref{eq:EfronStein} \ \leq \ 
\frac{36 n^3}{h a^2_{\min} \vmin^{3/2}}  e^{-n}
\ + \
\frac{ 18n^3 }{h a^2_{\min} \vmin^{3/2}}  \cdot   \frac{1}{n^2} \sum_{k=1}^n \Eb{\| \bm W_k \|^2_2}
\]

Finally, we combine all three terms in \cref{eq:EfronSteinOverall} yielding 
\begin{align*}
\Var(D(\bZ)) 
&\ \leq \ 
\frac{6n}{h \vmin^{3/2} a_{\min}}
\ + \ 
\frac{6n }{h \vmin^{3/2} a^2_{\min}} 
\ + \ 
\frac{36 n^3}{2h a^2_{\min} \vmin^{3/2}}  e^{-n}
\ + \
\frac{ 18n^3 }{2h a^2_{\min} \vmin^{3/2}}  \cdot   \frac{1}{n^2} \sum_{k=1}^n \Eb{\| \bm W_k \|^2_2}
\\
& \ \leq \ 
\frac{C_2}{h} \max( n^{3-\alpha}, n),
\end{align*}
by collecting the dominant terms.  
\hfill \Halmos \endproof


\subsection{Implementation Details}
\label[app]{app:ImplementationDetails}
As mentioned, in our experiments we utilize a second-order forward finite difference approximation.  Namely, instead of approximating the derivative using a first-order approximation as in \cref{eq:FiniteDiffFirstOrderRemainder}, we approximate 
\[
\left. \frac{\partial}{\partial \lambda} \Obj(\bZ + \lambda \xi_j \be_j)\right|_{\lambda = 0} 
\ = \ 
\frac{1}{2h\sqrt{\nu_j}a_j} \big( 4 \Obj(\bZ+ h\sqrt{\nu_j} \xi_j \be_j) - \Obj(\bZ + 2h\sqrt{\nu_j} \xi_j \be_j)- 3\Obj(\bZ) \big) + O(h^2).
\]
The coefficients in this expansion can be derived directly from a Taylor Series.  We  then use randomization to replace the unknown $h\xi_j$ and $2h\xi_j$ term as before. 
The $j^\text{th}$ element of our estimator becomes
\[
D_j(\bZ) \equiv \mathbb{E} \left[ \left. \frac{1}{ 2h \sqrt{\nu_j} a_j }  
\Big( 4 V(\bZ + \delta^h_j \be_j) - V(\bZ + \delta^{2h}_j \be_j) - 3V(\bZ) \Big) \right| \bZ \right].
\]
where 
$\delta^h_j \sim \mathcal N \left(0,h^{2}+\frac{2h}{\sqrt{\nu_j}} \right)$
and
$\delta^{2h}_j \sim \mathcal N \left(0,4h^{2}+\frac{4h}{\sqrt{\nu_j}} \right)$.

As mentioned, one can always compute the above conditional expectation by Monte Carlo simulation.  In special cases, a more computationally efficient method is to utilize a parametric programming algorithm to determine the values of $\bx(\bZ + t \be_j)$ as $t$ ranges over $\mathbb{R}$.  Importantly, for many classes of optimization problems, including, e.g., linear optimization and mixed-binary linear optimization,  $\bx(\bZ  + t \be_j)$ is piecewise constant on the intervals $(c_i, c_{i+1})$, taking value $\bx^i$, for $i = 1, \ldots, I$, with 
 $c_0 = -\infty$ and $c_I = \infty$.
In this case, 
\begin{align}\label{ex:simple-closed-form}
	\mathbb{E}\left[\left.V(\bZ+\delta_{j}\be_{j})\right|\bZ\right] 
	= 
	\sum_{i=0}^{I-1} r(\bZ)^{\top}\bx^{i} \cdot 
	\int_{c_{i}}^{c_{i+1}}\phi_{\delta_j}(t)\,dt
	+
	\sqrt{\frac{\sigma^{2}}{2\pi}}
	\exp\left(\frac{ \left(c_{j}-Z_j \right)^{2}}{2\sigma^{2}}\right)
\end{align}
where $\phi_{\delta_j}(\cdot)$ is the pdf of $\delta_j$. These integrals can then be evaluated in closed-form in terms of the standard normal CDF.  A similar argument holds for 
\(
	\mathbb{E}\left[\left.V(\bZ+\delta^{2h}_{j}\be_{j})\right|\bZ\right] .
\)
We follow this strategy in our case study in \cref{sec:experiments}.
\section{Problems that are Weakly Coupled by Variables}

\subsection{Convergence of In-Sample Optimism}

In this section we provide a high-probability bound on
\[
\sup_{\bx^0 \in \Y} \sup_{\btheta \in \Theta} \abs{ \bxi^\top \bx(\bZ, \btheta, \bx^0) - \Eb{\bxi^\top\bx(\bZ, \btheta, \bx^0) } }.
\]

As a first step, we bound the cardinality of 
\[
\mathcal{X}^{\Theta, \Y}(\bZ)\equiv\left\{ 
\left( \bx^0, \bx^1(\bZ, \btheta, \bx^0), \ldots, \bx^K(\bZ, \btheta, \bx^0) \right)
\ : \ \btheta\in\Theta, \ \bx^0 \in \Y
\right\}  \subseteq \R^n.
\]

\begin{lem}[Cardinality of Lifted, Decoupled Policy Class] \label[lem]{thm:cardinality-fully-decoupled}
Under the assumptions of \cref{thm:unif-oos-est-wc-v}, there exists an absolute constant $C$
such that
\[
\log \abs{ \mathcal{X}^{\Theta, \Y}(\bZ) }  \le {\dim(\phi)} \cdot \log \left(C K \abs{\Y}  \mathcal{X}_{\max}^2  \right)
\]
\end{lem}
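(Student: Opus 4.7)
The plan is to adapt the hyperplane arrangement counting argument from \cite{gupta2019shrunk} (Lemma~C.7). The central observation enabled by \cref{asn:hyperplane} is that, once we fix $\bZ$ and $\bx^0$, every subproblem's objective $\phi(\btheta)^\top g_k(\bZ^k, \bx^k, \bx^0)$ is \emph{linear} in the common lifted parameter $\phi(\btheta) \in \R^{\dim(\phi)}$. Hence, for a fixed $\bx^0$, each tuple $(\bx^1(\bZ,\btheta,\bx^0),\ldots,\bx^K(\bZ,\btheta,\bx^0))$ depends on $\btheta$ only through the vector $\phi(\btheta)$, and the problem reduces to counting how finely $\R^{\dim(\phi)}$ is partitioned by the selection rules across the $K$ subproblems.

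First, I would fix $\bx^0 \in \Y$ and, for each subproblem $k$, enumerate ordered pairs of extreme points $\bar\bx, \bar\bx' \in \text{Ext}(\text{Conv}(\mathcal{X}^k(\bx^0)))$. For each such pair, the set
\[
H_{k,\bar\bx,\bar\bx'} \;=\; \left\{\phi \in \R^{\dim(\phi)} \,:\, \phi^\top\bigl(g_k(\bZ^k,\bar\bx,\bx^0) - g_k(\bZ^k,\bar\bx',\bx^0)\bigr) = 0\right\}
\]
is either a hyperplane in $\R^{\dim(\phi)}$ or all of $\R^{\dim(\phi)}$ (the latter case is harmless since it does not cut the space). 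The total number of genuine hyperplanes produced in this way, across all $K$ subproblems, is at most $K \binom{\mathcal{X}_{\max}}{2} \le K\mathcal{X}_{\max}^2/2$.

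Next, I would invoke the classical Zaslavsky bound: an arrangement of $H$ hyperplanes in $\R^d$ has at most $\sum_{i=0}^d \binom{H}{i} \le (eH/d)^d$ faces (of all dimensions). Within any single face of this arrangement, the sign of each defining linear functional is constant, so for every subproblem the identity of the argmin among the extreme points of $\text{Conv}(\mathcal X^k(\bx^0))$ is constant, and by the consistent tie-breaking assumption (\cref{asn:TieBreaking}) the selected $\bx^k(\bZ,\btheta,\bx^0)$ is itself constant. Consequently, as $\btheta$ ranges over $\Theta$, the tuple $(\bx^1,\ldots,\bx^K)$ takes at most $\bigl(e K \mathcal{X}_{\max}^2/(2\dim(\phi))\bigr)^{\dim(\phi)}$ distinct values. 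Summing over $\bx^0 \in \Y$ and taking logarithms, one collects constants into $C$ (absorbing the factor $\log|\Y|$ into the $\dim(\phi)\log(\cdot)$ term, possible since $\dim(\phi)\ge 1$) to obtain the stated bound.

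The main obstacle is the careful bookkeeping on lower-dimensional faces: the argmin is genuinely unique only in the top-dimensional cells, and on the lower-dimensional faces ties occur along exactly the hyperplanes $H_{k,\bar\bx,\bar\bx'}$. The consistent tie-breaking rule $\sigma_{k\bx^0}$ resolves these, but one must verify that the selected extreme point is determined \emph{solely} by the set of active indices (i.e., by which face of the arrangement $\phi(\btheta)$ lies in), so that the face-counting bound still controls the number of realized tuples. This is straightforward from \cref{asn:TieBreaking}, since $\sigma_{k\bx^0}$ takes the argmin set as its sole input and the argmin set is constant on each face. Modulo this verification, the argument reduces to Zaslavsky's bound and the cardinality follows.
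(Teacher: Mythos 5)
Your proposal is essentially the paper's own argument: the paper likewise reduces the count to a hyperplane arrangement in $\R^{\dim(\phi)}$ with one hyperplane per pair of extreme points (per subproblem $k$ and per $\bx^0$), observes that the tie-broken tuple is constant on each cell of the arrangement, and bounds the number of cells by $(1+2m)^{\dim(\phi)}$ with $m \le K\abs{\Y}\mathcal{X}_{\max}^2$ via the region-counting lemma of \cite{gupta2019shrunk}; your variant of fixing $\bx^0$, counting, and then summing over $\Y$ changes nothing beyond the absorbed constant. One correction to your quantitative step: $\sum_{i=0}^{d}\binom{H}{i}$ is Zaslavsky's bound on the number of full-dimensional chambers only, not on faces of all dimensions, and — as you yourself flag — the tuples produced by consistent tie-breaking on lower-dimensional faces must also be counted, since $\sigma_{k\bx^0}$ may select an extreme point different from those chosen on adjacent chambers. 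Replace it with a bound on the number of realizable sign vectors in $\{-,0,+\}^{H}$, e.g.\ $(1+2H)^{\dim(\phi)}$, which is the bound the paper uses; this is of the same order $H^{\dim(\phi)}$ and only affects the absolute constant $C$ in the statement.
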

\begin{proof}{\hspace{-12pt}Proof:}
We adapt a hyperplane arrangement argument from \cite{gupta2019shrunk}.  We summarize the pertinent details briefly.   For any $\bx^0 \in \Y$, 
$\bx^k(\bZ, \btheta, \bx^0)$ is fully determined by the relative ordering of the values
\[
\left \{ \phi(\btheta)^{\top} g_k(\bZ^k, \bx^k, \bx^0) \ : \  \bx^k \in \mathcal X^k(\bx^0) \right \}.
\]
This observation motivates us to consider the hyperplanes in $\R^{\dim(\phi)}$
\begin{equation} \label{eq:defHyperplanes}
	H_{k, \bZ, \bx^0} ( \bx^{k},\bar{\bx}^{k})  =\left\{ \phi(\btheta) \ :\  \phi(\btheta)^{\top}\left(g_k(\bZ^k,\bx^k, \bx^0)-g_k(\bZ^k,\bar{\bx}^k, \bx^0 )\right) =  0\right\}  
\end{equation}
for all $\bx^{k},\bar{\bx}^{k} \in \text{Ext}\left( \mathcal X^k(\bx^0)\right)$, $k=1, \ldots, K$, and $\bx^0 \in \Y$.

On one side of 		$H_{k, \bZ, \bx^0} ( \bx^{k},\bar{\bx}^{k}) $, 
 $\bx^k$ is preferred to $\bar{\bx}^k$ in the policy problem \cref{eq:DefXPolicy_Variables}, on the other side $\bar{\bx}^k$ is preferred, and on the hyperplane we are indifferent. Thus, if we consider drawing all such hyperplanes in $\R^{\dim(\phi)}$, then the vector $\bx(\bZ, \btheta, \bx^0)$ is constant  for all $\phi(\btheta)$ in the relative interior of each induced region.  Hence, $\abs{\mathcal X^{\Theta, \Y}(\bZ) }$ is at most the number of such regions.  
\cite{gupta2019shrunk} prove that the number of such regions is at most $(1+2m)^{\text{dim}(\phi)}$
where $m$ is number of hyperplanes in \cref{eq:defHyperplanes}.  By assumption, $m \leq K \mathcal X_{\max}^2 \abs{ \Y }$, and, hence, 
\(
\abs{\mathcal X^\Theta(\bZ, \bx^0)} \ \leq \left( 1 + 2K \abs{\Y} \mathcal X_{max}^2\right)^{\text{dim}(\phi)}.
\)
Collecting constants yields the bound.
\hfill \Halmos \end{proof}

We can now prove our high-probability tail bound.  
\begin{lem}
[Bounding In-sample Optimism]
\label[lem]{thm:unif-in-samp-opt-fd} 
Under the assumptions of  \cref{thm:unif-oos-est-wc-v}, 
there exists a constant $C$ (depending on $\nu_{\min}$) such
that, for any $R > 1$, with probability $1-e^{-R} $
\[
\sup_{\btheta\in\Theta, \bx^0 \in \Y}
\abs{ \bxi^\top\bx(\bZ, \btheta, \bx^0) - \Eb{ \bxi^\top \bx(\bZ, \btheta, \bx^0)} } \ \leq \ 
C S_{\max} R\sqrt{  K {\rm{dim}}(\phi) \log(K \abs{\Y} \mathcal{X}_{\max} ) }
\]
%
\end{lem}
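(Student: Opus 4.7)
\textbf{Proof proposal for \cref{thm:unif-in-samp-opt-fd}.}

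The plan is to apply \cref{thm:Pollard} (suprema of stochastic processes over finite index sets) to the $K{+}1$ independent processes obtained by decomposing the in-sample optimism across subproblems. Specifically, for any $(\btheta, \bx^0) \in \Theta \times \Y$, the weak-coupling structure gives
\[
\bxi^\top \bx(\bZ, \btheta, \bx^0) \;=\; (\bxi^0)^\top \bx^0 \;+\; \sum_{k=1}^K (\bxi^k)^\top \bx^k(\bZ, \btheta, \bx^0) \;\equiv\; \sum_{k=0}^K f_k(\btheta, \bx^0),
\]
and since $\bx^k(\bZ, \btheta, \bx^0)$ depends on $\bZ$ only through $\bZ^k$ (with $\bZ^0, \bZ^1, \ldots, \bZ^K$ mutually independent under \cref{asn:Gaussian}), the processes $\{f_k(\cdot, \cdot)\}_{k=0}^{K}$ are jointly independent. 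Each term is centered because $\bxi$ is mean-zero and $\bx^k(\bZ, \btheta, \bx^0)$ is a function of $\bZ^k$ only.

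Next, I would verify the three inputs required by \cref{thm:Pollard}. (i) \emph{Cardinality.} Since $\bm f(\btheta, \bx^0)$ is determined by the tuple $(\bx^0, \bx^1(\bZ, \btheta, \bx^0), \ldots, \bx^K(\bZ, \btheta, \bx^0)) \in \mathcal{X}^{\Theta, \Y}(\bZ)$, \cref{thm:cardinality-fully-decoupled} yields $\log M \le \dim(\phi) \log(C K \abs{\Y} \mathcal{X}_{\max}^2)$ almost surely, for an absolute constant $C$. (ii) \emph{Envelope.} By Cauchy--Schwarz and $\| \bx^k \|_2 \le \sqrt{S_{\max}}$ (since $\bx^k \in [0,1]^{S_k}$ with $S_k \le S_{\max}$), we can take $F_k \equiv \sqrt{S_{\max}} \, \| \bxi^k \|_2$. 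Then
\[
\| \bm F \|_2^2 \;=\; S_{\max} \sum_{k=0}^K \| \bxi^k \|_2^2 \;=\; S_{\max} \, \| \bxi \|_2^2.
\]
(iii) \emph{$\Psi$-norm control.} The map $\bxi \mapsto \| \bxi \|_2$ is $1$-Lipschitz, and under \cref{asn:Gaussian} $\bxi$ is Gaussian with coordinate variances bounded by $1/\vmin$. Gaussian Lipschitz concentration (Tsirelson--Ibragimov--Sudakov) combined with $\Eb{\| \bxi \|_2} \le \sqrt{n/\vmin} \le \sqrt{K S_{\max}/\vmin}$ yields $\bigl\| \| \bxi \|_2 \bigr\|_\Psi \le C' \sqrt{K S_{\max}/\vmin}$, and therefore
\[
\bigl\| \| \bm F \|_2 \bigr\|_\Psi \;\le\; \sqrt{S_{\max}} \cdot C' \sqrt{K S_{\max}/\vmin} \;=\; C' S_{\max} \sqrt{K/\vmin}.
\]

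Plugging these three pieces into \cref{thm:Pollard} produces, with probability at least $1 - e^{-R}$,
\[
\sup_{\btheta \in \Theta,\, \bx^0 \in \Y} \abs{ \bxi^\top \bx(\bZ, \btheta, \bx^0) - \Eb{\bxi^\top \bx(\bZ, \btheta, \bx^0)}} \;\le\; C \, R \, S_{\max} \sqrt{K/\vmin} \cdot \sqrt{\dim(\phi) \log(C K \abs{\Y} \mathcal{X}_{\max}^2)},
\]
and absorbing $\vmin$ and the constants inside the logarithm into $C$ gives the claimed rate. The main technical care-point will be in step (iii): a crude $\ell_\infty$ bound of the form $|f_k| \le S_{\max} \| \bxi^k \|_\infty$ would give an extra factor of $\sqrt{S_{\max}}$ and miss the target. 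Using Cauchy--Schwarz at the subproblem level, coupled with Gaussian concentration of $\| \bxi \|_2$ as a whole (rather than of each $\| \bxi^k \|_2$ individually), is what gives the sharper $S_{\max}$ dependence. A secondary care-point is that the cardinality bound in (i) holds almost surely with a \emph{deterministic} constant $M$ --- which is precisely what \cref{thm:Pollard} requires --- so no additional conditioning step is needed.
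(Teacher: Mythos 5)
Your proposal is correct and rests on the same two pillars as the paper's proof: the deterministic cardinality bound of \cref{thm:cardinality-fully-decoupled} for the set of realizable vectors $(\bx^0, \bx^1(\bZ,\btheta,\bx^0), \ldots, \bx^K(\bZ,\btheta,\bx^0))$, and \cref{thm:Pollard} applied to the independent subproblem processes. The only organizational difference is that the paper peels off the coupling block first—bounding $\sup_{\bx^0 \in \Y} \abs{(\bxi^0)^\top \bx^0 - \Eb{(\bxi^0)^\top \bx^0}}$ by Hoeffding's inequality plus a union bound over $\Y$—and applies \cref{thm:Pollard} only to the $K$ subproblem terms, whereas you fold all $K{+}1$ independent processes into a single application of \cref{thm:Pollard}; your unified version is legitimate (the envelope $F_0 = \sqrt{S_{\max}}\,\|\bxi^0\|_2$ covers the $k=0$ term and the cardinality bound already accounts for the $\bx^0$ coordinate), and it buys nothing beyond a slightly shorter write-up. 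Your envelope/$\Psi$-norm treatment (Cauchy--Schwarz at the block level, then Gaussian Lipschitz concentration of $\|\bxi\|_2$) is a cosmetic variant of the paper's route (Cauchy--Schwarz to standardized normals, then the chi-square $\Psi$-norm bound from GR's Lemma A.1), and both give the same $O(S_{\max}\sqrt{K/\vmin})$ factor. One small correction: your parenthetical claim that each $f_k$ is centered is false for $k \ge 1$—$\bx^k(\bZ,\btheta,\bx^0)$ depends on $\bZ^k$, so $\Eb{(\bxi^k)^\top \bx^k(\bZ,\btheta,\bx^0)} \neq 0$ in general (this nonzero expectation is exactly the in-sample bias the paper is correcting)—but this is harmless, since \cref{thm:Pollard} bounds the deviation of the sum from its expectation and requires no centering.
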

\begin{proof}{\hspace{-12pt}Proof:}
By triangle inequality, 
\begin{align*}
&\sup_{\btheta\in\Theta, \bx^0 \in \Y}
\abs{ \bxi^\top\bx(\bZ, \btheta, \bx^0) - \Eb{ \bxi^\top \bx(\bZ, \btheta, \bx^0)} } 
\\ \quad & \ \leq \ 
\sup_{\bx^0 \in \Y} \abs{ (\bxi^0)^\top \bx^0 - \Eb{ (\bxi^0)^\top \bx^0} }  + 
\sup_{\bx^0 \in \Y, \btheta \in \Theta} \abs{ \sum_{k=1}^K\sum_{j\in S_k} \xi_j x_j(\bZ,\btheta,\bx^0) - \mathbb{E}\left[ \xi_j x_j(\bZ,\btheta,\bx^0) \right]} 
\end{align*}

Consider the first term.  For a fixed $\bx_0$, this is a sum of sub-Gaussian random variables each with parameter at most $1/\sqrt{\vmin}$.  We apply Hoeffding's inequality to obtain a pointwise bound for a fixed $\bx_0$ and then take a a union bound over $\mathcal{X}^0$.  This yields for some absolute constant $c$,
\[
	\mathbb{P}\left\{ 
		\sup_{\bx^0 \in \mathcal{X}^0} 
		\left| \sum_{j\in S_0} \xi_j x^0_j 
		- 
		\mathbb{E}\left[\xi_j x^0_j\right]\right| 
		> t  \right\} 
	\le 2 |\mathcal{X}^0| \exp \left( - \frac{c\nu_{\min} t^2}{|S_0|} \right),
\]
Rearranging shows that, with probability at least $1-\exp\left\{-R\right\}$,
\[
	\sup_{\bx^0 \in \mathcal{X}^0} 
		\abs{  \sum_{j\in S_0} \xi_j x^0_j  
		- 
		\mathbb{E}\left[\xi_j x^0_j \right] }
	\le \sqrt{ \frac{ |S_0| R } {c \nu_{\min}} \log \left( 2|\mathcal{X}^0| \right)}
\]

For the second component, we apply \cref{thm:Pollard}. We bounded the cardinality $\mathcal{X}^{\Theta, \Y}(\bZ)$
in \cref{thm:cardinality-fully-decoupled}.  Consider the vector
\[
\left(\sum_{j\in S_{1}}\xi_{j}x_{j}(\bZ;\btheta,\bx^0),\dots,\sum_{j\in S_{K}}\xi_{j}x_{j}(\bZ;\btheta,\bx^0)\right).
\]
This vector has independent components. We next 
construct an envelope $\bm{F}(\bZ)$ for it and bound the $\Psi$-norm of $\bm F(\bZ)$. 

Since $0\le x_{j}(\bZ, \btheta, \bx^0)\le1$, we take
\[
\bm{F}(\bZ)=\left(\sum_{j\in S_{1}}\left|\xi_{j}\right|,\dots,\sum_{j\in S_{K}}\left|\xi_{j}\right|\right).
\]
Note that
\[
\left(\sum_{j\in S_{k}}\left|\xi_{j}\right|\right)^{2}\overset{(a)}{\le}|S_{k}|\sum_{j\in S_{k}}\left(\frac{\zeta_{j}}{\sqrt{\nu_{j}}}\right)^{2}\le\frac{S_{\max}}{\nu_{\min}}\sum_{j\in S_{k}}\zeta_{j}^{2}
\]
where $\zeta_{j}\sim\mathcal{N}(0,1)$.
Inequality $(a)$ uses $\left\Vert \bm t \right\Vert _{1}\le\sqrt{d}\left\Vert \bm t \right\Vert _{2}$
for $\bm{t}\in\mathbb{R}^{d}$. Plugging into $\left\Vert \left\Vert \bm{F}(\bxi)\right\Vert _{2}\right\Vert _{\Psi}$
we have 
\begin{align*}
\left\Vert \left\Vert \bm{F}(\bZ)\right\Vert _{2}\right\Vert _{\Psi} 
 &\  \le \ \sqrt{\frac{|S_{\max}|}{\nu_{\min}}}  \cdot \left\Vert \sqrt{\sum_{k=1}^{K}\sum_{j\in S_{k}}\zeta_{j}^{2}}\right\Vert _{\Psi}
\  \overset{(b)}{\le} \ \sqrt{\frac{|S_{\max}|}{\nu_{\min}}\cdot2|S_{\max}|K}.
\end{align*}
Inequality $(b)$ follows from Lemma A.1 iv) of \citeGR. 

Applying \cref{thm:Pollard}, combining the bounds of our two components, and collecting constants, we obtain our result.
\hfill \Halmos \end{proof}

\subsection{Convergence of VGC}

Define $V(\bZ, \btheta, \bx^0)$ and $D(\bZ, \btheta, \bx^0)$ analogously to $V(\bZ, \btheta)$ and $D(\bZ, \btheta)$ with $\bx(\bZ, \btheta)$ replaced by $\bx(\bZ, \btheta, \bx^0)$ throughout.  The next step in our proof provides a high-probability bound on 
\[
\sup_{\btheta \in \Theta, \bx^0 \in \Y} \abs{ D(\bZ, \btheta, \bx^0) - \Eb{D(\bZ, \btheta, \bx^0) }}.
\]

We first establish a pointwise bound for a fixed $\btheta, \bx^0$.  
\begin{lem}
[Pointwise Convergence of \Danskin~for a fixed $\bx^0$]\label[lem]{lem:p-wise-decoupled-Danskin} 
Under the assumptions of \cref{thm:unif-oos-est-wc-v}, 
for fixed $\btheta, \bx^0$,  there exists a constant $C$ (that depends
on $\nu_{\min}$) such that, for any $ R > 1$, we have with probability $1-2\exp(-R)$,
\[
\left|D(\bZ, \btheta)-\mathbb{E}\left[D(\bZ, \btheta)\right]\right|\le C\left|S_{\max}\right|\sqrt{\frac{KR}{h}}
\]
\end{lem}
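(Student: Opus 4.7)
The plan is to exploit the independence structure induced by fixing $\bx^0$. Once $\bx^0 \in \Y$ is fixed, \cref{eq:wc-v-problem} decouples completely across the $K$ subproblems, and for each $k$, $\bx^k(\bZ, \btheta, \bx^0)$ depends on $\bZ$ only through $\bZ^k \equiv (Z_j : j \in S_k)$. In particular, the plug-in objective decomposes as
\[
V(\bZ, \btheta, \bx^0) \ = \ (\br^0)^\top \bx^0 + \sum_{k=1}^K \min_{\bx^k \in \mathcal X^k(\bx^0)} \br^k(\bZ^k, \btheta)^\top \bx^k,
\]
so that perturbing $Z_j$ for $j \in S_k$ only changes the $k$-th summand. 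Consequently, for any $j \in S_k$, the quantity $D_j(\bZ, \btheta, \bx^0)$ depends on $\bZ$ only through $\bZ^k$.

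Define $\tilde D_k(\bZ^k) \equiv \sum_{j \in S_k} D_j(\bZ, \btheta, \bx^0)$, so that
\[
D(\bZ, \btheta, \bx^0) \ = \ \sum_{k=1}^K \tilde D_k(\bZ^k).
\]
By \cref{asn:Gaussian}, $\bZ^1, \ldots, \bZ^K$ are independent, hence the $\tilde D_k$ are independent random variables. Moreover, \cref{lem:VGCBounded} gives $|D_j| \leq \sqrt{3}/(\vmin^{3/4} \sqrt{h})$ almost surely, so that $|\tilde D_k| \leq |S_{\max}| \sqrt{3}/(\vmin^{3/4}\sqrt{h})$.

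The bound then follows by a direct application of Hoeffding's inequality to the sum of $K$ bounded, independent random variables $\tilde D_k - \E[\tilde D_k]$: for any $t > 0$,
\[
\P\Big( \big| D(\bZ,\btheta,\bx^0) - \E[D(\bZ,\btheta,\bx^0)] \big| \geq t \Big) \ \leq \ 2\exp\left(-\frac{\vmin^{3/2} h t^2}{6 K |S_{\max}|^2}\right).
\]
Setting the right-hand side equal to $2e^{-R}$ and solving for $t$ yields $t = C |S_{\max}|\sqrt{KR/h}$ for a constant $C$ depending only on $\vmin$, which is the desired tail bound.

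I do not anticipate any real obstacle here: the independence structure is immediate from the weakly-coupled-by-variables form together with \cref{asn:Gaussian}, the uniform bound on $|D_j|$ is already in hand from \cref{lem:VGCBounded}, and Hoeffding's inequality then delivers the statement essentially for free. The content of this lemma is really just that fixing $\bx^0$ and $\btheta$ reduces the VGC to a sum of $K$ independent, uniformly bounded terms; the heavy lifting (uniformity over $\btheta$ and $\bx^0$) is deferred to the subsequent covering arguments used in the proof of \cref{thm:unif-oos-est-wc-v}.
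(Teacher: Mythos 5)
Your proof is correct and takes essentially the same route as the paper's: for fixed $(\btheta,\bx^0)$ the VGC decomposes into $K$ independent contributions (one per subproblem), each component $D_j$ is bounded by $O\bigl(\vmin^{-3/4}h^{-1/2}\bigr)$ via the optimality/conditional-expectation argument, and Hoeffding's inequality gives the tail bound. The only points to patch are minor: (i) your identity $D(\bZ,\btheta,\bx^0)=\sum_{k=1}^K \tilde D_k(\bZ^k)$ silently drops the coupling terms $j\in S_0$, which you should note are identically zero because $\bx^0$ is fixed and $\Eb{\delta_j}=0$ (the paper handles this explicitly), and (ii) \cref{lem:VGCBounded} is stated for the unrestricted plug-in problem, so you should remark that the same argument applies verbatim with the subproblem feasible regions $\mathcal X^k(\bx^0)$, yielding the identical per-component bound used in the paper's proof.
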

\begin{proof}{\hspace{-12pt}Proof:}
By definition, 
\begin{equation} \label{eq:HoeffdingForDanskin}
D(\bZ, \btheta, \bx^0) 
=
\sum_{\substack{j\in S_{0} \\  a_j(\btheta) \neq 0}} D_j(\bZ, \btheta, \bx^0) \ + \ 
\sum_{k=1}^{K}\mathbb{E}\left[\left.\sum_{\substack{j\in S_{k} \\  a_j(\btheta) \neq 0}} \frac{V(\bZ+\delta_{j}\be_{j}, \btheta, \bx^0)-V(\bZ, \btheta, \bx^0)}{a_{j}(\btheta)h\sqrt{\nu_{j}}}\right|\bZ\right].
\end{equation}
Consider the first term.  Since $\bx^0$ is fixed (deterministic),
\[
D_j(\bZ, \btheta, \bx^0) \ = \ \frac{1}{a_j(\btheta) h \sqrt{\nu_j} } \Eb{  V(\bZ+\delta_{j}\be_{j}, \btheta, \bx^0)-V(\bZ, \btheta, \bx^0) \Big| \bZ }
\ = \ \frac{1}{h \sqrt{ \nu_j} }\Eb{ \delta_j x^0_j} \ = 0.
\]
This equality holds almost surely.  Hence, it suffices to focus on the second term in \cref{eq:HoeffdingForDanskin}.  

Importantly, the second term  is a sum of $K$ \emph{independent} random variables for a fixed $\bx^0$.  We next claim that each of these random variables is bounded.  For any $j$ such that $a_j(\btheta) \neq 0$, let $S_i$ be the subproblem such that $j \in S_i$.  Then, write 
\begin{align*}
&\frac{V(\bZ+\delta_{j}\be_{j}, \btheta, \bx^0)-V(\bZ, \btheta, \bx^0 )}{a_{j}(\btheta)h\sqrt{\nu_{j}}}
 \\
&\qquad =
\frac{1}{a_j(\btheta) h \sqrt{\nu_j}} \Big( \underbrace{\br^k(\bZ, \btheta, \bx^0)^\top \left( \bx^k(\bZ + \delta_j \be_j, \btheta, \bx^0) - \bx^k(\bZ, \btheta, \bx^0) \right)}_{ \leq 0 \text{ by optimality of } \bx^k(\bZ, \btheta, \bx^0) } \; + \; a_j(\btheta) \delta_j x_j(\bZ + \delta_j, \be_j, \btheta, \bx^0)  \Big)
%
\\
&\qquad \leq \  \frac{\delta_{j}x_{j}(\bZ+\delta_{j}\be_{j}, \btheta, \bx^0 )}{h\sqrt{\nu_{j}}}
\\
&\qquad \leq \  \frac{ \abs{ \delta_{j}} }{h\sqrt{\nu_{j}}}.
\end{align*}
By an analogous argument, 
\[
\frac{V(\bZ, \btheta, \bx^0)-V(\bZ+\delta_{j}\be_{j}, \btheta, \bx^0 )}{a_{j}(\btheta)h\sqrt{\nu_{j}}}\le\frac{\abs{ \delta_{j}} }{h\sqrt{\nu_{j}}}.
\]
Hence,
\begin{equation}
\abs{ D_j(\bZ, \btheta, \bx^0)} 
\ \leq \ 
\Eb{\abs{ \frac{V(\bZ+\delta_{j}\be_{j}, \btheta, \bx^0)-V(\bZ, \btheta, \bx^0 )}{a_{j}(\btheta)h\sqrt{\nu_{j}}} } \Big| \bZ } 
 \ \leq \frac{1}{h \sqrt{\nu_j}} \Eb{ \delta_j } \ \leq \ \frac{1}{h \sqrt{\nu_j}}  \cdot \frac{\sqrt{h}}{\nu_j^{1/4}}  \ \leq \frac{1}{\sqrt h \vmin^{3/4}}.
\label{eq:fully-decoup-in-samp-1}
\end{equation}

%
Applying Hoeffding's inequality to \cref{eq:HoeffdingForDanskin} and collecting constants shows 
\begin{align*}
\text{\ensuremath{\mathbb{P}}}\left\{ \left|D(\bZ;\btheta)-\mathbb{E}\left[D(\bZ;\btheta)\right]\right|\ge t\right\}  
\le 2\exp\left(-\frac{h\cdot t^{2}}{C_{0}K\left|S_{\max}\right|^{2}}\right)
\end{align*}
for some constant $C_0$ (depending on $\vmin$). Thus, with probability
$1-\epsilon$, we see
\[
\left|D(\bZ;\btheta)-\mathbb{E}\left[D(\bZ;\btheta)\right]\right|\le\sqrt{\frac{C_{0}\cdot K\left|S_{\max}\right|^{2}}{h}\log\left(\frac{2}{\epsilon}\right)}
\]
Combining constants completes our proof. \hfill \Halmos 
\end{proof}

\smallskip 
We now bound 
\[
\sup_{\btheta \in \Theta} \abs{ D(\bZ, \btheta, \bx^0) - \Eb{ D(\bZ, \btheta, \bx^0)} }
\]
for a fixed $\bx^0$.  The key idea is to use the Lipschitz continuity of $\btheta \mapsto D(\bZ, \btheta, \bx^0)$ to cover the set $\Theta$.  
\blue{
\begin{lem}[Uniform Convergence of \Danskin~for a fixed $\bx^0$]
\label[lem]{lem:unif-danskin-decoupled}
Under the assumptions of \cref{thm:unif-oos-est-wc-v} and for $\mathcal{H} \equiv [h_{\min},h_{\max}]$, 
there exists a constant $C$ (that depends
on $\nu_{\min}$, $C_{\mu}$, L) such that for any $R> 1$, we have with probability $1-2e^{-R}$,
\[
\sup_{\substack{\btheta\in \bar{\Theta}}}\left|D(\bZ, \btheta)-\mathbb{E}\left[D(\bZ, \btheta)\right]\right|
\le 
C S_{\max} \sqrt{\frac{KR}{h_{\min}}} \sqrt{ \log n \cdot \log N\left( \sqrt{\frac{h_{\min}}{Kn^2}}, \Theta\right)N\left( \frac{h_{\min}}{K}, \mathcal{H}\right)}.
\]
\end{lem}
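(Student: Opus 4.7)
\proof{Proof Plan.}
The plan is to upgrade the pointwise bound from \cref{lem:p-wise-decoupled-Danskin} into a uniform bound over $\bar\Theta = \Theta \times \mathcal{H}$ by combining a standard two-scale covering argument with the Lipschitz smoothness of $D(\bZ, \cdot, \bx^0)$ established in \cref{lem:D-is-Lipschitz}. Throughout, $\bx^0$ is fixed, so we think of $\btheta \in \bar\Theta$ as the only free argument.

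First, let $\epsilon_\Theta \equiv \sqrt{h_{\min}/(K n^2)}$ and $\epsilon_h \equiv h_{\min}/K$, and fix minimal $\epsilon_\Theta$- and $\epsilon_h$-covers of $\Theta$ and $\mathcal{H}$, of cardinalities $N(\epsilon_\Theta,\Theta)$ and $N(\epsilon_h,\mathcal{H})$, respectively. Their product $\mathcal{N} \subset \bar\Theta$ thus satisfies $\log|\mathcal{N}| \le \log N(\epsilon_\Theta,\Theta) + \log N(\epsilon_h,\mathcal{H})$. Apply \cref{lem:p-wise-decoupled-Danskin} at each element of $\mathcal{N}$ with deviation parameter $R' = R + \log|\mathcal{N}|$ and take a union bound; for an absolute constant $C_1$, this yields that with probability at least $1 - e^{-R}$,
\[
    \max_{\btheta' \in \mathcal{N}} \bigl| D(\bZ,\btheta',\bx^0) - \Eb{D(\bZ,\btheta',\bx^0)}\bigr|
    \ \le\
    C_1 S_{\max}\sqrt{\frac{K\bigl(R + \log|\mathcal{N}|\bigr)}{h_{\min}}}.
\]

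Second, for any $\btheta = (\btheta_0, h) \in \bar\Theta$, pick the nearest $\btheta' = (\btheta_0', h') \in \mathcal{N}$, so that $\|\btheta_0 - \btheta_0'\| \le \epsilon_\Theta$ and $|h - h'| \le \epsilon_h$. We bridge via $(\btheta_0, h')$ and apply the two parts of \cref{lem:D-is-Lipschitz}, keeping in mind that for fixed $\bx^0$ the same Lipschitz estimates hold with $n$ replaced by the number of non-coupling coordinates, which is still at most $n$. Part (ii) gives $|D(\bZ,(\btheta_0,h),\bx^0) - D(\bZ,(\btheta_0,h'),\bx^0)| \le (C_2 n/(h_{\min} \vmin^{3/4}))\sqrt{\epsilon_h}$, deterministically. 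For part (i), we use the high-probability Lipschitz constant $(C_3 L / h_{\min})\sqrt{R/\vmin}\, n^2 \sqrt{\log n}$, valid uniformly in $\btheta_0$ with probability at least $1 - e^{-R}$; this gives $|D(\bZ,(\btheta_0,h'),\bx^0) - D(\bZ,\btheta',\bx^0)| \le (C_3 L n^2 \sqrt{\log n}\sqrt{R}/(h_{\min}\sqrt\vmin))\epsilon_\Theta$. Identical bounds hold for the corresponding expectations, so the net effect of the Lipschitz discretization error adds at most
\[
    \frac{C_4 L n^2 \sqrt{\log n}\sqrt{R}}{h_{\min}\sqrt{\vmin}}\,\epsilon_\Theta
    \;+\; \frac{C_4\, n}{h_{\min}\vmin^{3/4}}\sqrt{\epsilon_h}.
\]

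Third, I would plug in the chosen $\epsilon_\Theta$ and $\epsilon_h$, use the identity $n \le K S_{\max} + |S_0| \le 2 K S_{\max}$ (absorbing $S_0$ into $S_{\max}$), and observe that each discretization term is then $O(S_{\max}\sqrt{K R \log n /h_{\min}})$. Combining with the pointwise tail bound via the union-bound deviation parameter $R'$, and absorbing constants and the $\sqrt{\log n}$ factor, gives the claimed
\[
    C\, S_{\max} \sqrt{\frac{K R}{h_{\min}}}\,\sqrt{\log n \cdot \log\bigl(N(\epsilon_\Theta,\Theta)\,N(\epsilon_h,\mathcal{H})\bigr)},
\]
where we have bounded $\sqrt{R + \log|\mathcal N|}$ by $\sqrt{R}\sqrt{\log|\mathcal N|}$ (valid since $\log|\mathcal N| \ge 1$ and $R \ge 1$). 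The failure probabilities from the pointwise step and from the high-probability Lipschitz bound combine to give $1 - 2e^{-R}$ as claimed.

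The main obstacle is controlling the $\btheta$-Lipschitz constant, which from \cref{lem:D-is-Lipschitz}(i) grows like $n^2/h$ and, more delicately, depends on $\|\bZ\|_\infty$. Using the deterministic $\|\bz\|_\infty$-version would cost a factor of $\sqrt{n}$ (via $\|\bZ\|_\infty$ tail bounds alone), so it is essential to use the high-probability Lipschitz bound, which folds the sub-Gaussian tail of $\|\bZ\|_\infty$ directly into the constant and produces the $\sqrt{\log n}$ factor that appears in the stated rate. Calibrating $\epsilon_\Theta$ and $\epsilon_h$ so that all error sources are balanced (at the rate $S_{\max}\sqrt{K/h_{\min}}$) is what motivates the slightly unusual choices $\epsilon_\Theta = \sqrt{h_{\min}/(K n^2)}$ and $\epsilon_h = h_{\min}/K$.
\hfill\Halmos
\endproof
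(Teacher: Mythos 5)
Your proposal is correct and follows essentially the same route as the paper: cover $\Theta$ and $\mathcal{H}$ at the stated radii, union-bound the pointwise Hoeffding-type bound of \cref{lem:p-wise-decoupled-Danskin} over the net, control the discretization error with parts (i) and (ii) of \cref{lem:D-is-Lipschitz} (using the high-probability Lipschitz constant in $\btheta$), and absorb everything via $n \le K S_{\max}$. The only differences are cosmetic — you fold the net size into the deviation parameter $R' = R + \log|\mathcal{N}|$ rather than keeping the covering logs explicit, and your failure probability comes out as $3e^{-R}$ rather than $2e^{-R}$, which is fixed by a trivial rescaling of $R$ (the paper is equally loose on this constant).
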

}

\proof{Proof.}
\blue{
Using the full notation $D(\bZ,(\btheta,h))$, we first write the supremum to be over $\btheta \in \Theta$ and $h \in \mathcal{H}$.
Let $\Theta_0$ be a $\varepsilon_1$-covering of $\Theta$ and let $\overline{\mathcal{H}}$ be a $\varepsilon_2$-covering of $\mathcal{H} \equiv [h_{\min}, h_{\max}]$.  Then, 
\begin{subequations} \label{eq:OneStepDiscretization}
\begin{align} \label{eq:CoveringPortion}
\sup_{\substack{\btheta \in \Theta \\ h \in \mathcal{H}}} 
& \abs{D(\bZ, (\btheta, h)) - \Eb{D(\bZ, (\btheta, h))}}  \ \leq \ 
\sup_{\substack{\btheta \in \Theta_0 \\ h \in \overline{\mathcal{H}}}} \abs{D(\bZ, (\btheta, h)) - \Eb{D(\bZ, (\btheta, h))}}  
\\  \label{eq:LipschitzPortion}
& \ + \ 
\sup_{\substack{\btheta, \overline{\btheta} : \| \btheta - \overline \btheta\| \leq \varepsilon_1 \\ h \in \mathcal{H}} } \abs{D(\bZ, (\btheta, h)) - D(\bZ, (\overline{\btheta}, h)) } 
+
\sup_{ \substack{\btheta, \overline{\btheta} : \| \btheta - \overline \btheta\| \leq \varepsilon_1 \\ h \in \mathcal{H}} } \abs{\Eb{ D(\bZ, (\btheta, h)) - D(\bZ, (\overline{\btheta}, h)) } } 
\\ \label{eq:hLipschitzPortion}
& \ + \
\sup_{\substack{\overline{\btheta} \in \Theta_0 \\ h,\overline{h} :  \| h - \overline h \| \leq \varepsilon_2}} \abs{D(\bZ, (\overline{\btheta}, h)) - D(\bZ, (\overline{\btheta}, \overline{h})) } 
+
\sup_{ \substack{\overline{\btheta} \in \Theta_0 \\ h,\overline{h} :  \| h - \overline h \| \leq \varepsilon_2} } \abs{\Eb{ D(\bZ, (\overline{\btheta}, \overline{h})) - D(\bZ, (\overline{\btheta}, h)) } } 
\end{align}
\end{subequations}
%
We bound \cref{eq:LipschitzPortion} and \cref{eq:hLipschitzPortion} using \cref{lem:D-is-Lipschitz}.  
For \cref{eq:LipschitzPortion}, there exists a constant $C_1$ such that with probability at least $1-e^{-R}$
\[
\abs{D(\bZ, (\btheta, h)) - D(\bZ, (\overline{\btheta}, h)) }
\ \leq \ 
	\frac{C_1 \varepsilon_1 n^2 }{h}\sqrt{R \log n}.
\] 
Similarly, there exists $C_2$,  $C_3$ and $C_4$ (depending on $\vmin, L, C_{\mu}, a_{\min}, a_{\max}, b_{\max}$) such that 
\begin{align*}
\abs{\Eb{D(\bZ, (\btheta, h)) - D(\bZ, (\overline{\btheta}, h))} }
\ \leq \ \frac{C_2 \varepsilon_1 n^2 }{h } \left( \Eb{ \| \bz \|_\infty}  + 1 \right)
\ \leq \ \frac{C_3 \varepsilon_1 n^2 }{h } \left( \sqrt{ \log n}   + C_\mu \right)
\ \leq \ \frac{C_4 \varepsilon_1 n^2 }{h } \sqrt{ \log n}, 
\end{align*}
where the second inequality uses a standard bound on the maximum of $n$ sub-Gaussian random variables, and we have used \cref{asn:Parameters} to simplify.  Combining and taking the supremum over $h$ shows 
\[
\cref{eq:LipschitzPortion} \ \leq \ \frac{C_5 \varepsilon_1 n^2}{h_{\min}} \sqrt{ R \log n },
\]
for some constant $C_5$.  \\
For \cref{eq:hLipschitzPortion}, there exists a constant $C_6$ such that 
\[
\abs{D(\bZ, (\btheta, \overline{h})) - D(\bZ, (\btheta, h)) }
\ \leq \ 
	\frac{C_6 \varepsilon_2^{1/2} n }{h_{\min}}.
\] 
Since the same holds for the expectation of the same quantity, we see
\[
\cref{eq:hLipschitzPortion} \ \leq \ \frac{2C_6 \varepsilon_2^{1/2} n }{h_{\min}},
\]
}
\blue{
Similarly, using \cref{lem:p-wise-decoupled-Danskin}
and applying a union bound over elements in $\overline{\Theta}$ and $\overline{\mathcal{H}}$ shows with probability at least $1-e^{-R}$, 
\[
\cref{eq:CoveringPortion} \ \leq C_7 S_{\max} \sqrt{\frac{KR}{h_{\min}}\log\left(N(\varepsilon_1,\Theta)N(\varepsilon_2,\mathcal{H})\right)}, 
\]
for some constant $C_7$. Choosing $\varepsilon_1 = \frac{S_{\max} \sqrt{Kh_{\min}}}{n^2}$, $\varepsilon_2 = \frac{S_{\max}^2 K h_{\min}}{n^2}$, we see 
\[
	\cref{eq:CoveringPortion} + \cref{eq:LipschitzPortion} + \cref{eq:hLipschitzPortion} \le C_8 S_{\max} \sqrt{\frac{KR}{h_{\min}}\log\left(N(\varepsilon_1,\Theta)N(\varepsilon_2,\mathcal{H})\right)},
\]
Finally, we obtain our result by simplifying the above bound slightly since $n = \sum_{k=0}^K \abs{S_k} \leq K S_{\max}$, and hence, 
\begin{align*}
\varepsilon_1 = \frac{KS_{\max}\sqrt{Kh_{\min} }}{Kn^2}
\ \geq \ \sqrt{\frac{h_{\min} }{Kn^2}}
\\
\varepsilon_2 = \frac{S_{\max}^2K^2h_{\min}}{Kn^2}
\ \geq \ 
\frac{h_{\min}}{K}.
\end{align*}
Substituting the lower-bounds, we obtain our intended result.
}

%
\hfill \Halmos \endproof

\smallskip
We can now prove \cref{thm:unif-oos-est-wc-v}.

{\blockedit
\begin{proof}{Proof of \cref{thm:unif-oos-est-wc-v}:}
We proceed to bound each term on the right side of 
\cref{eq:ErrorExpansion}.  

To bound \cref{eq:ErrorExpansion_Optimism}, observe by definition of our lifted policy class and \cref{thm:unif-in-samp-opt-fd}, we have, with probability at least $1-e^{-R}$, that
\begin{align*}
\sup_{\btheta\in \bar{\Theta}}
\abs{ \bxi^\top\bx(\bZ, \btheta) - \Eb{ \bxi^\top \bx(\bZ, \btheta)} } 
& \ \leq \ 
\sup_{\btheta\in \bar{\Theta}, \bx^0 \in \Y}
\abs{ \bxi^\top\bx(\bZ, \btheta, \bx^0) - \Eb{ \bxi^\top \bx(\bZ, \btheta, \bx^0)} } 
\\
&\ \leq \ 
C S_{\max} R\sqrt{  K {\rm{dim}}(\phi) \log(K \abs{\Y} \mathcal{X}_{\max} ) }.
\end{align*}

To bound \cref{eq:ErrorExpanion_VGC}, let $\mathcal{H} \equiv [h_{\min},h_{\max}]$. Then, by applying the union bound to \cref{lem:unif-danskin-decoupled} with \mbox{$R \leftarrow R + \log(1 + \abs{\Y})$} we have that with probability at least $1-2e^{-R}$, 
\begin{align*}
\sup_{\substack{\btheta \in \Theta \\ \bx^0 \in \Y}} \left|D(\bZ, \btheta)-\mathbb{E}\left[D(\bZ, \btheta)\right]\right|
& \ \le \ 
C_1 S_{\max} \sqrt{\frac{K(R  + \log(\abs{1 + \Y})}{h_{\min}}} \sqrt{ \log n \cdot \log \left( N\left( \varepsilon_1, \Theta\right)N\left( \varepsilon_2, \mathcal{H}\right) \right)}
\\
&\ \leq \ 
C_2 S_{\max} \sqrt{\frac{KR \log(\abs{1 + \Y}) }{h_{\min}}} \sqrt{ \log n \cdot \log \left( N\left( \varepsilon_1, \Theta\right)N\left( \varepsilon_2, \mathcal{H}\right) \right)}, 
\end{align*}
for some constants $C_1$ and $C_2$ and where $\varepsilon_1 = \sqrt{\frac{h_{\min}}{Kn^2}}$ and $\varepsilon_2 = \frac{h_{\min}}{K}$.  


Finally, to bound \cref{eq:Bias}, use \cref{thm:equiv-in-sample} and take the supremum over $h\in \mathcal{H}$ to obtain
\[
	\cref{eq:Bias} \ \leq \ C_5 h_{\max} K S_{\max} \log(1/h_{\min}).
\]

Substituting these three bounds  into \cref{eq:ErrorExpansion} and collecting constants proves the theorem.  
\hfill \Halmos \end{proof}
}

\section{Problems that are Weakly Coupled by Constraints}
\subsection{Properties of the Dual Optimization Problem} 
\label{sec:fund_dual_prop}
Throughout the section, we use the notation $\langle \ell, u \rangle$ to denote the interval $[\min(\ell, u), \max(\ell, u)]$.    

Recall our dual formulation
\[
\blambda(\bz, \btheta) \in \arg \min_{\blambda \geq \bm 0} \ \L(\blambda, \bz, \btheta), \ 
\text{ where } \ 
\L(\blambda, \bz, \btheta) \ \equiv \ \bm b^\top \blambda + \frac{1}{n} \sum_{j=1}^n \left[ r_j(z_j, \btheta) - \bA_j^\top \blambda\right]^+.
\]
Since $\L(\blambda)$ is non-differentiable, its (partial) subgradient is not-unique.  We identify a particular subgradient by 
\[
\nabla_{\blambda} \L(\blambda, \bz, \btheta) \ = \ \bm{b} - \frac{1}{n} \sum_{j=1}^n \Ib{ r_j(z_j, \btheta) > \bA_j^\top \blambda } \bA_j.
\]

The following identity that characterizes the remainder term in a first order Taylor-series expansion of $\L(\blambda)$ with this subgradient.  
\begin{lem}[A Taylor Series for $\L(\blambda)$] \label[lem]{lem:TaylorIdentity}
For any $\blambda \in \R^m_+$, $\bz \in \R^n$, $\btheta \in \Theta$, 
\[
\L(\blambda_2, \bz, \btheta ) - \L(\blambda_1, \bz, \btheta) = \nabla_{\blambda} \L(\blambda_1, \bz, \btheta)^\top (\blambda_2 - \blambda_1) + 
\frac{1}{n} \sum_{j=1}^n \Ib{ r_j(z_j, \btheta)  \in \langle \bA_j^\top \blambda_1, \bA_j^\top \blambda_2 \rangle } \abs{ r_j(\bz, \btheta) - \bA_j^\top \blambda_2}.
\]
\end{lem}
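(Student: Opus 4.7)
My plan is to prove the identity termwise. Writing
\[
\L(\blambda_2,\bz,\btheta) - \L(\blambda_1,\bz,\btheta) \;=\; \bm{b}^\top(\blambda_2-\blambda_1) \;+\; \frac{1}{n}\sum_{j=1}^n \bigl(f_j(\blambda_2) - f_j(\blambda_1)\bigr)
\]
where $f_j(\blambda) \equiv [r_j(z_j,\btheta) - \bA_j^\top \blambda]^+$, the linear part $\bm{b}^\top(\blambda_2 - \blambda_1)$ already matches the corresponding piece of $\nabla_{\blambda}\L(\blambda_1,\bz,\btheta)^\top(\blambda_2-\blambda_1)$. So the lemma reduces to verifying, for each $j$, the pointwise identity
\[
f_j(\blambda_2) - f_j(\blambda_1) \;=\; -\Ib{r_j > \bA_j^\top\blambda_1}\,\bA_j^\top(\blambda_2-\blambda_1) \;+\; \Ib{r_j \in \langle \bA_j^\top\blambda_1,\bA_j^\top\blambda_2\rangle}\,\bigl|r_j - \bA_j^\top\blambda_2\bigr|,
\]
where I abbreviate $r_j = r_j(z_j,\btheta)$.

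The plan is to verify this pointwise identity by a short case split on the signs of $r_j - \bA_j^\top\blambda_1$ and $r_j - \bA_j^\top\blambda_2$, which jointly determine (i) whether $r_j$ lies in the interval $\langle \bA_j^\top\blambda_1, \bA_j^\top\blambda_2\rangle$ and (ii) which of $f_j(\blambda_1), f_j(\blambda_2)$ is zero versus positive. In the two ``non-crossing'' cases ($r_j$ strictly above both thresholds, or $r_j$ at or below both), the difference is either linear in $\blambda_2-\blambda_1$ or zero, and the indicator on the interval vanishes (or contributes zero through the $|r_j-\bA_j^\top\blambda_2|$ factor), matching the RHS. In the two ``crossing'' cases -- $r_j$ strictly between $\bA_j^\top\blambda_1$ and $\bA_j^\top\blambda_2$ in either direction -- a direct computation shows the gap between $f_j(\blambda_2) - f_j(\blambda_1)$ and its first-order term equals exactly $|r_j - \bA_j^\top\blambda_2|$, which is precisely the remainder claimed.

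The only subtle point -- and the mildest of obstacles -- is the handling of boundary equality, since the subgradient chosen in $\nabla_{\blambda}\L$ uses the strict inequality $r_j > \bA_j^\top\blambda_1$ (so at $r_j = \bA_j^\top\blambda_1$ the coefficient is zero). The interval $\langle \bA_j^\top\blambda_1, \bA_j^\top\blambda_2\rangle$ is closed at both endpoints, so the remainder indicator turns on at equality, but this is consistent because $|r_j - \bA_j^\top\blambda_2|$ still produces the correct value (including zero when $r_j = \bA_j^\top\blambda_2$). A careful enumeration of the four sign combinations, tracking these boundary conventions, closes the argument. Summing the pointwise identity over $j$ and adding $\bm{b}^\top(\blambda_2-\blambda_1)$ yields the claim.
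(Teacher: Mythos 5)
Your route is the same as the paper's: drop the linear $\bm{b}^\top(\blambda_2-\blambda_1)$ part, reduce to a per-component identity for $f_j(\blambda)=[r_j(z_j,\btheta)-\bA_j^\top\blambda]^+$, and verify it by a sign case analysis. The step that fails is exactly the boundary point you declare ``consistent.'' Take $\bA_j^\top\blambda_1 < \bA_j^\top\blambda_2$ and $r_j = \bA_j^\top\blambda_1$. Then $f_j(\blambda_2)-f_j(\blambda_1)=0$ and the first-order term vanishes (the chosen subgradient uses the strict inequality $r_j > \bA_j^\top\blambda_1$), yet the closed-interval indicator fires and the claimed remainder equals $\abs{r_j-\bA_j^\top\blambda_2}=\bA_j^\top\blambda_2-\bA_j^\top\blambda_1>0$. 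So the per-$j$ identity — and hence the lemma exactly as written, with $\langle\cdot,\cdot\rangle$ closed — is false at this configuration; your defense only covers the other endpoint $r_j=\bA_j^\top\blambda_2$, where the factor $\abs{r_j-\bA_j^\top\blambda_2}$ indeed kills the discrepancy. The exact remainder is instead
\[
\Ib{r_j > \bA_j^\top\blambda_1}\left[\bA_j^\top\blambda_2 - r_j\right]^+ \;+\; \Ib{r_j \le \bA_j^\top\blambda_1}\left[r_j - \bA_j^\top\blambda_2\right]^+ ,
\]
i.e., the interval must be taken open at $\bA_j^\top\blambda_1$ when $\bA_j^\top\blambda_1 < \bA_j^\top\blambda_2$ (and closed there in the reverse ordering), which agrees with the stated form except on that single boundary case.

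In fairness, the paper's own proof shares this blind spot: it disposes of $r_j\notin\langle\bA_j^\top\blambda_1,\bA_j^\top\blambda_2\rangle$, notes the degenerate case $\bA_j^\top\blambda_1=\bA_j^\top\blambda_2$, and then ``confirms directly'' the two orderings without treating $r_j=\bA_j^\top\blambda_1$; and the downstream uses of the lemma are insensitive to this (they only need the remainder to be the nonnegative quantity above, or equality off a degenerate set). So your argument is on par with the paper's, but to close it rigorously you should either verify the corrected half-open form of the remainder as displayed, or state explicitly that the identity as written holds whenever $r_j \ne \bA_j^\top\blambda_1$ for every $j$, rather than asserting that the closed-endpoint case goes through.
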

\proof{Proof of \cref{lem:TaylorIdentity}:}
Since $\bz$ and $\btheta$ are fixed, drop them from the notation.  Using the definitions of $\L$ and $\nabla_{\blambda}\L$, we see 
it is sufficient to prove that for each $j$, 
\begin{equation} \label{eq:TaylorIdentity_j}
[r_j - \bA_j^\top \blambda_2]^+ - [r_j - \bA_j^\top \blambda_1]^+ + \Ib{ r_j > \bA_j^\top \blambda_1} \bA_j^\top(\blambda_2 - \blambda_1) 
\ = \ 
\Ib{ r_j \in \langle \bA_j^\top \blambda_1, \bA_j^\top \blambda_2 \rangle} \abs{r_j - \bA_j^\top \blambda_2}
\end{equation}

First notice that if $\bA_j^\top \blambda_1 = \bA_j^\top \blambda_2$, then both sides of \cref{eq:TaylorIdentity_j} equal zero.
Further, if \mbox{$r_j \not\in \langle \bA_j^\top \blambda_1, \bA_j^\top \blambda_2 \rangle$}, then both sides are again zero.  
Thus, we need only considering the case where \mbox{$r_j \in \langle \bA_j^\top \blambda_1, \bA_j^\top \blambda_2 \rangle$}.  
We can confirm the identity directly by considering  the cases where \mbox{$\bA^\top\blambda_1 < \bA^\top \blambda_2$} and \mbox{$\bA^\top\blambda_1 > \bA^\top \blambda_2$} separately.
\hfill \Halmos \endproof

\smallskip
The following result is proven in Lemma D.3 of \citeGR.  We reproduce it here for completeness.  
\begin{lem}[Dual Solutions Bounded by Plug-in]\label[lem]{lem:BoundedDualSol} If $\mathcal{X}$ is $s_0$-strictly feasible, then

i) $\|\blambda(\bz,\btheta)\|_1 \le \frac{2}{ns_0} \| \bm{r}(\bz,\btheta) \|_1$

ii) $\mathbb{E}\left[ \|\blambda(\bz,\btheta)\|_1 \right] \le \frac{2}{ns_0} \mathbb{E}\left[ \| \bm{r}(\bz,\btheta) \|_1 \right]$
\end{lem}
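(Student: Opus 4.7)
The plan is to prove part (i) directly via LP duality, then obtain part (ii) by taking expectations. Part (i) is the substantive step; once it holds pointwise, monotonicity of expectation delivers (ii) in one line.

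For part (i), I would start from two complementary estimates of $\L(\blambda(\bz,\btheta),\bz,\btheta)$. The first is an upper bound from optimality of $\blambda(\bz,\btheta)$ in the dual: since $\bm 0 \in \R^m_+$ is dual feasible,
\[
\L(\blambda(\bz,\btheta),\bz,\btheta) \ \le \ \L(\bm 0,\bz,\btheta) \ = \ \tfrac1n \sum_{j=1}^n [r_j(z_j,\btheta)]^+ \ \le \ \tfrac{1}{n}\|\br(\bz,\btheta)\|_1.
\]
The second is a lower bound obtained by evaluating the Lagrangian $L(\bx,\blambda)$ (rather than its supremum over $\bx$) at the $s_0$-strictly feasible point $\bx^0$ guaranteed by \cref{asn:strict-feasibility}. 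By definition of $\L$ as a supremum over $[0,1]^n$, for every $\blambda \ge \bm 0$,
\[
\L(\blambda,\bz,\btheta) \ \ge \ \tfrac{1}{n}\sum_{j=1}^n r_j(z_j,\btheta)\, x^0_j \;+\; \blambda^\top\!\Big(\bm b - \tfrac{1}{n}\sum_{j=1}^n \bA_j x^0_j\Big).
\]
Since $\blambda \ge \bm 0$ and $\bm b - \tfrac{1}{n}\sum_j \bA_j x^0_j \ge s_0 \be$ componentwise, the last term is at least $s_0 \|\blambda\|_1$.

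Combining the two bounds at $\blambda = \blambda(\bz,\btheta)$ and rearranging yields
\[
s_0 \,\|\blambda(\bz,\btheta)\|_1 \ \le \ \tfrac{1}{n}\|\br(\bz,\btheta)\|_1 \;-\; \tfrac{1}{n}\sum_{j=1}^n r_j(z_j,\btheta)\, x^0_j \ \le \ \tfrac{2}{n}\|\br(\bz,\btheta)\|_1,
\]
where the final inequality uses $|x^0_j|\le 1$ to bound the second term by $\tfrac{1}{n}\|\br(\bz,\btheta)\|_1$ in absolute value. Dividing by $s_0$ gives (i). For (ii), take expectations of both sides of (i); linearity and monotonicity of expectation preserve the inequality.

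There is no real obstacle here: the argument is the standard ``strict feasibility forces dual boundedness'' trick, which goes through verbatim with $\br(\bz,\btheta)$ playing the role of the cost vector. The only thing to verify carefully is the sign conventions (maximization vs. minimization, and the sign of $\blambda$), since the paper has rescaled the objective by $1/n$ and uses $\blambda \ge \bm 0$ for $\le$ constraints --- both of which are consistent with the Lagrangian manipulation above.
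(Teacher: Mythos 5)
Your proof is correct and rests on exactly the same two ingredients as the paper's: the optimality bound $\L(\blambda(\bz,\btheta),\bz,\btheta)\le\L(\bm 0,\bz,\btheta)\le\tfrac1n\|\br(\bz,\btheta)\|_1$ and the evaluation of the inner maximum at the $s_0$-strictly feasible point $\bx^0$ to extract the term $s_0\|\blambda\|_1$. The paper merely packages the second step as a penalty ($1/s_0$) relaxation of an auxiliary maximization of $\be^\top\blambda$, whereas you lower-bound $\L(\blambda)$ directly at $\bx^0$; the algebra and the resulting intermediate bound $\tfrac{1}{ns_0}\bigl(\|\br\|_1-\br^\top\bx^0\bigr)\le\tfrac{2}{ns_0}\|\br\|_1$ are identical.
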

\proof{Proof of \cref{lem:BoundedDualSol}:} By optimality, 
$\mathcal{L}(\blambda(\bz,\btheta),\bz,\btheta) 
\le \mathcal{L}(\bm{0},\bz,\btheta)
\le \frac{1}{n}\| \bm{r}(\bz,\btheta) \|_1.$
Since \mbox{$\blambda(\bz,\btheta)\ge \bm{0}$},  it follows that $\| \blambda(\bz,\btheta) \|_1 = \be^{\top}\blambda(\bz,\btheta)$. Thus, 
\begin{align*}
	\| \blambda(\bz,\btheta) \|_1 & \  \leq  \ \max_{\blambda \ge 0} \be^\top \blambda \\
	& \text{s.t.} \quad \bm{b}^{\top}\blambda 
	+ \frac{1}{n} \sum_{j=1}^n 
	\left(r_j(z_j,\btheta) - \bA_j^\top\blambda\right)^+
	\ \leq \  \frac{1}{n} \| \bm{r}(\bz,\btheta) \|_1.
\end{align*}
We upper bound this optimization by relaxing the constraint with penalty $1/s_0 > 0$ to see that
\begin{align*}
	\| \blambda(\bz,\btheta) \|_1 & 
	\leq \max_{\blambda \ge 0} \quad \be^\top \blambda 
	+ \frac{1}{s_0} \left( \frac{1}{n} \| \bm{r}(\bz,\btheta) \|_1
	- \bm{b}^{\top}\blambda 
	- \frac{1}{n} \sum_{j=1}^n  \left(r_j(z_j,\btheta) 
	- \bA_j^\top\blambda\right)^+  \right)
	\\
	& =
	\max_{\blambda \ge 0} \quad \be^\top \blambda 
	+ \frac{1}{s_0} \left( \frac{1}{n} \| \bm{r}(\bz,\btheta) \|_1
	- \bm{b}^{\top}\blambda 
	- \frac{1}{n} \sum_{j=1}^n \max_{x_j\in[0,1]} x_j\left(r_j(z_j,\btheta) 
	- \bA_j^\top\blambda\right) \right)
	\\
	& \leq 
	\max_{\blambda \ge 0} \quad \be^\top \blambda 
	+ \frac{1}{s_0} \left( \frac{1}{n} \| \bm{r}(\bz,\btheta) \|_1
	- \bm{b}^{\top}\blambda 
	- \frac{1}{n} \sum_{j=1}^n x_j^0\left(r_j(z_j,\btheta) 
	- \bA_j^\top\blambda\right) \right)
	\\
	&  = 
	\max_{\blambda \ge 0} \quad 
	\left(\be - \frac{1}{s_0}\bm{b} + \frac{1}{ns_0} \bA\bx^0 \right)^{\top} \blambda
	+ \frac{1}{ns_0}\left(
		 \| \bm{r}(\bz,\btheta) \|_1 
		-  \bm{r}(\bz,\btheta)^{\top} \bx^0
	\right).
\end{align*}
By $s_0$-strict feasibility, $\frac{1}{n}\bA \bx^0 + s_0 \be \le \bm{b} \longleftrightarrow \be - \frac{1}{s_0}\bm{b} + \frac{1}{ns_0} \bA \bx^0 \le 0$.  Hence, $\blambda = \bm{0}$ is optimal for this optimization problem.  Thus, for all $\btheta \in \Theta$,
\begin{align*}
	\| \blambda(\bz,\btheta) \|_1 
\  \le \  
\frac{1}{ns_0}\left(
		\| \bm{r}(\bz,\btheta) \|_1
		- \bm{r}(\bz,\btheta)^{\top} \bx^0 \right)
\ \leq \ 
	\frac{2}{ns_0} \| \bm{r}(\bz,\btheta) \|_1. 
\end{align*}
This proves i). Applying the expectation to both sides completes the proof.
\hfill \Halmos \endproof

\subsection{Constructing the Good Set} 
\label{sec:GoodSet}
To construct the set of $\bZ$ where approximate strong convexity holds or the ``good'' set, we first define the following constants:
\begin{subequations}
\begin{align} \label{eq:DefLambdaMax}
\lambda_{\max} &\equiv \frac{2}{s_0} \left( a_{\max} \left(C_\mu  + \frac{4}{\sqrt \vmin} \right) + b_{\max} \right),
\\\label{eq:DefPhiMin}
\phi_{\min} &\equiv 		
\frac{\sqrt{\vmin}} {a_{\max}\sqrt{2 \pi}}
	\exp \left(-\frac{\vmax (a_{\max} C_{\mu} + b_{\max} + C_A \lambda_{\max})^2 }{2 a_{\min}^2}\right).
	\\ \label{eq:DefBigLambda}
\Lambda_n &= \big\{ (\blambda_1, \blambda_2) \in \R^m_+ \times \R^m_+ \ : \ \| \blambda_1 \|_1 \ \leq \lambda_{\max},  \ \| \blambda_2 \|_1 \leq \lambda_{\max}, \  \| \blambda_1 - \blambda_2 \|_2 \geq 4/n
\big\}, \text{ and } 
\\ \label{eq:DefTofTheta}
T_n & = \left\{ (\blambda, \btheta, \Gamma) \in 
\R^m_+ \times \Theta \times R \ : \ \| \blambda \|_1 \leq \lambda_{\max}, \; \Gamma \geq \frac{1}{n} \right\}.
\end{align}
\end{subequations}
These values depend on the constants defined in \cref{asn:strict-feasibility} and \cref{asn:MatrixAConditions}.

We now define the ``good'' set,
\begin{align*}
\mathcal E_n \ \equiv \ \Biggr\{ \bz \ : \ &  
\big( \nabla_{\blambda} \L(\blambda_1, \bz, \btheta) - \nabla_{\blambda} \L(\blambda_2, \bz, \btheta) \big)^\top (\blambda_1 - \blambda_2)
\\
& \qquad \ \geq \ 
\phi_{\min} \beta \| \blambda_1 - \blambda_2\|_2^2 
- \| \blambda_1 - \blambda_2\|_2^{3/2} V^2 \log(V) \frac{\log^2 n }{\sqrt{n}}
 \quad \forall (\blambda_1, \blambda_2) \in \Lambda_n, \; \forall \btheta \in \Theta,
\\
&\frac{1}{n} \sum_{j=1}^n \Ib{ \abs{r_j(Z_j, \btheta) - \bA_j^\top \blambda} \ \leq \Gamma}   
 \ \leq \ 
\Gamma \sqrt{\vmax}  + 
 \Gamma^{1/2}V \log(V) \frac{\log^2 n }{\sqrt n}, 
\quad \forall (\blambda, \btheta, \Gamma) \in T_n
\\
& \| \bz \|_1 \leq n C_\mu + \frac{2n}{\sqrt{\vmin}}, 
\\
& \| \bz \|_\infty  \leq \log n
\Biggr\},
\end{align*} 

For clarity, we stress that $\phi_{\min} > 0$ and $\lambda_{\max} > 0$ are dimension independent constants.  

We show in the next section that $\Pb{ \bZ \not\in \mathcal E_n } = \tilde{O}(1/n)$.  Thus, the event $\{ \bZ \in \mathcal E_n \}$ happens with high-probability, and we will perform our subsequent probabilistic analysis conditional on this ``good" set.  

\subsection{Bounding the ``Bad" Set} 
 \label{sub:bounding_the_bad_set}
The purpose of this section is to bound $\Pb{\bZ \notin \mathcal E_n}$.  Since, $\mathcal E_n$ consists of four conditions, we treat each separately.  The last two conditions on $\| \bZ\|_1$ and $\| \bZ \|_\infty$ can be analyzed using standard techniques for sub-Gaussian random variables.
\begin{lem}[Bounding $\| \bZ \|_1$] \label[lem]{lem:Bounding1NormZ}  Under \cref{asn:Parameters},
\[
  \Pb{ \| \bZ \|_1  > n C_{\mu} + \frac{2n}{\sqrt{\vmin}} }
  \ \leq \   e^{-n/32}.
\]
\end{lem}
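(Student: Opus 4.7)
My plan is to separate $\|\bZ\|_1$ into a deterministic bias piece and a stochastic noise piece, and then apply Gaussian concentration to the noise piece.

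First, by the triangle inequality and the bound $\|\bmu\|_\infty\le C_\mu$ from \cref{asn:Parameters}, I have $\|\bZ\|_1\le n C_\mu + \|\bxi\|_1$, so it suffices to prove
\[
\Pb{\|\bxi\|_1 > 2n/\sqrt{\vmin}} \le e^{-n/32}.
\]
Under \cref{asn:Gaussian}, the $\xi_j$ are independent with $\xi_j\sim\mathcal{N}(0,1/\nu_j)$. Using the standard folded-normal mean, $\Eb{|\xi_j|}=\sqrt{2/(\pi\nu_j)} \le \sqrt{2/(\pi\vmin)}$, so $\Eb{\|\bxi\|_1} \le n\sqrt{2/(\pi\vmin)} < n/\sqrt{\vmin}$ (since $\sqrt{2/\pi}<1$). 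Consequently, the event $\{\|\bxi\|_1 > 2n/\sqrt{\vmin}\}$ is contained in the event $\{\|\bxi\|_1 - \Eb{\|\bxi\|_1} > n/\sqrt{\vmin}\}$, which converts the problem into a concentration statement.

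For the concentration, I write $\bxi = \text{diag}(1/\sqrt{\nu_j})\,\bm g$ with $\bm g\sim\mathcal{N}(\bm 0,I_n)$, so that $\|\bxi\|_1 = \sum_j |g_j|/\sqrt{\nu_j}$ is a Lipschitz function of $\bm g$ with Lipschitz constant $\sqrt{\sum_j 1/\nu_j}\le\sqrt{n/\vmin}$. The classical Gaussian concentration inequality for Lipschitz functions (equivalently, Hoeffding's inequality applied to the independent sub-Gaussian summands $|\xi_j|-\Eb{|\xi_j|}$ with variance proxies $1/\nu_j\le 1/\vmin$) then yields
\[
\Pb{\|\bxi\|_1 - \Eb{\|\bxi\|_1} > t} \;\le\; \exp\!\left(-\tfrac{t^2\vmin}{2n}\right).
\]
Setting $t = n/\sqrt{\vmin}$ gives the bound $\exp(-n/2)$, and since $n\ge 3$ by \cref{asn:Parameters}, this is comfortably smaller than $\exp(-n/32)$. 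No step is really an obstacle here; the main thing to be careful about is that $\sqrt{2/\pi}$ is strictly less than $1$, which is what leaves the slack $n/\sqrt{\vmin}$ between $\Eb{\|\bxi\|_1}$ and the target threshold $2n/\sqrt{\vmin}$ and thereby allows the concentration step to absorb the deviation.
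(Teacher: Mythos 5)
Your proposal is correct and follows essentially the same route as the paper: bound $\|\bZ\|_1 \le nC_\mu + \|\bxi\|_1$, control $\Eb{\|\bxi\|_1}$ (the paper via Jensen, you via the exact folded-normal mean), and then concentrate the sum of $|\xi_j|$ around its mean. The only difference is the concentration tool — the paper bounds the $\Psi$-norm of the centered $|\xi_j|$ to get a conservative variance proxy $16/\vmin$ and hence exactly $e^{-n/32}$, whereas your Gaussian-Lipschitz (or sharper sub-Gaussian) argument gives the stronger $e^{-n/2}$, which of course implies the stated bound.
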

\proof{Proof of \cref{lem:Bounding1NormZ}:}
Note that $\Eb{ \abs{ Z_j - \mu_j}} \ \leq \frac{1}{\sqrt{\vmin}}$ by Jensen's inequality. Furthermore, because each $Z_j$ is sub-Gaussian with variance proxy $\frac{1}{\vmin}$, we have by Lemma A.1 of \citeGR~ that 
$\| \abs{ Z_j - \mu_j}\|_\Psi \ = \| Z_j - \mu_j\|_\Psi \ \leq \ \frac{2}{\sqrt{\vmin}}$. 
Thus, $\abs{Z_j - \mu_j} - \Eb{\abs{Z_j - \mu_j} }$ is a mean-zero sub-Gaussian random variable with variance proxy at most $\frac{16}{\vmin}$.  Finally, observe 
 $\| \bZ \|_1 \ \leq \ n C_{\mu} + \sum_{j=1}^n \abs{Z_j - \mu_j}$.  Hence, 
\begin{align*}
 \Pb{ \| \bZ \|_1  > n C_{\mu} + \frac{2n}{\sqrt{\vmin}} }
    & \ \leq \ 
 \Pb{ \frac{1}{n} \sum_{j=1}^n \abs{Z_j - \mu_j}  > \frac{2}{\sqrt{\vmin}} }
 \\
 & \ \leq \ 
  \Pb{ \frac{1}{n} \sum_{j=1}^n \abs{Z_j - \mu_j} - \Eb{\abs{Z_j - \mu_j} }  > \frac{1}{\sqrt{\vmin}} }
\\
& \ \leq \ 
e^{-\frac{n}{32}},
\end{align*}
by the usual bound on the sum of independent sub-Gaussian random variables. 

\hfill \Halmos \endproof

\begin{lem}[Bounding $\| \bZ \|_\infty$] \label[lem]{lem:BoundingInfNormZ} Under \cref{asn:Parameters}, there exists a dimension independent constant $n_0$ such that for all $n \geq n_0$, 
 \[
   \Pb{ \| \bZ \|_\infty  > \log n}
   \ \leq \   \frac{1}{n^2}.
 \]
 \end{lem}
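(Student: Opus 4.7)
{Proof proposal for \cref{lem:BoundingInfNormZ}:}
The plan is to reduce the bound on $\|\bZ\|_\infty$ to a bound on the maximum of the centered Gaussian (sub-Gaussian) noise $\bZ - \bmu$, and then apply a standard union bound together with a sub-Gaussian tail inequality.

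Concretely, I would first observe that $\|\bZ\|_\infty \leq \|\bmu\|_\infty + \|\bZ - \bmu\|_\infty \leq C_\mu + \max_{1\leq j \leq n} |Z_j - \mu_j|$ using \cref{asn:Parameters}. Hence for any $t > C_\mu$,
\[
\Pb{\|\bZ\|_\infty > t} \ \leq \ \Pb{\max_{1\leq j \leq n} |Z_j - \mu_j| > t - C_\mu}.
\]
Since each $Z_j - \mu_j$ is Gaussian with variance $1/\nu_j \leq 1/\nu_{\min}$ by \cref{asn:Gaussian,asn:Parameters}, a union bound and the standard Gaussian tail bound yield
\[
\Pb{\max_{1\leq j \leq n} |Z_j - \mu_j| > t - C_\mu} \ \leq \ 2n \exp\!\left( -\frac{\nu_{\min}(t - C_\mu)^2}{2}\right).
\]

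Specializing to $t = \log n$, it remains to check that $2n \exp\!\left(-\nu_{\min}(\log n - C_\mu)^2/2\right) \leq 1/n^2$ for all sufficiently large $n$. Taking logarithms, this is equivalent to the inequality
\[
\frac{\nu_{\min}(\log n - C_\mu)^2}{2} \ \geq \ 3 \log n + \log 2,
\]
which clearly holds for all $n \geq n_0$, where $n_0$ is a dimension-independent constant depending only on $\nu_{\min}$ and $C_\mu$ (the left side grows like $(\log n)^2$ while the right side grows only like $\log n$). No step here is a serious obstacle; the result is a routine Gaussian concentration estimate, and the only care needed is in extracting the explicit threshold $n_0$ from the quadratic-versus-linear comparison of $\log n$ terms.
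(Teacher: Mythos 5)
Your proposal is correct: the triangle inequality $\|\bZ\|_\infty \leq C_\mu + \max_j|Z_j-\mu_j|$, the union bound over the $n$ independent Gaussian coordinates, and the comparison $\tfrac{\vmin}{2}(\log n - C_\mu)^2 \geq 3\log n + \log 2$ for $n \geq n_0(\vmin,C_\mu)$ together give exactly the claimed $1/n^2$ bound, and the threshold $n_0$ depends only on $\vmin$ and $C_\mu$, which is permitted by the paper's notion of dimension-independence. (Note that, like the paper, you are implicitly also using \cref{asn:Gaussian}, not only \cref{asn:Parameters}; this is how the lemma is used throughout the appendix, so it is not an issue.)

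The route differs from the paper's, though. The paper does not use a coordinate-wise union bound: it first bounds the expectation $\Eb{\|\bZ\|_\infty} \leq C_1\sqrt{\log n}$ and then invokes concentration of the supremum around its mean — $\|\bZ\|_\infty - \Eb{\|\bZ\|_\infty}$ is sub-Gaussian with variance proxy $1/\vmin$ (Lipschitz/maximum concentration, Example 2.29 of Wainwright) — applied at deviation level $\tfrac12\log n$, yielding $\exp(-\vmin\log^2 n/8) = n^{-\vmin\log n/8} \leq n^{-2}$ for large $n$. Your direct union bound is more elementary and self-contained (only the scalar Gaussian tail is needed), at the cost of an explicit factor $n$ in front of the exponential that you must beat with the quadratic-in-$\log n$ exponent; the paper's argument hides that factor inside the expectation bound and would extend unchanged to settings where the coordinates are dependent but the supremum still concentrates. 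For the present purpose both give the same conclusion with the same quality of constants, so either proof is acceptable.
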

\proof{Proof of \cref{lem:BoundingInfNormZ}:}
By \cite{wainwright2019high}, 
 \(
 \Eb{\| \bZ \|_\infty \|} \ \leq \ C_1 \sqrt{\log n},
 \)
 for some dimension independent constant $C_1$.  Moreover, by \cite[Example 2.29]{wainwright2019high}, 
 \( 
 \| \bZ \|_\infty - \Eb{ \| \bZ\|_\infty}
 \)
 is sub-Gaussian with variance proxy at most $1/\vmin$.  Hence, 
 \begin{align*}
 \Pb{ \| \bZ\|_\infty > \log n}
 & \ \leq \ 
 \Pb{ \| \bZ \|_\infty - \Eb{\| \bZ \|_\infty }
  > \log n - C_1 \sqrt{\log n}}.
\end{align*}
For $n$ sufficiently large, $\log n - C_1 \sqrt{\log n} \geq \frac{1}{2} \log n$.  Hence, for $n$ sufficiently large, this last probability is at most 
 \begin{align*}
 \Pb{ \| \bZ \|_\infty - \Eb{\|\bZ\|_\infty}
 > \frac{1}{2} \log n 
 }
 &\ \leq \ \exp\left(-\frac{ \vmin \log^2 n}{8}\right)
 \\
 & \ \leq \ n^{-\frac{\vmin \log n}{8}}.
 \end{align*}
 For $n$ sufficiently large, the exponent is at most $-2$, proving the lemma. 
\hfill \Halmos \endproof

\smallskip
We next establish that the inequality bounding the behavior over $T_n$ hold with high probability.  As a preparation, we first bound the supremum of a particular stochastic process over this set.
\begin{lem}[Suprema over $T_n$] \label[lem]{lem:BoundingSupremaTn}  Recall the definition of $T_n$ in \cref{eq:DefTofTheta}.  Under \cref{asn:Parameters,asn:LiftedAffinePlugInII,asn:SmoothPlugIn}, there exist dimension independent constants $C$ and $n_0$ such for all $n \geq n_0$, we have that for any $R > 1$, with probability at least $1-e^{-R}$, 
\begin{align*}
&\sup_{(\blambda, \btheta, \Gamma) \in T_n}
\abs{\sum_{j=1}^n \Ib{\abs{ r_j(Z_j, \btheta) - \bA_j^\top \blambda} \leq \Gamma} - \Pb{\abs{ r_j(Z_j, \btheta) - \bA_j^\top \blambda } \leq \Gamma } } \Gamma^{-1/2} \ \leq \ C R V \log V \sqrt n.
\end{align*}
\end{lem}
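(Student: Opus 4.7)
I plan to establish this uniform concentration bound by combining a dyadic peeling argument in $\Gamma$ with the small-variance suprema bound of \cref{thm:SmallVariancePollard}. The key intuition is that when $\Gamma$ is small the indicator $\Ib{|r_j(Z_j, \btheta) - \bA_j^\top\blambda| \leq \Gamma}$ has pointwise variance of order $\Gamma$ (by the bounded Gaussian density of $Z_j$), and this smallness exactly matches the $\Gamma^{-1/2}$ normalization, producing a bound that is uniform across dyadic levels of $\Gamma$. As a first step, I verify that the relevant indicator class is Euclidean. By \cref{asn:LiftedAffinePlugInII}, the real-valued class $\{\rho((z, \nu, \bW, \bA), \btheta) - \bA^\top\blambda\}$ has pseudo-dimension at most $V$. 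Writing
\[
\Ib{|r_j(Z_j, \btheta) - \bA_j^\top \blambda| \leq \Gamma} = \Ib{r_j(Z_j, \btheta) - \bA_j^\top \blambda \leq \Gamma} - \Ib{r_j(Z_j, \btheta) - \bA_j^\top \blambda < -\Gamma},
\]
and viewing $\Gamma$ as one additional linear parameter, one sees that the induced set class has VC-dimension $O(V)$, so the indicator class is Euclidean (by standard results from Pollard) with parameters such that $V(A,W) = O(\sqrt{V \log V})$ in the notation of \cref{thm:SmallVariancePollard}.

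Next, I partition the $\Gamma$-range of $T_n$ into dyadic levels $T_n^k = \{(\blambda, \btheta, \Gamma) \in T_n : 2^{k-1}/n \leq \Gamma \leq 2^k/n\}$ for $k = 1, \ldots, K_n$, with $K_n = O(\log n)$ chosen so that the levels cover $\Gamma \leq 1$; the tail $\Gamma > 1$ is absorbed by monotonicity of $\Gamma \mapsto \Ib{|X|\leq\Gamma}$ and $\Gamma \mapsto \Pb{|X|\leq\Gamma}$ (both tend to $1$, and deviations can be bounded at $\Gamma = 1$). For $(\blambda, \btheta, \Gamma) \in T_n^k$, the Gaussian density of $Z_j$ combined with \cref{asn:Parameters,asn:SmoothPlugIn} gives
\[
\Pb{|r_j(Z_j, \btheta) - \bA_j^\top\blambda| \leq \Gamma} \leq \frac{2 \Gamma \sqrt{\nu_j}}{|a_j(\btheta)|\sqrt{2\pi}} \leq C_1 \cdot \frac{2^k}{n},
\]
so each indicator has variance at most $\sigma_k^2 = C_1 \cdot 2^k/n$ and the summed variance satisfies $n\sigma_k^2 \leq C_1 \cdot 2^k$.

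Applying \cref{thm:SmallVariancePollard} within level $k$ with $K = n$, $U = 1$, and $V(A,W) = O(\sqrt{V \log V})$ then yields, with probability at least $1 - e^{-R}$,
\[
\sup_{(\blambda, \btheta, \Gamma) \in T_n^k} \abs{\sum_{j=1}^n \Ib{\cdot} - \Pb{\cdot}} \;\leq\; C_2 R \sqrt{V \log V} \bigl(\sqrt{2^k} + \sqrt{V \log V}\bigr).
\]
Multiplying by $\Gamma^{-1/2} \leq \sqrt{2n/2^k}$ gives $C_3 R V \log V \sqrt n$ uniformly in $k$ (the $\sqrt{V\log V}/\sqrt{2^k}$ branch also produces $\lesssim \sqrt n$ since $2^k \geq 1$). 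A union bound over the $K_n = O(\log n)$ levels, applied with $R \leftarrow R + \log K_n$, completes the argument; the resulting $\log\log n$ correction is absorbed into the constant $C$ for $n \geq n_0$ by a suitable choice of $n_0$.

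The main obstacle will be Step~1: carefully passing from the real-valued pseudo-dimension bound to the Euclidean parameters of the interval-indicator class (so that $V(A,W)$ has the $\sqrt{V\log V}$ dependence claimed, with parameters independent of $n$, $\btheta$, $\blambda$, $\Gamma$). A secondary subtlety is ensuring the peeling preserves the $\sqrt n$ rate without introducing an additional $\log\log n$ factor in the final constant — this requires either absorbing the union-bound cost into $R$ (admissible since $R > 1$ and the extra term is $\log\log n$) or appealing to a Talagrand-type sharpening in place of the explicit peeling. I will handle the issue by enlarging $n_0$, which is the cleaner of the two options and suffices since $n_0$ is permitted to be dimension-independent.
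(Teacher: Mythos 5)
Your overall architecture (VC/Euclidean control of the interval-indicator class combined with the small-variance suprema bound of \cref{thm:SmallVariancePollard}) is sound and close in spirit to the paper, but your route differs in one structural choice and two of your steps do not go through as written. The paper avoids peeling in $\Gamma$ altogether: it folds the normalization into the process, applying \cref{thm:SmallVariancePollard} \emph{once} to the scaled functions $\Gamma^{-1/2}\Ib{\,\abs{r_j(Z_j,\btheta)-\bA_j^\top\blambda}\le\Gamma\,}$, whose second moment is $\Gamma^{-1}\Pb{\abs{r_j-\bA_j^\top\blambda}\le\Gamma}\le C$ uniformly over \emph{all} $\Gamma\ge 1/n$ (including $\Gamma>1$), with envelope $U=\Gamma^{-1/2}\le\sqrt n$; a single application then yields $C\,R\,V\log V\sqrt n$ with no union bound over $\Gamma$-levels. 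Your reduction of the two-sided indicator to a class of VC dimension $O(V)$ via the two one-sided thresholds mirrors the paper's $\mathcal F_1\wedge\mathcal F_2$ argument and is fine.

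The two gaps. First, the tail $\Gamma>1$: in $T_n$ (see \cref{eq:DefTofTheta}) $\Gamma$ is unbounded above, and the centered deviation $\abs{\sum_j(\Ib{\cdot\le\Gamma}-\Pb{\cdot\le\Gamma})}$ is \emph{not} monotone in $\Gamma$, so it cannot be ``bounded at $\Gamma=1$''; passing to complements does not help either, since $\sum_j\Pb{\abs{r_j-\bA_j^\top\blambda}>1}$ is generically of order $n$, which after multiplying by $\Gamma^{-1/2}\le1$ overshoots the target $\sqrt n$ rate. You would need a separate (easy) application of \cref{thm:SmallVariancePollard} on the region $\Gamma\ge1$ with $U\le1$ and $\sigma^2\le1$ — or simply adopt the single-application scaling above, which handles all $\Gamma$ at once. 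Second, the union bound over the $K_n=O(\log n)$ dyadic levels forces $R\mapsto R+\log K_n$, so your argument actually delivers $C\,(R+\log\log n)\,V\log V\sqrt n$. The $\log\log n$ term cannot be ``absorbed into $C$ by choosing $n_0$'': the bound must hold for every $n\ge n_0$ with a constant independent of $n$, and $R$ may remain bounded (say $R=2$) while $n\to\infty$, so the extra term is genuinely unbounded relative to $C\,R$. As written you therefore prove a slightly weaker statement than \cref{lem:BoundingSupremaTn}; the clean repair is to dispense with the peeling by scaling the class, exactly as the paper does.
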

\proof{Proof of \cref{lem:BoundingSupremaTn}:}
Our goal will be to apply \cref{thm:SmallVariancePollard}.  As a first step, we claim that for a fixed $\blambda, \btheta, \Gamma$, there exists a dimension independent constant $C_1$ such that 
\[
	\Pb{\abs{ r_j(Z_j, \btheta) - \bA_j^\top \blambda }  \ \leq \ \Gamma } \ \leq 
	\sqrt{\nu_j} C_1 \Gamma.
\]
To prove the claim, notice that this quantity is the probability that a Gaussian random variables lives in an interval of length $2\Gamma$.  Upper bounding the density of the Gaussian by its value at its mean shows the probability is at most 
\(
\sqrt{\frac{\nu_j}{2 \pi}}.
\)
Thus, 
\begin{equation} \label{eq:Meanj_ThetaAnalysis}
	\Pb{\abs{ r_j(Z_j, \btheta) - \bA_j^\top \blambda }  \ \leq \ \Gamma } \ \leq 
	2 \Gamma \sqrt{\frac{\nu_j}{2 \pi}} \ \leq 
	\Gamma \sqrt{\nu_j}.
\end{equation}

This upperbound further implies that there exists a dimension independent constant $C_2$ such that the parameter ``$\sigma^2$'' in \cref{thm:SmallVariancePollard} is at most $C_2$, because the indicator squared equals the indicator.  We also take the parameter ``$U$'' to be $\sqrt n$ since $\Gamma \ \geq \frac{1}{n}$.  

Thus, to apply the theorem it remains to show the set 
\[
	\mathcal F \equiv \left\{\left(\Ib{ \abs{r_j(Z_j, \btheta) - \bA_j^\top \blambda} \leq \Gamma} \right)_{j=1}^n \ : \ (\blambda, \btheta, \Gamma) \in T_n\right\}
\]
is Euclidean and compute its parameters.  

Consider the set 
\[
	\mathcal F_1 \equiv \left\{\left(\Ib{ r_j(Z_j, \btheta) - \bA_j^\top \blambda \leq \Gamma} \right)_{j=1}^n \ : \ (\blambda, \btheta, \Gamma) \in T_n\right\}.
\]
By \cref{asn:LiftedAffinePlugInII}, this set has VC-dimension at most $V$, and, hence, also has pseudo-dimension at most $V$.  The same is true of the set 
\[
	\mathcal F_2 \equiv \left\{\left(\Ib{ \bA_j^\top \blambda - r_j(Z_j, \btheta) \leq \Gamma} \right)_{j=1}^n \ : \ (\blambda, \btheta, \Gamma) \in T_n\right\}.
\]
Since $\mathcal F = \mathcal F_1 \wedge \mathcal F_2$, by \cite[Lemma 5.1]{pollard1990empirical} there exists an absolute constant $C_2$ such that $\mathcal F$ has pseudo-dimension at most $C_2V$.  By Theorem A.3 of \citeGR, $\mathcal F$ is Euclidean with parameters $A = (C_2 V)^{6C_2 V}$ and $W = 4C_2 V$.  The relevant complexity parameter ``$V(A,W)$'' is then at most 
\[
	\frac{6C_2 V \log(C_2 V)  + 4C_2 V}{\sqrt{6C_2 V \log(C_2 V)}} \ \leq \ 
	C_3 \sqrt{ V \log V},
\]
for some dimension independent constant $C_3$. 

\Cref{thm:SmallVariancePollard} now bounds the suprema by 
\(
	C_4 R V \log(V) \sqrt n,
\)
completing the proof. 
\hfill \Halmos \endproof

\smallskip
Equipped with \cref{lem:BoundingSupremaTn}, we can now show the relevant condition holds with high-probability.
\begin{lem}[Bounding Away from Degeneracy]\label[lem]{lem:BoundAwayDegeneracy}
Recall the definition of $T_n$ in \cref{eq:DefTofTheta}. Under \cref{asn:Parameters,asn:LiftedAffinePlugInII,asn:SmoothPlugIn} there exists a dimension independent constant $n_0$ such that for all $n \geq n_0$, 
with probability at least $1-1/n$ we have that
\begin{align*}
&\frac{1}{n} \sum_{j=1}^n \Ib{ \abs{r_j(Z_j, \btheta) - \bA_j^\top \blambda} \ \leq \Gamma}   
 \ \leq \ 
\Gamma \sqrt{\vmax} \ + \
 \Gamma^{1/2} V \log(V) \frac{\log^2 n }{\sqrt n}, 
\quad \forall (\blambda, \btheta, \Gamma) \in T_n.
\end{align*}
\end{lem}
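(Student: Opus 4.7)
\proof{Proof Plan for \cref{lem:BoundAwayDegeneracy}:}
The plan is to apply \cref{lem:BoundingSupremaTn} with a carefully chosen $R$ and then combine the resulting uniform concentration bound with the pointwise estimate on $\Pb{\abs{r_j(Z_j,\btheta)-\bA_j^\top\blambda} \leq \Gamma}$ that was already established (inline) in the proof of \cref{lem:BoundingSupremaTn}.

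Specifically, I would first invoke \cref{lem:BoundingSupremaTn} with $R = \log n$, which holds provided $n$ is at least some dimension-independent $n_0$ large enough to ensure $R > 1$. This gives that, with probability at least $1 - e^{-\log n} = 1 - 1/n$,
\[
\sup_{(\blambda,\btheta,\Gamma)\in T_n} \Gamma^{-1/2}\abs{\sum_{j=1}^n \Ib{\abs{r_j(Z_j,\btheta)-\bA_j^\top\blambda}\leq\Gamma} - \Pb{\abs{r_j(Z_j,\btheta)-\bA_j^\top\blambda}\leq\Gamma}} \leq C V \log(V)\, \log n\, \sqrt{n}
\]
for an absolute constant $C$. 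Dividing by $n$ and rearranging yields, uniformly over $T_n$,
\[
\frac{1}{n}\sum_{j=1}^n \Ib{\abs{r_j(Z_j,\btheta)-\bA_j^\top\blambda}\leq\Gamma} \;\leq\; \frac{1}{n}\sum_{j=1}^n \Pb{\abs{r_j(Z_j,\btheta)-\bA_j^\top\blambda}\leq\Gamma} + C V \log(V)\frac{\Gamma^{1/2}\log n}{\sqrt{n}}.
\]

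Next, I would appeal to the pointwise bound derived in equation~\eqref{eq:Meanj_ThetaAnalysis} inside the proof of \cref{lem:BoundingSupremaTn}: since $Z_j$ is Gaussian with precision at most $\vmax$, upper bounding the density by its value at the mean shows $\Pb{\abs{r_j(Z_j,\btheta)-\bA_j^\top\blambda}\leq\Gamma} \leq \Gamma \sqrt{\nu_j} \leq \Gamma\sqrt{\vmax}$ for every $j$. Averaging over $j$, the expectation term is dominated by $\Gamma\sqrt{\vmax}$.

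Finally, combining the two displays gives, with probability at least $1-1/n$ and uniformly over $T_n$,
\[
\frac{1}{n}\sum_{j=1}^n \Ib{\abs{r_j(Z_j,\btheta)-\bA_j^\top\blambda}\leq\Gamma} \;\leq\; \Gamma\sqrt{\vmax} + C V \log(V)\frac{\Gamma^{1/2}\log n}{\sqrt n},
\]
which, since $\log n \leq \log^2 n$ for $n \geq e$, yields the claimed bound (after choosing $n_0$ large enough to absorb the absolute constant $C$ into the $\log^2 n$ factor). The proof is essentially a direct corollary of \cref{lem:BoundingSupremaTn}, so there is no substantive obstacle; the only care needed is in selecting $R = \log n$ to calibrate the failure probability to $1/n$ and in noting that the resulting $\log n$ bound is weaker than the stated $\log^2 n$ bound.
\hfill\Halmos
\endproof
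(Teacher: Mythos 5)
Your proposal is correct and follows essentially the same route as the paper's proof: invoke \cref{lem:BoundingSupremaTn} with $R=\log n$ to get the $1-1/n$ probability, bound the expectation term by $\Gamma\sqrt{\vmax}$ via the Gaussian density estimate, and absorb the absolute constant into the extra $\log n$ factor (the paper uses $C\le \log n$ for $n\ge n_0$) to reach the stated $\log^2 n$ bound.
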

\proof{Proof of \cref{lem:BoundAwayDegeneracy}}
Apply \cref{lem:BoundingSupremaTn} with $R =\log n$ to conclude that with probability at least $1-1/n$, for all $(\blambda, \btheta, \Gamma) \in T_n$ simultaneously, we have 
\begin{align*}
&\abs{\sum_{j=1}^n \Ib{\abs{ r_j(Z_j, \btheta) - \bA_j^\top \blambda} \leq \Gamma} - \Pb{\abs{ r_j(Z_j, \btheta) - \bA_j^\top \blambda } \leq \Gamma } } \ \leq \ C \Gamma^{1/2}  V \log(V) \log(n) \sqrt n.
\end{align*}
Then observe that as in the proof of \cref{eq:Meanj_ThetaAnalysis}, 
\(
\Pb{\abs{ r_j(Z_j, \btheta) - \bA_j^\top \blambda } \leq \Gamma } \ \leq \Gamma \sqrt{\vmax}.
\)
Finally, for $n$ sufficiently large, $C \leq \log n$.  Rearranging then completes the proof.  
\hfill \Halmos \endproof
\begin{rem} We describe the condition in \cref{lem:BoundAwayDegeneracy} as ``Bounding Away from Degeneracy" because $r_j(\bZ_j, \btheta)- \bA_j^\top \blambda$ is the reduced cost of the $j^\text{th}$ component at the dual solution $\blambda$.  Hence, the lemma asserts that there are not too many reduced costs that are less than $1/n$.  
\end{rem}

It remains to establish that the approximate strong convexity condition over $\Lambda_n$ holds with high probability.  As preparation, we again bound the suprema of a particular stochastic process.
\begin{lem}[Suprema over $\Lambda_n$]
\label[lem]{lem:SupremaForApproxSC}
Under \cref{asn:Parameters,asn:LiftedAffinePlugInII,asn:SmoothPlugIn}, there exists a dimension independent constant $C$ such that for any $R>1$, with probability at least $1-e^{-R}$, we have 
\begin{align*}
&	\sup_{(\blambda_1, \blambda_2) \in \Lambda_n, \btheta \in \Theta}
	\abs{\frac{1}{n} \sum_{j=1}^n 
	\Big(\Ib{r_j(Z_j, \btheta) \in \langle \bA_j^\top \blambda_1, \bA_j^\top \blambda_2 \rangle} 
	 	 	- 
	 	 	\Pb{r_j(Z_j, \btheta) \in \langle \bA_j^\top \blambda_1, \bA_j^\top \blambda_2 \rangle}\Big) 
	 \frac{\abs{\bA_j^\top(\blambda_1-\blambda_2)}}{\| \blambda_1 - \blambda_2 \|_{2}^{3/2}}
	}
\\
& \qquad 	\ \leq \ 
C RV^2 \log(V) \sqrt n.	
\end{align*}
\end{lem}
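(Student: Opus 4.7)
\vspace{0.5em}
\noindent\textbf{Proof Proposal for \cref{lem:SupremaForApproxSC}.}

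My plan is to invoke \cref{thm:SmallVariancePollard} on the suprema of stochastic processes with small variance, following the same template as the proof of \cref{lem:BoundingSupremaTn}. For each $(\blambda_1,\blambda_2,\btheta) \in \Lambda_n \times \Theta$, define
\[
f_j(\blambda_1,\blambda_2,\btheta) \ \equiv \ \Ib{r_j(Z_j,\btheta) \in \langle \bA_j^\top\blambda_1, \bA_j^\top\blambda_2\rangle} \cdot \frac{\abs{\bA_j^\top(\blambda_1-\blambda_2)}}{\|\blambda_1-\blambda_2\|_2^{3/2}},
\]
and let $\mathcal F$ denote the corresponding function class.  The three ingredients required by \cref{thm:SmallVariancePollard} are a uniform bound ``$U$,'' a pointwise variance bound ``$\sigma^2$,'' and a Euclidean covering estimate.

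For the uniform bound, I will use $\abs{\bA_j^\top(\blambda_1-\blambda_2)} \leq C_A\sqrt{m}\,\|\blambda_1-\blambda_2\|_2$ together with $\|\blambda_1-\blambda_2\|_2 \geq 4/n$ from the definition of $\Lambda_n$ to conclude $\abs{f_j} \leq (C_A\sqrt{m}/2)\sqrt n \leq C_1 V \sqrt n$, since $V \geq m$.  For the variance bound, I will exploit the fact that the indicator constrains $r_j(Z_j,\btheta)$ to an interval of length $\abs{\bA_j^\top(\blambda_1-\blambda_2)}$, so that (as in \cref{eq:Meanj_ThetaAnalysis}) the probability of the indicator being one is at most $C_2\abs{\bA_j^\top(\blambda_1-\blambda_2)}$.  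Hence
\[
\E[f_j^2] \ \leq \ C_2 \frac{\abs{\bA_j^\top(\blambda_1-\blambda_2)}^3}{\|\blambda_1-\blambda_2\|_2^3} \ \leq \ C_2 (C_A\sqrt m)^3 \ \leq \ C_3 V^3,
\]
giving $\sigma^2 = O(V^3)$.

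The most delicate step is establishing that $\mathcal F$ is Euclidean with a parameter ``$V(A,W)$'' of order $\sqrt{V\log V}$.  I plan to write the indicator of the interval $\langle \bA_j^\top\blambda_1,\bA_j^\top\blambda_2\rangle$ as a finite lattice combination of indicators of half-planes in the class of \cref{asn:LiftedAffinePlugInII}, each of which has pseudo-dimension at most $V$; an application of \cite[Lemma 5.1]{pollard1990empirical} (as in the proof of \cref{lem:BoundingSupremaTn}) shows the combined indicator class has pseudo-dimension at most $O(V)$.  Multiplication by the data-independent scalar $\abs{\bA_j^\top(\blambda_1-\blambda_2)}/\|\blambda_1-\blambda_2\|_2^{3/2}$ (which depends only on $(\blambda_1,\blambda_2,\bA_j)$ and is uniformly bounded on $\Lambda_n$) preserves the Euclidean property at the cost of another constant factor, by standard permanence results for Euclidean classes (see Theorem~A.3 of \citeGR).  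This yields parameters $A = (CV)^{O(V)}$ and $W = O(V)$, hence $V(A,W) = O(\sqrt{V\log V})$.

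Assembling the pieces via \cref{thm:SmallVariancePollard} gives a high-probability bound of order
\[
R \cdot V(A,W) \cdot \bigl(\sigma\sqrt n + U\,V(A,W)\bigr) \ = \ O\!\left(R\sqrt{V\log V}\bigl(V^{3/2}\sqrt n + V^{3/2}\sqrt{n\log V}\bigr)\right),
\]
which I will simplify (dividing by the factor of $n$ present in the $\tfrac1n\sum_{j}$ in the statement, and absorbing log factors) to the claimed $CRV^2\log(V)\sqrt n$.  The main obstacle will be carefully verifying the Euclidean covering bound for the weighted indicator class; in particular, ensuring that the scaling factor $\|\blambda_1-\blambda_2\|_2^{-3/2}$, which can be as large as $\Theta(\sqrt n)$ on $\Lambda_n$, does not inflate the covering numbers beyond the polynomial-in-$V$ regime needed for the variance-based bound to be useful.
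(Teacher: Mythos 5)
Your proposal is correct and follows essentially the same route as the paper's proof: both apply \cref{thm:SmallVariancePollard} with the envelope $U = O(\sqrt{mn})$ coming from $\|\blambda_1-\blambda_2\|_2 \geq 4/n$, the variance bound coming from the Gaussian interval-probability estimate, and the Euclidean property obtained by writing the interval indicator via lattice operations on the pseudo-dimension-$V$ half-space class of \cref{asn:LiftedAffinePlugInII} and multiplying by the bounded linear weight class $\{(\bA_j^\top\blambda)_j : \blambda \in \R^m\}$. The only small refinements versus the paper are that the product-permanence step should be justified by treating the weight family as its own Euclidean class (it sits in an $m$-dimensional vector space) and invoking Pollard's Lemma 5.3 rather than Theorem A.3 of \citeGR, and that no division by $n$ is needed at the end — the bound $CRV^2\log(V)\sqrt n$ is exactly what \cref{thm:SmallVariancePollard} delivers for the centered sum, which is how the lemma is used downstream.
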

\proof{Proof of \cref{lem:SupremaForApproxSC}:}
Our strategy will be to apply \cref{thm:SmallVariancePollard}.  To this end, we first claim that there exists a dimension independent constant $\phi_{\max}$
such that for any fixed $(\blambda_1, \blambda_2) \in \Lambda_n$ and $\btheta \in \Theta$
\[
	\Pb{r_j(Z_j, \btheta) \in \langle \bA_j^\top \blambda_1, \bA_j^\top \blambda_2 \rangle}
	\ \leq \ 
	\phi_{\max} \| \blambda_1 - \blambda_2 \|_1.
\]
To prove the claim, notice that this is the probability that a Gaussian random variable lives in an interval of length at most $\abs{\bA_j^\top(\blambda_1 - \blambda_2)}
\ \leq C_A \| \blambda_1 - \blambda_2\|_1$.  Upper bounding the Gaussian density by the square root of its precision proves the claim.  

We next argue that this claim implies that there exists a dimension independent constant $C_1$ such that the parameter ``$\sigma^2$''
in \cref{thm:SmallVariancePollard} is at most $C_1$.  Indeed, an indicator squared is still the same indicator.  
Scaling by 
\[
\frac{ \abs{\bA_j^\top(\blambda_1 - \blambda_2)}^2}{\| \blambda_1 - \blambda_2\|_2^{3}} \ \leq \	
\frac{C_A^2 m }{\| \blambda_1 - \blambda_2\|_2},
\] 
and then averaging over $j$ proves that $\sigma^2$ is at most $C_1 m$.  

We can take the parameter ``$U$'' to be $C_A \sqrt n$ since
\[
	\frac{\abs{\bA_j^\top(\blambda_1 - \blambda_2)}}{\| \blambda_1 - \blambda_2\|_{2}^{3/2}} \ \leq \ 
	C_A \sqrt{m} \| \blambda_1 - \blambda_2 \|_{2}^{-1/2} \leq C_A \sqrt{mn},
\]
because $(\blambda_1, \blambda_2) \in \Lambda_n$.

Thus, to apply \cref{thm:SmallVariancePollard} we need only show that the set 
\[
	\mathcal F \equiv \left\{\left(\Ib{r_j(Z_j, \btheta) \in \langle \bA_j^\top \blambda_1, \bA_j^\top \blambda_2 \rangle} \frac{\abs{\bA_j^\top(\blambda_1 -\blambda_2)}}{\| \blambda_1 - \blambda_2	\|_{2}^{3/2}} \right)_{j=1}^n \ : \ 
			(\blambda_1, \blambda_2) \in \Lambda_n, \ \btheta \in \Theta\right\}
\]
is Euclidean and determine its parameters.  To this end, first consider the sets
\begin{align*}
\mathcal F_1 &\equiv 
	\left\{\left(\Ib{r_j(Z_j, \btheta) \geq \bA_j^\top \blambda_1}\right)_{j=1}^n \ : \ \blambda_1 \in \R^m_+, \ \btheta \in \Theta\right\}
\\
\mathcal F_2 &\equiv 
	\left\{\left(\Ib{r_j(Z_j, \btheta) \leq \bA_j^\top \blambda_2}\right)_{j=1}^n \ : \ \blambda_2 \in \R^m_+, \ \btheta \in \Theta\right\}
\\
\mathcal F_3 &\equiv 
	\left\{\left(\Ib{r_j(Z_j, \btheta) \geq \bA_j^\top \blambda_2}\right)_{j=1}^n \ : \ \blambda_2 \in \R^m_+, \ \btheta \in \Theta\right\}
\\
\mathcal F_4 &\equiv 
	\left\{\left(\Ib{r_j(Z_j, \btheta) \leq \bA_j^\top \blambda_1}\right)_{j=1}^n \ : \ \blambda_1 \in \R^m_+, \ \btheta \in \Theta\right\}
\end{align*}
By \cref{asn:LiftedAffinePlugInII}, each of these sets has VC-dimension at most $V$ (they are indicator sets for functions with pseudo-dimension at most $V$).  Now define the set 
\[
	\mathcal F_5  \equiv
\left\{	\left(\Ib{r_j(Z_j, \btheta) \in \langle \bA_j^\top \blambda_1, \blambda_2 \rangle}\right)_{j=1}^n \ : \ (\blambda_1, \blambda_2) \in \Lambda_n\right\},
\]  
and notice that 
\[
	\mathcal F_5 \subseteq 
	\left(\mathcal F_1 \wedge \mathcal F_2\right) \vee \left(\mathcal F_3 \wedge \mathcal F_4 \right).
\]
Hence, by \cite[Lemma 5.1]{pollard1990empirical}, there exists an absolute constant $C_2 > 1$ such that $\mathcal F_5$ has pseudodimension at most $C_2 V$.  By Theorem A.3 of \citeGR, $\mathcal F_5$ is thus Euclidean with parameters 
$A = (C_2V)^{6 C_2V}$ and $W = 4C_2 V$.  

Now consider the set
\[
	\mathcal F_6 = 
	\left\{\left(\frac{\bA_j^\top (\blambda_1 - \blambda_2)}{\| \blambda_1 - \blambda_2\|_{2}^{3/2}}\right)_{j=1}^n  \ : \ 
			(\blambda_1, \blambda_1) \in \Lambda_n \right\},
\]
and notice 
\[
\mathcal F_6 \subseteq \{ \left(\bA_j^\top \blambda\right)_{j=1}^n \ : \blambda \in \R^m\}.
\]
This latter set belongs to a vector space of dimension at most $m$, and hence has pseudo-dimension at most $m \leq V$. Thus, by Theorem A.3 of \citeGR, it is Euclidean with parameters at most $A = V^{6V}$ and $W = 4V$.  

To conclude, notice that $\mathcal F$ is the pointwise product of $\mathcal F_5$ and $\mathcal F_6$.  Hence, by \cite[Lemma 5.3]{pollard1990empirical}, we have that 
$\mathcal F$ is Euclidean with parameters 
$A= (C_3V)^{C_3V} \cdot C_3^{C_3 V }$ and 
 $W= C_3 V$ for some absolute constant $C_3$.  In particular, the relevant complexity parameter ``$V(A,W)$'' for $\mathcal F$ is at most 
\(
	C_4\sqrt{ V \log(V)}
\)
for some dimension independent parameter $C_4$. 

Applying \cref{thm:SmallVariancePollard} now shows that suprema of the lemma is at most 
\(
	C_5 R (\sqrt{V \log V})^2 m\sqrt{mn},
\)
for some dimension independent $C_5$. Since $m \leq V$, this completes the lemma. 
\hfill \Halmos \endproof

\smallskip
Equipped with \cref{lem:SupremaForApproxSC}, we can prove the approximate strong convexity condition holds with high probability. 
\begin{lem}[Approximate Strong Convexity with High Probability]
\label[lem]{lem:ApproxStrongConvexCondition}
Under \cref{asn:Parameters,asn:LiftedAffinePlugInII,asn:SmoothPlugIn}, there exists a dimension independent constant $n_0$ such that for all $n \geq n_0$, we have with probability at least $1 - \frac{1}{n}$ that the following inequality holds simultaneously for all $(\blambda_1, \blambda_2) \in \Lambda_n$ and $\btheta \in \Theta$:
\begin{align*}
\big( \nabla_{\blambda} \L(\blambda_1, \bz, \btheta) - \nabla_{\blambda} \L(\blambda_2, \bz, \btheta) \big)^\top (\blambda_1 - \blambda_2) \ \geq \ 
\phi_{\min} \beta \| \blambda_1 - \blambda_2\|_2^2 
- \| \blambda_1 - \blambda_2\|_2^{3/2} V^2 \log(V) \frac{\log^2 n}{\sqrt{n}}.
\end{align*}
\end{lem}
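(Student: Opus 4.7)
The plan is to rewrite the left-hand side as an empirical average that I can split into its expectation plus a uniform fluctuation term, control the expectation from below by the constant $\phi_{\min}\beta$, and invoke \cref{lem:SupremaForApproxSC} to bound the fluctuation uniformly over $(\blambda_1,\blambda_2,\btheta)$.

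First, I will use the explicit formula for $\nabla_{\blambda}\L$ to write
\[
\big(\nabla_{\blambda}\L(\blambda_1,\bz,\btheta)-\nabla_{\blambda}\L(\blambda_2,\bz,\btheta)\big)^\top(\blambda_1-\blambda_2)
=\frac{1}{n}\sum_{j=1}^n\Big[\Ib{r_j>\bA_j^\top\blambda_2}-\Ib{r_j>\bA_j^\top\blambda_1}\Big]\,\bA_j^\top(\blambda_1-\blambda_2),
\]
and then do a quick case-split on the sign of $\bA_j^\top(\blambda_1-\blambda_2)$ to rewrite each summand as $\Ib{r_j(Z_j,\btheta)\in\langle \bA_j^\top\blambda_1,\bA_j^\top\blambda_2\rangle}\,\abs{\bA_j^\top(\blambda_1-\blambda_2)}$ (each term is automatically nonnegative, mirroring the Taylor identity in \cref{lem:TaylorIdentity}).

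Next, for the expectation, I will use that $r_j(Z_j,\btheta)=a_j(\btheta)Z_j+b_j(\btheta)$ is Gaussian with mean $a_j(\btheta)\mu_j+b_j(\btheta)$ and variance $a_j(\btheta)^2/\nu_j$. On $\Lambda_n$ we have $\|\blambda_i\|_1\le\lambda_{\max}$, so any point $t\in\langle \bA_j^\top\blambda_1,\bA_j^\top\blambda_2\rangle$ satisfies $\abs{t-(a_j(\btheta)\mu_j+b_j(\btheta))}\le a_{\max}C_\mu+b_{\max}+C_A\lambda_{\max}$ under \cref{asn:Parameters,asn:MatrixAConditions,asn:SmoothPlugIn}. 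Plugging this bound into the Gaussian density gives a uniform pointwise lower bound of $\phi_{\min}$ (exactly the constant from \cref{eq:DefPhiMin}) on the density of $r_j(Z_j,\btheta)$ over the interval, so
\[
\Pb{r_j(Z_j,\btheta)\in\langle \bA_j^\top\blambda_1,\bA_j^\top\blambda_2\rangle}\;\ge\;\phi_{\min}\abs{\bA_j^\top(\blambda_1-\blambda_2)}.
\]
Squaring inside the sum and applying the minimum eigenvalue bound from \cref{asn:MatrixAConditions} then yields
\[
\mathbb{E}\Big[\big(\nabla_{\blambda}\L(\blambda_1)-\nabla_{\blambda}\L(\blambda_2)\big)^\top(\blambda_1-\blambda_2)\Big]\;\ge\;\phi_{\min}\,(\blambda_1-\blambda_2)^\top\!\Big(\tfrac{1}{n}\textstyle\sum_j \bA_j\bA_j^\top\Big)(\blambda_1-\blambda_2)\;\ge\;\phi_{\min}\beta\|\blambda_1-\blambda_2\|_2^2.
\]

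Finally, for the uniform fluctuation I will factor out $\|\blambda_1-\blambda_2\|_2^{3/2}$ and recognize the remaining suprema as precisely the quantity controlled by \cref{lem:SupremaForApproxSC}. Applying that lemma with $R=2\log n$ gives, with probability at least $1-1/n^2\ge 1-1/n$,
\[
\sup_{(\blambda_1,\blambda_2)\in\Lambda_n,\,\btheta\in\Theta}
\abs{\tfrac{1}{n}\textstyle\sum_j\big(\Ib{r_j\in\langle\cdot\rangle}-\Pb{r_j\in\langle\cdot\rangle}\big)\tfrac{\abs{\bA_j^\top(\blambda_1-\blambda_2)}}{\|\blambda_1-\blambda_2\|_2^{3/2}}}
\;\le\;\frac{C\,V^2\log(V)\,\log n}{\sqrt n},
\]
for a dimension-independent constant $C$. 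Multiplying by $\|\blambda_1-\blambda_2\|_2^{3/2}$, combining with the lower bound on the expectation, and absorbing the constant $C$ into an extra $\log n$ factor (valid for all $n\ge n_0$) produces the claimed inequality. The only real obstacle is verifying that the exponential factor in $\phi_{\min}$ is indeed bounded below by the definition in \cref{eq:DefPhiMin}; everything else follows mechanically from the setup and the two preparatory lemmas.
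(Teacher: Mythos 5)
Your proposal is correct and follows essentially the same route as the paper's proof: the same rewriting of the inner product as $\frac{1}{n}\sum_j \Ib{r_j(Z_j,\btheta)\in\langle \bA_j^\top\blambda_1,\bA_j^\top\blambda_2\rangle}\,\abs{\bA_j^\top(\blambda_1-\blambda_2)}$, the same pointwise Gaussian-density lower bound $\phi_{\min}$ combined with the eigenvalue condition of \cref{asn:MatrixAConditions} to extract $\phi_{\min}\beta\|\blambda_1-\blambda_2\|_2^2$, and the same application of \cref{lem:SupremaForApproxSC} (the paper takes $R=\log n$ rather than $2\log n$) with the constant absorbed into an extra $\log n$ factor for $n\ge n_0$. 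No substantive differences or gaps.
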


\proof{Proof of \cref{lem:ApproxStrongConvexCondition}:}
By choosing $R = \log n$, \cref{lem:SupremaForApproxSC} shows that there exists a dimension independent constant $C_1$ with probability at least $1-1/n$
\begin{align*}
&	\sup_{(\blambda_1, \blambda_2) \in \Lambda_n, \btheta \in \Theta}
	\frac{1}{n} \sum_{j=1}^n 
	\Big(\Ib{r_j(Z_j, \btheta) \in \langle \bA_j^\top \blambda_1, \bA_j^\top \blambda_2 \rangle} 
	 	 	- 
	 	 	\Pb{r_j(Z_j, \btheta) \in \langle \bA_j^\top \blambda_1, \bA_j^\top \blambda_2 \rangle}\Big) 
	 \frac{\abs{\bA_j^\top(\blambda_1-\blambda_2)}}{\| \blambda_1 - \blambda_2 \|^{3/2}}
\\
& \qquad 	\ \geq \ 
-C V^2 \log(V) \sqrt{n} \log n.	
\end{align*}
This inequality implies that for any $(\blambda_1, \blambda_2) \in \Lambda_n$ and $\btheta \in \Theta$,
\begin{align*}
&\frac{1}{n} \sum_{j=1}^n 
	\Ib{r_j(Z_j, \btheta) \in \langle \bA_j^\top \blambda_1, \bA_j^\top \blambda_2 \rangle}  
	 \abs{\bA_j^\top(\blambda_1-\blambda_2)}
\\
& \qquad 	\ \geq \ 
\frac{1}{n} \sum_{j=1}^n \Pb{r_j(Z_j, \btheta) \in \langle \bA_j^\top \blambda_1, \bA_j^\top \blambda_2 \rangle}\abs{\bA_j^\top(\blambda_1-\blambda_2)}
-C \| \blambda_1 - \blambda_2 \|^{3/2} V^2 \log(V) \frac{\log n}{\sqrt{n}}. 
\end{align*}

Thus our first goal will be to bound the summation on the right side. Isolate the $j^\text{th}$ term.  The probability 
\(
\Pb{r_j(Z_j, \btheta) \in \langle \bA_j^\top \blambda_1, \bA_j^\top \blambda_2\rangle }
\)
is the probability that a Gaussian random variable lives in an interval of length at most $\abs{\bA_j^\top(\blambda_1 - \blambda_2)}$.  Moreover, the endpoints of this interval are most $\abs{ \bA_j^\top \blambda_i} \leq C_A \blambda_{\max}$ for $i =1, 2$, since $(\blambda_1, \blambda_2) \in \Lambda_n$.  It follows that these endpoints are no further than $a_j(\btheta) \mu_j + b_j(\btheta) + C_A \lambda_{\max}$ from the mean of the relevant Gaussian.  Thus, we can lower bound the density of the Gaussian on this interval.  This reasoning proves
\begin{align*}
	& \Pb{r_j(Z_j, \btheta) \in \langle \bA_j^\top \blambda_1, \bA_j^\top \blambda_2\rangle } \abs{\bA_j^\top (\blambda_1 - \blambda_2)}
	\\
	&\ \geq \ 
	\left(\bA_j^\top(\blambda_1 - \blambda_2)\right)^2
	\cdot 
	\frac{\sqrt{\nu_j}} {a_j(\btheta)\sqrt{2 \pi}}
	\exp \left(-\frac{\nu_j (a_j(\btheta) \mu_j + b_j(\btheta) + C_A \lambda_{\max})^2 }{2 a_j(\btheta)^2}\right)
	\\& \ \geq \ 
	\phi_{\min}\left(\bA_j^\top(\blambda_1 - \blambda_2)\right)^2.
\end{align*}

Averaging over $j$ shows
\begin{align*}
\frac{1}{n} \sum_{j=1}^n \Pb{r_j(Z_j, \btheta) \in \langle \bA_j^\top \blambda_1, \bA_j^\top \blambda_2 \rangle}\abs{\bA_j^\top(\blambda_1-\blambda_2)}
& \ \geq \ 
\phi_{\min} (\blambda_1 - \blambda_2)^\top \frac{1}{n} \sum_{j=1}^n \bA_j \bA_j^\top (\blambda_1 - \blambda_2)
\\ & \ \geq  \ 
\phi_{\min}\beta \| \blambda_1 - \blambda_2 \|_2^2,
\end{align*}
by \cref{asn:MatrixAConditions}.

Substitute above, and notice if $n_0 = e^C$, then $\log n \geq C$ to complete the proof.  
\hfill \Halmos \endproof
\smallskip
Finally, a simple union bound gives
\begin{lem}[$\bZ \in \mathcal{E}_n$ with High Probability]
\label[lem]{lem:ZinGoodSetHighProb}  Under \cref{asn:Parameters,asn:LiftedAffinePlugInII,asn:SmoothPlugIn} there exists a dimension independent constant $n_0$ such that for all $n\geq n_0$, $\Pb{\bZ \in \mathcal E_n} \geq 1-\frac{4}{n}$.
\end{lem}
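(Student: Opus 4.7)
The proof is a straightforward union bound over the four defining conditions of $\mathcal{E}_n$, each of which has been established to hold with high probability in the preceding lemmas. Specifically, \cref{lem:ApproxStrongConvexCondition} shows the approximate strong convexity condition over $\Lambda_n$ fails with probability at most $1/n$; \cref{lem:BoundAwayDegeneracy} shows the bounded-away-from-degeneracy condition over $T_n$ fails with probability at most $1/n$; \cref{lem:Bounding1NormZ} shows $\|\bZ\|_1 > nC_\mu + 2n/\sqrt{\vmin}$ with probability at most $e^{-n/32}$; and \cref{lem:BoundingInfNormZ} shows $\|\bZ\|_\infty > \log n$ with probability at most $1/n^2$, provided $n$ exceeds some dimension-independent threshold.

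The plan is then to choose $n_0$ large enough so that simultaneously (i) the thresholds required by \cref{lem:ApproxStrongConvexCondition}, \cref{lem:BoundAwayDegeneracy}, and \cref{lem:BoundingInfNormZ} are all met, and (ii) the two fast-decaying tails satisfy $e^{-n/32} \leq 1/n$ and $1/n^2 \leq 1/n$. Both inequalities in (ii) hold for all sufficiently large $n$ by elementary calculus (the first because $e^{-n/32}/n^{-1} = n e^{-n/32} \to 0$), so such an $n_0$ exists and is itself dimension-independent.

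Having fixed $n_0$, a simple union bound over the four bad events gives
\[
\Pb{\bZ \notin \mathcal{E}_n} \ \leq \ \frac{1}{n} + \frac{1}{n} + e^{-n/32} + \frac{1}{n^2} \ \leq \ \frac{4}{n},
\]
for all $n \geq n_0$, which is precisely the claim. There is no real obstacle here beyond verifying that all constituent lemmas can be invoked simultaneously with a common, dimension-independent $n_0$; since each lemma's threshold depends only on the fixed problem constants (not on $n$, $m$, $V$, or $h$), their maximum serves as the required $n_0$.
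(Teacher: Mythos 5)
Your proposal is correct and matches the paper's proof, which likewise just combines \cref{lem:Bounding1NormZ}, \cref{lem:BoundingInfNormZ}, \cref{lem:ApproxStrongConvexCondition}, and \cref{lem:BoundAwayDegeneracy} via a union bound. Your added care in checking that $e^{-n/32}$ and $1/n^2$ are dominated by $1/n$ for $n$ beyond a common dimension-independent threshold is exactly the implicit bookkeeping the paper leaves to the reader.
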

\proof{Proof.} Combine \cref{lem:Bounding1NormZ,lem:BoundingInfNormZ,lem:ApproxStrongConvexCondition,lem:BoundAwayDegeneracy} and apply a union bound.  
\subsection{Properties of the Good Set} 
In this section, we argue that for data realizations $\bz \in \mathcal{E}_n$, our optimization problems satisfy a number of properties, and, in particular, the dual solutions and VGC satisfy a bounded differences condition.  We start by showing that small perturbations to the data $\bz$ still yield dual solutions that are bounded.  Note, any $\bz \in \mathcal E_n$ satisfies the assumptions of the next lemma.  

\begin{lem}[Bounded Duals] \label[lem]{lem:DualsBoundedOnE}
Suppose \cref{asn:SmoothPlugIn,asn:Parameters} hold and 
$\|\bm t \|_\infty \leq  \frac{3\sqrt n}{\sqrt{\vmin}}$ and $\bz$ satisfies $\|\bz\|_{1} \leq nC_{\mu} + \frac{2n}{\sqrt{\vmin}}$.
Then, for all $j =1, \ldots, n$, 
\[
\sup_{\btheta \in \Theta} \ \| \blambda(\bz+ t_j \be_j, \btheta) \|_1 \ \leq \ \lambda_{\max}.
\]
\end{lem}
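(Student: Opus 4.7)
The plan is to reduce the statement directly to Lemma C.2 (\cref{lem:BoundedDualSol}), which gives the uniform-in-$\btheta$ bound
\[
\| \blambda(\bz', \btheta) \|_1 \ \leq \ \frac{2}{n s_0} \| \br(\bz', \btheta) \|_1
\]
for any data vector $\bz'$. Applying this with $\bz' = \bz + t_j \be_j$, it suffices to upper bound $\| \br(\bz + t_j \be_j, \btheta) \|_1$ by a quantity of order $n$, and then divide by $n s_0 / 2$ to recover $\lambda_{\max}$.

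Since $r_k(z, \btheta) = a_k(\btheta) z + b_k(\btheta)$ is affine in $z$ with coefficients bounded by $a_{\max}$ and $b_{\max}$ (\cref{asn:SmoothPlugIn}), a term-by-term triangle inequality on the perturbed vector yields
\[
\| \br(\bz + t_j \be_j, \btheta)\|_1 \ \leq \ a_{\max} \|\bz\|_1 \ + \ a_{\max}|t_j| \ + \ n\, b_{\max},
\]
where the middle term collects only the perturbed coordinate because $t_j \be_j$ is supported on a single component. Substituting the two hypotheses $\|\bz\|_1 \leq n C_\mu + 2n/\sqrt{\vmin}$ and $|t_j| \leq \|\bm t\|_\infty \leq 3\sqrt{n}/\sqrt{\vmin}$ gives a bound of the form $n a_{\max} C_\mu + a_{\max}(2n + 3\sqrt n)/\sqrt{\vmin} + n b_{\max}$.

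The only real step after that is a routine numerical simplification. Because $n \geq 3$ by \cref{asn:Parameters}, we have $3\sqrt n \leq 2n$, so $(2n + 3\sqrt n)/\sqrt{\vmin} \leq 4n/\sqrt{\vmin}$. This is precisely why the definition of $\lambda_{\max}$ in \cref{eq:DefLambdaMax} carries the factor $4/\sqrt{\vmin}$ rather than the naive $2/\sqrt{\vmin}$ one would obtain for an unperturbed $\bz$: the extra margin absorbs the single-coordinate perturbation. Plugging the simplified bound into \cref{lem:BoundedDualSol} and cancelling the $n$ gives $\|\blambda(\bz + t_j \be_j, \btheta)\|_1 \leq \lambda_{\max}$, uniformly in $j$ and $\btheta$.

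There is no substantive obstacle here; the proof is essentially a bookkeeping exercise. The only subtlety is verifying that the constants in the definition of $\lambda_{\max}$ were calibrated with this worst-case perturbation in mind, which one checks by following the algebra through as above. The result will be used later to argue that, on the good event $\mathcal E_n$, one-coordinate resampling of $\bZ$ keeps both the original and perturbed dual solutions inside the compact set $\{\blambda : \|\blambda\|_1 \leq \lambda_{\max}\}$, which is exactly the domain on which the approximate strong-convexity condition from \cref{lem:ApproxStrongConvexCondition} applies.
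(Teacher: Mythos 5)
Your proposal is correct and follows essentially the same route as the paper's proof: apply \cref{lem:BoundedDualSol} to $\bz + t_j \be_j$, bound $\| \br(\bz + t_j \be_j, \btheta)\|_1 \leq a_{\max}\|\bz\|_1 + a_{\max}|t_j| + n b_{\max}$ via \cref{asn:SmoothPlugIn}, substitute the two hypotheses, and use $n \geq 3$ to absorb the $3\sqrt{n}/\sqrt{\vmin}$ term into the $4/\sqrt{\vmin}$ margin in the definition of $\lambda_{\max}$. The argument, including the uniformity in $\btheta$ coming from the $a_{\max}, b_{\max}$ bounds, matches the paper's.
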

\proof{Proof of \cref{lem:DualsBoundedOnE}:}
Write
\begin{align*}
\| \blambda(\bz + t_j \be_j, \btheta) \|_1 
& \ \leq \ \frac{2}{n s_0} \| \bm r(\bz + t_j \be_j, \btheta) \|_1 
&&(\text{\cref{lem:BoundedDualSol}})
\\
&\ \leq \ 
\frac{2}{n s_0} \left( a_{\max} \| \bz \|_1 + a_{\max}  \abs{t_j} + b_{\max} n  \right) 
&&(\text{Definition of $\br(\cdot, \btheta)$})
\\ & \ \leq \ 
\frac{2}{s_0} \left( a_{\max} \left( C_\mu + \frac{2}{\sqrt{\vmin}} +  \frac{3}{\sqrt{n \vmin}}  \right)  + b_{\max}  \right) 
&&(\text{by assumptions on } \|\bz\|_1 \text{ and } \| \bm t \|_\infty) 
\\ & \leq \ \lambda_{\max}, 
\end{align*}
since $3/\sqrt{n} \leq \sqrt{ 3} \leq 2$.  
Taking the supremum of both sides over $\btheta \in \Theta$ completes the proof.  
\hfill \Halmos \endproof
\smallskip

We next establish a bounded differences condition for the dual solution $\blambda(\bz, \btheta)$.  
\begin{lem}[Bounded Differences for the Dual] \label[lem]{lem:DualStability}
Suppose \cref{asn:SmoothPlugIn,asn:Parameters} hold and that 
$\bz \in \mathcal E_n$ and $\| \blambda(\bzbar, \btheta) \|  \ \leq \lambda_{\max}$.  Then, there exists a dimension independent constant $C$  such that 
\[ 
\| \blambda(\bz, \btheta) - \blambda(\bar{\bz}, \btheta) \|_2 \ \leq \ 
C V^3\log^2(V) \frac{\log^4 n}{n} \sum_{j=1}^n \Ib{z_j \neq \bar z_j}. 
\]
\end{lem}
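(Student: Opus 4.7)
The plan is to combine the approximate strong convexity of $\blambda \mapsto \L(\blambda, \bz, \btheta)$ guaranteed by $\bz \in \mathcal E_n$ (\cref{lem:ApproxStrongConvexCondition}) with the first-order optimality conditions at $\blambda(\bz, \btheta)$ and $\blambda(\bar\bz, \btheta)$, then solve the resulting quadratic-type inequality for $\Delta \equiv \|\blambda(\bz, \btheta) - \blambda(\bar\bz, \btheta)\|_2$.

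More explicitly, I would first reduce to the non-trivial case $\Delta \geq 4/n$, since otherwise $\Delta \leq 4/n$ is already absorbed into the stated bound. Next, I would verify that the pair $(\blambda(\bz, \btheta), \blambda(\bar\bz, \btheta))$ lies in the set $\Lambda_n$ defined in \cref{eq:DefBigLambda}: the hypothesis directly controls $\|\blambda(\bar\bz,\btheta)\|_1$, while $\|\blambda(\bz,\btheta)\|_1 \leq \lambda_{\max}$ follows from \cref{lem:DualsBoundedOnE} applied with $\bm t = \bm 0$, using that $\bz \in \mathcal E_n$ implies $\|\bz\|_1 \leq nC_\mu + 2n/\sqrt{\vmin}$. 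This places us inside the domain where approximate strong convexity holds.

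The heart of the argument is to add the first-order optimality conditions at the two minimizers to obtain
\[
\bigl[\nabla_\blambda \L(\blambda(\bz),\bz,\btheta) - \nabla_\blambda \L(\blambda(\bar\bz),\bar\bz,\btheta)\bigr]^\top (\blambda(\bz) - \blambda(\bar\bz)) \leq 0,
\]
and then add and subtract $\nabla_\blambda \L(\blambda(\bar\bz),\bz,\btheta)$ to decompose the left-hand side into (i) a "strong-convexity part" $\bigl[\nabla_\blambda \L(\blambda(\bz),\bz) - \nabla_\blambda \L(\blambda(\bar\bz),\bz)\bigr]^\top(\blambda(\bz)-\blambda(\bar\bz))$, which by \cref{lem:ApproxStrongConvexCondition} is bounded below by $\phi_{\min}\beta \Delta^2 - \Delta^{3/2} V^2\log(V)\log^2 n / \sqrt n$, and (ii) a "data-perturbation part" $\bigl[\nabla_\blambda \L(\blambda(\bar\bz),\bz) - \nabla_\blambda \L(\blambda(\bar\bz),\bar\bz)\bigr]^\top(\blambda(\bz)-\blambda(\bar\bz))$. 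A direct computation of the subgradient formula for $\L$ shows the vector $\nabla_\blambda \L(\blambda,\bz) - \nabla_\blambda \L(\blambda,\bar\bz) = -\tfrac{1}{n}\sum_j \bigl[\Ib{r_j(z_j,\btheta) > \bA_j^\top\blambda} - \Ib{r_j(\bar z_j,\btheta) > \bA_j^\top\blambda}\bigr]\bA_j$ is supported on indices where $z_j \neq \bar z_j$, so by Cauchy--Schwarz and $\|\bA_j\|_2 \leq C_A\sqrt{m}$, its magnitude is at most $\frac{C_A \sqrt m}{n} H \cdot \Delta$, where $H \equiv \sum_j \Ib{z_j \neq \bar z_j}$. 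Rearranging yields an inequality of the form $c\Delta \leq b\Delta^{1/2} + a$ with $c = \phi_{\min}\beta$, $b = V^2\log(V)\log^2 n/\sqrt n$, and $a = C_A\sqrt m H/n$. Splitting into the cases $b\Delta^{1/2} \leq a$ and $b\Delta^{1/2} > a$ gives $\Delta = O(\max(a/c, b^2/c^2))$; using $m \leq V$ and $H\geq 1$ absorbs both terms into $C\,V^3\log^2(V)\log^4(n) H / n$ after bounding polynomial $V$-factors.

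The main obstacle will be step three: rigorously invoking first-order optimality using the particular subgradient $\nabla_\blambda \L$ defined with strict inequality, rather than an arbitrary element of $\partial_\blambda \L$. Since $\L(\cdot,\bz,\btheta)$ is piecewise linear, at the optimum there may be ``tied'' coordinates $j$ with $r_j(z_j,\btheta) = \bA_j^\top \blambda(\bz,\btheta)$ for which this specific subgradient need not certify optimality. I would handle this by working instead with the Taylor identity of \cref{lem:TaylorIdentity} applied in both directions, so that $\L(\blambda(\bar\bz),\bz) \geq \L(\blambda(\bz),\bz)$ and its symmetric counterpart produce an inequality that already absorbs the nonnegative remainder terms $Q(\cdot,\cdot,\cdot)$; these remainders are controlled using the bounding-away-from-degeneracy condition built into $\mathcal E_n$ (with $\Gamma = C_A\sqrt m\,\Delta$ in the definition of $T_n$). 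Carefully matching the $V$-exponents through this argument, together with the polynomial $m \leq V$ conversion, is the main bookkeeping challenge.
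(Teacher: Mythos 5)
Your route is essentially the paper's: reduce to $\|\blambda(\bz,\btheta)-\blambda(\bzbar,\btheta)\|_2\ge 4/n$, verify $(\blambda(\bz,\btheta),\blambda(\bzbar,\btheta))\in\Lambda_n$ via \cref{lem:DualsBoundedOnE} and the hypothesis, play optimality of the two minimizers against the approximate strong convexity encoded in $\mathcal E_n$, and bound the data-perturbation term by $\tfrac{C_A\sqrt m}{n}H\Delta$ with $H=\sum_j\Ib{z_j\neq\bar z_j}$ and $\Delta=\|\blambda(\bz,\btheta)-\blambda(\bzbar,\btheta)\|_2$. The paper packages the optimality step through \cref{lem:TaylorIdentity} applied at each optimizer (dropping the first-order term "by convexity and optimality") and a $\bar z_j\to z_j$ swap at Hamming cost, which is the same algebra as your variational-inequality decomposition; in particular, the tie/subgradient subtlety you flag is one the paper simply dispatches by that same step, and it never uses the bounding-away-from-degeneracy condition in this proof. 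Your proposed repair via that condition with $\Gamma=C_A\sqrt m\,\Delta$ is both unnecessary for the paper's argument and too vague to check (a priori $\Delta$ can be of constant order, so that count is not small).

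The concrete gap is the final step. From $c\Delta\le b\Delta^{1/2}+a$ with $b=V^2\log(V)\log^2(n)/\sqrt n$ (the constant that actually appears in $\mathcal E_n$), $a=C_A\sqrt m\,H/n$, $c=\phi_{\min}\beta$, your case split yields $\Delta=O\!\left(\max(a/c,\,b^2/c^2)\right)$, and the second branch is of order $V^4\log^2(V)\log^4(n)/n$ — a full factor of $V$ above the stated $V^3\log^2(V)$, and it contains no $H$ or $m$, so "using $m\le V$ and $H\ge 1$" cannot absorb it; indeed no resolution of that single inequality can beat $b^2/c^2$. The paper's finish is different: it first extracts $\Delta^{1/2}\le C\,V^2\log(V)\tfrac{\log^2 n}{\sqrt n}H$ (folding the $a$-term into the $\Delta^{3/2}$-term using $\sqrt n\,\Delta^{1/2}\ge 1$), then re-substitutes this bound to replace the $\Delta^{3/2}$ term by one linear in $\Delta$ carrying $\tfrac{\log^4 n}{n}H$, and only then divides by $\Delta$; that re-substitution is where the stated $V^3\log^2 V$ comes from (your worry about matching $V$-exponents is legitimate — the paper's own substitution multiplies by $V\log V$ rather than $V^2\log V$ there — but that is the argument you would need to reproduce or repair). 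As written, your proposal proves the lemma only with $V^4\log^2 V$ in place of $V^3\log^2 V$, which does not match the statement or the constants propagated into \cref{lem:VGC-Dual-Solution-Bnd}, \cref{lem:dual-sol-pointwise}, and \cref{lem:Uniform-Dual-sol-conv}.
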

\proof{Proof of \cref{lem:DualStability}:} 
To declutter the notation, define
\begin{align*}
f_1(\blambda) \equiv \L(\blambda, \bz, \btheta), \quad \blambda_1 \equiv \blambda(\bz, \btheta), 
\\
f_2(\blambda) \equiv \L(\blambda, \bzbar, \btheta), \quad \blambda_2 \equiv \blambda(\bzbar, \btheta).
\end{align*}
Furthermore, let $I_j = \langle \bA_j^\top\blambda_1, \bA_j^\top \blambda_2 \rangle$.

Notice if $ \| \blambda_1 - \blambda_2 \|_2 \leq 4/n$, the inequality is immediate for $C = 4$ since $m \geq 1$.  Hence, we assume throughout that $\| \blambda_1 - \blambda_2 \|_2 > 4/n$.  

Using \cref{lem:TaylorIdentity} we have that 
\begin{align} \notag
f_1(\blambda_2) - f_1(\blambda_1) 
& \ = \ 
\nabla f_1(\blambda_1)^\top (\blambda_2 - \blambda_1) + \frac{1}{n} \sum_{j=1}^n \Ib{ r_j(z_j) \in I_j} \abs{r_j(z_j) - \bA_j^\top \blambda_2} 
\\ \label{eq:TaylorSeries1}
& \ \geq \ 
\frac{1}{n} \sum_{j=1}^n \Ib{ r_j(z_j) \in I_j} \abs{r_j(z_j) - \bA_j^\top \blambda_2}, 
\end{align}
where the inequality uses $f_1(\cdot)$ is convex and $\blambda_1$ is an optimizer.   Analogously, we have that 
\begin{equation} \label{eq:TaylorSeries2}
f_2(\blambda_1) - f_2(\blambda_2) \ \geq \ 
\frac{1}{n} \sum_{j=1}^n \Ib{ r_j(\bar{z}_j) \in I_j} \abs{ r_j(\bar{z}_j) - \bA_j^\top \blambda_1 }.
\end{equation}
Adding \cref{eq:TaylorSeries1,eq:TaylorSeries2} yields 
\begin{align*}
&f_1(\blambda_2) - f_1(\blambda_1) + f_2(\blambda_1) - f_2(\blambda_2) 
\\ & \quad \ \geq \ 
\frac{1}{n} \sum_{j=1}^n \left( \Ib{ r_j(\bar{z}_j) \in I_j } \abs{ r_j(\bar{z}_j) - \bA_j^\top \blambda_1 }
+ 
 \Ib{ r_j(z_j) \in I_j} \abs{r_j(z_j) - \bA_j^\top \blambda_2} \right)
\end{align*}

Isolate the $j^\text{th}$ term on the right.  To lower bound this term,  note that when $z_j \neq \bar{z}_j$, 
\begin{align*}
&\Big| \Ib{ r_j(\bar{z}_j) \in I_j} \abs{ r_j(\bar{z}_j) - \bA_j^\top \blambda_1 }
- 
\Ib{ r_j(z_j) \in I_j} \abs{ r_j(z_j) - \bA_j^\top \blambda_1 }  \Big| 
\\
& \qquad  \ \leq \ 
\Ib{ r_j(\bar{z}_j) \in I_j } \abs{ r_j(\bar{z}_j) - \bA_j^\top \blambda_1 }
+ 
\Ib{ r_j(z_j) \in I_j} \abs{ r_j(z_j) - \bA_j^\top \blambda_1 }  
\\
& \qquad  \ \overset{\text{(a)}}{\leq} \ 
2 \abs{ \bA_j^\top (\blambda_1 - \blambda_2) }
\\
& \qquad \ \leq \
2 C_A \| \blambda_1 - \blambda_2 \|_2,
\end{align*}
where inequality (a) follows because each indicator is non-zero only when the corresponding $r$ is in the interval $\langle \bA_j^\top \blambda_1, \bA_j^\top \blambda_2 \rangle$.  
Hence, substituting above and rearranging yields
\begin{align} \notag
&f_1(\blambda_2) - f_1(\blambda_1) + f_2(\blambda_1) - f_2(\blambda_2) 
+ 2 C_A \| \blambda_1 - \blambda_2 \|_2 \cdot \frac{1}{n} \sum_{j=1}^n \Ib{ z_j \neq \bar{z}_j} 
\\ \notag
& \quad \ \geq \ 
\frac{1}{n} \sum_{j=1}^n  \Ib{ r_j({z}_j) \in I_j} \left(  \abs{ r_j(z_j) - \bA_j^\top \blambda_2 }  +  \abs{ r_j(z_j) - \bA_j^\top \blambda_1 }   \right)
\\ \label{eq:RInTheMiddle}
& \quad \ = \  
\frac{1}{n} \sum_{j=1}^n  \Ib{ r_j({z}_j) \in I_j} \abs{\bA_j^\top(\blambda_1 - \blambda_2)},
\\  \notag
& \quad = 
\big( \nabla_{\blambda} \L(\blambda_1, \bz, \btheta) - \nabla_{\blambda} \L(\blambda_2, \bz, \btheta) \big)^\top (\blambda_1 - \blambda_2)
\\ \notag
& \quad \geq 
\phi_{\min} \beta \| \blambda_1 - \blambda_2 \|_2^2 -  V^2 \log(V)\cdot \frac{ \log^2 n }{\sqrt{n}}\cdot \| \blambda_1 - \blambda_2 \|_2^{3/2}, \qquad \text{(since $\bz \in \mathcal E_n, (\blambda_1, \blambda_2) \in \Lambda_n)$)}
\end{align}
where \cref{eq:RInTheMiddle} follows because when the indicator is non-zero, $r_j(z_j)$ is between $\bA_j^\top \blambda_1$ and $\bA_j^\top \blambda_2$.     

To summarize the argument so far, we have shown that 
\begin{align} \label{eq:MasterEQ}
f_1(\blambda_2) - &f_1(\blambda_1) + f_2(\blambda_1) - f_2(\blambda_2) 
\; + \; 2 C_A \| \blambda_1 - \blambda_2 \|_2 \; \frac{1}{n} \sum_{j=1}^n \Ib{ z_j \neq \bar{z}_j} 
 \\\notag
 &\ \geq \ 
 \phi_{\min} \beta \| \blambda_1 - \blambda_2 \|_2^2 -  V^2 \log(V) \frac{ \log^2 n }{\sqrt{n}}\| \blambda_1 - \blambda_2 \|_2^{3/2}.
\end{align}
The next step of the proof upper bounds the left side.  
By definition of $f_1(\cdot), f_2(\cdot)$, 
\begin{align*}
&f_1(\blambda_2) - f_1(\blambda_1) + f_2(\blambda_1) - f_2(\blambda_2)  
\\ & \quad \ = \ 
\frac{1}{n} \sum_{j=1}^n \left( \left[ r_j(z_j) - \bA_j^\top \blambda_2\right]^+ 
				-\left[ r_j(z_j) - \bA_j^\top \blambda_1\right]^+  
				+ \left[ r_j(\bar{z}_j) - \bA_j^\top \blambda_1\right]^+
				- \left[ r_j(\bar z_j) - \bA_j^\top \blambda_2\right]^+ \right).
\end{align*}
The $j^\text{th}$ term is non-zero only if $z_j \neq \bar{z_j}$.  In that case, 
\begin{align*}
& \left[ r_j(z_j) - \bA_j^\top \blambda_2\right]^+ 
				-\left[ r_j(z_j) - \bA_j^\top \blambda_1\right]^+  
				+ \left[ r_j(\bar{z}_j) - \bA_j^\top \blambda_1\right]^+
				- \left[ r_j(\bar z_j) - \bA_j^\top \blambda_2\right]^+
\\ & \qquad \leq \ 
2 \abs{ \bA_j^\top (\blambda_2 - \blambda_1) }
\\ & \qquad \leq \ 
2 C_A \| \blambda_1 - \blambda_1 \|_2
\\ & \qquad \leq \
2 C_A \| \blambda_1 - \blambda_1 \|_2.
\end{align*}
Summing over $j$ for which $z_j \neq \bar z_j$ and substituting into the left side of \cref{eq:MasterEQ} yields, 
\begin{align} \label{eq:MasterEq_UB}
4 C_A \| \blambda_1 - \blambda_1 \|_2 \cdot \frac{1}{n} \sum_{j=1}^n \Ib{ z_j \neq \bar z_j}
\ \geq \ 
 \phi_{\min} \beta \| \blambda_1 - \blambda_2 \|_2^2 
 -  V^2 \log(V) \frac{ \log^2 n }{\sqrt{n}}\| \blambda_1 - \blambda_2 \|_2^{3/2}.
 \end{align} 

To simplify this expression, recall that by assumption 
\[
	\| \blambda_1 - \blambda_2 \|_2  \geq \frac{4}{n} 
	\implies 
	\sqrt n \| \blambda_1 - \blambda_2 \|_2^{1/2} \geq 1.
\]
Hence we can inflate the left side of \cref{eq:MasterEq_UB} by multiplying by $\sqrt n \| \blambda_1 - \blambda_2 \|_2^{1/2}$ and then rearranging to obtain
\begin{align*}
	\phi_{\min}\beta \| \blambda_1 - \blambda_2\|_2^2 
&\ \leq \ 
4C_A  \| \blambda_1 - \blambda_2\|^{3/2} \cdot
	\frac{1}{\sqrt n} \sum_{j=1}^n \Ib{z_j \neq \bar z_j} + 
	V^2\log(V) \frac{\log^2 n}{\sqrt{n}} \| \blambda_1 - \blambda_2\|_2^{3/2}
\\
& \ \leq \ 
C_1  V^2 \log(V) \frac{\log^2 n}{\sqrt n} \sum_{j=1}^n \Ib{z_j \neq \bar z_j} \| \blambda_1 - \blambda_2\|_2^{3/2},
\end{align*}
for some dimension independent constant $C_1$.
Dividing both sides by $\| \blambda_1 - \blambda_2\|_2^{3/2}$ and combining constants yields
\begin{align} \label{eq:MasterEq_UB_1}
\| \blambda_1 - \blambda_2 \|_2^{1/2} \ \leq \ C_2V^2 \log(V) \frac{\log^2 n}{\sqrt{n}} \sum_{j=1}^n \Ib{z_j \neq \bar z_j},
 \end{align} 
for some dimension independent constant $C_2$.  
Multiply \cref{eq:MasterEq_UB_1} by $V \log(V) \frac{\log^2 n}{\sqrt n} \|\blambda_1 - \blambda_2\|_2 $ to see that 
\[
	 V\log(V)\frac{ \log^2 n }{\sqrt{n}}\| \blambda_1 - \blambda_2 \|_2^{3/2}
	\leq 
	C_2 V^3\log^2(V) \frac{\log^4 n}{n} \sum_{j=1}^n \Ib{z_j \neq \bar z_j} \; \| \blambda_1 - \blambda_2 \|_2. 
\]
Substitute this upper-bound to \cref{eq:MasterEq_UB},  yielding
\begin{align*} 
4 C_A \| \blambda_1 - \blambda_1 \|_2 \cdot \frac{1}{n} \sum_{j=1}^n \Ib{ z_j \neq \bar z_j}
\ \geq \ 
 \phi_{\min} \beta \| \blambda_1 - \blambda_2 \|_2^2 
 -  C_2 V^3\log^2(V) \frac{\log^4 n}{n} \sum_{j=1}^n \Ib{z_j \neq \bar z_j} \; \| \blambda_1 - \blambda_2 \|_2.
 \end{align*} 

Now divide by $\| \blambda_1 - \blambda_2\|_2 $ and rearrange to complete the proof.  
\hfill \Halmos \endproof

 We now use this result to show the VGC is also Lipschitz in the Hamming distance. The key to the following proof is that that strong-duality shows $V(\bz, \btheta) = n \L(\blambda(\bz, \btheta), \bz, \btheta)$.
\begin{lem}
[Bounded Differences for VGC]\label[lem]{lem:VGC-Dual-Solution-Bnd} Let
$\bz,\overline{\bz}\in\mathcal{E}_{n}$. 
Suppose \cref{asn:SmoothPlugIn,asn:Parameters} hold.
Then, there exists
a dimension independent constant $C$, such that for any $n$ such that $\frac{\log n}{nh}\le 1$, we have that 
\[
	\left|D(\bz,\btheta)-D(\overline{\bz},\btheta)\right|
	\le
	\frac{C}{h}~ 
	V^3\log^2(V) \log^4(n) ~ \sum_{i=1}^{n}\Ib{z_{i}\ne\overline{z}_{i}}\]
\end{lem}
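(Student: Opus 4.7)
The plan is to reduce the bound on $|D(\bz,\btheta) - D(\overline{\bz},\btheta)|$ to a bound on differences of primal indicators via the fully-randomized VGC representation, and then control those differences by combining dual stability with the degeneracy condition that defines $\mathcal E_n$.  Concretely, the representation used in the proof of \cref{lem:FullyRandomizedVGC} (Danskin plus fundamental theorem of calculus) gives
\[
 D_j(\bz,\btheta) - D_j(\overline{\bz},\btheta)
 \ = \ \frac{1}{h\sqrt{\nu_j}}\,\Eb{\,\delta_j \bigl[x_j(\bz + \delta_j \tilde U\be_j,\btheta) - x_j(\overline{\bz} + \delta_j \tilde U\be_j,\btheta)\bigr]\,}
\]
with $\tilde U \sim \text{Uniform}[0,1]$ independent of $\delta_j$, so that the whole problem becomes one about the primal indicator $x_j$.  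Let $S=\{i : z_i\neq \overline{z}_i\}$.  For $j\in S$ we will use only the trivial bound $|x_j-x_j'|\le 1$ together with $\Eb{|\delta_j|}=O(\sqrt h/\nu_{\min}^{1/4})$ to get a contribution of order $|S|/\sqrt h$, which is already within the claimed bound.

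For the remaining indices (and up to $m$ fractional primal components per LP, which contribute only $O(m/h)=O(V/h)$ in total and are absorbed into the $V^3$ factor), complementary slackness gives $x_j(\bw) = \Ib{r_j(w_j)>\bA_j^\top \blambda(\bw,\btheta)}$, so that when $z_j = \overline{z}_j$,
\[
 |x_j(\bz+\delta_j\tilde U\be_j,\btheta) - x_j(\overline{\bz}+\delta_j\tilde U\be_j,\btheta)|
 \ \leq\ \Ib{r_j(z_j+\delta_j\tilde U,\btheta)\in \bigl\langle \bA_j^\top \blambda(\bz+\delta_j\tilde U\be_j,\btheta),\; \bA_j^\top \blambda(\overline{\bz}+\delta_j\tilde U\be_j,\btheta)\bigr\rangle}.
\]
I would then invoke \cref{lem:DualStability} three times with the triangle inequality: $\|\blambda(\bz+\delta_j\tilde U\be_j)-\blambda(\bz)\|_2$ and $\|\blambda(\overline{\bz}+\delta_j\tilde U\be_j)-\blambda(\overline{\bz})\|_2$ are each of order $V^3\log^2(V)\log^4(n)/n$ (Hamming distance $1$), while $\|\blambda(\bz)-\blambda(\overline{\bz})\|_2$ is of order $V^3\log^2(V)\log^4(n)|S|/n$, so the total interval width is $O(C_A\sqrt{m}V^3\log^2(V)\log^4(n)(|S|+1)/n)$.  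To translate the shifted argument $r_j(z_j+\delta_j\tilde U,\btheta) = r_j(z_j,\btheta) + a_j\delta_j\tilde U$ back to a condition on $r_j(z_j,\btheta)$ at a \emph{fixed} $\blambda$ and $\Gamma$ (so the degeneracy condition in $\mathcal E_n$ can be applied), I absorb the shift into the threshold, upper bounding the indicator by $\Ib{|r_j(z_j,\btheta)-\bA_j^\top \blambda(\overline{\bz},\btheta)|\le \Gamma + a_{\max}|\delta_j|}$.  Summing over $j$, the degeneracy inequality (valid since $\bz\in\mathcal E_n$ and $\|\blambda(\overline{\bz},\btheta)\|_1\le\lambda_{\max}$) bounds this sum by $n\Gamma'\sqrt{\vmax} + \sqrt{n\Gamma'}\,V\log(V)\log^2(n)$ for any admissible $\Gamma'\ge 1/n$.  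Multiplying by $1/(h\sqrt{\nu_{\min}})$, applying Cauchy--Schwarz to separate the $|\delta_j|$ factor (using $\Eb{\delta_j^2}=O(h)$), and collecting polylogarithms using the hypothesis $\log n/(nh)\le 1$ yields the claimed $V^3\log^2(V)\log^4(n)\,|S|/h$ rate.

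The principal obstacle will be the event $\{|\delta_j| > 3\sqrt{n/\nu_{\min}}\}$ on which \cref{lem:DualsBoundedOnE} (and hence \cref{lem:DualStability}) can fail to apply to the perturbed data $\bz+\delta_j\tilde U\be_j$.  I expect to handle this with a simple split: on the complementary event, dual stability applies directly (since $\bz \in \mathcal E_n$ controls $\|\bz\|_1$), and on the exceptional event we use the $L^2$ bound on $\delta_j$ together with the Gaussian tail probability, which is doubly exponentially small in $n$ and therefore contributes negligibly.  A secondary subtlety is that, within the expectation over $\delta_j$, the parameters $(\blambda,\Gamma') = (\blambda(\overline{\bz},\btheta),\; \Gamma + a_{\max}|\delta_j|)$ must lie in the set $T_n$ of \cref{eq:DefTofTheta} in order to invoke the degeneracy bound; the constraint $\Gamma'\ge 1/n$ is delivered by the lower bound on $\Gamma$ implicit in our dual-stability estimate combined with $\log n/(nh)\le 1$, while $\|\blambda(\overline{\bz},\btheta)\|_1\le\lambda_{\max}$ is guaranteed by $\overline{\bz}\in\mathcal E_n$.
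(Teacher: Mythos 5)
Your route is genuinely different from the paper's, which never passes to primal indicators at all: the paper writes $V(\bz,\btheta)=n\L(\blambda(\bz,\btheta),\bz,\btheta)$, uses suboptimality of the duals to sandwich $V(\bz+\delta_j\be_j)-V(\bz)$ between $\left(r_j(z_j)+a_j\delta_j-\bA_j^\top\blambda\right)^+-\left(r_j(z_j)-\bA_j^\top\blambda\right)^+$ evaluated at $\blambda=\blambda(\bz,\btheta)$ and at $\blambda(\bz+\delta_j\be_j,\btheta)$, and then, for coordinates with $z_j=\overline z_j$, exploits the $1$-Lipschitzness of $t\mapsto t^+$ to bound the difference of the two finite differences by $2\,\lvert \bA_j^\top(\blambda(\bz,\btheta)-\blambda(\bzbar+\delta_j\be_j,\btheta))\rvert$, which \cref{lem:DualStability} (together with the same split on $\lvert\delta_j\rvert\le 3\sqrt{n/\vmin}$ that you describe, via \cref{lem:DualsBoundedOnE}) makes $O(V^3\log^2(V)\log^4(n)(\lvert S\rvert+1)/n)$ in expectation; coordinates in $S$ are handled exactly as you handle them. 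In particular the paper needs neither complementary slackness nor the degeneracy condition built into $\mathcal E_n$.

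The genuine gap in your plan is the recentering step in which you replace the event that $r_j(z_j)+a_j\delta_j\tilde U$ lies in a (random) interval of width $O(\mathrm{polylog}\,(\lvert S\rvert+1)/n)$ by the deterministic-threshold event $\Ib{\lvert r_j(z_j,\btheta)-\bA_j^\top\blambda(\bzbar,\btheta)\rvert\le\Gamma+a_{\max}\lvert\delta_j\rvert}$ so that the degeneracy inequality of $\mathcal E_n$ applies. This enlargement is too lossy: the degeneracy bound at a threshold of size $\Gamma+a_{\max}\lvert\delta_j\rvert\gtrsim\sqrt h$ only limits the number of qualifying coordinates to order $n\sqrt h$, and for each such coordinate your estimate $\frac{1}{h\sqrt{\nu_j}\lvert a_j\rvert}\Eb{\lvert\delta_j\rvert\,\Ib{\cdot}}$ is $\Theta(1/\sqrt h)$ (Cauchy--Schwarz cannot improve this, since the indicator has constant probability there), so your chain of inequalities yields an additive term of order $n$ that is independent of both $\lvert S\rvert$ and $1/h$. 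For $\lvert S\rvert=1$ and, say, $h=n^{-1/4}$ this does not give the claimed $\frac{C}{h}V^3\log^2(V)\log^4(n)$ bound, and the $\lvert S\rvert=1$ case is precisely what the downstream bounded-differences argument (\cref{thm:Combes-McDiarmid}) requires. A fix within your framework is to keep $a_j\delta_j\tilde U$ out of the threshold: for $j\notin S$ the event forces $a_j\delta_j\tilde U$ into a deterministic interval $A$ of width $O(\mathrm{polylog}(\lvert S\rvert+1)/n)$, and $\Eb{\lvert\delta_j\rvert\,\Ib{\delta_j\in A'}}\le \lvert A'\rvert\sup_t \lvert t\rvert\phi_{\delta_j}(t)\le \lvert A'\rvert/\sqrt{2\pi e}$ (condition on $\tilde U$, then integrate out the $1/\tilde U$ at the cost of a logarithm), giving $O(\mathrm{polylog}(\lvert S\rvert+1)/(nh))$ per coordinate and hence the right $1/h$ rate without any use of the degeneracy condition; alternatively, adopt the paper's positive-part Lipschitz argument. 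Two smaller points: the representation you start from (\cref{lem:FullyRandomizedVGC}) formally assumes almost-sure uniqueness of the primal solution, which is not a hypothesis of this lemma and should at least be justified (for fixed $\bz$ and continuous $\delta_j$, ties occur on a null set for the LP); and the separate ``$O(m/h)$ for fractional components'' bookkeeping is unnecessary once you round with $\ge$ on one side and $>$ on the other, since the closed-interval indicator you wrote already covers fractional coordinates.
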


\proof{Proof of \cref{lem:VGC-Dual-Solution-Bnd}:}
Notice if $\bz = \bar{\bz}$ the lemma is trivially true.  Hence, throughout, we assume $\bz \neq \bar{\bz}$.  Since $\btheta$ is fixed throughout, we also drop it from the notation.  

As a first step, we will prove the two inequalities
\begin{subequations} \label{eq:TwoInequalities}
\begin{align}  \label{eq:TwoInequalities_lb}
V(\bz + \delta_{j}\be_{j})-V(\bz)
& \ \geq \ 
\left(r_j(\bz) + a_j \delta_j - \bA_j^\top \blambda(\bz + \delta_j \be_j)\right)^+ - 
\left(r_j(\bz_j) - \bA_j^\top \blambda(\bz + \delta_j \be_j)\right)^+,
\\ \label{eq:TWoInequalities_ub}
V(\bz + \delta_j \be_j) - V(\bz) 
& \ \leq \ 
\left(r_j(\bz) + a_j \delta_j - \bA_j^\top \blambda(\bz)\right)^+ - 
\left(r_j(\bz_j) - \bA_j^\top \blambda(\bz )\right)^+.
\end{align}
\end{subequations}
To prove the first inequality, write 
\begin{align} \label{eq:VGC-dual-sol-bnd-ii}
 V(\bz+\delta_{j}\be_{j})-V(\bz)
= \ &   n\L \left(\blambda\left(\bz+\delta_{j}\be_{j}\right), \bz+\delta_{j}\be_{j} \right)
	- n \L \left(\blambda\left(\bz\right), \bz\right)
\\ \notag
\le \ & n\L \left(\blambda\left(\bz\right), \bz+\delta_{j}\be_{j} \right)
	-n\L \left(\blambda\left(\bz\right), \bz\right)
 \\ \notag
= \ & \left(r_j(z_{j})+a_{j}\delta_{j}-\bA_{j}^{\top}\blambda(\bz)\right)^{+}-\left(r_j(z_{j})-\bA_{j}^{\top}\blambda(\bz)\right)^{+} 
\end{align}
where the inequality holds by the sub-optimality of $\blambda(\bz)$
for $\L\left(\blambda, \bz+\delta_{j}\be_{j}\right)$,
and the last equality holds since all terms except the $j^{\text{th}}$
in the summation of the Lagrangian cancel out.  A similar argument using the sub-optimality of $\blambda(\bz + \delta_j \be_j)$ for $\L (\blambda, \bz + \delta_j \be_j)$ proves the lower bound.  

The next step of the proof establishes that there exists a dimension independent constant $C_1$ such that
\begin{align} \notag
 & \Eb{ V(\bz+\delta_{j}\be_{j})-V(\bz)-\left(V(\overline{\bz}+\delta_{j}\be_{j})-V(\overline{\bz})\right) }
\\\label{eq:VGC-dual-sol-bnd-v}
& \qquad \ \le \   C_1 \left(
V^3\log^2(V) \frac{\log^4 n}{n}\sum_{i=1}^{n}\Ib{z_{i}\ne\overline{z}_{i}} \right) \Ib{z_{j} = \overline{z}_{j}} 
+
C_1 \sqrt{h} ~ \Ib{z_{j}\ne\overline{z}_{j}}
\end{align}

As suggested by the bound, we will consider two cases depending on whether $z_j = \bar z_j$.

\noindent \textbf{Case 1:} $z_j \neq \bar z_j$.
Notice the inequalities \cref{eq:TwoInequalities} apply as well when $\bz$ is replaced by $\bar{\bz}$.  Hence, applying the upper bound for the first term and the lower bound for the second term shows 
\begin{align*}
 & V(\bz + \delta_{j}\be_{j})
  -V(\bz) - \left(V(\overline{\bz}+\delta_{j}\be_{j})-V(\overline{\bz})\right)
\\
 & \quad \le 
 \left(
    \left(
        r_j(z_j) + a_{j}\delta_{j} - \bA_{j}^{\top}\blambda (\bz)
    \right)^{+}
    -\left(r_j(z_j)-\bA_{j}^{\top}\blambda(\bz)\right)^{+}
\right)
\\
 & \qquad-\left(\left(r_j(\bar z_j) +a_{j}\delta_{j}-\bA_{j}^{\top}\blambda(\overline{\bz}+\delta_{j}\be_{j})\right)^{+}-\left(r_j(\bar z_j) -\bA_{j}^{\top}\blambda(\overline{\bz}+\delta_{j}\be_{j})\right)^{+}\right)
 \\
 & \quad \leq \ 
 2\abs{a_j} \abs{\delta_j}, 
 \end{align*}
because $t \mapsto t^+$ is a $1$-Lipschitz function.
Take expectations of both sides, using Jensen's inequality and upper bounding the variance of $\delta_j$ shows
\begin{align*}
\Eb{ V(\bz + \delta_j \be_j) - V(\bz) - (V(\bzbar + \delta_j \be_j) - V(\bzbar) )}
\ \leq \ 
	2 a_{\max}\Eb{ \abs{\delta_j}} 
\ \leq 2 a_{\max} \frac{\sqrt{3h}}{\vmin^{1/4}}.
\end{align*}
Collecting constants proves the inequality when $z_j \neq \bar z_j$.

\noindent \textbf{Case 2:} $z_j = \bar z_j$. 
Proceeding as in Case 1, 
\begin{align*}
 & V(\bz+\delta_{j}\be_{j})-V(\bz)-\left(V(\overline{\bz}+\delta_{j}\be_{j})-V(\overline{\bz})\right)
\\
& \quad \le \left(r_j(z_j)+a_{j}\delta_{j}-\bA_{j}^{\top}\blambda(\bz) \right)^{+}-\left(r_j(z_j)-\bA_{j}^{\top}\blambda(\bz)\right)^{+}
\\
 & \qquad-\left(\left(r_j(\bar z_j) +a_{j}\delta_{j}-\bA_{j}^{\top}\blambda(\overline{\bz}+\delta_{j}\be_{j})\right)^{+}-\left(r_j(\bar z_j) -\bA_{j}^{\top}\blambda(\overline{\bz}+\delta_{j}\be_{j}))\right)^{+}\right)
 \\
 & \quad \leq 
 2\abs{\bA_j^\top (\blambda(\bz) - \blambda(\bar{\bz} + \delta_j \be_j) )}
 \\
 & \quad \leq 
 2 C_A \| \blambda(\bz) - \blambda(\bar{\bz})\| + 
 2 C_A \| \blambda(\bzbar) - \blambda(\bzbar + \delta_j \be_j)\|,
\end{align*}
where the second inequality follows again because $t \mapsto t^+$ is a contraction, but we group the terms in a different order, and the last inequality follows from the triangle-inequality and the Cauchy-Schwarz inequality.  We can bound the first term by invoking \cref{lem:DualStability} yielding
\begin{align} \label{eq:UnscaledVGC_Diff}
  V(\bz+\delta_{j}\be_{j})-V(\bz) & -\left(V(\overline{\bz}+\delta_{j}\be_{j})-V(\overline{\bz})\right) \\ \notag
\ \leq \ &
C_2 V^3\log^2(V) \frac{\log^4 n}{n} \sum_{i=1}^n \Ib{z_i \neq \bar z_i}
+ 
 2 C_A \| \blambda(\bzbar) - \blambda(\bzbar + \delta_j \be_j)\|,
\end{align}
for some dimension independent constant $C_2$.  
Taking expectations shows that to prove a bound, it will suffice to bound $\Eb{ \| \blambda(\bzbar) - \blambda(\bzbar + \delta_j \be_j)\|}$. 
To this end, consider splitting the expectation based on whether $\abs{\delta_j} \geq 3 \sqrt{\frac{n}{\vmin}}$.

If $\abs{\delta_j} \leq 3 \sqrt{\frac{n}{\vmin}}$, then by \cref{lem:DualsBoundedOnE}, $\| \blambda(\bzbar + \delta_j \be_j)\|_1 \ \leq \lambda_{\max}$.  Hence we can invoke \cref{lem:DualStability} again yielding
\begin{align*}
\Eb{ 
\| \blambda(\bzbar) - \blambda(\bzbar + \delta_j \be_j)\| \Ib{\abs{\delta_j} \leq 3 \sqrt{\frac{n}{\vmin}} }} 
& \ \leq \ 
C_3 V^3 \log^2(V) \frac{\log^4 n}{n} \Pb{\abs{\delta_j} \leq 3 \sqrt{\frac{n}{\vmin}}  }
\\ & \ \leq \ 
C_3 V^3 \log^2(V) \frac{\log^4 n}{n}.
\end{align*}

Next, assume $\abs{\delta_j} \geq 3 \sqrt{\frac{n}{\vmin}}$.  Write
\begin{align*}
\| \blambda(\bzbar) - \blambda(\bzbar + \delta_j \be_j)\| 
&\ \leq \ 
\lambda_{\max} + \frac{2}{n s_0}\| \br(\bzbar + \delta_j \be_j)\|_1 
	&&(\text{by \cref{lem:BoundedDualSol}})
\\& \ \leq \ 
\lambda_{\max} + \frac{2}{n s_0}\| \br(\bzbar)\|_1 + \frac{2 \abs{a_j}}{n s_0}\| \abs{\delta_j}
&&(\text{by def. of $\br(\cdot)$})
\\ & \ \leq \ 
\lambda_{\max} + C_4 + \frac{2 a_{\max}}{n s_0} \abs{\delta_j},
\end{align*}
for some dimension independent constant $C_4$, because $\bzbar \in \mathcal E$ implies that $\| \br(\bz)\|/n$ is bounded by a (dimension-independent) constant.

Thus, for some dimension independent constant $C_5$ we have
\begin{align*}
\Eb{ \| \blambda(\bzbar) - \blambda(\bzbar + \delta_j \be_j)\| \Ib{ \abs{\delta_j} > 3 \sqrt{\frac{n}{\vmin}}}}
& \ \leq \ 
\Eb{\left(C_5 +  \frac{C_5}{n}\abs{\delta_j}\right) \Ib{ \abs{\delta_j} > 3 \sqrt{\frac{n}{\vmin}}}}
\\ 
&\  \leq \ 
\Eb{\left(C_5 +  \frac{C_5}{n}\right) \abs{\delta_j} \Ib{ \abs{\delta_j} > 3 \sqrt{\frac{n}{\vmin}}}} 
\\ & \ = \ 
\Eb{ 2C_5 \abs{\delta_j} \Ib{ \abs{\delta_j} > 3 \sqrt{\frac{n}{\vmin}}} },
\end{align*}
where the final inequality uses $3\sqrt{\frac{n}{\vmin}} \geq 1$ since $n \geq 3$.
Integration by parts with the Gaussian density shows there exists a dimension independent constant $C_6$ such that 
\begin{align*}
\Eb{ \abs{\delta_j} \Ib{\abs{\delta_j} > 3 \sqrt{\frac{n}{\vmin}}}} \ \leq C_6 \sqrt{ h} e^{-nh/\sqrt{\vmin}} \ \leq \ 
C_6 \sqrt{h} e^{-nh} \ \leq \  \frac{C_6}{n},
\end{align*}
because $nh > \log n$ and $h < 1$ by assumption.  

Combining the two cases shows that 
\[
	\Eb{ \| \blambda(\bzbar) - \blambda(\bzbar + \delta_j \be_j)\|} \ \leq \ C_7 V^3 \log^2(V) \frac{\log^4 n}{n}
\]
for some dimension independent constant $C_7$.

Taking the expectation of \cref{eq:UnscaledVGC_Diff}, substituting this bound and collecting constants proves 
\begin{align*} 
 & V(\bz+\delta_{j}\be_{j})-V(\bz)-\left(V(\overline{\bz}+\delta_{j}\be_{j})-V(\overline{\bz})\right)
\ \leq \
C_8 V^3\log^2(V) \frac{\log^4 n}{n} \sum_{j=1}^n \Ib{z_j \neq \bar z_j}
\end{align*}
for some constant $C_8$.  Combining with Case 1 establishes \cref{eq:VGC-dual-sol-bnd-v}.

Now, by symmetry, \cref{eq:VGC-dual-sol-bnd-v} holds with the roles of $\bz$ and $\bzbar$ reversed.  Hence, \cref{eq:VGC-dual-sol-bnd-v} also holds after taking the absolute values of both sides.  

We can now write, 
\begin{align*}
\abs{ D(\bz) - D(\bzbar) }
& \ \leq \ \sum_{j=1}^n \frac{1}{h \sqrt{\nu_j} \abs{a_j}}
\abs{ \Eb{V(\bz + \delta_j\be_j) - V(\bz) - \big(V(\bzbar + \delta_j \be_j) - V(\bzbar)\big)} }
\\
& \ \leq \ 
\frac{C_1}{h \sqrt{\vmin} a_{\min}} 
V^3\log^2(V) \frac{\log^4 n}{n}\sum_{i=1}^{n}\Ib{z_{i}\ne\overline{z}_{i}}  \sum_{j=1}^n\Ib{z_{j} = \overline{z}_{j}} 
+
\frac{C_1}{\sqrt{h \vmin} a_{\min}} \sum_{j=1}^n ~ \Ib{z_{j}\ne\overline{z}_{j}}
\\
&\ \leq \ 
\frac{C_9}{h}~ 
V^3\log^2(V) \log^4(n) ~ \sum_{i=1}^{n}\Ib{z_{i}\ne\overline{z}_{i}},  
\end{align*}
for some constant $C_9$.  This completes the proof. 
\hfill \Halmos \endproof

Finally, we show that $\btheta \mapsto \blambda(\bz, \btheta)$ is also smooth on $\mathcal{E}_n$, at least locally.
\begin{lem}[Local Smoothness of Dual Solution in $\btheta$] \label[lem]{lem:DualSmoothness}
 Suppose $\bm{z}\in\mathcal{E}_{n}$ and that \cref{asn:Parameters,asn:SmoothPlugIn,asn:LiftedAffinePlugInII} hold.
Then, there exist dimension independent constants $C$ and $n_0$ such that for any $n \geq n_0$ and any $\bthetabar$ such that $\| \bthetabar - \btheta\| \leq \frac{1}{n}$, we have that
\[
    \left\Vert \blambda(\bz, \btheta) -\blambda(\bz, \bthetabar) \right\Vert _{2}
    \ \le \
    C V^2 \log V \frac{ \log^{5/4} n }{\sqrt{n}}
\]
\end{lem}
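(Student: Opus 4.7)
The plan is to mirror the proof of \cref{lem:DualStability}, but with parameter perturbation in place of data perturbation and continuous Lipschitz control in place of the Hamming-distance bookkeeping. Set $\blambda_1 \equiv \blambda(\bz, \btheta)$, $\blambda_2 \equiv \blambda(\bz, \bthetabar)$, $f_1 \equiv \L(\cdot, \bz, \btheta)$, $f_2 \equiv \L(\cdot, \bz, \bthetabar)$, and $I_j \equiv \langle \bA_j^\top \blambda_1, \bA_j^\top \blambda_2\rangle$. By \cref{lem:DualsBoundedOnE} applied with $\bm t = \bm 0$, $\|\blambda_i\|_1 \leq \lambda_{\max}$; if $\|\blambda_1 - \blambda_2\|_2 < 4/n$ the claim is trivial, so assume otherwise and thus $(\blambda_1, \blambda_2) \in \Lambda_n$. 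The argument then proceeds by matching an upper and lower bound on $\mathcal{D} \equiv f_1(\blambda_2) - f_1(\blambda_1) + f_2(\blambda_1) - f_2(\blambda_2)$.

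\textbf{Key bounds.} For the upper bound, $|[a]^+ - [b]^+| \leq |a-b|$ together with \cref{asn:SmoothPlugIn} gives $|r_j(z_j,\btheta) - r_j(z_j,\bthetabar)| \leq L(|z_j|+1)\|\btheta-\bthetabar\|$; summing over $j$ and using $\|\bz\|_1 \leq nC_\mu + 2n/\sqrt\vmin$ on $\mathcal{E}_n$ together with $\|\btheta-\bthetabar\| \leq 1/n$ yields $|f_1(\blambda) - f_2(\blambda)| = O(1/n)$ for any $\blambda$, hence $\mathcal{D} \leq O(1/n)$. For the lower bound, applying \cref{lem:TaylorIdentity} to each $f_i$ and invoking subgradient optimality of $\blambda_i$ for $f_i$ yields
\[
\mathcal{D} \geq \tfrac{1}{n}\sum_{j=1}^n \Ib{r_j(\btheta) \in I_j}\abs{r_j(\btheta) - \bA_j^\top \blambda_2} + \Ib{r_j(\bthetabar) \in I_j}\abs{r_j(\bthetabar) - \bA_j^\top \blambda_1}.
\]
Replacing $\bthetabar$ by $\btheta$ in the second summand converts the RHS into $\frac{1}{n}\sum_j \Ib{r_j(\btheta) \in I_j}\abs{\bA_j^\top(\blambda_1 - \blambda_2)} = (\nabla_{\blambda} \L(\blambda_1, \bz, \btheta) - \nabla_{\blambda}\L(\blambda_2, \bz, \btheta))^\top(\blambda_1-\blambda_2)$, which by approximate strong convexity (valid since $\bz \in \mathcal{E}_n$ and $(\blambda_1,\blambda_2)\in\Lambda_n$) is at least $\phi_{\min}\beta x^2 - V^2\log(V)\log^2(n)\, x^{3/2}/\sqrt n$ with $x \equiv \|\blambda_1-\blambda_2\|_2$. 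Combining both bounds gives the quadratic-type inequality
\[
\phi_{\min}\beta\, x^2 \;\leq\; V^2 \log V \,\frac{\log^2 n}{\sqrt n}\, x^{3/2} \;+\; \frac{C\, V^{3/2}\log V \,\log^{5/2} n}{n}\, x \;+\; \frac{C}{n},
\]
and the claimed bound follows by verifying each RHS term is subdominant to $\phi_{\min}\beta x^2$ at the scale $x = \Theta(V^2 \log V \,\log^{5/4}(n)/\sqrt n)$ for all $n$ beyond a dimension-independent threshold.

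\textbf{Main obstacle.} The key technical step is bounding the error of the $\bthetabar\to\btheta$ swap in the lower bound. Per index $j$, this error is at most $|\Delta_j| \equiv |r_j(\btheta)-r_j(\bthetabar)| \leq L(\log n + 1)/n$ when the two indicators $\Ib{r_j(\cdot)\in I_j}$ agree (summing to $O(1/n)$, absorbed into the constant term) and at most $\abs{\bA_j^\top(\blambda_1-\blambda_2)} \leq C_A\sqrt V\,\|\blambda_1-\blambda_2\|_2$ when they disagree. Disagreement forces $r_j(\btheta)$ to lie within $|\Delta_j|$ of an endpoint of $I_j$; choosing $\Gamma = L(\log n + 1)/n$, which exceeds $1/n$ so that $(\blambda_i, \btheta, \Gamma) \in T_n$, the ``bounded away from degeneracy'' clause of $\mathcal{E}_n$ bounds the density of such $j$ by $O(V \log V \,\log^{5/2}(n)/n)$, producing the linear-in-$x$ error term. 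The $\|\bz\|_\infty \leq \log n$ property of $\mathcal{E}_n$ is crucial for calibrating $\Gamma$ at the correct scale, and verifying each constant can be absorbed is the main bookkeeping challenge.
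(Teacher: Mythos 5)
Your main body follows the paper's own proof essentially step for step: lower-bound the quantity $\mathcal D \equiv f_1(\blambda_2)-f_1(\blambda_1)+f_2(\blambda_1)-f_2(\blambda_2)$ via \cref{lem:TaylorIdentity} plus optimality, swap $\bthetabar\to\btheta$ in the remainder, control the swap error through Lipschitzness of $r_j$ in $\btheta$ together with the $\|\bz\|_\infty\le\log n$ clause and the ``bounded away from degeneracy'' clause of $\mathcal E_n$ with $\Gamma$ of order $\log n/n$, invoke approximate strong convexity on $\Lambda_n$, and upper-bound $\mathcal D$ by $O(1/n)$ using the $\|\bz\|_1$ clause. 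The only cosmetic deviation is that you keep the disagreement coefficient as $C_A\sqrt{m}\,\|\blambda_1-\blambda_2\|_2$ (a linear-in-$x$ error), whereas the paper bounds it by the constant $2C_A\sqrt m\,\lambda_{\max}$; both are fine.

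The genuine gap is in the concluding step. Splitting the trivial case at $\|\blambda_1-\blambda_2\|_2<4/n$ only buys membership in $\Lambda_n$; it gives you no lower bound on $x\equiv\|\blambda_1-\blambda_2\|_2$ with which to tame the $x^{3/2}$ term, and ``each RHS term is subdominant at $x=\Theta(V^2\log V\,\log^{5/4}n/\sqrt n)$'' is not a valid way to conclude that $x$ is at most that scale. From $\phi_{\min}\beta x^2\le Ax^{3/2}+Bx+D$ one can only conclude $x\le \max\{(3A/\phi_{\min}\beta)^2,\;3B/\phi_{\min}\beta,\;\sqrt{3D/\phi_{\min}\beta}\}$, and with $A=V^2\log V\,\log^2 n/\sqrt n$ the first balance point is of order $(V^2\log V)^2\log^4 n/n$. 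That sits below the claimed $V^2\log V\,\log^{5/4}n/\sqrt n$ only when $\sqrt n \ge c\,V^2\log V\,\log^{11/4}n$, so your threshold $n_0$ would depend on $V$, which the lemma's notion of dimension-independence excludes. The paper's route around this is structural: it places the trivial-case split at the scale of the final bound (if $\|\blambda_1-\blambda_2\|_2\le 4V^2\log V\,\frac{\log n}{\sqrt n}$ the claim is immediate with $C=4$), and in the complementary case uses that lower bound on $x$ to absorb the $x^{3/2}$ term into a fraction of $\phi_{\min}\beta x^2$ before solving, leaving only the $O(V^2\log V\,\log^{5/2}n/n)$ term and hence $x\le C\sqrt{V^2\log V}\,\log^{5/4}n/\sqrt n$. (Even the paper's absorption is delicate in its $V$-bookkeeping, but the split-at-target-scale-then-absorb maneuver is the ingredient your write-up is missing; adopting it repairs the argument.)
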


\proof{Proof of \cref{lem:DualSmoothness}:}
The proof is similar to that of \cref{lem:DualStability}.
To declutter the notation, define 
\[
    f_{1}(\blambda) \equiv
    \mathcal{L}(\blambda,\bm{z},\btheta),
    \quad \blambda_{1} \equiv \blambda(\bz,\btheta)
\]
\[
    f_{2}(\blambda) \equiv 
    \mathcal{L}(\blambda,\bm{z},\bthetabar),
    \quad \blambda_{2} \equiv \blambda(\bz,\bthetabar)
\]
Furthermore, let 
$I_{j} = 
  \left\langle 
    \bm{A}_{j}^{\top}\blambda_{1},\bm{A}_{j}^{\top}\blambda_{2}
  \right\rangle 
$.

If $\| \blambda_1 - \blambda_2 \|_2 \leq 4 V^2 \log V \frac{\log n}{\sqrt n}$, then the lemma holds trivially for $C = 4$.  Hence, for the remainder, we assume $\| \blambda_1 - \blambda_2 \|_2 > 4 V^2 \log V \frac{\log n}{\sqrt n}$.  In particular, by \cref{lem:DualsBoundedOnE}, this implies $(\blambda_1, \blambda_2) \in \Lambda_n$.

Using \cref{lem:TaylorIdentity}, we have that
\begin{align*}
    f_{1}(\blambda_{2}) - f_{1}(\blambda_{1}) & 
    = \nabla f_{1}(\blambda_{1})^{\top}(\blambda_{2}-\blambda_{1})
    + \frac{1}{n}\sum_{j=1}^{n}\mathbb{I}\left\{ r_j(z_j,\btheta)\in I_{j}\right\} 
    \left|r_j(z_j,\btheta)-\bA_j^\top\blambda_{2}\right| 
    \\
    & \ge \frac{1}{n}\sum_{j=1}^{n}\mathbb{I}\left\{ r_j(z_j,\btheta)\in I_{j}\right\} 
    \left| r_j(z_j, \btheta) - \bA_j^\top\blambda_{2} \right|
\end{align*}
where $f_{1}(\cdot)$ is convex and $\blambda_{1}$ is an optimizer.
Similarly, we have that
\[
    f_{2}(\blambda_{1}) - f_{2}(\blambda_{2}) 
    \ge 
    \frac{1}{n} \sum_{j=1}^{n}
        \mathbb{I}\left\{ r_j(z_j,\bthetabar)\in I_{j}\right\} 
        \left|r_j(z_j,\bthetabar)-\bA_j^\top\blambda_{1}\right|.
\]
Adding yields
\begin{align} \label{eq:MasterEQ_Theta}
    f_{1}(\blambda_{2}) & 
    - f_{1}(\blambda_{1}) + f_{2}(\blambda_{1}) - f_{2}(\blambda_{2})
    \\ \notag
    &  \ \ge  \ 
    \frac{1}{n} \sum_{j=1}^{n} 
    \mathbb{I}\left\{ r_j(z_j,\btheta)\in I_{j}\right\} 
    \left|r_j(z_j,\btheta) - \bA_j^\top\blambda_{2}\right| 
    + \mathbb{I}\left\{ r_j(z_j,\bthetabar)\in I_{j}\right\} 
    \left|r_j(z_j,\bthetabar) - \bA_j^\top\blambda_{1}\right|.
\end{align}

We would like to combine the $j^{\text th}$ summand to simplify. To this end, adding and subtracting \\
\(
\mathbb{I} \big\{r_j(z_j, \btheta) \in I_j\big\} \abs{r_j(z_j, \btheta) - \bA_j^\top \blambda_2}
\) 
yields, 
\begin{subequations} \label{eq:DifferencesInRemainders}
\begin{align} \notag
& \Ib{r_j(z_j,\btheta)\in I_{j}} 
    \left| r_j(z_j,\btheta) - \bA_j^\top\blambda_{2} \right|
    + \Ib{r_j(z_j,\bthetabar)\in I_{j}} 
    \left| r_j(z_j,\bthetabar) - \bA_j^\top\blambda_{1} \right|
\\ \label{eq:GradTerm}
& \quad = \ 
\Ib{r_j(z_j, \btheta) \in I_j} \Big( \abs{r_j(z_j, \btheta) - \bA_j^\top \blambda_2} + \abs{r_j(z_j, \btheta) - \bA_j^\top \blambda_1 }\Big) 
\\ \label{eq:DiffTerm}
& \qquad +
\Ib{r_j(z_j, \bthetabar) \in I_j} \abs{r_j(z_j, \bthetabar) - \bA_j^\top \blambda_1} - 
\Ib{r_j(z_j, \btheta) \in I_j} \abs{r_j(z_j, \btheta) - \bA_j^\top \blambda_1}
\end{align}
\end{subequations}

We simplify \cref{eq:GradTerm} by noting that when the indicator is non-zero, 
\[
\abs{r_j(z_j, \btheta) - \bA_j^\top \blambda_2 } + 
\abs{r_j(z_j, \btheta) - \bA_j^\top \blambda_1 }
\  = \ 
\abs{\bA_j^\top (\blambda_2 - \blambda_1)} 
\]
Hence, 
\[
    {\rm\cref{eq:GradTerm}} \ = \ \Ib{r_j(z_j, \btheta) \in I_j} \abs{\bA_j^\top (\blambda_2 - \blambda_1)}.
\]
We rewrite \cref{eq:DiffTerm} as 
\begin{align*}
{\rm\cref{eq:DiffTerm} }
& \ = \ 
\Ib{r_j(z_j, \bthetabar) \in I_j} \Big( 
    \abs{r_j(z_j, \bthetabar) - \bA_j^\top \blambda_1 } 
   - \abs{r_j(z_j, \btheta) - \bA_j^\top \blambda_1 } 
\Big)
\\
& \qquad + 
\abs{r_j(z_j, \btheta) - \bA_j^\top \blambda_1 }
\Big(
\Ib{r_j(z_j, \bthetabar) \in I_j} - \Ib{r_j(z_j, \btheta) \in I_j}
\Big)
\\
& \ \overset{(a)}{\geq} \ 
- \Ib{r_j(z_j, \bthetabar) \in I_j} \abs{ r_j(z_j, \bthetabar) - r_j(z_j, \btheta)}
\\
& \qquad + 
\abs{r_j(z_j, \btheta) - \bA_j^\top \blambda_1 }
\Big(
\Ib{r_j(z_j, \bthetabar) \in I_j} - \Ib{r_j(z_j, \btheta) \in I_j}
\Big),
\\  
& \ \overset{(b)}{\geq} \ 
- \abs{ r_j(z_j, \bthetabar) - r_j(z_j, \btheta)}
- 
\abs{r_j(z_j, \btheta) - \bA_j^\top \blambda_1 }
\Ib{ r_j(z_j, \bthetabar) \not \in I_j, \ r_j(z_j, \btheta) \in I_j},
\\
& \ \overset{(c)}{\geq} \ 
- L \|\btheta - \bthetabar\|_2 (\abs{z_j} + 1)
- \abs{r_j(z_j, \btheta) - \bA_j^\top \blambda_1 }
\Ib{ r_j(z_j, \bthetabar) \not \in I_j, \ r_j(z_j, \btheta) \in I_j},
\\
& \ \overset{(d)}{\geq} \ 
- \frac{L}{n} (\abs{z_j} + 1)
- \abs{r_j(z_j, \btheta) - \bA_j^\top \blambda_1 }
\Ib{ r_j(z_j, \bthetabar) \not \in I_j, \ r_j(z_j, \btheta) \in I_j},
\end{align*}
where inequality (a) is the triangle inequality, inequality (b) rounds the indicators, inequality (c) follows from the Lipschitz assumptions on $a_j(\btheta)$ and $b_j(\btheta)$, and inequality (d) uses $\| \btheta - \bthetabar\| \ \leq \frac{1}{n}$.
Finally note that when the last indicator is non-zero, 
\[
  \abs{r_j(z_j, \btheta) - \bA_j^\top \blambda_1} 
  \ \leq \
  \abs{ \bA_j^\top(\blambda_2 - \blambda_1)} \ \leq \ 
  2C_A \sqrt{m} \lambda_{\max}.
\] 
Substituting this bound above and the resulting lower bound on \cref{eq:DiffTerm} into \cref{eq:DifferencesInRemainders} proves
\begin{align} \label{eq:GradDiffTerm_w_Remainders} 
& \Ib{r_j(z_j,\btheta)\in I_{j}} 
    \left| r_j(z_j,\btheta) - \bA_j^\top\blambda_{2} \right|
    + \Ib{r_j(z_j,\bthetabar)\in I_{j}} 
    \left| r_j(z_j,\bthetabar) - \bA_j^\top\blambda_{1} \right|
\\ \notag
& \quad \geq 
\Ib{r_j(z_j, \btheta) \in I_j} \abs{\bA_j^\top (\blambda_2 - \blambda_1)}
- \frac{L}{n} (\abs{z_j} + 1)
- 
2C_A \sqrt{m} \lambda_{\max}
\Ib{ r_j(z_j, \bthetabar) \not \in I_j, \ r_j(z_j, \btheta) \in I_j}.
\end{align}

We can further clean up the last indicator by noting that 
\begin{align*}
&    \Ib{r_j(z_j, \bthetabar) \not \in I_j, \ r_j(z_j, \btheta) \in I_j} 
\\& \quad    \implies 
    \text{ Either } 
    \abs{r_j(z_j, \btheta) - \bA_j^\top \blambda_1} \leq
    \abs{r_j(\bz, \btheta) - r_j(\bz, \bthetabar)}
    \text{ or } 
    \abs{r_j(z_j, \btheta) - \bA_j^\top \blambda_2} \leq
    \abs{r_j(\bz, \btheta) - r_j(\bz, \bthetabar)}.
\end{align*}
Moreover, because $\bz \in \mathcal E$, we can use the Lipschitz assumptions on $a_j(\btheta)$ and $b_j(\btheta)$ and the fact that $\| \btheta - \bthetabar\|\leq \frac{1}{n}$ to write
\[
	\abs{r_j(\bz, \btheta) - r_j(\bz, \btheta)} \ 
	\leq \ 2 L \| \btheta - \bthetabar\| \log n
    \ \leq \  \frac{2L \log n}{n}.
\]
Thus, 
\begin{align*}
\Ib{r_j(z_j, \bthetabar) \notin I_j, r_j(z_j, \btheta) \in I_j} \ &\leq \ \Ib{ \abs{r_j(z_j, \btheta) - \bA_j^\top \blambda_1} \leq \frac{2L \log n}{n}} 
 \; + \; \Ib{ \abs{r_j(z_j, \btheta) - \bA_j^\top \blambda_2} \leq \frac{2L \log n}{n}}.
\end{align*}

Making this substitution into \cref{eq:GradDiffTerm_w_Remainders}, averaging over $j$, 
and substituting this bound into \cref{eq:MasterEQ_Theta} shows
\begin{align*} 
    f_{1}(\blambda_{2}) & 
    - f_{1}(\blambda_{1}) + f_{2}(\blambda_{1}) - f_{2}(\blambda_{2})
\\ 
&  \quad \geq
    \frac{1}{n} \sum_{j=1}^{n}
        \Ib{r_j(z_j, \btheta) \in I_j} \abs{\bA_j^\top (\blambda_2 - \blambda_1)}
        - \frac{L}{n^2} \sum_{j=1}^{n} (\abs{z_j} + 1)
\\ 
& \qquad         
        - 2C_A \sqrt{m} \lambda_{\max}
        \frac{1}{n} \sum_{j=1}^{n} 
        \Ib{ \abs{r_j(z_j, \btheta) - \bA_j^\top \blambda_1} \leq \frac{2L \log n}{n}} + \Ib{ \abs{r_j(z_j, \btheta) - \bA_j^\top \blambda_2} \leq \frac{2L \log n}{n}}
\\ 
&  \quad =
    \left(\nabla_{\blambda} \L(\blambda_1, \bz, \btheta) - \nabla_{\blambda} \L(\blambda_2, \bz, \btheta)
    \right)^\top 
    (\blambda_1 - \blambda_2) 
    - \frac{L}{n} (\| \bz \|_1 / n + 1) 
\\ 
& \qquad 
        - 2C_A \sqrt{m} \lambda_{\max}
        \frac{1}{n} \sum_{j=1}^{n} 
        \Ib{ \abs{r_j(z_j, \btheta) - \bA_j^\top \blambda_1} \leq \frac{2L \log n}{n}} + \Ib{ \abs{r_j(z_j, \btheta) - \bA_j^\top \blambda_2} \leq \frac{2L \log n}{n}}
\\ 
& \quad \geq 
    \phi_{\min}\beta \| \blambda_1 - \blambda_2\|_2^2 -  V^2 \log(V) \frac{\log^2 n }{\sqrt{n}}\| \blambda_1 - \blambda_2\|_2^{3/2}
    - \frac{2L}{n} \left(C_\mu + 2/\sqrt{\vmin} \right)
\\
& \qquad    - 8 C_A \sqrt{m} \lambda_{\max} \left( L\sqrt{\vmax}
\frac{\log n}{n}   + 
 \sqrt{2 L} 
 V \log(V) \frac{\log^{5/2} n }{n} \right),
\end{align*}
because $\frac{2L \log n}{n} \geq \frac{1}{n}$ by \cref{asn:Parameters} and 
$\bz \in \mathcal E_n$.  Using \cref{asn:Parameters} to further simplify, we have thus far shown that  for some dimension independent constant $C_2$, 
\begin{align} \label{eq:MasterEQ2_Theta}
    f_1(\blambda_2) - f_1(\blambda_1) + 
    f_2(\blambda_1) - f_2(\blambda_2) 
    &\ \geq \ 
    \phi_{\min} \beta \| \blambda_1 - \blambda_2\|_2^2 -  V^2 \log(V) \frac{\log^2 n }{\sqrt{n}} \| \blambda_1 - \blambda_2\|_2^{3/2}
\\ \notag
& \qquad  
    - C_2 V^2 \log V \frac{\log^{5/2} n}{n}
\end{align}

We next proceed to upper bound left side of this inequality. 
By definition of $f_{1}(\cdot),f_{2}(\cdot)$,
\begin{align*}
    f_{1}(\blambda_{2}) & 
    - f_{1}(\blambda_{1}) + f_{2}(\blambda_{1}) - f_{2}(\blambda_{2})
    \\
    = & 
    \frac{1}{n} \sum_{j=1}^{n}\left( \left[r_j(z_j,\btheta) - \bA_j^\top\blambda_{2} \right]^{+}
    - \left[r_j(z_j,\btheta) - \bA_j^\top\blambda_{1}\right]^{+}
    + \left[r_j(z_j,\bthetabar) - \bA_j^\top\blambda_{1}\right]^{+}
    - \left[r_j(z_j,\bthetabar) - \bA_j^\top\blambda_{2}\right]^{+}
    \right).
\end{align*}
Focusing on the $j^{\sf th}$ term, we see
\begin{align*}
 & \left[r_j(z_j,\btheta)-\bA_j^\top\blambda_{2}\right]^{+}-\left[r_j(z_j,\btheta)-\bA_j^\top\blambda_{1}\right]^{+}+\left[r_j(z_j,\bthetabar)-\bA_j^\top\blambda_{1}\right]^{+}-\left[r_j(z_j,\bthetabar)-\bA_j^\top\blambda_{2}\right]^{+}
\\ & 
\quad \le  2\left|r_j(z_j,\btheta)-r_j(z_j,\bthetabar)\right|
\\ & \quad \leq 
2 L \| \btheta - \bthetabar\|_2 ( \abs{z_j} + 1),
\\ & \quad \leq 
\frac{2L}{n}( \abs{z_j} + 1),
\end{align*}
where the penultimate inequality uses \cref{asn:SmoothPlugIn}.
Averaging over $j$, we see
\[
f_{1}(\blambda_{2})-f_{1}(\blambda_{1})+f_{2}(\blambda_{1})-f_{2}(\blambda_{2})
\ \le \ 
\frac{2L}{n} \left(\frac{\| \bz\|_1}{n} + 1\right) 
\ \leq \ 
\frac{C_3}{n}
\]
for some constant $C_3$, since $\bz \in \mathcal E_n$.

Substitute into \cref{eq:MasterEQ2_Theta} to see that 
\begin{align*}
    \frac{C_3}{n} 
    &\ \geq \ 
    \phi_{\min} \beta \| \blambda_1 - \blambda_2\|_2^2 - \| \blambda_1 - \blambda_2\|_2^{3/2} V^2 \log(V) \frac{\log^2 n }{\sqrt{n}}
    - C_2 V^2 \log V  \frac{\log^{5/2} n}{n}
\end{align*}
Rearranging and collecting constants shows 
\begin{align} \notag
\phi_{\min} \beta \| \blambda_1 - \blambda_2\|_2^2 - \| \blambda_1 - \blambda_2\|_2^{3/2} V^2 \log(V) \frac{\log^2 n }{\sqrt{n}}
& \ \leq \ 
    C_4 V^2 \log V  \frac{\log^{5/2} n}{n},
\end{align}
for some dimension-independent constant $C_4$. 

We can also lower bound the left side by recalling 
\[
\| \blambda_1 - \blambda_2 \|_2 > V^2 \log V \frac{\log n}{\sqrt n}	
\implies 
\frac{n^{1/4}\| \blambda_1 - \blambda_2 \|^{1/2}_2}{\log^{1/2}(n) \; V^2 \log(V)  } > 1.	
\]
Hence, inflating the second term on the left yields 
\begin{equation} \notag
	\phi_{\min} \beta \| \blambda_1 - \blambda_2\|_2^2 - 
	\frac{\log^{3/2}n}{n^{1/4}}\| \blambda_1 - \blambda_2 \|^2_2 
 \leq C_5 V^2 \log V \frac{\log^{5/2} n}{n}.
\end{equation}

For $n$ sufficiently large, the first term on the left is at least twice the second.  
Rearranging and taking square roots shows 
\[
    \| \blambda_1 - \blambda_2 \|_2 \ \leq \ 
    C_6 \sqrt{ V^2 \log V}  \frac{\log^{5/4} n}{\sqrt n}.
\]
Recalling that $V \geq 2$ proves the theorem.

\hfill \Halmos \endproof
\subsection{Pointwise Convergence Results} 
\label[app]{sec:appendix_pointwise_results}
To prove our theorem, we require the uniform convergence of the in-sample optimism to expectation and the uniform convergence of VGC to its expectation.  In this section, we first establish several pointwise convergence results to assist with this task.  Our main workhorse will be \cref{thm:Combes-McDiarmid} where $\mathcal E_n$ defines the good set on which our random variables satisfy a bounded differences condition.

As a first step, we will show that the dual solutions converge (for a fixed $\btheta$) to their expectations.  In preparation, we first bound the behavior of the dual on the bad set.
\begin{lem}[Dual Solution Conditional Expectation Bound]\label[lem]{lem:cond-dual-sol-bnd} 
Suppose \cref{asn:SmoothPlugIn,asn:Parameters} both hold.  Let 
\[
	\mathcal{E}_{1,n} 
	\equiv 
	\left\{\bz :  \| \bz \|_1 \leq n C_\mu + \frac{2n}{\sqrt{\vmin}}\right\}.
\] Then, there exists a dimension independent constant $C$, such that
	\[
		\mathbb{E}\left[ 
			\lambda_i(\bZ,\btheta) 
			\mathbb{I} \left\{ \bZ \in \mathcal{E}_{1,n}^{c} \right \}
		\right]
		\le 
		C \exp\left( -\frac{n}{C} \right)
	\]
\end{lem}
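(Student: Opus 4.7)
The plan is to combine the deterministic bound on $\|\blambda(\bZ,\btheta)\|_1$ from \cref{lem:BoundedDualSol} with the exponential tail bound on $\|\bZ\|_1$ from \cref{lem:Bounding1NormZ} via Cauchy--Schwarz.

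First I would observe that since $\blambda(\bZ,\btheta) \geq \bm 0$, we have $\lambda_i(\bZ,\btheta) \leq \|\blambda(\bZ,\btheta)\|_1$, and by \cref{lem:BoundedDualSol} together with the definition of $\br(\bZ,\btheta)$ under \cref{asn:SmoothPlugIn},
\[
\lambda_i(\bZ,\btheta) \ \leq \ \frac{2}{n s_0}\|\br(\bZ,\btheta)\|_1 \ \leq \ \frac{2 a_{\max}}{n s_0}\|\bZ\|_1 + \frac{2 b_{\max}}{s_0}.
\]
This shows $\lambda_i(\bZ,\btheta)^2$ is controlled, pointwise, by an affine function of $\|\bZ\|_1^2/n^2$.

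Next I would bound $\mathbb{E}[\lambda_i(\bZ,\btheta)^2]$ by a dimension-independent constant. Using $(x+y)^2 \le 2x^2 + 2y^2$ and then Cauchy--Schwarz, $\mathbb{E}[\|\bZ\|_1^2] \leq n \sum_j \mathbb{E}[Z_j^2] = n\sum_j(\mu_j^2 + 1/\nu_j) \leq n^2(C_\mu^2 + 1/\vmin)$, where the moment bound uses \cref{asn:Gaussian,asn:Parameters}. Dividing by $n^2$ produces a dimension-independent upper bound $C_1$ on $\mathbb{E}[\lambda_i(\bZ,\btheta)^2]$.

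Finally, I would apply Cauchy--Schwarz to the expectation:
\[
\mathbb{E}\!\left[\lambda_i(\bZ,\btheta)\,\mathbb{I}\{\bZ \in \mathcal{E}_{1,n}^c\}\right] \ \leq \ \sqrt{\mathbb{E}[\lambda_i(\bZ,\btheta)^2]} \cdot \sqrt{\mathbb{P}(\bZ \in \mathcal{E}_{1,n}^c)} \ \leq \ \sqrt{C_1}\cdot e^{-n/64},
\]
where the tail probability bound $\mathbb{P}(\bZ\in\mathcal{E}_{1,n}^c) \leq e^{-n/32}$ is precisely \cref{lem:Bounding1NormZ}. Choosing $C$ large enough to absorb $\sqrt{C_1}$ and to weaken the exponent to $e^{-n/C}$ yields the lemma. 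The only mild subtlety is ensuring the constants $C_1$ and those from \cref{lem:BoundedDualSol} depend only on the allowed parameters ($\nu_{\min}, C_\mu, s_0, a_{\max}, b_{\max}$) and not on $n$ or $\btheta$; this is immediate from \cref{asn:SmoothPlugIn,asn:Parameters}. There is no essential obstacle here --- the proof is a short Cauchy--Schwarz combined with two previously established lemmas.
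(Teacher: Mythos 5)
Your proof is correct, but it takes a different route from the paper's. The paper proves the lemma by working directly with the truncated first moment: it first shows $\Eb{\|\bZ\|_1 \, \Ib{\bZ \in \mathcal{E}_{1,n}^c}} \le C' e^{-n/C'}$ via the tail formula $\Eb{X} = \int_0^\infty \Pb{X \ge t}\,dt$, splitting the integral at $nC_\mu + 2n/\sqrt{\vmin}$ and using the sub-Gaussian tail of $\frac{1}{n}\sum_j |Z_j - \mu_j|$, and then multiplies the deterministic bound $\lambda_i(\bZ,\btheta) \le \frac{2}{ns_0}\left(a_{\max}\|\bZ\|_1 + b_{\max} n\right)$ by the indicator and takes expectations, picking up the extra term $C\,\Pb{\bZ \in \mathcal{E}_{1,n}^c} \le C e^{-n/32}$ from \cref{lem:Bounding1NormZ}. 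You instead decouple the magnitude from the rare event by Cauchy--Schwarz, replacing the truncated-first-moment computation with a crude dimension-independent bound on $\Eb{\lambda_i(\bZ,\btheta)^2}$ (via $\|\bZ\|_1^2 \le n\sum_j Z_j^2$ and $\Eb{Z_j^2} = \mu_j^2 + 1/\nu_j$) and paying only a square root on the tail probability, which still leaves an $e^{-n/64}$ decay. Both arguments rest on the same two ingredients (\cref{lem:BoundedDualSol} and \cref{lem:Bounding1NormZ}); yours is shorter because it avoids the integration-by-parts bookkeeping, at the mild cost of invoking the second-moment structure of $\bZ$ (\cref{asn:Gaussian}), which is not listed in the lemma statement but is standing in this section and is also implicitly used by the paper through the sub-Gaussianity underlying \cref{lem:Bounding1NormZ}, so this is not a substantive gap. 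Your closing remark about the constants depending only on $\nu_{\min}, C_\mu, s_0, a_{\max}, b_{\max}$ is the right check and holds as you say.
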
 

\proof{Proof of \cref{lem:cond-dual-sol-bnd}:}
We first bound
\begin{align*}
	\mathbb{E}\left[
		\left\Vert \bZ\right\Vert _{1}
		\mathbb{I} \left\{\bZ \in \mathcal{E}_{1,n}^{c} \right\} \right]
		& = 
		\int_{0}^{\infty} \Pb{ \left\Vert \bZ\right\Vert _{1}
		\mathbb{I} \left\{ \bZ \in \mathcal{E}_{1,n}^{c} \right\}
		\ge t } dt
	\\
 	& =
 	\int_0^{n C_\mu + \frac{2n}{\sqrt \vmin}}
 	\Pb{ \| \bZ\|_1 > n C_\mu + \frac{2n}{\sqrt \vmin}} dt
 	\ + \ 
 	\int_{n C_\mu + \frac{2n}{\sqrt \vmin}}^{\infty}
 	\Pb{ \left\Vert \bZ\right\Vert _{1} \ge t } dt
 	\\
 	& \ \leq \ 
	\left(n C_\mu + \frac{2n}{\sqrt \vmin}\right) e^{-n/32}	
	\ + \ 
\int_{n C_\mu + \frac{2n}{\sqrt \vmin}}^{\infty}
 	\Pb{ \left\Vert \bZ\right\Vert _{1} \ge t } dt
\end{align*}
By inspection, there exists a dimension independent constant $C_1$ such that the first term is at most $C_1 e^{-n/C_1}$.

To analyze the second term, recall $\| \bZ\|_1 \ \leq n C_\mu + \sum_{j=1}^n \abs{Z_j - \mu_j}$.  Hence, 
\begin{align*}
\int^\infty_{nC_\mu + \frac{2n}{\sqrt{\vmin}}} \Pb{ \abs{\bZ}_1 \geq t} dt
& \ = \ 
\int^\infty_{nC_\mu + \frac{2n}{\sqrt{\vmin}}} \Pb{ \frac{1}{n}\sum_{j=1}^n \abs{Z_j - \mu_j} \geq \frac{1}{n} t - C_\mu} dt
\\
& \ = \ 
\int^\infty_{nC_\mu + \frac{2n}{\sqrt{\vmin}}} \Pb{ \frac{1}{n}\sum_{j=1}^n \abs{Z_j - \mu_j}- \Eb{\abs{Z_j - \mu_j}} \geq \frac{1}{n} t - C_\mu - \frac{1}{\vmin}} dt,
\end{align*}
since $\Eb{\abs{Z_j - \mu_j}} \ \leq \frac{1}{\sqrt{\vmin}}$ by Jensen's inequality.  Now make the change of variable $s = \frac{t}{n} - C_\mu - \frac{1}{\sqrt{\vmin}}$ to obtain
\begin{align*}
n\int_{\vmin^{-1/2}}^\infty \Pb{ \frac{1}{n}\sum_{j=1}^n \abs{Z_j - \mu_j}- \Eb{\abs{Z_j - \mu_j}} \geq s} ds
& \ \leq \ 
n \int_{\vmin^{-1/2}}^\infty e^{-\frac{s^2\vmin n}{32}} ds
\end{align*}
because $\abs{Z_j - \mu_j} - \Eb{\abs{Z_j - \mu_j}}$ is a mean-zero, sub-Gaussian random variable with variance proxy at most $\frac{16}{\vmin}$.  (See \cref{lem:Bounding1NormZ} for clarification.)
Making another change of variables proves this last integral is equal to
\begin{align*}
\frac{4}{\sqrt{\vmin n}} \int_{\sqrt n /4}^\infty e^{-t^2/2} dt, 
& \ \leq \ 
\frac{16}{n\sqrt{\vmin}} e^{-\frac{n}{32}}
\end{align*}
This value is also at most $C_2 e^{-n/C_2}$ for some constant $C_2$.

In summary, we have shown that there exists a dimension independent constant $C_3$ such that
\[
	\mathbb{E}\left[
		\left\Vert \bZ\right\Vert _{1}
		\mathbb{I} \left\{\bZ \in \mathcal{E}_{1,n}^{c} \right\} \right] \ \leq C_3 e^{-n / C_3}.
\]	

Now to prove the lemma, recall by \cref{lem:BoundedDualSol}, 
\begin{equation}
	\lambda_{i}(\bZ,\btheta) 
	\le \left\Vert \blambda(\bZ,\btheta)\right\Vert _{1} 
	\le \frac{2}{s_{0}n} \left\Vert \bm{r}(\bZ,\btheta) \right\Vert_1
	\le \frac{2}{s_{0}n} \left( a_{\max}\left\Vert \bZ\right\Vert _{1} + b_{\max}n \right)	 
\end{equation}
where the second inequality holds by \cref{asn:SmoothPlugIn}.  Multiplying by $\Ib{ \bZ  \in \mathcal E^c_{1,n}}$ and taking expectations hows,
\[
	\Eb{ \lambda_i(\bZ, \btheta) \Ib{\bZ \in \mathcal E^c_{1,n}} } \ \leq \ 
	\frac{C_4}{n} e^{-n/C_4} + C_4 \Pb{\bZ \in \mathcal E^c_{1,n}}
	\ \leq \ \frac{C_4}{n} e^{-n/C_4} + C_4 e^{-n/32},
\]
by \cref{lem:Bounding1NormZ}.  Collecting constants proves the lemma.  
%
%
%
\hfill \Halmos \endproof

We now use \cref{thm:Combes-McDiarmid} to prove that the dual solution concentrates at its expectation for any fixed $\btheta \in \Theta$.
 \begin{lem}
[Pointwise Convergence Dual Solution]\label[lem]{lem:dual-sol-pointwise}
Fix some $\btheta \in \Theta$ and $i = 1, \ldots, m$.
Under \cref{asn:Parameters,asn:SmoothPlugIn,asn:LiftedAffinePlugInII}
There exists dimension independent constants $C$ and $n_0$, such that for all $n\ge n_0 e^R$, the
following holds with probability $1-\exp\left(-R\right)$
\[
\left|\lambda_{i}(\bZ,\btheta)-\mathbb{E}\left[\lambda_{i}(\bZ,\btheta)\right]\right|
\le 
C V^3 \log^2 V \frac{\log^4 n}{\sqrt n }\sqrt{R}.\]
\end{lem}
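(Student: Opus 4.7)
The natural strategy is to apply the Combes--McDiarmid inequality (\cref{thm:Combes-McDiarmid}) with ``good'' set $\mathcal{Y} = \mathcal{E}_n$, using the dual stability result of \cref{lem:DualStability} to supply the bounded differences constants. Since $|\lambda_i(\bz,\btheta)| \le \|\blambda(\bz,\btheta)\|_2$, \cref{lem:DualStability} tells us that for any $\bz, \bar{\bz} \in \mathcal{E}_n$ with $\|\blambda(\bar{\bz},\btheta)\|_1 \le \lambda_{\max}$ (automatic since $\bar{\bz} \in \mathcal{E}_n \subseteq \mathcal{E}_{1,n}$ by \cref{lem:DualsBoundedOnE}),
\[
  |\lambda_i(\bz,\btheta) - \lambda_i(\bar{\bz},\btheta)| \;\le\; \sum_{j=1}^n c_j \,\Ib{z_j \neq \bar{z}_j}, \qquad c_j := \frac{C_0\, V^3 \log^2(V)\, \log^4 n}{n},
\]
for some dimension-independent $C_0$. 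Hence $\|\bm{c}\|_2 = C_0 V^3 \log^2(V)\log^4(n)/\sqrt n$ and $\bar{c} = C_0 V^3 \log^2(V)\log^4(n)$. Also, by \cref{lem:ZinGoodSetHighProb}, $p := \Pb{\bZ \notin \mathcal{E}_n} \le 4/n$.

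Next, I would choose $\epsilon$ in \cref{thm:Combes-McDiarmid} such that $\log(2/(\epsilon - 2p)) = R$, i.e., $\epsilon = 2p + 2e^{-R}$. Under the hypothesis $n \ge n_0 e^R$ with $n_0$ large enough, we have $p \le e^{-R}$, so $\epsilon \le 4 e^{-R}$. The conclusion of \cref{thm:Combes-McDiarmid} then reads: with probability at least $1-4e^{-R}$,
\[
  \bigl|\lambda_i(\bZ,\btheta) - \Eb{\lambda_i(\bZ,\btheta) \mid \bZ \in \mathcal{E}_n}\bigr| \;\le\; p \bar c + \|\bm c\|_2 \sqrt{R} \;\le\; \frac{4 C_0 V^3 \log^2(V)\log^4(n)}{n} + C_0 V^3 \log^2(V) \frac{\log^4 n}{\sqrt n}\sqrt R.
\]
(Absorbing the factor of $4$ into a constant and rescaling $R$ by an absolute constant converts the $4e^{-R}$ tail into $e^{-R}$.)

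The remaining task is to replace the conditional expectation $\Eb{\lambda_i \mid \bZ \in \mathcal{E}_n}$ by the unconditional expectation $\Eb{\lambda_i(\bZ,\btheta)}$. Writing
\[
  \Eb{\lambda_i(\bZ,\btheta)} = (1-p)\, \Eb{\lambda_i \mid \bZ \in \mathcal{E}_n} + \Eb{\lambda_i(\bZ,\btheta)\,\Ib{\bZ \notin \mathcal{E}_n}},
\]
we see the difference is bounded by $p\, \Eb{\lambda_i \mid \bZ \in \mathcal{E}_n} + \Eb{\lambda_i \Ib{\bZ \notin \mathcal{E}_n}}$. The first summand is at most $(4/n)\lambda_{\max}$ since on $\mathcal{E}_n$ we have $\lambda_i \le \lambda_{\max}$ by \cref{lem:DualsBoundedOnE}. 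For the second, split $\mathcal{E}_n^c = (\mathcal{E}_{1,n}^c) \cup (\mathcal{E}_{1,n} \cap \mathcal{E}_n^c)$. On the latter set $\lambda_i \le \lambda_{\max}$, so its contribution is at most $\lambda_{\max} \cdot \Pb{\mathcal{E}_n^c} \le 4\lambda_{\max}/n$; on $\mathcal{E}_{1,n}^c$, \cref{lem:cond-dual-sol-bnd} bounds the contribution by $C e^{-n/C}$. Both terms are $O(1/n)$ and are dominated by $V^3\log^2(V)\log^4(n)/\sqrt n$ for $n$ sufficiently large. Collecting the constants and absorbing lower-order terms yields the claimed bound.

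The main technical work is not the concentration step itself (which is a direct application of \cref{thm:Combes-McDiarmid} once \cref{lem:DualStability,lem:ZinGoodSetHighProb} are in hand) but rather verifying that conditioning on $\mathcal{E}_n$ does not materially bias the expectation. That step relies crucially on the uniform bound $\lambda_{\max}$ on $\mathcal{E}_n$ together with the moment-style bound of \cref{lem:cond-dual-sol-bnd}, which controls the tail contribution of $\lambda_i$ when $\|\bZ\|_1$ is atypically large. Keeping track of the interplay between the three scales ($p = 4/n$, $\epsilon \approx e^{-R}$, and $n \ge n_0 e^R$) is the delicate bookkeeping but does not require any new ideas.
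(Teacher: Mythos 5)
Your proposal is correct and follows essentially the same route as the paper's proof: Combes--McDiarmid on the good set $\mathcal E_n$ with the bounded-differences constants from \cref{lem:DualStability} and $\Pb{\bZ\notin\mathcal E_n}$ from \cref{lem:ZinGoodSetHighProb}, plus the same conditional-versus-unconditional expectation correction using $\lambda_{\max}$ on $\mathcal E_n$ and \cref{lem:cond-dual-sol-bnd} on the bad set. The only (immaterial) difference is bookkeeping: the paper takes $\epsilon = e^{-R}$ directly (using $n \ge n_0 e^R$ to ensure $\epsilon > 2p$), whereas you take $\epsilon = 2p + 2e^{-R}$ and rescale $R$ to absorb the resulting factor of $4$ in the tail probability.
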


\proof{Proof of \cref{lem:dual-sol-pointwise}:}
The proof will use the dual stability condition (\cref{lem:DualStability}) to apply \cref{thm:Combes-McDiarmid}.  Since $\btheta$ is fixed throughout, we drop it from the notation.  


By triangle inequality, 
\begin{equation} \label{eq:SplitLambdaIntoConditional}
  \left|\lambda_{i}(\bZ)-\mathbb{E}\left[\lambda_{i}(\bZ)\right]\right| 
  \ \le \ 
  \underbrace{\left|\lambda_{i}(\bZ)-\Eb{\left.\lambda_{i}(\bZ)\right|\bZ\in\mathcal E_n }\right|}_{(a)}
  +
  \underbrace{\left|\Eb{\left.\lambda_{i}(\bZ)\right|\bZ\in\mathcal E_n } -\mathbb{E}\left[\lambda_{i}(\bZ)\right]\right|}_{(b)}.
\end{equation}

We first bound $(b)$ by a term that is $O(1/n)$. We see that 
\begin{align*}
 	\mathbb{E}\left[\left.\lambda_{i}(\bZ)\right| \bZ \in \mathcal E_n  \right]
 	- \mathbb{E}\left[\lambda_{i}(\bZ)\right]
	& \ =  \ 
	\Big(\mathbb{E}\left[
		\left.\lambda_{i}(\bZ)\right| \bZ \in \mathcal E_n 
	\right]
	- \mathbb{E}\left[
		\left.\lambda_{i}(\bZ)\right| \bZ \notin \mathcal E_n 
	\right]
	\Big)
	\mathbb{P}\left\{ \bZ \notin \mathcal E_n  \right\} 
	\\
	& \ \le  \ 
	\frac{C_1}{n}  
	+ \mathbb{E}\left[
		\lambda_{i}(\bZ)
		\mathbb{I}\left\{ \bZ \notin \mathcal E_n \right\} 
	\right],
\end{align*}
where we used \cref{lem:DualsBoundedOnE,lem:ZinGoodSetHighProb} to bound the first term and $C_1$ is a dimension independent constant.
To bound the second term, define the set
\[
	\mathcal{E}_{0}
	\equiv
	\left\{ \bz : \|\bz\|_{1} \leq nC_{\mu} + \frac{2n}{\sqrt{\vmin}}\right\}.
\]
Notice, $\mathcal E_n \subseteq \mathcal E_{0}$.
Then write, 
\begin{align*}
	\mathbb{E}\left[
		\lambda_{i}(\bZ)
		\mathbb{I}\left\{ \bZ \notin \mathcal E_n \right\} 
	\right] 
 	& =
 	\mathbb{E}\left[
 		\lambda_{i}(\bZ)
 		\left(\mathbb{I}\left\{ \bZ\in\mathcal{E}_{0}^{c}\right\} 
 		+ \mathbb{I}\left\{ \bZ\in\mathcal{E}_{0}\cap\mathcal{E}_{n}^{c}\right\} 
 		\right)
 	\right]
 	\\
 	& \le
 	C_2 \exp\left(-\frac{n}{C_2} \right)
 	+ \lambda_{\max} \mathbb{P}\left\{ \bZ\in\mathcal{E}_{n}^c\right\} 
 	&&(\text{\cref{lem:cond-dual-sol-bnd,lem:DualsBoundedOnE}})
 	\\
 	& \le
 	C_3 \exp\left(-\frac{n}{C_3} \right)
 	+ \frac{C_2}{n}
 	&& (\text{\cref{lem:ZinGoodSetHighProb}}), 
\end{align*}
for dimension independent constants $C_2$ and $C_3$.

Collecting terms shows that there exists a dimension independent constant $C_3$ such that for $n$ sufficiently large, 
\[
	\text{Term (b) of \cref{eq:SplitLambdaIntoConditional}}  \ = \ \left| \Eb{\lambda_i(\bZ) \mid \bZ \in \mathcal E_n } -\mathbb{E}\left[\lambda_{i}(\bZ)\right]\right|\le \frac{C_{3}}{n}.
\]

We now bound Term $(a)$ by leveraging \cref{thm:Combes-McDiarmid}.  First note that for any $\bZ,\bar{\bZ}\in\mathcal E_n$, we have
\[
\left|\lambda_{i}(\bZ)-\lambda_{i}(\bar{\bZ})\right|=\sqrt{\left|\lambda_{i}(\bZ)-\lambda_{i}(\bar{\bZ})\right|^{2}}\le\sqrt{\sum_{i=1}^{m}\left|\lambda_{i}(\bZ)-\lambda_{i}(\bar{\bZ})\right|^{2}}=\left\Vert \blambda(\bZ)-\blambda(\bar{\bZ})\right\Vert _{2}.
\]
Thus, by \cref{lem:DualStability}, we see that
\[
	\left|\lambda_{i}(\bZ)-\lambda_{i}(\bar{\bZ})\right|
	\le
	C_{4}V^3 \log^2 V\cdot  \frac{\log^4(n)}{n}\sum_{j=1}^{n}\mathbb{I}\left\{ Z_{j}\ne\bar{Z}_{j}\right\}, 
\]
and, hence, $\lambda_i(\cdot)$ satisfies the bounded differences condition on $\mathcal E_n$.  
By \cref{lem:ZinGoodSetHighProb},
\mbox{\(
\mathbb{P}\left\{ \bZ \not \in\mathcal E_n \right\} \le \frac{C_{5}}{n}.
\)}
By the assumptions, $n > 4C_5e^R \implies e^{-R}> \frac{2C_5}{n}$.  
\Cref{thm:Combes-McDiarmid} then shows that with probability at least $1-e^{-R}$, 
\begin{align*}
\lambda_i(\bZ) - \Eb{\lambda_i(\bZ) \mid \bZ \in \mathcal E_n} 
&\ \leq \ 
C_5C_4 V^3 \log^2(V) \frac{\log^4 n}{n} 
+ 
C_4 V^3 \log^2(V) \frac{\log^4 n}{\sqrt n} 
\sqrt{\log\left(\frac{2}{e^{-R} - 2C_5/n}\right)}
\\
&\ \leq \ 
C_6 V^3 \log^2(V) \frac{\log^4 n}{\sqrt n} 
\sqrt{\log\left(\frac{2}{e^{-R} - 2C_5/n}\right)}
\\
&\ \leq \ 
C_6 V^3 \log^2(V) \frac{\log^4 n}{\sqrt n} 
\sqrt{\log\left(\frac{4}{e^{-R}}\right)}
\\
&\ \leq \ 
C_7 V^3 \log^2(V) \frac{\log^4 n}{\sqrt n} 
\sqrt{R},
\end{align*}
where the third inequality again uses $n > 4C_5e^R$, and the remaining inequalities simply collect constants and dominant terms.  



To summarize, by substituting the two bounds into the upperbound of $\left|\lambda_{i}(\bZ)-\mathbb{E}\left[\lambda_{i}(\bZ)\right]\right|$ in \cref{eq:SplitLambdaIntoConditional}, we obtain that with probability at least $1-e^{-R}$
\[
	\left| \lambda_i(\bZ) - \mathbb{E}\left[ \lambda_i(\bZ) \right] \right|
	\leq 
C_8 V^3 \log^2(V) \frac{\log^4 n}{\sqrt n} 
\sqrt{R}
	+ \frac{C_8}{n}.
\]
Collecting terms completes the proof.

\hfill \Halmos \endproof

\proof{Proof of \cref{lem:VGC-pw-conv}:}
Since $\btheta$ is fixed, we drop it from the notation.
By triangle inequality, 
\begin{equation}\label{eq:vgc-point-conv-i}
	\left|D(\bZ)-\mathbb{E}\left[D(\bZ)\right]\right| \le 
	\left|D(\bZ)-\mathbb{E}\left[\left.D(\bZ)\right|\bZ\in\mathcal E_n \right]\right|+\left|\mathbb{E}\left[\left.D(\bZ)\right|\bZ\in\mathcal E_n \right]-\mathbb{E}\left[D(\bZ)\right]\right|.
\end{equation}
We bound the latter term first.
Since $D(\bZ)$ is bounded by \cref{lem:VGCBounded}, we see
\begin{align*} 
	\left|\mathbb{E}\left[\left.D(\bZ)\right|\bZ\in\mathcal{E}_{n}\right]
	- \mathbb{E}\left[D(\bZ)\right]\right|
	= & \Bigl|\mathbb{E}\left[\left.D(\bZ)\right|\bZ\in\mathcal{E}_{n}\right]
	- \mathbb{E}\left[\left.D(\bZ)\right|\bZ \notin \mathcal{E}_{n} \right]\Bigr|
	\mathbb{P}\left\{ \bZ\notin\mathcal{E}_{n}\right\} 
	\\
	\le & \frac{C_{1}}{n\sqrt{h}}
\end{align*}
for some dimension independent constant $C_1$ by using \cref{lem:ZinGoodSetHighProb}.

We now bound the first term.  We use \cref{thm:Combes-McDiarmid}. Recall from \cref{lem:VGC-Dual-Solution-Bnd} that
\[
\left|D(\bZ)-D(\overline{\bZ})\right|\le C_{2} \log^4 n \cdot  V^3 \log^2 V \cdot \frac{1}{h}\sum_{j=1}^{n}\mathbb{I}\left\{ Z_{j}\ne\overline{Z}_{j}\right\} 
\]
for $\bZ,\overline{\bZ}\in\mathcal{E}_{n}$ and from \cref{lem:ZinGoodSetHighProb}
$\mathbb{P}\left\{ \bZ \not \in\mathcal{E}_{n}\right\} \le \frac{C_{1}}{n}$.  
Finally if $n > 4C_1 e^R$, then $2C_1/n < \frac{1}{2} e^{-R}$, and we have by \cref{thm:Combes-McDiarmid} 
that
\begin{align*}
\abs{D(\bZ) - \Eb{D(\bZ) \mid \bZ \in \mathcal E_n}} 
&\ \leq \ 
C_3 V^3 \log^2(V) \frac{\log^4(n)}{h} + 
C_3 V^3 \log^2(V) \frac{\log^4(n)\sqrt n}{h}
\sqrt{ \log\left(\frac{2}{e^{-R} - 2C_1/n}\right)}
\\
& \ \leq \ 
C_4 V^3 \log^2(V) \frac{\log^4(n)\sqrt n}{h}
\sqrt{ \log\left(\frac{2}{e^{-R} - 2C_1/n}\right)}
\\
& \ \leq \ 
C_5 V^3 \log^2(V) \frac{\log^4(n)\sqrt n}{h}
\sqrt{R},
\end{align*}
where the last line again uses $n > 4C_1 e^R$.

Returning to the initial upper bound \cref{eq:vgc-point-conv-i},  we apply our two bounds to see
\[
	\abs{D(\bZ) - \Eb{D(\bZ)}} 
	\ \leq \ 
	C_6 V^3 \log^2(V) \frac{\log^4(n)\sqrt n}{h} \sqrt{R} 
	+ \frac{C_{6}}{n\sqrt{h}}
\]
By \cref{asn:Parameters}, $h < 1 <  n$ implies that $\frac{\sqrt{n}}{h} \ \geq \frac{1}{n \sqrt h}$.  Hence, collecting dominant terms completes the proof.  
\hfill \Halmos \endproof
\subsection{Uniform Convergence of Dual Solutions} 
The goal of this section is to extend our previous pointwise results to uniform results over all $\btheta \in \Theta$. Let $\overline{\Theta}$
be a minimal $\frac{1}{n}$-covering of $\Theta$.  Then, for every $\btheta\in\Theta$
there exists $\bthetabar\in\overline{\Theta}$ such that $\| \btheta-\bthetabar\|_{2}\le\frac{1}{n}$. 

\begin{lem}
[Uniform Convergence Dual Solution]\label[lem]{lem:Uniform-Dual-sol-conv}
Under the assumptions of \cref{thm:WC-Constraints-bound},
there exists dimension independent constants $C$ and $n_0$ such that for any $R >1$ and any 
$n \geq n_0e^R$,  the following holds with probability $1-2 e^{-R}$:
\[
	\sup_{\btheta\in\Theta}
	\|\blambda(\bZ,\btheta) - \mathbb{E}\left[\blambda(\bZ,\btheta)\right]\|_{\infty}
	\le 
	C V^2 \log^2 V \log m \sqrt{R\log N\left(\frac{1}{n},\Theta\right)} \frac{\log^4 n}{\sqrt n}
\]
\end{lem}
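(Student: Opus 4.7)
My plan is to prove this uniform bound by a standard covering argument that reduces the supremum over $\Theta$ to a supremum over a $\frac{1}{n}$-net, using the two main ingredients from the preceding lemmas: \Cref{lem:dual-sol-pointwise} which gives pointwise concentration of $\lambda_i(\bZ,\btheta)$ at a fixed $\btheta$, and \Cref{lem:DualSmoothness} which shows that $\btheta \mapsto \blambda(\bz,\btheta)$ is Lipschitz in a quantitative sense, provided $\bz \in \mathcal{E}_n$ and $\|\btheta - \bthetabar\| \le 1/n$.

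\textbf{Step 1 (Covering and triangle inequality).} Let $\overline{\Theta}$ be a minimal $\frac{1}{n}$-covering of $\Theta$, so $|\overline{\Theta}| = N(1/n, \Theta)$. For each $\btheta \in \Theta$ pick $\bthetabar \in \overline{\Theta}$ with $\|\btheta - \bthetabar\|_2 \le 1/n$. Then for each coordinate $i$,
\begin{align*}
|\lambda_i(\bZ,\btheta) - \mathbb{E}\lambda_i(\bZ,\btheta)|
&\le \|\blambda(\bZ,\btheta) - \blambda(\bZ,\bthetabar)\|_2
+ |\lambda_i(\bZ,\bthetabar) - \mathbb{E}\lambda_i(\bZ,\bthetabar)| \\
&\quad + \|\mathbb{E}[\blambda(\bZ,\bthetabar)] - \mathbb{E}[\blambda(\bZ,\btheta)]\|_2.
\end{align*}
Taking suprema over $\btheta$ and maxima over $i$ reduces the problem to controlling each term separately.

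\textbf{Step 2 (Deterministic smoothness term).} On the event $\{\bZ \in \mathcal{E}_n\}$, \Cref{lem:DualSmoothness} directly gives
$\|\blambda(\bZ,\btheta) - \blambda(\bZ,\bthetabar)\|_2 \le C_1 V^2 \log V \cdot \frac{\log^{5/4} n}{\sqrt n}$, uniformly over the choice of $\btheta$ and its proxy in the net. By \Cref{lem:ZinGoodSetHighProb}, $\bZ \in \mathcal{E}_n$ with probability at least $1 - 4/n$, so for $n \ge n_0 e^R$ this already holds with probability at least $1 - e^{-R}$.

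\textbf{Step 3 (Expected smoothness term).} Split the expected difference as
\[
\|\mathbb{E}[\blambda(\bZ,\bthetabar)] - \mathbb{E}[\blambda(\bZ,\btheta)]\|_2
\le \mathbb{E}\bigl[\|\blambda(\bZ,\bthetabar) - \blambda(\bZ,\btheta)\|_2 \mathbb{I}\{\bZ \in \mathcal{E}_n\}\bigr]
+ \mathbb{E}\bigl[\|\blambda(\bZ,\bthetabar) - \blambda(\bZ,\btheta)\|_2 \mathbb{I}\{\bZ \notin \mathcal{E}_n\}\bigr].
\]
The first piece is again bounded via \Cref{lem:DualSmoothness}, while the second is controlled by combining \Cref{lem:cond-dual-sol-bnd} (which handles the $\mathcal{E}_{1,n}^c$ part) with \Cref{lem:DualsBoundedOnE} (which bounds $\blambda$ by $\lambda_{\max}$ on $\mathcal{E}_{1,n} \cap \mathcal{E}_n^c$), together with $\mathbb{P}(\bZ \notin \mathcal{E}_n) \le 4/n$. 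These together yield a bound of order $V^2 \log V \cdot \log^{5/4}(n)/\sqrt n$ plus lower-order terms that are absorbed.

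\textbf{Step 4 (Pointwise concentration on the net).} Apply \Cref{lem:dual-sol-pointwise} at each pair $(\bthetabar,i) \in \overline{\Theta} \times \{1,\dots,m\}$ with confidence level $R' = R + \log N(1/n,\Theta) + \log m$, and take a union bound. Provided $n \ge n_0 e^{R'}$ (which gives the $n_0$ in the statement, possibly after enlarging), with probability at least $1 - e^{-R}$ we have
\[
\max_{\bthetabar \in \overline{\Theta}, \, i \le m}|\lambda_i(\bZ,\bthetabar) - \mathbb{E}\lambda_i(\bZ,\bthetabar)|
\le C_2 V^3 \log^2 V \cdot \frac{\log^4 n}{\sqrt n} \sqrt{R + \log N(1/n,\Theta) + \log m}.
\]

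\textbf{Step 5 (Combine).} Summing the three bounds and collecting dominant terms yields a bound dominated by the Step 4 contribution. Using $\sqrt{a+b+c} \le \sqrt a + \sqrt b + \sqrt c$ and absorbing constant factors and cross-terms into the stated form (exploiting that $V \ge 2$, $m \le V$, $\log m \cdot \sqrt{\log N(1/n,\Theta)}$ already dominates the cross-terms for large $n$) produces the claimed expression.

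The main obstacles will be (i) bookkeeping constants so that the expected-smoothness correction in Step 3 is genuinely absorbed into the dominant stochastic term rather than inflating the leading coefficient (in particular making sure the bad-set contribution, which is only $O(1/n)$ in probability but potentially $O(1)$ in magnitude, does not dominate); and (ii) justifying that the single application of \Cref{lem:dual-sol-pointwise} at level $R'$ satisfies the sample-size condition $n \ge n_0 e^{R'}$, which requires enlarging the baseline $n_0$ by a polynomial factor in $N(1/n,\Theta) \cdot m$ — this forces $n$ to be at least polynomially larger than the covering number, a constraint we can absorb by choosing $n_0$ appropriately.
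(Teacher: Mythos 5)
Your decomposition (Steps 1--3) is exactly the paper's: the same $\frac1n$-net, the same three terms, bounded with \Cref{lem:DualSmoothness} on $\mathcal E_n$, \Cref{lem:ZinGoodSetHighProb}, \Cref{lem:cond-dual-sol-bnd} and \Cref{lem:DualsBoundedOnE} for the expected-smoothness piece, and \Cref{lem:dual-sol-pointwise} plus a union bound on the net. So structurally you are on the paper's route.

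The genuine gap is in Step 4 and in your proposed patch for it. You apply \Cref{lem:dual-sol-pointwise} as a black box at the inflated level $R' = R + \log N(1/n,\Theta) + \log m$, which imports that lemma's hypothesis $n \ge n_0 e^{R'}$, i.e.\ $n \gtrsim n_0 e^{R}\, m\, N(1/n,\Theta)$. You say this can be ``absorbed by choosing $n_0$ appropriately,'' but it cannot: $n_0$ must be a dimension-independent constant, whereas $N(1/n,\Theta)$ grows with $n$ (for a compact $\Theta\subset\R^p$ of diameter $\Gamma$ it is of order $(3n\Gamma)^p$), so the requirement $n \ge n_0 e^R\, m\, N(1/n,\Theta)$ is a materially stronger hypothesis than the stated $n \ge n_0 e^R$ and is in fact unsatisfiable for large $n$ once $p\ge 1$. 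The condition $n\ge n_0 e^{R'}$ in the pointwise lemma exists only to guarantee $\epsilon > 2\Pb{\bZ\notin\mathcal E_n}$ in \Cref{thm:Combes-McDiarmid}, and the key observation you are missing is that the exceptional event $\{\bZ\notin\mathcal E_n\}$ is the \emph{same} event for every $(\bthetabar,i)$ on the net. So instead of invoking \Cref{lem:dual-sol-pointwise} at inflated confidence, open its proof: pay $\Pb{\bZ\notin\mathcal E_n}\le C/n$ once, and union-bound only the exponential (bounded-differences) tails over the $m\,N(1/n,\Theta)$ functions, which inflates the deviation by a factor $\sqrt{\log\bigl(m\,N(1/n,\Theta)\bigr)}$ but leaves the sample-size requirement at $n\ge n_0 e^R$ (needed only so that $C/n\le e^{-R}$ and for swapping conditional and unconditional expectations). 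With that modification your argument closes and matches the claimed bound; without it, Step 4 does not establish the lemma under the stated hypothesis.
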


\proof{Proof of \cref{lem:Uniform-Dual-sol-conv}:}
By triangle inequality, 
\begin{align*}
\sup_{\btheta\in\Theta}\| \blambda(\bZ,\btheta)-\mathbb{E}\left[\blambda(\bZ,\btheta)\right]\|_\infty & \le\underbrace{\sup_{\btheta\in\Theta}\|\blambda(\bZ,\btheta)-\blambda(\bZ,\bthetabar)\|_\infty}_{(a)}+\underbrace{\sup_{\btheta\in\Theta}\|\mathbb{E}\left[\blambda(\bZ,\btheta)-\blambda(\bZ,\bthetabar)\right]\|_\infty}_{(b)}\\
 & \qquad+\underbrace{\sup_{\bthetabar\in\bar{\Theta}}\|\blambda(\bZ,\bthetabar)-\mathbb{E}\left[\blambda(\bZ,\bthetabar)\right]\|_\infty}_{(c)}
\end{align*}
We bound each term separately. 

First we bound Term $(a)$. If $\bZ \in \mathcal E_n$, then, from \cref{lem:DualSmoothness}, and bounding the $\ell_\infty$-norm by the $\ell_2$-norm,
\[
\sup_{\btheta\in\Theta}\| \blambda(\bZ,\btheta)-\blambda(\bZ,\bthetabar)\|_\infty
\le
 C_1 V^3 \log^2 V \frac{ \log^{5/4} n }{\sqrt{n}}
\]
for some dimension independent constant $C_1$.  
By \cref{lem:ZinGoodSetHighProb}, this occurs with probability at least $1-4/n$.

Next, we bound $(b)$. Telescoping the expectation as before, we have for any $i =1, \ldots, m$ that 
\begin{align*}
	\mathbb{E}\left[\lambda_{i}(\bZ,\btheta) - \lambda_{i}(\bZ,\bthetabar)\right] 
	& =
	\mathbb{E}\left[\lambda_{i}(\bZ,\btheta) - \lambda_{i}(\bZ,\bthetabar) \Bigr| 
	\bZ \in \mathcal{E}_{n} \right]
	\mathbb{P}\left\{ \bZ \in \mathcal{E}_{n}\right\} 
	\\
 	& \qquad 
 	+ \mathbb{E}\left[\lambda_{i}(\bZ,\btheta) - \lambda_{i}(\bZ,\bthetabar)\Bigr|
 	\bZ \notin \mathcal{E}_{n} \right]
 	\mathbb{P}\left\{ \bZ \notin \mathcal{E}_{n} \right\} 
\end{align*}
We can bound the first term using \cref{lem:DualSmoothness,lem:ZinGoodSetHighProb}.
To bound the second term, define the set 
\[
	\mathcal E_{1,n} \equiv 
	\left\{ \bz \ : \ \| \bz \|_1 \ \leq n C_\mu + \frac{2n}{\sqrt\vmin}\right\},
\]
and recall that $\mathcal E_n \subseteq \mathcal E_{1,n}$.  Observe that
\begin{align*}
	\mathbb{E}\left[\lambda_{i}(\bZ,\btheta) \Bigr| \bZ \notin \mathcal{E}_{n} \right]
	\Pb{\bZ \notin \mathcal{E}_{n}}
	& =
	\mathbb{E}\left[\lambda_{i}(\bZ,\btheta)\mathbb{I}\left\{\bZ \notin \mathcal{E}_{n}\right\} \right]
	\\
	& \leq 
	\mathbb{E}\left[
		\lambda_{i}(\bZ,\btheta) 
		\left( \mathbb{I}\left\{\bZ \notin \mathcal{E}_{1,n}\right\}
		+
		\mathbb{I}\left\{\bZ \in \mathcal{E}_{1,n},  \bZ \notin \mathcal{E}_{n}\right\} 
		\right) 
	\right]
	\\
	& \leq
	\mathbb{E}\left[ \lambda_i(\bZ,\btheta)\mathbb{I}\left\{\bZ \notin \mathcal{E}_{1,n}\right\} \right]
	+ \lambda_{\max} \Pb{\bZ \notin \mathcal{E}_n } 
	&&(\text{\cref{lem:DualsBoundedOnE}})
	\\
	& \leq
	C_2 \exp \left( - \frac{n}{C_2} \right) +  \frac{C_2 \lambda_{\max} }{n}
	&&(\text{\cref{lem:ZinGoodSetHighProb,lem:cond-dual-sol-bnd}}),
\end{align*}
for some dimension independent constant $C_2$.
Combining these observations shows that
\begin{align*}
	\mathbb{E}\left[\lambda_{i}(\bZ,\btheta) - \lambda_{i}(\bZ,\bthetabar)\right] 
	& \le  
	C_3 V^3 \log^2 V \frac{ \log^{5/4} n }{\sqrt{n}} +
	C_3 \exp \left( - \frac{n}{C_3} \right) +  \frac{C_3 \lambda_{\max} }{n} \\
	& \le
	C_4 V^3 \log^2 V \frac{ \log^{5/4} n }{\sqrt{n}}
\end{align*}
where $C_3$ and $C_4$ are dimension-independent constants.  Taking the supremum over $\btheta \in \Theta$ and over $i=1, \ldots, m$, bounds Term $(b)$.

Finally, we bound Term $(c)$. We see that
\[
	\sup_{\bthetabar\in\bar{\Theta},0\le i \le m}
	\left|\lambda_{i}(\bZ,\bthetabar)-\mathbb{E}\left[\lambda_{i}(\bZ,\bthetabar)\right]\right|
	\le 
	C_{5}V^3 \log^2(V) \frac{\log^4 n}{\sqrt n} \sqrt{R \log \left(m\cdot N\left(\frac{1}{n},\Theta\right)\right)}
\]
by applying Lemma \ref{lem:dual-sol-pointwise} and taking the union
bound over the $\left|\overline{\Theta}\right|\le N\left(\frac{1}{n},\Theta\right)$
elements in $\overline{\Theta}$ by the $m$ choices of $i$.

Taking a union bound over the probabilities that bounds hold on Terms $(a)$ and $(c)$ and adding term $(b)$ shows that there exists a dimension independent constant $C$ such that with probability $ 1 - e^{-R} - 4/n$
\[
	\sup_{\btheta\in\Theta}
	\| \blambda(\bZ,\btheta) - \mathbb{E}\left[\blambda(\bZ,\btheta)\right] \|_\infty 
	\ \leq \ 
	C V^3 \log^2(V) \log m \sqrt{R \log N\left(\frac{1}{n},\Theta\right)} \frac{\log^4 n}{\sqrt n}.
\]
Finally, note that if $n > 4e^{-R}$, this last probability is at least $1-2e^{-R}$ to complete the proof.
\hfill \Halmos \endproof

\subsection{Uniform Convergence of In-Sample Optimism}
In this section, we construct a high-probability bound for
\[ 
	\sup_{\btheta\in\Theta}
	\left|
		\frac{1}{n}
		\sum_{j=1}^{n}
		\blue{\xi_{j}}x_{j}(\bZ, \btheta)
		-
		\mathbb{E}\left[\blue{\xi_{j}}x_{j}(\bZ, \btheta)\right]
	\right|
\]
where we recall that $\bZ = \bmu + \bxi $. Note for convenience we have scaled this by $\frac{1}{n}$. 

Constructing the bound requires  decomposing the in-sample optimism into several sub-components.  We outline the subcomponents by providing the proof to \Cref{lem:unif-conv-in-samp-wide-lp} first.   For convenience, in this section \emph{only}, we use the notation $\blambda(\btheta) \equiv \Eb{\blambda(\bZ, \btheta)}$ as shorthand.
\begin{lem}
[Uniform In-sample Optimism for Coupling Constraints] \label[lem]{lem:unif-conv-in-samp-wide-lp} Let $N(\varepsilon, \Theta)$ be the $\varepsilon-$covering number of $\Theta$. Under the assumptions of \cref{thm:WC-Constraints-bound}, there exists dimension independent constants $C$ and $n_0$ such that for any $R > 1$ and $n\ge n_0e^R$, the following holds with probability $1-6\exp(-R)$.
\[
	\sup_{\btheta\in\Theta}
	\left|\frac{1}{n}\sum_{j=1}^{n} \blue{\xi_{j}}x_{j}(\bZ;\btheta)
	-\mathbb{E}\left[\blue{\xi_{j}} x_{j}(\bZ;\btheta)\right]\right|
	\le 
	C V^3 \log^3 V
\sqrt{\log N\left(\frac{1}{n},\Theta\right) } \cdot \frac{R \log^4(n)}{\sqrt n}
\]
\end{lem}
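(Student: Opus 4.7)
The plan is to decompose the in-sample optimism using complementary slackness so that we can replace the data-dependent solution $\bx(\bZ, \btheta)$ with an "idealized" solution based on a deterministic dual variable, and then apply empirical process theory. Specifically, by strong duality and complementary slackness for \cref{eq:wc-c-plug-in-problem}, we have $x_j(\bZ, \btheta) = \Ib{r_j(Z_j, \btheta) > \bA_j^\top \blambda(\bZ, \btheta)}$ for all but at most $m$ fractional components. Let $\blambda^*(\btheta) \equiv \Eb{\blambda(\bZ, \btheta)}$ and define the idealized solution $\tilde{x}_j(\bZ, \btheta) \equiv \Ib{r_j(Z_j, \btheta) > \bA_j^\top \blambda^*(\btheta)}$. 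Then we split
\[
\frac{1}{n}\sum_{j=1}^n \xi_j x_j(\bZ, \btheta) - \Eb{\xi_j x_j(\bZ, \btheta)}
\;=\; \underbrace{\frac{1}{n}\sum_j \xi_j(x_j - \tilde{x}_j) - \Eb{\cdot}}_{\text{(I): Dual Approximation Error}} \;+\; \underbrace{\frac{1}{n}\sum_j \xi_j \tilde{x}_j - \Eb{\xi_j \tilde{x}_j}}_{\text{(II): Idealized Empirical Process}},
\]
which we bound separately uniformly in $\btheta$.

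For term (II), notice that with $\blambda^*(\btheta)$ deterministic, the summands are \emph{independent} across $j$. The class $\{(\xi_j \tilde{x}_j(\bZ, \btheta))_{j=1}^n : \btheta \in \Theta\}$ can be controlled by combining \cref{asn:LiftedAffinePlugInII} (which bounds the VC-dimension of the indicator class by $V$) with the sub-Gaussianity of $\xi_j$. A direct application of \cref{thm:SmallVariancePollard} (after truncating $\xi_j$ at a threshold $\sim \log n$ and handling the tail separately using sub-Gaussian bounds) yields a bound of order $\tilde{O}(V R \sqrt{\log n / n})$.

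For term (I), we argue that $x_j \neq \tilde{x}_j$ only when $r_j(Z_j, \btheta)$ lies between $\bA_j^\top \blambda(\bZ, \btheta)$ and $\bA_j^\top \blambda^*(\btheta)$, or when the $j$-th component is one of the at most $m$ fractional entries. On the good set $\mathcal{E}_n$ combined with \cref{lem:Uniform-Dual-sol-conv}, we have uniformly over $\btheta$ that $\|\blambda(\bZ, \btheta) - \blambda^*(\btheta)\|_\infty \leq \Gamma_n$ where $\Gamma_n = \tilde{O}(V^2 \log^2 V \cdot \log^4 n / \sqrt{n})$. Thus the set of indices where $x_j \neq \tilde{x}_j$ is contained in the set where $\abs{r_j(Z_j, \btheta) - \bA_j^\top \blambda^*(\btheta)} \leq C_A m \Gamma_n$. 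The "bounding away from degeneracy" guarantee built into $\mathcal{E}_n$ (\cref{lem:BoundAwayDegeneracy}) then controls the number of such $j$ by $\tilde{O}(n \sqrt{\Gamma_n})$, and bounding $|\xi_j|$ by $\log n$ outside a negligible event yields a bound of the right order after multiplying by $\sqrt{\Gamma_n}$. The expectation part of (I) is bounded by combining the bounded differences on $\mathcal{E}_n$ with the bound on $\Pb{\bZ \notin \mathcal{E}_n}$ from \cref{lem:ZinGoodSetHighProb}, which is $O(1/n)$ and hence negligible.

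The main obstacle is term (I): carefully coordinating the uniform dual stability bound (\cref{lem:Uniform-Dual-sol-conv}) with the degeneracy bound in $\mathcal{E}_n$ (\cref{lem:BoundAwayDegeneracy}), while also handling the uniform bound over $\btheta$. Since $\blambda(\bZ, \btheta)$ is random and depends on $\btheta$ through the data, naively each indicator $\Ib{r_j \in \langle \bA_j^\top \blambda(\bZ, \btheta), \bA_j^\top \blambda^*(\btheta)\rangle}$ couples all the summands and $\btheta$ in a complex way. The trick is to use the $\ell_\infty$-bound $\Gamma_n$ to upper bound the problematic indicator by $\Ib{\abs{r_j - \bA_j^\top \blambda^*(\btheta)} \leq C_A m \Gamma_n}$, which only depends on the \emph{deterministic} $\blambda^*(\btheta)$ and hence falls under the VC-controlled class of \cref{asn:LiftedAffinePlugInII}; the required uniform bound then follows from (a variant of) \cref{lem:BoundAwayDegeneracy} applied with $\Gamma = C_A m \Gamma_n$. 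The final bound of the theorem follows from substituting $\Gamma_n$, multiplying through by a $\sqrt{\log n}$ factor coming from bounding $\max_j |\xi_j|$, and taking a union bound over the $O(5)$ high-probability events invoked.
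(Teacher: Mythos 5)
Your proposal is correct in substance and shares the paper's skeleton: both replace $\bx(\bZ,\btheta)$ by an indicator policy via complementary slackness (absorbing at most $m$ fractional coordinates), both swap the random dual $\blambda(\bZ,\btheta)$ for the deterministic proxy $\blambda^*(\btheta)=\Eb{\blambda(\bZ,\btheta)}$ using the uniform dual convergence of \cref{lem:Uniform-Dual-sol-conv}, and both control the idealized process $\frac1n\sum_j\xi_j\Ib{r_j(Z_j,\btheta)\ge\bA_j^\top\blambda^*(\btheta)}$ by lifting to the class indexed by $(\btheta,\blambda)$ and invoking \cref{asn:LiftedAffinePlugInII} with a Pollard-type bound (the paper uses \cref{thm:Pollard} with the envelope $(\abs{\xi_j})_j$, which avoids your truncation step). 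Where you genuinely diverge is the dual-approximation error: the paper runs a fresh empirical-process argument (\cref{lem:err-approx-dual-sol}, again via \cref{thm:Pollard}) on the band indicators $\Ib{\abs{r_j(Z_j,\btheta)-\bA_j^\top\blambda}\le C_A\delta}$, whereas you reuse the degeneracy condition already baked into $\mathcal E_n$ (\cref{lem:BoundAwayDegeneracy}) with $\Gamma=C_A m\Gamma_n$; this is legitimate because $\|\blambda^*(\btheta)\|_1\le\lambda_{\max}$ by \cref{lem:BoundedDualSol}(ii), $\Gamma\ge 1/n$ for $n$ large, and the $T_n$ condition is uniform over $(\blambda,\btheta,\Gamma)$ — so your route avoids redoing a covering/VC argument at the cost of picking up the slightly weaker $\Gamma+\sqrt{\Gamma/n}$ rate, which is still within the stated bound. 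Two small repairs are needed: the phrase ``count $\tilde O(n\sqrt{\Gamma_n})$ \ldots after multiplying by $\sqrt{\Gamma_n}$'' should simply read that the good-set bound gives a fraction of order $\Gamma_n\sqrt{\vmax}+\sqrt{\Gamma_n}\,V\log V\log^2 n/\sqrt n$, whose product with $\max_j\abs{\xi_j}\lesssim\log n$ already matches the target rate; and the expectation part of your term (I) is not a bounded-differences argument — since the expectation runs over all of $\bZ$, you must split on the high-probability events and bound the good-event contribution by the Gaussian band probability $O(\sqrt{\vmax}\,\Gamma_n)$ (as the paper does for its expected dual-approximation error) and the bad-event contribution by Cauchy--Schwarz with $\Pb{\bZ\notin\mathcal E_n}=O(1/n)$, both of which are of the right order.
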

\smallskip 
\proof{Proof of \cref{lem:unif-conv-in-samp-wide-lp}.} 
By triangle inequality, we see,
\begin{align*}
	& \sup_{\btheta\in\Theta}
 	\left|
 		\frac{1}{n}\sum_{j=1}^{n} \blue{\xi_{j}} x_{j}(\bZ, \btheta) - \mathbb{E}\left[ \blue{\xi_{j}} x_{j}(\bZ, \btheta)\right]
 	\right|
 	\\
 	& \ \le \
 	\underbrace{
 		\sup_{\btheta\in\Theta}
 		\left|
 			\frac{1}{n}\sum_{j=1}^{n}\blue{\xi_{j}} \left(x_{j}(\bZ, \btheta) - 
 			\mathbb{I}\left\{ r_{j}(Z_{j}, \btheta)\ge\bA_{j}^{\top}\blambda(\bZ, \btheta)\right\} \right)\right|
 		}_{\text{Rounding Error}}
 	\\
 	&  \qquad + 
 	\underbrace{
 		\sup_{\btheta\in\Theta}
 		\left|
 			\frac{1}{n}\sum_{j=1}^{n} \blue{\xi_{j}} \left(	
 				\mathbb{I}\left\{ r_{j}(Z_{j}, \btheta)\ge\bA_{j}^{\top}\blambda(\bZ, \btheta)\right\} 
 				-
 				\mathbb{I}\left\{ r_{j}(Z_{j}, \btheta)\ge\bA_{j}^{\top}\blambda(\btheta)\right\} 
 			\right)
 		\right|}_{\text{Dual Approximation Error}}
 	\\
 	& \qquad +
 	\underbrace{
 		\sup_{\btheta\in\Theta}
 		\left|
 			\frac{1}{n}\sum_{j=1}^{n} \blue{\xi_{j}}
 			\mathbb{I}\left\{ r_{j}(Z_{j}, \btheta) \ge \bA_{j}^{\top}\blambda(\btheta)\right\} 
 			-
 			\mathbb{E}\left[ \blue{\xi_{j}} \mathbb{I}\left\{ r_{j}(Z_{j}, \btheta) \ge \bA_{j}^{\top}\blambda(\btheta)\right\} \right]
 		\right|
 	}_{\text{ULLN for Dual Approximation}}
 	\\
 	& \qquad +
 	\underbrace{
 		\sup_{\btheta\in\Theta}
 		\left|
 			\frac{1}{n}\sum_{j=1}^{n}
 			\mathbb{E}\left[ \blue{\xi_{j}} \left(
 				\mathbb{I}\left\{ r_{j}(Z_{j}, \btheta)\ge\bA_{j}^{\top}\blambda(\btheta)\right\} 
 			- 
 				\mathbb{I}\left\{ r_{j}(Z_{j}, \btheta)\ge\bA_{j}^{\top}\blambda(\bZ, \btheta)\right\} 
 			\right)\right]
 		\right|
 	}_{\text{Expected Dual Approximation Error} }
 	\\
 	& \qquad + 
 	\underbrace{
 		\sup_{\btheta\in\Theta}
 			\left|
 				\frac{1}{n}\sum_{j=1}^{n}
 				\mathbb{E}\left[
 					\blue{\xi_{j}} \left(
 						\mathbb{I}\left\{ r_{j}(Z_{j}, \btheta)
 						\ge
 						\bA_{j}^{\top}\blambda(\bZ, \btheta)\right\} -x_{j}(\bZ, \btheta)
 					\right)
 				\right]
 			\right|
 		}_{\text{Expected Rounding Error}}.
\end{align*}
For Rounding and Expected Rounding Error, we have
\begin{align*}
\left|
	\sum_{j=1}^{n}\blue{\xi_{j}} \left(x_{j}(\bZ, \btheta)-\mathbb{I}\left\{ r_{j}(Z_{j}, \btheta)
	\ge
	\bA_{j}^{\top}\blambda(\bZ, \btheta)\right\} \right)
\right| 
& \le \left\Vert \bxi \right\Vert _{\infty}\left\Vert x_{j}(\bZ, \btheta)-\mathbb{I}\left\{ r_{j}(Z_{j}, \btheta)\ge\bA_{j}^{\top}\blambda(\bZ, \btheta)\right\} \right\Vert _{1}\\
& \le \left\Vert \bxi \right\Vert _{\infty} m
\end{align*}
where the first inequality follows Holder's inequality and the second inequality holds by complementary slackness.
Note,
\[
\text{\ensuremath{\mathbb{P}}}\left\{ \left\Vert \bxi\right\Vert _{\infty}\ge t\right\} \le\sum_{j=1}^{n}\text{\ensuremath{\mathbb{P}}}\left\{ \left|\xi_{j}\right|\ge t\right\} \le\sum_{j=1}^{n}2\exp\left\{ -\frac{\nu_{\min}t^{2}}{2}\right\}. 
\]
Moreover, $\Eb{ \| \bxi \|_\infty } \ \leq \ C_1 \sqrt{\log n}$ for some dimension independent constant $C_1$.
Thus, with probability at least $1-e^{-R}$, we have
\blue{ \begin{align*}
    \text{Rounding Error} + \text{Expected Rounding Error}
	& \ \le \
	\left\Vert \bxi\right\Vert _{\infty} \frac{m}{n} + \mathbb{E}\left\Vert \bxi\right\Vert _{\infty}\frac{m}{n}\\
 	& \ = \
 	\frac{m}{n}\left(
 		\left\Vert \bxi\right\Vert _{\infty} +\mathbb{E}\left\Vert \bxi\right\Vert _{\infty}
 	\right)
 	\\
 	& \ \le \
 	\frac{C_2 m}{n}\sqrt{R\log n},
\end{align*}}
for some dimension independent constant $C_2$. 

We bound the Dual Approximation Error terms \blue{in \Cref{lem:err-approx-dual-sol}} with our uniform bounds on the dual solutions from \Cref{lem:Uniform-Dual-sol-conv} below, proving that 
with probability at least $1-4e^{-R}$,
\begin{align}\label{eq:unif-in-samp-opt-i}
& \text{Dual Approximation Error} + \text{Expected Dual Approximation Error} \\
& \ \le \ C_3R\sqrt{\frac{V}{n}}+C_3 V^2 \log^2 V
\log(m) \cdot 
\sqrt{R\log N\left(\frac{1}{n},\Theta\right)} \cdot \frac{\log^4(n)}{\sqrt n}
\\
& \ \leq \ 
C_4 V^2 \log^2 V
\log(m) \frac{R}{\sqrt n} \cdot 
\sqrt{\log N\left(\frac{1}{n},\Theta\right) }\frac{\log^4(n)}{\sqrt n}
\end{align}
for some dimension independent constants $C_3$ and $C_4$.


We bound the ULLN for Dual Approximation term in \cref{lem:ULLN-dual-approx} below to prove that
\[
\text{ULLN for Dual Approximation} \ \le \ C_{5}R\sqrt{\frac{V}{n}}
\]
with probability $1-\exp(-R)$. 


Taking a union bound over all probabilities and summing all bounds yields the result.
\hfill \Halmos \endproof

\begin{lem}
[ULLN for Dual Approximation] \label[lem]{lem:ULLN-dual-approx}
Under \cref{asn:LiftedAffinePlugInII,asn:Parameters}, 
there exists a dimension independent constant $C$ such that for any $R > 1$, with probability at least
$1-e^{-R}$,
\[
	\sup_{\btheta\in\Theta}
	\left|
		\sum_{j=1}^{n} \blue{\xi_{j}} 
		\mathbb{I}\left\{ r_{j}(Z_{j}, \btheta)\ge\bA_{j}^{\top}\blambda(\btheta)\right\} 
		- \mathbb{E}\left[\blue{\xi_{j}} \mathbb{I}\left\{ r_{j}(Z_{j}, \btheta)\ge\bA_{j}^{\top}\blambda(\btheta)\right\} \right]\right|\le C\cdot R\sqrt{Vn}
\]
\end{lem}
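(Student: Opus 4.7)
The plan is to apply the empirical-process concentration inequality for suprema over finite-VC classes (\cref{thm:Pollard}) to the process $\btheta \mapsto \sum_{j=1}^n f_j(\btheta)$, where
\[
f_j(\btheta) \;\equiv\; \xi_j\, \Ib{r_j(Z_j,\btheta) \geq \bA_j^\top \blambdabar(\btheta)} - \Eb{\xi_j\, \Ib{r_j(Z_j,\btheta) \geq \bA_j^\top \blambdabar(\btheta)}},
\]
with the shorthand $\blambdabar(\btheta) \equiv \Eb{\blambda(\bZ,\btheta)}$. The crucial observation is that because $\blambdabar(\btheta)$ is a deterministic quantity (an expectation, not the random dual solution), $f_j(\btheta)$ is a function only of $Z_j$; hence the summands are genuinely independent across $j$ for each fixed $\btheta$. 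This is precisely the structural feature required by \cref{thm:Pollard} and is why we work with $\blambdabar(\btheta)$ rather than $\blambda(\bZ,\btheta)$.

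To apply \cref{thm:Pollard}, I would first construct an envelope. Since the indicator is bounded by $1$, the vector $\bm F \in \R^n_+$ with entries $F_j = |\xi_j|$ dominates $|f_j(\btheta)|$ uniformly in $\btheta$. By \cref{asn:Gaussian,asn:Parameters}, $\xi_j \sim \mathcal N(0, 1/\nu_j)$ with $\nu_j \geq \vmin$, and Lemma A.1(iv) of \citeGR~yields
\[
\bigl\| \|\bm F\|_2 \bigr\|_\Psi \;=\; \Biggl\| \sqrt{\sum_{j=1}^n \xi_j^2} \Biggr\|_\Psi \;\leq\; \frac{C_1}{\sqrt{\vmin}}\,\sqrt{n},
\]
for an absolute constant $C_1$. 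This controls the sub-Gaussian tail of the envelope at the optimal rate $\sqrt{n}$.

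Next I would bound the realized cardinality $M \equiv |\{\bm f(\btheta) : \btheta \in \Theta\}|$. The values of $\bm f(\btheta)$ depend on $\btheta$ only through the indicator pattern $\bigl(\Ib{r_j(Z_j,\btheta) \geq \bA_j^\top \blambdabar(\btheta)}\bigr)_{j=1}^n$. Viewing the map $\btheta \mapsto \blambdabar(\btheta) \in \R^m_+$ as just \emph{some} selection $\blambda = \blambdabar(\btheta)$, the family
\(
\{(z,\nu,\bm W,\bA)\mapsto \rho((z,\nu,\bm W,\bA),\btheta) - \bA^\top \blambdabar(\btheta) : \btheta \in \Theta\}
\)
is a sub-family of the class in \cref{asn:LiftedAffinePlugInII}, and thus has pseudo-dimension at most $V$. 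Consequently the associated $\{0,1\}$-valued indicator class has VC-dimension at most $V$, and by Sauer's lemma the number of distinct sign patterns on the $n$ data points is at most $(en/V)^V$, giving $\log M \leq V \log(en/V)$.

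Substituting these two bounds into \cref{thm:Pollard} yields, with probability at least $1-e^{-R}$,
\[
\sup_{\btheta \in \Theta}\Bigl|\sum_{j=1}^n f_j(\btheta)\Bigr| \;\leq\; C_2\, R\, \frac{\sqrt n}{\sqrt{\vmin}}\, \sqrt{V \log(en/V)} \;\leq\; C\, R\sqrt{Vn},
\]
after absorbing $\sqrt{\log(en/V)}$ and $1/\sqrt{\vmin}$ into the dimension-independent constant $C$ (using $V \geq 2$ and the conventions of \cref{asn:Parameters}). The main technical obstacle will be verifying that the sub-family obtained by plugging in $\blambda = \blambdabar(\btheta)$ inherits the pseudo-dimension bound cleanly---once that VC reduction is in place, the rest is a direct application of the envelope and cardinality bounds to \cref{thm:Pollard}.
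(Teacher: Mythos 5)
Your route is structurally the same as the paper's: bound the summands by the envelope $F_j=\abs{\xi_j}$, control $\left\Vert \left\Vert \bm F\right\Vert_2\right\Vert_\Psi \leq C\sqrt{n/\vmin}$ via Lemma A.1(iv) of \citeGR, count the realized indicator patterns through \cref{asn:LiftedAffinePlugInII}, and invoke \cref{thm:Pollard}. The only structural difference is cosmetic: the paper first enlarges the supremum to all pairs $(\btheta,\blambda)\in\Theta\times\R^m$, so that the indicator class is exactly the class of \cref{asn:LiftedAffinePlugInII}, whereas you keep the selection $\blambda=\blambdabar(\btheta)$ and note that the resulting sub-family inherits the pseudo-dimension bound. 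Both reductions are legitimate (a sub-family never has larger pseudo-dimension than the family, so the "main technical obstacle" you flag is not an obstacle), and they produce the same cardinality control.

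The genuine gap is your last inequality. From the Sauer--Shelah count you correctly obtain $\log M \leq V\log(en/V)$, so \cref{thm:Pollard} gives a bound of order $R\sqrt{Vn\log(en/V)}$. You then "absorb $\sqrt{\log(en/V)}$ into the dimension-independent constant $C$," but in this section a dimension-independent constant is, by definition, one that does not depend on $n$ (nor on $V$, $m$); $\log(en/V)$ grows without bound in $n$, so this absorption is invalid and your argument as written only proves the lemma with an extra $\sqrt{\log n}$ factor rather than the stated $C\,R\sqrt{Vn}$. The paper's proof avoids the factor by asserting that the realized set of indicator vectors has cardinality at most $2^V$, which makes $\log M \leq V\log 2$ independent of $n$; if you instead keep your (more standard) Sauer count, the honest conclusion is $C\,R\sqrt{Vn\log n}$, which would be harmless downstream of \cref{lem:unif-conv-in-samp-wide-lp} and \cref{thm:WC-Constraints-bound} (those bounds already carry polylogarithmic factors in $n$) but does not match the lemma as stated. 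To close the gap you must either justify a cardinality bound of the form $2^{O(V)}$ for the realized patterns or carry the $\sqrt{\log n}$ factor explicitly through the statement.
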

\proof{\hspace{-12pt}Proof of \cref{lem:ULLN-dual-approx}:}
We first note that 
\begin{align*}
 & \sup_{\btheta\in\Theta}\left|\sum_{j=1}^{n} \blue{\xi_{j}} \mathbb{I}\left\{ r_{j}(Z_{j}, \btheta)\ge\bA_{j}^{\top}\blambda(\btheta)\right\} -\mathbb{E}\left[\blue{\xi_{j}} \mathbb{I}\left\{ r_{j}(Z_{j}, \btheta)\ge\bA_{j}^{\top}\blambda(\btheta)\right\} \right]\right|\\
 & \ \le \ \sup_{\btheta\in\Theta,\blambda\in\mathbb{R}^{m}}\left|\sum_{j=1}^{n} \blue{\xi_{j}}  \mathbb{I}\left\{ r_{j}(Z_{j}, \btheta)\ge\bA_{j}^{\top}\blambda\right\} -\mathbb{E}\left[\blue{\xi_{j}} \mathbb{I}\left\{ r_{j}(Z_{j}, \btheta)\ge\bA_{j}^{\top}\blambda\right\} \right]\right|
\end{align*}
and the summation is a sum of centered independent random variables.
We will apply \cref{thm:Pollard} to the last expression.  Specifically, we consider the envelope $\bm{F}(\bZ)=(\abs{Z_j})_{j=1}^n$. Then, 
we have
\[
	\left\Vert \left\Vert \bm{F}(\bZ)\right\Vert _{2}\right\Vert _{\Psi}
	\overset{(a)}{\le}
	\left\Vert \frac{\left\Vert \bm{\zeta}\right\Vert _{2}}{\sqrt{\nu_{\min}}} \right\Vert _{\Psi}
	\overset{(b)}{\le}
	\sqrt{\frac{2n}{\nu_{\min}}}
	= C_{1}\sqrt{n}
\]
for some dimension independent constant $C_1$. We see $(a)$ holds by letting $\zeta_{j}=\sqrt{\nu_{j}}\xi_{j}$
and $(b)$ holds by Lemma A.1 iv) from GR2020. 

Next,
\[
\abs{ \left\{ \left( \blue{\xi_{j}}  \mathbb{I}\left\{ r_{j}(Z_{j}, \btheta)\ge\bA_{j}^{\top}\blambda\right\} \right)_{j=1}^n \ : \ \btheta\in\Theta,\; \blambda\in\mathbb{R}^{m}\right\} }
\ \leq \ 
\abs{ \left\{ \left(\mathbb{I}\left\{ r_{j}(Z_{j}, \btheta)\ge\bA_{j}^{\top}\blambda\right\} \right)_{j=1}^n \ : \ \btheta\in\Theta,\; \blambda\in\mathbb{R}^{m}\right\} },
\]
and by \cref{asn:LiftedAffinePlugInII}, the latter set has VC-dimension $V$ and hence cardinality at most $2^V$.  

Thus, we see that with probability $1-e^{-R}$, that
\[
\sup_{\btheta\in\Theta,\blambda\in\mathbb{R}^{m}}\left|\sum_{j=1}^{n} \blue{\xi_{j}}  \mathbb{I}\left\{ r_{j}(Z_{j}, \btheta)\ge\bA_{j}^{\top}\blambda\right\} -\mathbb{E}\left[ \blue{\xi_{j}} \mathbb{I}\left\{ r_{j}(Z_{j}, \btheta)\ge\bA_{j}^{\top}\blambda\right\} \right]\right|\le C_2 R\sqrt{V n}
\]
for some absolute constant $C_2$.
\hfill \Halmos \endproof

We next provide bounds for the Dual Approximation Error terms in the proof of \cref{lem:unif-conv-in-samp-wide-lp}.

\begin{lem}
[Dual Approximation Error] \label[lem]{lem:err-approx-dual-sol}
Assume \cref{asn:Parameters,asn:LiftedAffinePlugInII} hold.  Then, there exists dimension independent constants $C$ and $n_0$ such that for any $R>1$ and $n > n_0e^R$, we have 
with probability at least $1-4e^{-R}$, the following two inequalities hold simultaneously:
\begin{align*}
& \sup_{\btheta\in\Theta}
  \left|
  	\sum_{j=1}^{n}\blue{\xi_{j}} \left(
  		\mathbb{I}\left\{ r_{j}(Z_{j}, \btheta)\ge\bA_{j}^{\top}\blambda(\bZ, \btheta)\right\} 
  		-\mathbb{I}\left\{ r_{j}(Z_{j}, \btheta)\ge\bA_{j}^{\top}\blambda(\btheta)\right\} 
  	\right)
  \right|
  \le CR\sqrt{Vn},
\\
& \sup_{\btheta\in\Theta}
  \left|
  	\sum_{j=1}^{n}
  	\mathbb{E}\left[\blue{\xi_{j}} 
	  	\left(
	  		\mathbb{I}\left\{ r_{j}(Z_{j}, \btheta)\ge\bA_{j}^{\top}\blambda(\bZ, \btheta)\right\} 
	  		- \mathbb{I}\left\{ r_{j}(Z_{j}, \btheta)\ge\bA_{j}^{\top}\blambda(\btheta)\right\} 
	  	\right)
  	\right]
  \right|
\\
& \qquad \le 
C V^2 \log^3(V) \cdot \sqrt{R\log N\left(\frac{1}{n},\Theta\right)} \cdot \log^4(n) \sqrt n.
\end{align*}
\end{lem}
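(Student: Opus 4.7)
The plan is to handle the two inequalities by very different arguments: the first, a supremum of a mean-zero random sum, will yield to empirical process theory applied to a class of ``difference-indicators,'' while the second, a bound on the magnitude of an expectation, will follow from Lipschitz smoothness of Gaussian densities combined with the uniform dual-solution convergence in \cref{lem:Uniform-Dual-sol-conv}. A union bound over the two sub-claims (together with the good set event $\{\bZ \in \mathcal E_n\}$) will give the overall probability $1 - 4e^{-R}$.

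For the first inequality, I would upper bound the supremum by the larger supremum over all $\blambda_1, \blambda_2 \in \R^m_+$, which is valid because $\blambda(\bZ, \btheta), \blambda(\btheta) \in \R^m_+$ for every $\btheta$. This reduces matters to controlling the centered empirical process indexed by
\[
\mathcal G = \Big\{\big(\xi_j\big(\mathbb I\{r_j(Z_j, \btheta) \geq \bA_j^\top \blambda_1\} - \mathbb I\{r_j(Z_j, \btheta) \geq \bA_j^\top \blambda_2\}\big)\big)_{j=1}^n : \btheta \in \Theta,\ \blambda_1, \blambda_2 \in \R^m_+\Big\},
\]
plus a bound on the corresponding expectation (see below). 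The envelope $(|\xi_j|)_j$ has $\Psi$-norm at most $C\sqrt n$ by Lemma A.1 of \citeGR, and the associated indicator class is Euclidean with parameters depending only on $V$, by the same intersection-of-half-spaces argument used in the proof of \cref{lem:SupremaForApproxSC}, combined with \cref{asn:LiftedAffinePlugInII} and \cite[Lemma~5.1]{pollard1990empirical}. Applying \cref{thm:Pollard} (or the small-variance refinement \cref{thm:SmallVariancePollard}, noting that the pointwise variance over $\mathcal G$ is bounded by a constant multiple of $\|\blambda_1 - \blambda_2\|$) then yields a centered-process bound of order $CR\sqrt{V n}$ with probability at least $1-e^{-R}$.

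For the second inequality, I would first compute the expectation pointwise. For $Z_j \sim \mathcal N(\mu_j, 1/\nu_j)$ and $\btheta \in \Theta$ with $a_j(\btheta) \neq 0$, direct integration against the Gaussian density gives an identity of the form
\[
\mathbb E\big[\xi_j\,\mathbb I\{r_j(Z_j, \btheta) \geq \bA_j^\top \blambda\}\big]\ =\ \tfrac{\mathrm{sgn}(a_j(\btheta))}{\nu_j}\,\phi_j\!\left(\tfrac{\bA_j^\top \blambda - b_j(\btheta)}{a_j(\btheta)}\right),
\]
where $\phi_j$ is the density of $Z_j$, so the expectation is Lipschitz continuous in $\blambda$ with a dimension-independent constant. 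Telescoping in $\blambda$ and conditioning on $\bZ \in \mathcal E_n$ (the off-event contribution being absorbed via the worst-case dual bound \cref{lem:DualsBoundedOnE} and the good-set tail bound \cref{lem:ZinGoodSetHighProb}) produces
\[
\abs{\sum_{j=1}^n \mathbb E[\xi_j(\mathbb I_1 - \mathbb I_2)]}\ \leq\ C n\cdot \mathbb E\big[\|\blambda(\bZ, \btheta) - \blambda(\btheta)\|_2\,\mathbb I\{\bZ \in \mathcal E_n\}\big]\ +\ \text{lower-order terms}.
\]
Applying \cref{lem:Uniform-Dual-sol-conv} uniformly in $\btheta$ and converting the $\ell_\infty$ bound to $\ell_2$ via $\|\cdot\|_2 \leq \sqrt m\|\cdot\|_\infty \leq \sqrt V\|\cdot\|_\infty$ then yields the claimed order $C V^2\log^3(V)\sqrt{R\log N(1/n,\Theta)}\cdot \log^4(n)\sqrt n$.

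The main obstacle, I expect, will be in the second inequality: because $\mathbb I_1$ depends on $\blambda(\bZ, \btheta)$, which in turn depends on \emph{all} coordinates of $\bZ$, one cannot apply the pointwise Lipschitz identity above directly without first conditioning on the dual solution (or on $\bZ_{-j}$) and then carefully tracking the residual dependence through $Z_j$. The interplay between this conditioning, the smoothness of $\phi_j$, and the uniform dual bound must be balanced so as not to inflate the constant beyond the stated rate. By contrast, the first inequality is relatively routine once the Euclidean property of $\mathcal G$ is in hand, since the centered suprema is already at the target scale and the uncentered expectation term it omits is precisely what the second inequality controls.
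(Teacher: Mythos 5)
Your scaffolding---the uniform dual convergence of \cref{lem:Uniform-Dual-sol-conv}, the good set $\mathcal E_n$, a Pollard-type bound over a pseudo-dimension-$O(V)$ indicator class, and a Gaussian density estimate---matches the paper's, but your decomposition of the first display diverges from the paper's proof and leaves a gap. The paper does not center the process $\xi_j\bigl(\Ib{r_j(Z_j,\btheta)\ge \bA_j^\top\blambda_1}-\Ib{r_j(Z_j,\btheta)\ge \bA_j^\top\blambda_2}\bigr)$ over arbitrary $(\blambda_1,\blambda_2)$. It first restricts to the event $\sup_{\btheta}\|\blambda(\bZ,\btheta)-\Eb{\blambda(\bZ,\btheta)}\|_2\le\delta$ supplied by \cref{lem:Uniform-Dual-sol-conv}; on that event the indicator difference is dominated by $\Ib{\,r_j(Z_j,\btheta)-\bA_j^\top\Eb{\blambda(\bZ,\btheta)}\in[-C_A\delta,C_A\delta]\,}$, and---this is the step your route misses---on that small interval $\abs{\xi_j}$ is bounded by a dimension-independent constant, so the unbounded envelope disappears: \cref{thm:Pollard} is then applied to interval indicators with a constant envelope, and the drift is the interval probability, of order $\delta$. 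In your route the deferred term is the frozen-$\blambda$ mean $\sum_j \Eb{\xi_j\bigl(\Ib{r_j(Z_j,\btheta)\ge\bA_j^\top\blambda_1}-\Ib{r_j(Z_j,\btheta)\ge\bA_j^\top\blambda_2}\bigr)}$ evaluated at the random pair $(\blambda(\bZ,\btheta),\Eb{\blambda(\bZ,\btheta)})$; this is not the quantity in the second display (there the expectation also integrates over the data-dependent dual), and once you bound it via your Lipschitz-in-$\blambda$ identity together with the $\delta$-closeness event, it is of order $n\delta \asymp V^2\log^2(V)\log(m)\sqrt{R\log N(1/n,\Theta)}\,\log^4(n)\sqrt{n}$, which strictly dominates the claimed $CR\sqrt{Vn}$. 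So as written your argument proves the first display only at the second display's (much larger) rate; that weaker version happens to suffice where the lemma is used (\cref{lem:unif-conv-in-samp-wide-lp} adds the two bounds), but it does not establish the statement as given.

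For the second display, the obstacle you anticipate---that $\blambda(\bZ,\btheta)$ depends on all of $\bZ$, so the exact Gaussian-mean identity cannot be applied to the data-dependent indicator---is real, but the paper's resolution is more elementary than conditioning on $\bZ_{-j}$: no closed-form identity is needed at all. On the good event, $\abs{\xi_j\bigl(\Ib{r_j(Z_j,\btheta)\ge\bA_j^\top\blambda(\bZ,\btheta)}-\Ib{r_j(Z_j,\btheta)\ge\bA_j^\top\Eb{\blambda(\bZ,\btheta)}}\bigr)}$ is bounded pointwise by a dimension-independent constant times the same small-interval indicator, whose probability is at most a constant times $\delta$ by a density bound; summing over $j$ gives $Cn(1+\delta)\delta$, which after substituting $\delta$ and using $m\le V$ is exactly the stated right-hand side. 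Your frozen-$\blambda$ identity is correct and could serve the same purpose, but your telescoping bound of the form $Cn\,\Eb{\|\blambda(\bZ,\btheta)-\Eb{\blambda(\bZ,\btheta)}\|_2\,\Ib{\bZ\in\mathcal E_n}}$ is not justified without first passing through this event-based pointwise bound, at which point you have reproduced the paper's mechanism. The centered piece of your first-display plan (envelope $(\abs{\xi_j})_j$, Euclidean class of indicator differences, \cref{thm:Pollard}) is sound---it is essentially \cref{lem:ULLN-dual-approx} applied twice---so the fix is to keep that machinery but handle the drift by the paper's localization rather than by pointing to the second display.
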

\proof{\hspace{-12pt}Proof of \cref{lem:err-approx-dual-sol}:}
First observe that under the conditions of the theorem, \cref{lem:Uniform-Dual-sol-conv} implies that
for some dimension independent constant $C_1$,
 with probability at least $1-2e^{-R}$, 
\[
	\sup_{\btheta \in \Theta} \| \blambda(\bZ, \btheta) - \blambda(\btheta) \|_2 \ \leq \ 
	\underbrace{C_1 V^2 \log^2(V) \log(m) \cdot \sqrt{R\log N\left(\frac{1}{n},\Theta\right)} \cdot \frac{\log^4(n)}{\sqrt n} }_{\equiv \delta},
\]
where we have by bounding the $\ell_2$-norm by $\sqrt m$ times the $\ell_\infty$ norm.  Define the right side to be the constant $\delta$ as indicated.

We will restrict attention to the events where both the above inequality holds and also $\bZ \in \mathcal E_n$.  By the union bound and \cref{lem:ZinGoodSetHighProb}, this event happens with probability at least $1-2e^{-R} - 4/n$. For $n > 4e^R$, this probability is at least $1-3e^{-R}$.  


Now write
\begin{align}\label{eq:err-approx-dual-sol-i}
 & \left|\sum_{j=1}^{n} \blue{\xi_{j}} \left(\mathbb{I}\left\{ r_{j}(Z_{j}, \btheta)\ge\bA_{j}^{\top}\blambda(\bZ, \btheta)\right\} -\mathbb{I}\left\{ r_{j}(Z_{j}, \btheta)\ge\bA_{j}^{\top}\blambda(\btheta)\right\} \right)\right|
 \\ \nonumber 
& \qquad \le  \sum_{j=1}^{n}\left|\blue{\xi_{j}} \left(\mathbb{I}\left\{ r_{j}(Z_{j}, \btheta)\ge\bA_{j}^{\top}\blambda(\bZ, \btheta)\right\} -\mathbb{I}\left\{ r_{j}(Z_{j}, \btheta)\ge\bA_{j}^{\top}\blambda(\btheta)\right\} \right)\right|
\\\nonumber
& \qquad \leq  \sum_{j=1}^{n}\left|\blue{\xi_{j}} \right|\mathbb{I}\left\{ r_{j}(Z_{j}, \btheta)-\bA_{j}^{\top}\blambda(\btheta)\in\left[-C_{A}\delta,C_{A}\delta\right]\right\},
\end{align}
because $\| \blambda(\bZ, \btheta) - \blambda(\btheta)\|_2 \ \leq \delta$.  Furthermore, when the indicator is non-zero, we can bound 
\begin{equation} \label{eq:BoundAbsZInInterval}
\abs{ \blue{\xi_{j}} } \ \leq \ 
\frac{1}{a_{\min}}\left(C_{A}\lambda_{\max}+C_{A}\delta+b_{\max}\right) + C_{\mu}
\ \leq \ C_2(1+\delta),
\end{equation}
for some dimension independent $C_2$.


By \cref{lem:BoundedDualSol}, $\|\blambda(\btheta)\|_1 \leq \frac{2}{n s_0}\mathbb{E}\| \bm{r}(\bZ,\btheta) \|_1 \leq \lambda_{\max}$ and thus we can upper bound, 
\begin{align*}
 & \sup_{\btheta\in\Theta}
 \left|\sum_{j=1}^{n} \blue{\xi_j} \left(\mathbb{I}\left\{ r_{j}(Z_{j}, \btheta)\ge\bA_{j}^{\top}\blambda(\bZ, \btheta)\right\} -\mathbb{I}\left\{ r_{j}(Z_{j}, \btheta)\ge\bA_{j}^{\top}\blambda(\btheta)\right\} \right)\right|
 \\
\le & 
\underbrace{C_2(1+\delta)\sup_{\btheta\in\Theta,\left\Vert \blambda\right\Vert _{1} \le\lambda_{\max}}
\left|\sum_{j=1}^{n} \mathbb{I}\left\{ r_{j}(Z_{j}, \btheta)-\bA_{j}^{\top}\blambda\in\left[-C_{A}\delta,C_{A}\delta\right]\right\} -\mathbb{P}\left\{ r_{j}(Z_{j}, \btheta)-\bA_{j}^{\top}\blambda\in\left[-C_{A}\delta,C_{A}\delta\right]\right\} \right|}_{(i)}
\\
 & \qquad\qquad
 + \underbrace{C_2 (1+\delta) \sup_{\btheta\in\Theta}\mathbb{P}\left\{ r_{j}(Z_{j}, \btheta)-\bA_{j}^{\top}\blambda(\btheta)\in\left[-C_{A}\delta,C_{A}\delta\right]\right\} }_{(ii)}
\end{align*}
To bound the supremum $(i)$, we will apply \cref{thm:Pollard}.  Not that the vector $\be$ is a valid envelope.  To bound the cardinality of 
\[
	\mathcal F \equiv \left\{\left(\Ib{r_j(Z_j, \btheta) - \bA_j^\top \blambda \in [-C_A \delta, C_A\delta]}\right)_{j=1}^n \ : \ 
			\blambda \in \R^m_+, \; \ \btheta \in \Theta\right\},
\]
consider the two sets 
\begin{align*}
	\mathcal F_1 &\equiv \left\{\left(\Ib{r_j(Z_j, \btheta) - \bA_j^\top \blambda \geq -C_A \delta}\right)_{j=1}^n \ : \ 
			\blambda \in \R^m_+, \; \ \btheta \in \Theta\right\},
\\
	\mathcal F_2 &\equiv \left\{\left(\Ib{r_j(Z_j, \btheta) - \bA_j^\top \blambda \leq C_A \delta}\right)_{j=1}^n \ : \ 
			\blambda \in \R^m_+, \; \ \btheta \in \Theta\right\}.			
\end{align*}
Under \cref{asn:LiftedAffinePlugInII}, both sets have pseudo-dimension at most $V$.  Furthermore, $\mathcal F = \mathcal F_1 \wedge \mathcal F_2$.  Hence, by \cite{pollard1990empirical}, there exists an absolute constant $C_3$ such that the pseudo-dimension of $\mathcal F$ is at most $C_3 V$, and hence its cardinality is at most $n^{C_3 V}$.  

Thus, applying \cref{thm:Pollard} shows that there exists a constant $C_4$ such that with probability at least $1-e^{-R}$, 
\[
	\text{Term (i)}  \ \leq \ C_4(1+\delta) R \sqrt{V n \log n}.
\]

To evaluate Term $(ii)$, we recognize it as the probability as the probability that a Gaussian random variable lives in an interval of length $2C_A \delta$.  Upper bounding the Gaussian density by its value at the mean shows 
\begin{align}
\mathbb{P}\left\{ r_{j}(Z_{j}, \btheta)-\bA_{j}^{\top}\blambda\in\left[-C_{A}\delta,C_{A}\delta\right]\right\} \label{eq:err-approx-dual-sol-ii}
 & \le2C_{A} \sqrt{\frac{\vmax}{2 \pi}}\delta
 \ \leq C_5 \delta.
\end{align}
Thus,
\[
	\text{Term } (ii) \ \leq C_6(1+\delta)\delta.
\]


Combining our bounds, we see that with probability at least $1-4e^{-R}$,
\begin{align*}
\sup_{\btheta\in\Theta} & \left|\sum_{j=1}^{n} \blue{\xi_j} \left(\mathbb{I}\left\{ r_{j}(Z_{j}, \btheta)\ge\bA_{j}^{\top}\blambda(\bZ, \btheta)\right\} -\mathbb{I}\left\{ r_{j}(Z_{j}, \btheta)\ge\bA_{j}^{\top}\blambda(\btheta)\right\} \right)\right|
\\
& \ \le  \ 
C_7(1+\delta) R \sqrt{V n} + C_7(1+\delta)\delta
\\
& \ \leq \ 
C_7 R \sqrt{V n}
,
\end{align*}
by substituting the value of $\delta$ and only retaining the dominant terms. 
This proves the first result of the lemma.

To prove the second result of the lemma, note that
\begin{align*}
 & \left|\sum_{j=1}^{n}\E\left[\blue{\xi_j}\left(\mathbb{I}\left\{ r_{j}(Z_{j}, \btheta)\ge\bA_{j}^{\top}\blambda(\bZ, \btheta)\right\} -\mathbb{I}\left\{ r_{j}(Z_{j}, \btheta)\ge\bA_{j}^{\top}\blambda(\btheta)\right\} \right)\right]\right|
 \\
&  \quad \le \sum_{j=1}^{n}\Eb{ \abs{\blue{\xi_j}\left(\mathbb{I}\left\{ r_{j}(Z_{j}, \btheta)\ge\bA_{j}^{\top}\blambda(\bZ, \btheta)\right\} -\mathbb{I}\left\{ r_{j}(Z_{j}, \btheta)\ge\bA_{j}^{\top}\blambda(\btheta)\right\} \right)}}
\\
& \quad \le  C_2(1+\delta) \sum_{j=1}^{n}  \Pb{r_{j}(Z_{j}, \btheta)-\bA_{j}^{\top}\blambda(\btheta)\in\left[-C_{A}\delta,C_{A}\delta\right]}
\\
& \quad \le  nC_8(1+\delta) \delta,
\end{align*}
where the second inequality uses the bound on $\abs{\blue{\xi_j}}$ (\cref{eq:BoundAbsZInInterval}) and the last inequality follows from argument leading to
\cref{eq:err-approx-dual-sol-ii} above.  
Substituting the value of $\delta$, using the assumption that $V\ge m$, and retaining only the dominant terms completes the proof.  
\hfill \Halmos \endproof

\subsection{Uniform Convergence of VGC} 
\begin{lem}[Uniform \Danskin~for Coupling Constraints] \label[lem]{thm:unif-conv-D} Let $N(\varepsilon, \Theta)$ be the $\varepsilon-$covering number of $\Theta$ and the assumptions under \cref{thm:WC-Constraints-bound} hold. There exists dimension independent constants
$C$ and $n_0$ such that for  $n\ge n_0 e^R$ the following holds with probability $1-C e^{-R}$,
\blue{
\[
	\sup_{\btheta\in \bar\Theta}
	\left|
		D(\bZ, \btheta)
		-
		\mathbb{E}\left[D(\bZ, \btheta)\right]
	\right|
	\le
	C \cdot V^2 \log^2 V  \cdot \sqrt{R\cdot n \log \left( n \cdot N(n^{-3/2}, \Theta) \right)} \cdot \frac{\log^4 n}{h_{\min}}
\] } 
\end{lem}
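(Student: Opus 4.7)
The plan is to promote the pointwise concentration of $D(\bZ, \btheta)$ from \cref{lem:VGC-pw-conv} to a uniform statement over $\bar\Theta = \Theta \times [h_{\min}, h_{\max}]$ via a standard covering argument. First I would build an $\varepsilon_1$-net $\Theta_0$ of $\Theta$ at scale $\varepsilon_1 = n^{-3/2}$ and an $\varepsilon_2$-net $\mathcal H_0$ of $[h_{\min}, h_{\max}]$ at a comparable polynomial scale, say $\varepsilon_2 = n^{-2}$. Decomposing
\[
\sup_{(\btheta, h) \in \bar\Theta} \abs{D(\bZ, (\btheta,h)) - \E D(\bZ, (\btheta,h))}
\]
via triangle inequality yields the maximum over the finite grid $\Theta_0 \times \mathcal H_0$ plus two oscillation terms for $D$ and $\E D$ across an $\varepsilon_1 \times \varepsilon_2$ cell.

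To control the grid maximum, I would apply \cref{lem:VGC-pw-conv} at every $(\btheta_0, h_0) \in \Theta_0 \times \mathcal H_0$ with the tail parameter inflated to $R + \log( N(\varepsilon_1, \Theta) N(\varepsilon_2, \mathcal H))$ and take a union bound. Since $\log N(\varepsilon_2, \mathcal H) = O(\log n)$, this cost is absorbed into the $\log(n\cdot N(n^{-3/2}, \Theta))$ factor in the target bound. For the oscillation terms, I would invoke \cref{lem:D-is-Lipschitz}: part (i) gives a random $\btheta$-Lipschitz modulus of order $(n^2/h_{\min}) \sqrt{R \log n}$ with probability at least $1-e^{-R}$ (the randomness entering through $\|\bZ\|_\infty$), so the drift over a cell of size $n^{-3/2}$ is $\tilde O(\sqrt{nR}/h_{\min})$, of the same order as the pointwise concentration rate. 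Part (ii) provides Hölder-$\tfrac12$ continuity in $h$ with constant $O(n/h_{\min})$, so the $h$-drift over a cell of size $n^{-2}$ is $\tilde O(1/h_{\min})$, a strict lower-order contribution. The same moduli transfer to $\E D(\bZ,\cdot)$ by Jensen's inequality applied to the pointwise Lipschitz bound, using that $\E \|\bZ\|_\infty = O(\sqrt{\log n})$ by a standard sub-Gaussian maximal inequality.

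Combining these three ingredients and taking a union bound over the three controlling events yields the desired statement: the dominant term is the pointwise rate $\sqrt{n}/h_{\min}$ multiplied by $\sqrt{R + \log(n\cdot N(n^{-3/2}, \Theta))}$, with all remaining contributions absorbed into polylog factors in $n$ and $V$.

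The main obstacle I expect is the bookkeeping required to recover the $V^2 \log^2 V$ dependence in the target, given that \cref{lem:VGC-pw-conv} carries $V^3 \log^2 V$. One factor of $V$ must be saved by the argument itself. The most plausible route is to rerun the proof of \cref{lem:VGC-pw-conv} in a form tailored for the covering argument: that is, to apply \cref{thm:Combes-McDiarmid} to the supremum of $D(\bZ, \cdot) - \E D(\bZ, \cdot)$ over the grid $\Theta_0 \times \mathcal H_0$ directly, using the bounded-differences constant from \cref{lem:VGC-Dual-Solution-Bnd} (which depends on $V$ through the Euclidean parameters of the dual stability class rather than through a naive pointwise product), and folding the covering entropy into the variance proxy rather than paying for it via a union bound. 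A secondary technicality is coupling the high-probability Lipschitz modulus (a random quantity controlled by $\|\bZ\|_\infty$) to the concentration event; this is handled cleanly by restricting attention to the good set $\mathcal E_n$ via \cref{lem:ZinGoodSetHighProb} and absorbing the complementary event into the $Ce^{-R}$ failure probability.
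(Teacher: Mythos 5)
Your proposal is correct and follows essentially the same route as the paper's own proof: an $n^{-3/2}$-net of $\Theta$ together with a polynomially fine net of $[h_{\min},h_{\max}]$, a union bound over the grid using \cref{lem:VGC-pw-conv} with the tail parameter inflated by the covering entropy, and control of the oscillation terms (and their expectations) via the two parts of \cref{lem:D-is-Lipschitz} together with $\E\|\bZ\|_\infty = O(\sqrt{\log n})$. The only divergence is your concern about recovering $V^2\log^2 V$ from the $V^3\log^2 V$ pointwise bound: the paper does not carry out the refinement you sketch — it simply pushes the union bound through and states the smaller power of $V$, so this is an inconsistency internal to the paper rather than a gap your argument needs to close.
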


\proof{Proof of \cref{thm:unif-conv-D}}
\blue{ We follow a similar strategy to \cref{lem:unif-danskin-decoupled} and again consider the full notation version of the VGC, $D(\bZ,(\btheta,h))$, and take the supremum over $\btheta \in \Theta$ and $h \in \mathcal{H}$ where $\mathcal{H} \equiv [h_{\min}, h_{\max}]$.  
Let $\Theta_0$ be a minimal $n^{-3/2}$-covering of $\Theta$. In particular, for any $\btheta \in \Theta$ there exists a $\bthetabar \in \Theta_0$ such that 
$\left\Vert \bthetabar-\btheta\right\Vert _{2}\le n^{-3/2}$. Similarly, let $\bar{\mathcal{H}}$ be the $n^{-1}$-covering of $\mathcal{H}$.
By telescoping, 
\begin{align}\label{eq:Danskin-sup-1st-decomp}
	\sup_{\btheta\in\Theta} &
	\left|
		D(\bZ, (\btheta, \overline{h}))-\mathbb{E}\left[D(\bZ, (\btheta, \overline{h}))\right]\right|
 \le \underbrace{
 		\sup_{\substack{\bthetabar \in \Theta_0 \\ \overline{h} \in \overline{\mathcal{H}}}}
 		\left| D(\bZ, (\bthetabar, \overline{h}))-\mathbb{E}\left[D(\bZ, (\bthetabar, \overline{h}))\right]\right|}_{(i)}
	 \\
 & \qquad + \underbrace{
		\sup_{ \substack{\btheta, \overline{\btheta} : \| \btheta - \overline \btheta\| \leq n^{-3/2} \\ h \in \mathcal{H}} } 
		\left| D(\bZ, (\btheta, h))- D(\bZ, (\bthetabar, h))\right| 
		+ \sup_{\substack{\btheta, \overline{\btheta} : \| \btheta - \overline \btheta\| \leq n^{-3/2} \\ h \in \mathcal{H}}} 
		\left|\mathbb{E}\left[D(\bZ, (\bthetabar, h))\right]-\mathbb{E}\left[D(\bZ, (\btheta, h))\right]\right|
	}_{(ii)}
 	\nonumber \\
 & \qquad  + \underbrace{
 		\sup_{ \substack{\bthetabar \in \Theta_0 \\ h,\overline{h} :  \| h - \overline h \| \leq n^{-1}} } 
		\left| D(\bZ, (\bthetabar, h)) - D(\bZ, (\bthetabar, \overline{h}))\right| 
		+ \sup_{\substack{\bthetabar \in \Theta_0 \\ h,\overline{h} :  \| h - \overline h \| \leq n^{-1}}} 
		\left|\mathbb{E}\left[D(\bZ, (\bthetabar, h))\right]-\mathbb{E}\left[D(\bZ, (\bthetabar, \overline{h})) \right]\right|
	}_{(iii)}\nonumber 
\end{align}
We bound Term $(i)$ by taking a union bound over the 
$ N(n^{-3/2}, \Theta)$ elements of $\bar \Theta$ and $ N(n^{-1}, \mathcal{H})$ elements of $\overline{\mathcal{H}}$ in combination with the pointwise bound from \cref{lem:VGC-pw-conv}.  
This shows that with probability at least $1-4e^{-R}$
\[
	\sup_{\substack{\bthetabar \in \Theta_0 \\ \overline{h} \in \overline{\mathcal{H}}}}
	\left|
		D(\bZ, \bthetabar)
		-
		\mathbb{E}\left[D(\bZ, \bthetabar)\right]
	\right|
	\le
C_1 V^2 \log^2(V) \sqrt{R \log \left( N\left(n^{-3/2}, \Theta\right) N(n^{-1}, \mathcal{H}) \right)} \frac{\sqrt{n} \log^4(n)}{h_{\min}} . 
\]
Terms $(ii)$ and $(iii)$ of \cref{eq:Danskin-sup-1st-decomp} can be bounded as follows. 
}

\blue{
First, for $(ii)$, we see by \Cref{lem:D-is-Lipschitz} that for $\| \btheta - \overline{\btheta} \| \leq n^{-3/2}$ there exists a constant $C_1$ such that
\[
		\left| D(\bZ, (\btheta, h))- D(\bZ, (\bthetabar, h))\right| 
	\le \frac{C_1 L}{h} \sqrt{\frac{R}{\nu_{\min}}} \cdot n^{1/2} \sqrt{\log n}
\]
with probability $1 - \exp(-R)$. Similarly, there exists $C_2$,  $C_3$ and $C_4$ (depending on $\vmin, L, C_{\mu}, a_{\min}, a_{\max}, b_{\max}$) such that 
\begin{align*}
\abs{\Eb{D(\bZ, (\btheta, h)) - D(\bZ, (\overline{\btheta}, h))} }
\ \leq \ \frac{C_2 n^{1/2} }{h } \left( \Eb{ \| \bz \|_\infty}  + 1 \right)
\ \leq \ \frac{C_3 n^{1/2} }{h } \left( \sqrt{ \log n}   + C_\mu \right)
\ \leq \ \frac{C_4 n^{1/2} }{h } \sqrt{ \log n}, 
\end{align*}
where the second inequality uses a standard bound on the maximum of $n$ sub-Gaussian random variables, and we have used \cref{asn:Parameters} to simplify. Combining the two terms and taking the supremum over $h \in [h_{\min},h_{\max}]$, we see there exists a constant $C_5$ (depending on $C_1$ and $C_4$), such that 
\[
	(ii) \le \frac{C_5 \sqrt{R n \log n}}{h_{\min}}
\]
Finally, for $(iii)$ we see by \Cref{lem:D-is-Lipschitz} that for $\| h - \overline h \| \leq n^{-1}$, there exists an absolute constant $C_6$ such that,
\[
	(iii) \le \frac{C_6 \sqrt{n}}{h_{\min}\nu_{\min}}
\]
Combining, we see there exists dimension independent constants $C_7$ and $C_8$ such that with probability $1-5e^{-R}$,
\begin{align*}
	\sup_{\substack{\btheta \in \Theta \\ h \in \mathcal{H}}}
	\abs{
		D(\bZ, \btheta)-
		\mathbb{E}\left[D(\bZ, \btheta)\right]
	} 
	& 
	\le
	C_7 V^2 \log^2(V) \sqrt{R n \log \left( N\left(n^{-3/2}, \Theta\right)N(n^{-1}, \mathcal{H}) \right)} \frac{\log^4(n)}{h_{\min}} 
	+
	C_7
	\frac{\sqrt{R n \log n}}{h_{\min}} \\
	& 
	\le
	C_8 \cdot V^2 \log^2 V  \cdot \sqrt{R\cdot n \log \left(n \cdot N(n^{-3/2}, \Theta) \right) } \cdot \frac{\log^4 n}{h_{\min}}
\end{align*}
where the last inequality holds as $N(n^{-1}, \mathcal{H}) \le n$.  This completes the proof.  
}
\hfill \Halmos \endproof

To obtain uniform bounds, we characterize the complexity of the policy class through the \blue{$n^{-3/2}$} covering number of the parameter space $\Theta$. As an example to demonstrate the size of the covering number, consider the case where $\Theta$ is a compact subset of $\mathbb{R}^p$ with a finite diameter $\Gamma$. Applying Lemma 4.1 of \cite{pollard1990empirical} and that the $\epsilon-$packing number bounds the $\epsilon-$covering number, we see that \blue{$N(n^{-3/2},\Theta) \le  \left( 3n^{3/2}\Gamma \right)^p$}.  Combining this bound with \cref{eq:Danskin-sup-1st-decomp}, we obtain the following corollary.


\begin{cor}[Uniform Convergence for Finite Policy Class]\label[cor]{cor:unif-conv-fin-D}
Let $\Theta$ be a compact subset of $\mathbb{R}^p$ with a finite diameter $\Gamma$.   There exists dimension independent constants
$C,n_0$ such that for $n\ge n_0 $ the following holds with probability $1-C\exp\left\{ -R\right\} $,
\[
	\sup_{\btheta \in \Theta}
	\frac{1}{n}\left|
		D(\bZ, \btheta)
		-
		\mathbb{E}\left[D(\bZ, \btheta)\right]
	\right|
	\le C\log^4 n \cdot V^2 \log V
	\cdot \frac{1}{h_{\min}}\sqrt{\frac{R}{n}\cdot 
	\blue{ p \log \left( 3n^{3/2}\Gamma \right) } }
\]
\end{cor}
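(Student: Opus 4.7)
The proof is essentially a direct corollary of \cref{thm:unif-conv-D} combined with a standard volumetric bound on the covering number of a compact subset of Euclidean space, so I expect no substantive obstacle; the plan is almost entirely bookkeeping.

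The first step will be to bound the covering number $N(n^{-3/2}, \Theta)$. Since $\Theta \subseteq \mathbb{R}^p$ has diameter $\Gamma$, a standard packing argument (e.g., Lemma 4.1 of \cite{pollard1990empirical}) gives
\[
N(n^{-3/2}, \Theta) \ \leq \ \left(3 n^{3/2} \Gamma\right)^p,
\]
since the $\epsilon$-covering number is bounded by the $\epsilon$-packing number, which is in turn bounded by a volume ratio.

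The second step will be to invoke \cref{thm:unif-conv-D} directly. Taking logs of the above bound yields $\log N(n^{-3/2}, \Theta) \leq p \log(3 n^{3/2} \Gamma)$. Substituting into the conclusion of \cref{thm:unif-conv-D}, we obtain, with probability at least $1 - C e^{-R}$,
\[
\sup_{\btheta \in \Theta} \abs{D(\bZ, \btheta) - \Eb{D(\bZ, \btheta)}}
\ \leq \
C V^2 \log^2 V \cdot \sqrt{R n \left( \log n + p \log(3 n^{3/2} \Gamma) \right)} \cdot \frac{\log^4 n}{h_{\min}}.
\]

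The final step will be to absorb the stray $\log n$ term and divide by $n$. Since $p \geq 1$ and $3 n^{3/2} \Gamma \geq n$ for $n$ large enough (absorbing the diameter dependence into $n_0$ if $\Gamma < 1$), we have $\log n \leq p \log(3 n^{3/2} \Gamma)$, so the sum inside the square root is at most $2 p \log(3 n^{3/2} \Gamma)$. Dividing both sides by $n$ and collecting constants yields the stated bound. The main ``hard part'' - establishing the uniform tail bound for the VGC - has already been done in \cref{thm:unif-conv-D}; this corollary is a concrete specialization to Euclidean parameter spaces.
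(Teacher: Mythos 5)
Your proposal is correct and follows essentially the same route as the paper: bound $N(n^{-3/2},\Theta)\le (3n^{3/2}\Gamma)^p$ via Pollard's Lemma 4.1 (covering bounded by packing), substitute into \cref{thm:unif-conv-D}, absorb the extra $\log n$ for $n$ large, and divide by $n$. The only cosmetic difference is that you carry the $V^2\log^2 V$ factor from \cref{thm:unif-conv-D}, whereas the corollary as stated writes $V^2\log V$; this is a discrepancy in the paper's statement rather than a flaw in your argument.
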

This corollary shows that the complexity of the policy class depends on the number of parameters of the plug-in policies. We see from \cref{sec:plug-in-policy} that $p$ for many common policy classes does not depend on $n$, implying that the convergence of the VGC estimator to its expectation follows the rate from \cref{cor:unif-conv-fin-D} up to log terms. For example, $p$ for mixed effect policies depends on the dimension of $\bW_j$ which reflects the information available, such as features, for each $\mu_j$. This is typically fixed even as the number of observations $n$ may increase. This implies that for many policy classes, the estimation error converges to 0 as $n\rightarrow \infty$.

\subsection{Proof of \cref{thm:WC-Constraints-bound}}
We can now prove \cref{thm:WC-Constraints-bound}.

\begin{proof}{Proof of \cref{thm:WC-Constraints-bound}:}
\blue{
We proceed to bound each term on the right side of 
\cref{eq:ErrorExpansion}.  
}
\blue{
To bound \cref{eq:ErrorExpansion_Optimism}, we have by \cref{lem:unif-conv-in-samp-wide-lp}, that with probability at least $1-e^{-R}$, that
\begin{align*}
\sup_{\btheta\in \bar{\Theta}}
\abs{ \bxi^\top\bx(\bZ, \btheta) - \Eb{ \bxi^\top \bx(\bZ, \btheta)} } 
& \ \leq \ 
C V^3 \log^3 V
\sqrt{ n \cdot \log N\left(\frac{1}{n},\Theta\right) } \cdot R \log^4(n).
\end{align*}
}
\blue{
To bound \cref{eq:ErrorExpanion_VGC}, let $\mathcal{H} \equiv [h_{\min},h_{\max}]$. Then, by \cref{thm:unif-conv-D}, we have that for some dimension independent constant $C_1$ that with probability at least $1-C_1 e^{-R}$, 
\begin{align*}
   \sup_{\btheta\in \bar\Theta}
	\left|
		D(\bZ, \btheta)
		-
		\mathbb{E}\left[D(\bZ, \btheta)\right]
	\right|
	\le
	C_1 \cdot V^2 \log^2 V  \cdot \sqrt{R\cdot n \log \left( n \cdot N(n^{-3/2}, \Theta) \right)} \cdot \frac{\log^4 n}{h_{\min}}.
\end{align*}
}
\blue{
Finally, to bound \cref{eq:Bias}, use \cref{thm:equiv-in-sample} and take the supremum over $h\in \mathcal{H}$ to obtain
\[
	\cref{eq:Bias} \ \leq \ C_2 h_{\max} n \log(1/h_{\min}).
\]
Substituting these three bounds  into \cref{eq:ErrorExpansion} and collecting constants proves the theorem.  }
\hfill \Halmos \end{proof}

\end{APPENDICES}

\end{document}